\documentclass[11pt]{amsart}
\usepackage{a4wide}
\usepackage[T1]{fontenc}
\usepackage{amssymb,amsmath,amsthm,latexsym}
\usepackage{mathrsfs}
\usepackage[usenames,dvipsnames]{color}
\usepackage{euscript}
\usepackage{graphicx}
\usepackage{mdwlist}
\usepackage{enumerate}
\usepackage{mathtools,dsfont,wasysym}

\usepackage{bbm}
\usepackage{stmaryrd}
\usepackage{centernot}
\usepackage{todonotes}
\usepackage{hyperref}
\numberwithin{equation}{section}
\usepackage[toc]{appendix}

\makeatletter
\@namedef{subjclassname@2020}{\textup{2020} Mathematics Subject Classification}

\renewcommand\subsubsection{\@startsection{subsubsection}{3}%
  \z@{.5\linespacing\@plus.7\linespacing}{-.5em}%
  {\normalfont\bfseries}}

\makeatother

\usepackage{indentfirst}
\usepackage{float}

\theoremstyle{plain}
\newtheorem{thm}{Theorem}[section]
\newtheorem{cor}[thm]{Corollary}
\newtheorem{lem}[thm]{Lemma}
\newtheorem{prop}[thm]{Proposition}

\newtheorem{rem}[thm]{Remark}
\newtheorem{defn}[thm]{Definition}
\newtheorem{exm}[thm]{Example}

\theoremstyle{remark}
\newtheorem{rmk}[thm]{Remark}
\numberwithin{equation}{section}

\theoremstyle{plain}
\newtheorem{assumpt}{Assumption}
\renewcommand{\theassumpt}{$\mathbf{A_{\arabic{assumpt}}}$}

\newtheorem{cond}{Condition}
\renewcommand{\thecond}{$\mathbf{B}$}

\begin{document}
\title[]{Fractional porous medium equation on manifolds with nonnegative Ricci curvature: existence of solutions and smoothing effects via potential methods}

\author[D. von Criegern]{Dorothea-Enrica von Criegern}
\address[D. von Criegern]{Politecnico di Milano, Dipartimento di Matematica, Piazza Leonardo da Vinci 32, 20133 Milano, Italy}
\email{dorotheaenrica.von@polimi.it}

\author[G.~Grillo]{Gabriele Grillo}
\address[G.~Grillo]{Politecnico di Milano, Dipartimento di Matematica, Piazza Leonardo da Vinci 32, 20133 Milano, Italy}
\email{gabriele.grillo@polimi.it}

\author[D.D.~Monticelli]{Dario D. Monticelli}
\address[D.D.~Monticelli]{Politecnico di Milano, Dipartimento di Matematica, Piazza Leonardo da Vinci 32, 20133 Milano, Italy}
\email{dario.monticelli@polimi.it}


\keywords{Fractional Laplacian, porous medium equation, smoothing estimates, Green function, Riemannian manifolds, Ricci curvature, potential estimates}
\subjclass[2020]{Primary: 35R01. Secondary: 35R11 35K65 35A01 31C12 47G40 58J35}

\begin{abstract}
We study the fractional porous medium equation on complete noncompact Riemannian manifolds with nonnegative Ricci curvature for $s\in(0,1]$ and $n>2s$. Assuming that the manifold is $s$-nonparabolic, so that the fractional Laplacian admits a suitable positive minimal Green function, we use this Green function to introduce a natural weighted space of initial data, strictly larger than $L^1$. This leads to a weak dual, or potential, formulation of the equation, for which we prove existence for nonnegative initial data in the weighted space. We then establish quantitative local smoothing estimates for initial data
in either $L^1$ or the Green-weighted space. Under additional
noncollapsing and uniform volume-growth assumptions, we obtain global
smoothing estimates. We also show that, even in $\mathbb{R}^n$, data in the Green-weighted space need not generate bounded solutions unless a suitable uniform weighted integrability condition is imposed. The short- and long-time
behaviors predicted by our estimates are shown to be optimal in
appropriate senses. When $s\in(0,1/2)$, the results require only $\operatorname{Ric}\geq0$, together with a noncollapsing assumption where needed. Our estimates also cover the case $s=1$, several of them being new in that setting. Finally, we extend the approach to more general filtration equations.
\end{abstract}
\maketitle
\section{Introduction}

Let $(M,g)$ be a complete, noncompact, $n$-dimensional Riemannian manifold
without boundary. We study solutions to the \textit{Fractional Porous Medium
Equation} (FPME)
\begin{equation}\label{maineq}
\begin{cases}
\partial_t u + (-\Delta)^s(u^m)=0,
    & (t,x)\in(0,\infty)\times M,\\
u(0,x)=u_0(x),
    & x\in M,
\end{cases}
\end{equation}
where $m>1$, $s\in(0,1]$, $n>2s$ and $(-\Delta)^s$ denotes the spectral $s$-power of the
Laplace--Beltrami operator $-\Delta$ associated with $g$; see
Section~\ref{S2} for the precise definition. Our main interest is the
fractional range $s\in(0,1)$, although the same methods also cover $s=1$,
with some results being new even in the non-fractional case.

The passage from $s=1$ to $0<s<1$ introduces nonlocal diffusion, accounting
for long-range interactions and anomalous diffusion. On a complete Riemannian
manifold, the operator $(-\Delta)^s$ also admits a probabilistic interpretation
through subordination of Brownian motion by an independent $s$-stable
subordinator; see \cite{Jacob2001}.



Throughout, we assume $\operatorname{Ric}\geq0$ and that $M$ is
$s$-nonparabolic, i.e., that the minimal fractional Green function $G_M^s$
exists. Some of our results additionally require a noncollapsing condition
(Assumption~\ref{A1}) and a uniform volume-growth condition
(Assumption~\ref{A2}). We emphasize that, when $0<s<1/2$,
$s$-nonparabolicity is automatic and Assumption~\ref{A2} also holds, with
an explicit choice of the function $f$.

We consider nonnegative initial data in the Green-weighted space
$L^1_{G_M^s,o}(M)$, where $o\in M$ is a fixed reference point; see
Section~\ref{weakdual}. This space is tailored to the potential
methods used throughout the paper and is strictly larger than $L^1(M)$,
defined with respect to the Riemannian measure.

The FPME in the Euclidean space $\mathbb R^N$ was introduced and studied in the fundamental works \cite{DePabloQuirosRodriguezVazquez2011, DePabloQuirosRodriguezVazquez2012},
where the extension method for the fractional Laplacian introduced in
\cite{CS} plays an important role. When dealing with other kinds of nonlocal
operators, in particular with the various versions of the fractional
Laplacian on bounded Euclidean domains, different techniques are necessary. 
In that setting, the spectral, restricted, and censored fractional Laplacian
can all be associated with Dirichlet-type boundary conditions, while defining
genuinely different operators; see, e.g.,
\cite{BonforteFigalliVazquez2018,BonforteSireVazquez2014,
BonforteVazquez2014,BonforteVazquez2015,BonforteVazquez2016}.
Their associated Green functions, in particular their local and boundary
behavior, provide a useful way to distinguish these realizations. This illustrates the central role played by the Green function
in the analysis of nonlocal diffusion equations, a viewpoint that is also
fundamental to our approach on manifolds.


Fractional nonlinear evolution equations on Riemannian manifolds have been considered only in a few recent works. The case of hyperbolic space was studied in
\cite{BerchioBonforteGangulyGrillo2020}, where the almost explicit form of
the heat kernel and the corresponding fractional Green function, obtained
for instance by subordination, play a central role. More recently,
\cite{BerchioBonforteGrilloMuratori2024} considered general
Cartan--Hadamard manifolds. In that setting, comparison results from
Riemannian geometry yield suitable upper bounds for the heat kernel and,
consequently, for the fractional Green function. Such estimates allow one to
establish existence results for the analogue of \eqref{maineq}, as well as
quantitative smoothing effects, showing in particular that suitable initial
data, such as data in $L^1(M)$, generate essentially bounded solutions for
every $t>0$; see Theorem~\ref{thm:smoothing} below for results of this type in
our setting. The same approach also yields information on the long-time
behavior of solutions.


Smoothing effects and decay estimates for linear and nonlinear evolution
equations have a long history and are closely related to suitable functional
inequalities, such as Sobolev, Nash, Gagliardo--Nirenberg, and Log-Sobolev
inequalities; see, in particular, \cite{Davies} for the linear theory.
For nonlinear evolutions, including the porous medium equation and its
fast diffusion counterpart for $0<m<1$, the first results of this kind, to the best of our knowledge, appear to be due to B\'enilan \cite{Benilan1978} and V\'eron
\cite{Veron1979}, who used Moser iteration to establish smoothing effects.
We also refer to \cite{Vazquez2006} for a general account.
This approach has since been used extensively. Without attempting an
exhaustive list, we mention \cite{DePabloQuirosRodriguezVazquez2012} for
fractional nonlinear diffusion and \cite{GrigorSuerig2025, GrilloMuratori2016, GrilloMuratoriPunzo2018bis, GrilloMuratoriPunzo2018, GrilloMuratoriVazquez2017, GrilloMuratoriVazquez2019,RS2, RS, RS3, Vazquez2015Fundamental} for nonlinear diffusion equations on various classes of noncompact
Riemannian manifolds, in the
non-fractional case $s=1$.


A particularly interesting class in the study of linear evolution equations is that of manifolds with nonnegative Ricci curvature, which we denote by Ric\,$\geq 0$. The fundamental work of Li--Yau \cite{liyau} provides, among
several other results, sharp upper and lower bounds for solutions to the heat
equation on such manifolds, in which the volume growth of geodesic balls
plays a crucial role. These bounds readily imply that no global smoothing effect
for the heat equation can hold on collapsing manifolds, that is, if
$\inf_{x\in M}\mathrm{Vol}(B_1(x))=0$, where $\mathrm{Vol}$ denotes the
Riemannian volume and $B_r(x)$ the geodesic ball of radius $r$ centered at
$x$. Results for nonlinear evolutions under the condition Ric\,$\ge0$ are still
few. We refer in particular to
\cite{GrigorSuerig2025,GrigorSunSuerig}, especially for long-time upper
bounds for solutions corresponding to $L^1\cap L^\infty$-data, in a more
general geometric framework and for doubly nonlinear evolution equations,
and to \cite{FPMD} for the porous medium equation. All these results
concern the non-fractional case $s=1$.

In this article, we improve the results of \cite{FPMD} in two fundamental aspects. First, we treat the fractional case and prove existence without imposing any additional geometric assumptions beyond $s$-nonparabolicity; see Theorem~\ref{thm:existence}. The result applies to nonnegative initial data
in $L^1_{G_M^s,o}(M)$, which may be neither bounded nor integrable, and is
new in this generality even for $s=1$. Second, in
Theorems~\ref{thm:smoothing} and~\ref{thm:smoothing2} we establish
quantitative local smoothing estimates for nonnegative data in $L^1(M)$
and $L^1_{G_M^s,o}(M)$, respectively. Under the additional geometric Assumptions~\ref{A1} and~\ref{A2}, namely
that $M$ is noncollapsing and satisfies a suitable uniform volume-growth
condition, these estimates become global; see
Corollaries~\ref{cor:global-smoothing-L1} and
\ref{cor:global-smoothing-weighted}. Their short- and long-time behaviors
are sharp in appropriate senses, as discussed in
Section~\ref{sec:optimality}. In the range $0<s<1/2$,
$s$-nonparabolicity is automatic and Assumption~\ref{A2} holds with
$f(r)=r^{2-2s}$, considerably simplifying
the geometric assumptions; see Corollaries~\ref{cor:subcritical-existence},
\ref{cor:subcritical-local-smoothing}, and
\ref{cor:subcritical-global-smoothing}.


Our approach is based on the so-called ``weak dual formulation'', obtained
formally by applying $(-\Delta)^{-s}$ to the equation, yielding the system $\partial_t U = -u^m$ and $U = (-\Delta)^{-s} u$. This method goes back to Pierre's work for the local case $s=1$ \cite{Pierre1982} and was later extended to the fractional setting on $\mathbb{R}^N$, $0<s<1$, in \cite{Vazquez2014Barenblatt}. This weak dual formulation was subsequently combined with Green function
methods in \cite{BonforteSireVazquez2014,BonforteVazquez2015,
BonforteVazquez2016}, an approach that is also central here. Making this formal procedure rigorous is delicate. We first construct mild solutions by time discretization and then prove that they are in fact weak dual solutions.

The paper is organized as follows. Section~\ref{S2} collects the
preliminaries on the geometric setting, the fractional Laplacian and Green
function, the relevant initial-data spaces, and weak dual solutions.
Section~\ref{S3} states the main results for $s\in(0,1]$, with
Section~\ref{sec:optimality} devoted to their optimality.
Section~\ref{subsec:subcritical-range} treats the case $0<s<1/2$, where
$s$-nonparabolicity is immediate and some of the geometric assumptions used
in the general theory are automatic. In Section~\ref{sec:apriori} we
establish preliminary Green function and integral estimates, while
Section~\ref{S6} provides examples satisfying the assumptions required for
global smoothing. Section~\ref{S7} develops comparison estimates for the
Green function and Green potentials that are central to the proofs of our
main results. In Section~\ref{sec:frommstowds} we construct mild solutions
and identify them as weak dual solutions, while Section~\ref{sec: propswds}
establishes their key properties. These are then used in
Section~\ref{sec:proofsofmainres} to prove the main results.
Section~\ref{S11} extends the approach to more general filtration
nonlinearities. Finally, Appendix~\ref{proofs} contains the proofs of some
technical results.

\section{Preliminaries and Assumptions}\label{S2}

\medskip

\noindent \bf Notation\rm. For a function $v \colon [0,\infty) \times M \to \mathbb{R}$, we denote by $v(t)$ the function $v(t, \cdot) \colon M \to \mathbb{R}$ for each fixed $t \geq 0$. Unless otherwise specified, all Riemannian manifolds in this article are assumed to be connected, without boundary, and of dimension $n > 2s$, with $s\in(0,1]$. We shall denote by $\mu$ the Riemannian measure on $M$ and by $d$ geodesic distance on $M$. $\operatorname{Vol}$ will denote the Riemannian volume and $B_r(x)$ the Riemannian ball of radius $r$ centered at $x$.

\subsection{The Fractional Laplacian and the Fractional Green Function}

For $s=1$, $(-\Delta)^1=-\Delta$ denotes the \textit{Laplace--Beltrami operator} on the manifold $M$. When $s\in(0,1)$ the \textit{fractional Laplace--Beltrami operator} $(-\Delta)^s$ is defined as the spectral $s$-th power of the Laplace--Beltrami operator $-\Delta$. 
On a complete Riemannian manifold, $-\Delta$  coincides with its closure, initially defined on $C_c^\infty(M)$, since $-\Delta$ is essentially self-adjoint \cite[Theorem 2.4]{strichartz1983}. For $s\in (0,1)$, on any stochastically complete manifold, a property which always holds on manifolds with $\operatorname{Ric}\ge0$, the operator admits an explicit integral representation on suitable classes of functions; in particular, for $v\in C_c^\infty(M)$, and more generally whenever the following integral is well defined, one has
\begin{equation*}
(-\Delta)^s v(x) = \frac{1}{\Gamma(-s)} \int_0^\infty \left( \int_M p_M(t,x,y)(v(y)-v(x))\,d\mu(y) \right) \frac{dt}{t^{1+s}},
\end{equation*}
where $p_M(t,x,y)$ denotes the heat kernel associated with $-\Delta$.


The \textit{fractional Green function} $G_M^s(x,y)$ is defined for $s\in(0,1]$ for distinct points $x,y \in M$ by
\begin{equation}\label{fracGreen'sheat}
    G_M^s(x,y) \coloneqq \frac{1}{\Gamma(s)} \int_0^\infty \frac{p_M(t,x,y)}{t^{1-s}} \, dt.
\end{equation}
\begin{defn}
We say that $M$ is \emph{$s$-nonparabolic} if the fractional Green function
\eqref{fracGreen'sheat} is finite and positive for all $x\neq y$.
\end{defn}
For $s=1$, this is the usual notion of nonparabolicity. We will always be dealing with \textit{$s$-nonparabolic} manifolds. Then $G_M^s$ is positive and satisfies
\begin{equation*}
    (-\Delta)^s G_M^s(x,\cdot) = \delta_x,
\end{equation*}
in the distributional sense, where $\delta_x$ is the Dirac measure centered at $x$.

For a function $\psi\in L^1(M)\cap L^{\infty}(M)$, we define
\begin{equation}\label{defform}
    (-\Delta)^{-s}\psi(x)\coloneqq\int_M \psi(y)G_M^s(x,y)\,d\mu(y).
\end{equation}
By Lemma~\ref{lem_leftinv}, within the geometric framework established in Section~\ref{sec:fracgreenest}, this operator is well-defined and constitutes the left-inverse of $(-\Delta)^s$, mapping $L^1(M) \cap L^\infty(M)$ continuously into $L^{\infty}_{\mathrm{loc}}(M)$.

\subsection{Geometric Setting, Assumptions and Properties}
Let $(M,g)$ be a complete, noncompact $n$-dimensional Riemannian manifold, $n>2s$, with nonnegative Ricci curvature, $\mathrm{Ric}\geq 0$. We denote by $d\mu$ the Riemannian volume measure, by $\Delta$ the Laplace--Beltrami operator, and by $B_r(x)$ the geodesic ball of radius $r>0$ centered at $x \in M$. The geodesic distance between points $x, y \in M$ is written as $d(x,y)$.

In this setting, the \textit{Bishop--Gromov Volume Comparison Theorem} (see e.g. \cite{petersen2016}) applies, yielding for any $0 < R_1 < R_2$:
\begin{equation}\label{bishopgromov}
    \frac{\mathrm{Vol}(B_{R_2}(x))}{\mathrm{Vol}(B_{R_1}(x))}\leq \frac{R_2^n}{R_1^n}.
\end{equation}
One obtains the following uniform volume growth estimate: For any $R>0$, with $\omega_n$ denoting the volume of the unit Euclidean ball in $\mathbb{R}^n$,
\begin{equation}\label{bishopgromovD}
\mathrm{Vol}(B_R(x)) \leq \omega_n R^n.
\end{equation}

We also recall that on any Riemannian manifold $M$ with $\mathrm{Ric}\geq 0$, geodesic balls satisfy the \textit{Volume Doubling Property}: There exists a constant $C_d > 0$, such that for all $x_0 \in M$ and $r > 0$,
\begin{equation}\label{rmk:doubling}
    \mathrm{Vol}(B_{2r}(x_0)) \leq C_d\, \mathrm{Vol}(B_r(x_0)),
\end{equation}
see e.g. \cite{grigoryan2009}.

\begin{rmk}
We will repeatedly make use of the coarea formula: For any $x\in M$ and a.e. $r>0$ we have
\begin{equation*}
\frac{d}{dr}\mathrm{Vol}(B_r(x)) = \mathrm{Meas}(\partial B_r(x)),
\end{equation*}
where $\mathrm{Meas}(\partial B_r(x))$ denotes the surface measure of $\partial B_r(x)$.
\end{rmk}

\medskip

\noindent The following Assumption will only be imposed in the results requiring estimates uniform with respect to the pole, in particular in the global smoothing results.

\begin{assumpt}\label{A1}
$M$ is noncollapsing, i.e.,
\begin{equation}\label{noncollapsing}
    \alpha \coloneqq \inf_{x\in M} \mathrm{Vol}(B_1(x)) > 0.
\end{equation}
\end{assumpt}
This property is not a consequence of the previous assumptions, as there exist manifolds with $\mathrm{Ric} \geq 0$ where \eqref{noncollapsing} fails, see, e.g., \cite[Example 2]{croke-karcher1988}.

\medskip Independently of Assumption \ref{A1}, the preceding geometric hypotheses imply the following comparison property. Indeed we shall now prove a crucial bound for ratios of volumes of balls of different radii, which will be crucial in what follows and in particular will allow to prove existence of solutions to initial value problem for the fractional porous medium equation on $M$, see \eqref{maineq}, where solutions will be meant in the weak dual sense, see Definition \ref{defn:wds}, and the space of initial data will be $L^1_{G_M^s,o}(M)$,  where $o\in M$ is any fixed reference point, as defined in Section \ref{weakdual}, such space being larger than $L^1(M)$.

\begin{prop}\label{general} Let $M$ be a complete, \(s\)-nonparabolic Riemannian manifold with \(\partial M = \emptyset\), $n>2s$, and \(\mathrm{Ric} \ge 0\). 
Let also $o\in M$ be fixed, and define
\begin{equation}\label{deffo1}
f(t) \coloneqq \frac{\mathrm{Vol}(B_t(o))}{t^{2s-1}},\ \ \forall t>0.
\end{equation}
Let finally  $R_0(x):= 2d(x,o)+1\geq 1$ and $\gamma:=C_d^2$, where $C_d$ is the doubling constant of equation \eqref{rmk:doubling}. Then the manifold $M$ satisfies the following
{\renewcommand{\thecond}{\ensuremath{\mathbf B}}}
\begin{cond}\label{B}
For all $x\in M$, all $R_1, R_2$ s.t. $1\leq R_0(x)\leq R_1 \leq R_2$ one has
\begin{equation}\label{fcondition1}
       \frac{R_2^{2s-1}f(R_2)}{\mathrm{Vol}(B_{R_2}(x))} \leq \gamma \frac{R_1^{2s-1}f(R_1)}{\mathrm{Vol}(B_{R_1}(x))},
\end{equation}
with
\begin{equation}\label{betacondition1}
\beta \coloneqq \int_{1}^{\infty} \frac{1}{f(t)} \, dt < +\infty.
\end{equation}
\end{cond}
\end{prop}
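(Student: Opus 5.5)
With this choice of $f$ we have $R^{2s-1}f(R)=\mathrm{Vol}(B_R(o))$, so \eqref{fcondition1} becomes
\begin{equation*}
\frac{\mathrm{Vol}(B_{R_2}(o))}{\mathrm{Vol}(B_{R_2}(x))}\le \gamma\,\frac{\mathrm{Vol}(B_{R_1}(o))}{\mathrm{Vol}(B_{R_1}(x))},\qquad R_0(x)\le R_1\le R_2 .
\end{equation*}
The plan is to prove this by showing that, for $R\ge R_0(x)$, the ratio $\mathrm{Vol}(B_R(o))/\mathrm{Vol}(B_R(x))$ is pinched between $C_d^{-1}$ and $C_d$. Set $d:=d(x,o)$. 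For $R\ge R_0(x)\ge 2d$ the triangle inequality gives
\begin{equation*}
B_R(x)\subset B_{R+d}(o)\subset B_{2R}(o),\qquad B_R(o)\subset B_{2R}(x).
\end{equation*}
Applying the doubling property \eqref{rmk:doubling} then yields
\begin{equation*}
\mathrm{Vol}(B_R(x))\le C_d\,\mathrm{Vol}(B_R(o)),\qquad \mathrm{Vol}(B_R(o))\le C_d\,\mathrm{Vol}(B_R(x)).
\end{equation*}
Using the second bound at $R=R_2$ and the first bound at $R=R_1$, the left side of the displayed inequality is at most $C_d$, while its right side is at least $\gamma C_d^{-1}=C_d$. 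This proves \eqref{fcondition1} with $\gamma=C_d^2$. No monotonicity in $R$ is needed, only the two-sided comparability.

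It remains to prove \eqref{betacondition1}, namely
\begin{equation*}
\int_1^\infty \frac{t^{2s-1}}{\mathrm{Vol}(B_t(o))}\,dt<\infty .
\end{equation*}
This must come from $s$-nonparabolicity, and it is the only step that does not follow from doubling alone. On manifolds with $\mathrm{Ric}\ge0$, the Li--Yau estimates \cite{liyau} give
\begin{equation*}
p_M(t,x,y)\asymp \frac{1}{\mathrm{Vol}(B_{\sqrt t}(x))}\exp\!\left(-c\,\frac{d(x,y)^2}{t}\right).
\end{equation*}
Fix $y\ne o$ and insert the lower Li--Yau bound into \eqref{fracGreen'sheat}. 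For $t\ge d(o,y)^2$ the exponential factor is bounded below by a constant, so finiteness of $G_M^s(o,y)$ forces
\begin{equation*}
\int_{1}^\infty \frac{t^{s-1}}{\mathrm{Vol}(B_{\sqrt t}(o))}\,dt<\infty .
\end{equation*}
The range $t\in[1,d(o,y)^2]$ contributes a finite amount because volumes are continuous and positive there. Substituting $t=r^2$, so that $dt=2r\,dr$, turns $t^{s-1}\,dt$ into $2r^{2s-1}\,dr$, which gives exactly $\beta<\infty$.

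The main obstacle is this last step: one must invoke the correct form of the Li--Yau lower bound, with uniform constants, and check that the exponent bookkeeping matches. The rest is elementary. Conversely, the two-sided Li--Yau bound shows that this integral condition is equivalent to $s$-nonparabolicity, which is a useful consistency check on the computation.
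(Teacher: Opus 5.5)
Your proof is correct and follows the paper's route for \eqref{fcondition1}. The paper also reduces to the two-sided comparability $C_d^{-1}\mathrm{Vol}(B_R(o))\le \mathrm{Vol}(B_R(x))\le C_d\,\mathrm{Vol}(B_R(o))$ for $R\ge R_0(x)$. It gets this from the inclusions $B_{R/2}(o)\subset B_{R-d(x,o)}(o)\subset B_R(x)$ and their symmetric counterpart, rather than your enlargement to balls of radius $2R$; the difference is immaterial and gives the same $\gamma=C_d^2$.

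The only difference is in \eqref{betacondition1}. The paper simply cites Theorem~\ref{thm:charnonpar}. You instead derive the needed implication by inserting the Li--Yau lower heat kernel bound into \eqref{fracGreen'sheat}, which amounts to reproving the lower half of Theorem~\ref{thm:bounGreen's}. This is a self-contained but equivalent justification, valid for all $s\in(0,1]$.
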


The proof of this result will be given in the Appendix.

\noindent Inequalities of the form \eqref{fcondition1} may hold on $M$ for different choices of the function $f$. When this is the case, the corresponding choice of $f$ determines the refined smoothing effects that can be obtained for solutions to the evolution equation under consideration. Condition \ref{B}, in which the threshold radius $R_0$ is allowed to depend on the pole, is sufficient for the local smoothing estimates proved below. In order to obtain estimates that are uniform with respect to the pole, and hence global on $M$, we introduce the stronger Assumption \ref{A2}, where the same function $f$ is admissible for all poles with a common threshold radius $R_0$.

{\renewcommand{\theassumpt}{\ensuremath{\mathbf A_2}}
\begin{assumpt}\label{A2}There exist a constant $\gamma>0$ and a continuous function $f\colon[1,\infty)\to(0,\infty)$ such that $t^{2s-1}f(t)$ is increasing and such that  there exists $R_0\geq 1$ such that for all $x\in M$ and all $R_0\leq R_1 \leq R_2$:
\begin{equation}\label{fcondition}
       \frac{R_2^{2s-1}f(R_2)}{\mathrm{Vol}(B_{R_2}(x))} \leq \gamma \frac{R_1^{2s-1}f(R_1)}{\mathrm{Vol}(B_{R_1}(x))}
\end{equation}
and
\begin{equation}\label{betacondition}
    \beta \coloneqq \int_{1}^{\infty} \frac{1}{f(t)} \, dt < +\infty.
\end{equation}
\end{assumpt}}

\begin{rmk}


The proof of Proposition \ref{general} shows that, under the running assumptions on $M$, if \eqref{fcondition} holds at some $o \in M$ for some $\gamma,f$ and $\overline{R}$, then it holds for every $x \in M$, with $\overline{R}$ replaced by a suitable $R_0(x)$ (in fact, $R_0(x)=\overline{R}+2d(x,o)$ works), with the same function $f$ and with $\gamma$ replaced by $\gamma C_d^2$. Such pole-dependent radii are sufficient for the local smoothing estimates proved below; a radius independent of the pole is required
only for the corresponding global estimates.
\end{rmk}

\begin{rmk}
On manifolds with nonnegative Ricci curvature, property \eqref{fcondition} in Assumption \hyperref[A2]{$\mathbf{A_2}$} is invariant under Lipschitz equivalence of metrics. Specifically, if \eqref{fcondition} holds for $d$ and another metric $d'$ satisfies
\begin{equation*}
    K_1d(x,y)\leq d'(x,y)\leq K_2 d(x,y)\quad \text{for all }x,y\in M,
\end{equation*}
for positive constants $K_1, K_2$, then \eqref{fcondition} also holds for $d'$, for the same function $f$ and a larger constant $\gamma'$. The proof follows a similar strategy as in Proposition \ref{general}: One uses the metric equivalence to establish inclusions between $d$-balls and $d'$-balls and applies the Doubling Property \eqref{rmk:doubling}, both for $d$ and $d'$.
\end{rmk}

\begin{rmk}
The noncollapsing Assumption \hyperref[A1]{$\mathbf{A_1}$} combined with the Bishop--Gromov Volume Comparison \eqref{bishopgromov} implies in particular for every $R\geq1$ and $0<r\leq R$ and every $x\in M$,
\begin{equation*}
    \mathrm{Vol}(B_r(x)) \geq \mathrm{Vol}(B_R(x)) \frac{r^n}{R^n} \geq \mathrm{Vol}(B_1(x))\frac{r^n}{R^n} \geq \alpha \frac{r^n}{R^n}.
\end{equation*}
Note also that $\mathrm{Vol}(B_R(x)) \geq \alpha$ holds for all $R\geq1$, $x\in M$.
\end{rmk}

\subsection{Green-Weighted Spaces and Weak Dual Solutions}\label{weakdual}

Let $(M,g)$ be a complete, noncompact, s-nonparabolic $n$-dimensional Riemannian manifold, $n>2s$, with $\mathrm{Ric}\geq 0$. We investigate solutions to \eqref{maineq} with initial data in either $L^1(M)$ or the following weighted space, defined in terms of the fractional Green function $G^s_M$: for any fixed reference point $o\in M$
\begin{equation*}
L^1_{G^s_M,o}(M) \coloneqq \left\{
v\colon M \to \mathbb{R} \text{ measurable } \,\Bigg|\,
\|v\|_{L^1_{G_M^s,o}(M)} < +\infty
\right\},
\end{equation*}
with
\begin{equation}\label{L1Gnorm}
\|v\|_{L^1_{G_M^s,o}(M)} \coloneqq
\int_{B_1(o)} |v(x)| \, d\mu(x) +
\int_{M \setminus B_1(o)} |v(x)| G^s_M(x, o) \, d\mu(x).
\end{equation}

For our global smoothing estimates we will consider the space
\begin{equation*}
L^1_{G^s_M}(M) \coloneqq \left\{
v\colon M \to \mathbb{R} \text{ measurable } \,\Bigg|\,
 \|v\|_{L^1_{G_M^s}(M)} < +\infty
\right\},
\end{equation*}
with the norm
\begin{equation*}
\|v\|_{L^1_{G^s_M}(M)} \coloneqq \sup_{x_0 \in M} \|v\|_{L^1_{G_M^s,x_0}(M)}.
\end{equation*}
For any fixed compact nonempty set $V\subset M$ we also introduce the space
\begin{equation*}
L^1_{G^s_M,V}(M) \coloneqq \left\{
v\colon M \to \mathbb{R} \text{ measurable } \,\Bigg|\,
 \|v\|_{L^1_{G_M^s,V}(M)} < +\infty
\right\},
\end{equation*}
endowed with the norm
\begin{equation*}
\|v\|_{L^1_{G^s_M,V}(M)} \coloneqq \sup_{x_0 \in V} \|v\|_{L^1_{G_M^s,x_0}(M)}.
\end{equation*}

We now define weak dual solutions, motivated by the following formal observation: Applying the operator $(-\Delta)^{-s}$ to both sides of~\eqref{maineq} yields,
\begin{equation*}
    \partial_t [(-\Delta)^{-s} u] + u^m = 0.
\end{equation*}
This formal computation suggests the following weak formulation:

\begin{defn}\label{defn:wds}
Let $o\in M$ be a fixed reference point and let $u_0 \in L^1_{G^s_M,o}(M)$ be nonnegative. A measurable function $u \geq 0$ is called a \emph{Weak Dual Solution (WDS)} to Problem~\eqref{maineq} if for every $T > 0$:
\begin{enumerate}[label=(\roman*)]
    \item[(i)] $u \in C([0,T]; L^1_{G_M^s,x_0}(M))$ for all $x_0 \in M$;
    \item[(ii)] $u^m \in L^1((0,T); L^1_{\mathrm{loc}}(M))$;
    \item[(iii)] For all test functions $\psi \in C^1_c((0,T); L^\infty_c(M))$,
    \begin{equation}\label{wds-identity}
        \int_0^T \int_M \partial_t \psi (-\Delta)^{-s} u \, \mathrm{d}\mu\, \mathrm{d}t
        = \int_0^T \int_M u^m \psi \, \mathrm{d}\mu\, \mathrm{d}t;
    \end{equation}
    \item[(iv)] $u(0) = u_0$ almost everywhere in $M$.
\end{enumerate}
\end{defn}

The next technical result, and its main line of proof, will be used repeatedly in the paper. Its proof will be given in the Appendix.

\begin{lem}\label{rmk:contloc}
Let $M$ be a complete, noncompact, s-nonparabolic $n$-dimensional Riemannian manifold with $\mathrm{Ric}\geq0$.
If $u$ is a WDS  to \eqref{maineq} in the sense of
Definition~\ref{defn:wds}, then
$$(-\Delta)^{-s}u \in C\big([0,+\infty);L^1_{\rm loc}(M)\big).$$
\end{lem}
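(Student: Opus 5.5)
Fix a compact set $K\subset M$ and times $t,t_0\in[0,T]$. Our aim is to show
\[
\int_K \big|(-\Delta)^{-s}u(t)-(-\Delta)^{-s}u(t_0)\big|\,d\mu\to 0 \quad\text{as } t\to t_0.
\]
The approach rests only on property (i) of Definition~\ref{defn:wds}, namely continuity of $t\mapsto u(t)$ in every weighted space $L^1_{G_M^s,x_0}(M)$, together with linearity of $(-\Delta)^{-s}$. Put $w=u(t)-u(t_0)$. Then by Tonelli
\[
\int_K |(-\Delta)^{-s}w|\,d\mu\le \int_M |w(y)|\,\Phi_K(y)\,d\mu(y),\qquad \Phi_K(y):=\int_K G_M^s(x,y)\,d\mu(x).
\]
So it suffices to prove a weight comparison
\[
\Phi_K(y)\le C_K\Big(\mathbf 1_{B_1(o)}(y)+G_M^s(y,o)\,\mathbf 1_{M\setminus B_1(o)}(y)\Big)
\]
for a suitable single pole $o$, or with finitely many poles. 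Once this holds, $\int_K|(-\Delta)^{-s}w|\le C_K\|w\|_{L^1_{G_M^s,o}}\to0$ by (i). The same estimate also shows that $(-\Delta)^{-s}u(t)$ is well defined as an $L^1_{\rm loc}$ function for each $t$.

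The weight comparison splits into two regimes in $y$. Choose $R$ with $K\subset B_R(o)$.

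\emph{Near region.} For $y\in B_{2R+1}(o)$ we need $\sup_y\int_K G_M^s(x,y)\,d\mu(x)<\infty$. We use the subordination formula \eqref{fracGreen'sheat} and split the time integral at $t=1$. For $t\le1$, use $\int_K p_M(t,x,y)\,d\mu(x)\le 1$, which gives a finite contribution $\int_0^1 t^{s-1}dt$. For $t\ge1$, use the Li--Yau upper bound $p_M(t,x,y)\le C/\mathrm{Vol}(B_{\sqrt t}(y))$. The resulting integral $\int_1^\infty t^{s-1}/\mathrm{Vol}(B_{\sqrt t}(y))\,dt$ is finite locally uniformly in $y$, by $s$-nonparabolicity together with doubling and Bishop--Gromov, which let us move the center from $y$ to $o$ at a cost of a constant. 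Then on $B_{2R+1}(o)\setminus B_1(o)$ the weight $G_M^s(\cdot,o)$ is bounded below by a positive constant, via the Li--Yau lower bound for the heat kernel inserted into \eqref{fracGreen'sheat}. Hence $\Phi_K\lesssim$ the weight there.

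\emph{Far region.} For $d(y,o)\ge 2R+1$ and $x\in K$ we have $d(x,y)\simeq d(y,o)$. The Li--Yau two-sided Gaussian bounds, combined with doubling, give $p_M(t,x,y)\le C\,p_M(ct,o,y)$ up to a change of the constant in the Gaussian. Equivalently, one can use the two-sided estimate
\[
G_M^s(x,y)\simeq\int_{d(x,y)}^\infty \frac{r^{2s-1}}{\mathrm{Vol}(B_r(x))}\,dr,
\]
derived from Li--Yau, together with the volume-ratio comparison in Proposition~\ref{general} or Condition~\ref{B}, to compare the pole $x$ with the pole $o$. Either route yields $G_M^s(x,y)\le C_K G_M^s(o,y)$ uniformly for $x\in K$, and integrating over $K$ gives $\Phi_K(y)\le C_K\mathrm{Vol}(K)\,G_M^s(y,o)$.

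The main obstacle is this uniform pole comparison of Green functions in the far region. Our first attempt will be the subordination formula with the Li--Yau Gaussian bounds, changing the time variable $t\mapsto ct$ and using doubling to absorb volume factors. The near-region bound is routine. Finally, continuity at $t_0=0$ uses (iv) together with (i) at $t=0$, so the argument covers $[0,\infty)$ since $T$ is arbitrary.
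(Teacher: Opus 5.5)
Your proposal is correct and follows the paper's proof. In both, Tonelli reduces the claim to bounding $(-\Delta)^{-s}\chi_V$ by a multiple of $\mathbf 1_{B_1(x_0)}+G_M^s(\cdot,x_0)\mathbf 1_{M\setminus B_1(x_0)}$, after which property (i) of Definition~\ref{defn:wds} gives the continuity. The only difference is how this weight comparison is obtained: the paper quotes Lemma~\ref{lemlastlem}(3) and the upper bound of Proposition~\ref{thm:3.4} with Remark~\ref{rmkprop3.4}, whereas you re-derive both from Li--Yau heat kernel bounds and doubling. Your far-field step, using $d(x,y)\geq d(o,y)/2$ together with either time rescaling or the two-sided Green estimate, does go through.
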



\subsection{Initial Data Spaces}

In Section~3, we prove local smoothing estimates for initial data
in $L^1(M)$ and in $L^1_{G_M^s,o}(M)$. Under the additional
Assumptions \ref{A1} and \ref{A2}, we obtain corresponding global estimates
for data in $L^1(M)$ and in $L^1_{G_M^s}(M)$.
\begin{prop}
\label{prop:compact-poles-equivalence}
Let $(M,g)$ be a complete, noncompact, $s$-nonparabolic
$n$-dimensional Riemannian manifold, with empty boundary,
$n> 2s$, $s\in(0,1]$, and $\operatorname{Ric}\geq 0$.
Let $V\subset M$ be nonempty and compact, and let $o\in M$.
Then there exists a constant $C=C(o,V)\geq 1$ such that
\[
 C^{-1}\|v\|_{L^1_{G^s_M,o}(M)}
 \leq
 \|v\|_{L^1_{G^s_M,V}(M)}
 \leq
 C\|v\|_{L^1_{G^s_M,o}(M)}
\]
for every measurable function $v$ on $M$. Consequently,
\[
 L^1_{G^s_M,V}(M)=L^1_{G^s_M,o}(M)
\]
with equivalent norms. In particular, the space
$L^1_{G^s_M,o}(M)$ is independent of the choice of the pole
$o\in M$, with equivalent norms.
\end{prop}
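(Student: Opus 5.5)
The plan is to prove a two-sided comparison of the weights: for $x_0$ in a compact set $K$ (we take $K=V\cup\{o\}$), the functions $G^s_M(\cdot,x_0)$ and $G^s_M(\cdot,o)$ are comparable away from a large ball, uniformly in $x_0\in K$. Near the poles, both norms reduce to $L^1$-type quantities, and the Green function is bounded above and below on the relevant annuli. The lower bound in the statement then follows trivially: $\|v\|_{L^1_{G^s_M,V}}\ge \|v\|_{L^1_{G^s_M,x_0}}$ for any $x_0\in V$, and it remains to compare $x_0$ with $o$. So everything reduces to showing that $C^{-1}\|v\|_{L^1_{G,o}}\le\|v\|_{L^1_{G,x_0}}\le C\|v\|_{L^1_{G,o}}$ with $C$ independent of $x_0\in K$.

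\textbf{Step 1: Green function bounds.} Using Li--Yau Gaussian bounds for $p_M$ and \eqref{fracGreen'sheat}, I would derive the standard two-sided estimate
\[
G^s_M(x,y)\asymp \int_{d(x,y)}^\infty \frac{t^{2s-1}}{\mathrm{Vol}(B_t(x))}\,dt ,
\]
which is finite by $s$-nonparabolicity. I expect this estimate (or its use) to appear in Section~\ref{sec:fracgreenest}/\ref{sec:apriori}; I would assume it. Set $D=\operatorname{diam}(K)+1$.

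\textbf{Step 2: far region.} For $d(x,o)\ge 2D$ and $x_0\in K$, we have $d(x,x_0)\asymp d(x,o)$ within factor $2$. By doubling \eqref{rmk:doubling}, $\mathrm{Vol}(B_t(x))\asymp\mathrm{Vol}(B_t(x))$ trivially, and the lower limits of the integrals differ by at most a factor $2$. The piece $\int_r^{2r}$ is controlled by $\int_{2r}^{4r}$ via doubling, since on $[r,2r]$ and $[2r,4r]$ the integrands agree up to $C_d^2\cdot 4$. Hence $G(x,x_0)\asymp G(x,o)$ there, with constants depending only on $C_d,s$.

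\textbf{Step 3: intermediate region.} On the compact region $\{d(x,o)\le 2D\}$ minus the unit balls, I need the Green functions bounded above and below. The lower bound follows from positivity and continuity off the diagonal, or more concretely from the integral formula, since $\mathrm{Vol}(B_t(x))\le\omega_n t^n$ on large $t$ gives integrands that are not too small, and finiteness holds. The upper bound comes from the estimate with $d(x,y)\ge$ some small positive distance. The subtle issue is that $B_1(x_0)$ and $B_1(o)$ differ: on $B_1(o)\setminus B_1(x_0)$ the weight $G(x,x_0)$ may blow up near $x_0$. Here I would handle this using $G(\cdot,x_0)\ge c>0$ on $B_{2D}(o)$, which suffices for lower bounds of $\|\cdot\|_{x_0}$. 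For upper bounds, on $B_1(o)\setminus B_1(x_0)$ we have $d(x,x_0)\ge1$, so $G(x,x_0)\le G$ evaluated at distance $1$, which is bounded uniformly in $x_0\in K$ by Step~1 and doubling comparing $\mathrm{Vol}(B_t(x))$ with $\mathrm{Vol}(B_t(o))$. Symmetric reasoning applies with $x_0$ and $o$ swapped.

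\textbf{Main obstacle.} The uniformity of these constants in $x_0\in K$, together with the positive lower bound for $G$ on compacts, is the main obstacle. I would obtain both from the integral formula of Step~1. For the lower bound, $\int_{3D}^\infty t^{2s-1}/\mathrm{Vol}(B_t(x))\,dt$ dominates a fixed multiple of $\int_{3D}^\infty t^{2s-1}/\mathrm{Vol}(B_t(o))\,dt>0$ by doubling, since $B_t(x)\subset B_{2t}(o)$. Combining the three regions yields the claimed constant $C(o,V)$, and the equality of spaces and pole independence follow immediately.
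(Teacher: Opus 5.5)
Your argument is correct, but it obtains the key far-field comparability $G^s_M(x,x_0)\asymp G^s_M(x,o)$ by a different route from the paper.

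\textbf{The paper's route.}
\begin{itemize}
\item It invokes the quasi-metric inequality $G^s_M(x,y)\wedge G^s_M(y,z)\leq\kappa\,G^s_M(x,z)$, cited from the literature. It combines this with the lower bound $G^s_M(o,x_0)\geq m>0$ for $x_0\in V$ and with the decay $G^s_M(x,o)\to0$.
\item It then chooses $R$ so large that $G^s_M(x,o)<m/\kappa$ outside $B_R(o)$. Two applications of the quasi-metric inequality give $\kappa^{-1}G^s_M(x,o)\leq G^s_M(x,x_0)\leq\kappa\,G^s_M(x,o)$ there.
\item Inside $B_R(o)$ it uses off-diagonal continuity of $G^s_M$ for the upper bound and $G^s_M\geq c_0d^{-(n-2s)}$ for the lower bound.
\item The reverse norm inequality comes from applying the one-sided estimate to the compact set $\{o\}$ with a pole $x_1\in V$.
\end{itemize}

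\textbf{Your route.}
\begin{itemize}
\item You apply Theorem~\ref{thm:bounGreen's} with the volume centred at the common point $x$. Both Green functions are then comparable to tails $\int_r^\infty\phi_x$ of the same function $\phi_x(t)=t^{2s-1}/\mathrm{Vol}(B_t(x))$, with lower limits within a factor $2$ of each other.
\item The substitution $t\mapsto 2t$ and one doubling step give $\int_r^{2r}\phi_x\leq C_d\int_{2r}^{4r}\phi_x$, which settles the far region.
\item The intermediate region is handled by the same integral bounds, together with a doubling or Bishop--Gromov comparison of $\mathrm{Vol}(B_t(x))$ with $\mathrm{Vol}(B_t(o))$ for $x$ in a fixed ball. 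This is essentially the tail estimate in the proof of Lemma~\ref{lemlastlem}.
\end{itemize}

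\textbf{What each route buys.} Yours is more elementary and self-contained. It uses only Theorem~\ref{thm:bounGreen's}, doubling, Bishop--Gromov and $s$-nonparabolicity, and it avoids the external quasi-metric result. It also gives an explicit threshold $2D$ and a far-field constant depending only on $n,s,C_d$. By contrast, the paper's threshold $R$ comes from a non-quantified decay of $G^s_M(\cdot,o)$. The paper's route is shorter, and it would work in any setting where the quasi-metric property and decay at infinity are known, even without the two-sided integral formula.

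Two cosmetic points. First, on $B_1(o)\setminus B_1(x_0)$ the weight $G^s_M(\cdot,x_0)$ cannot blow up, since $d(x,x_0)\geq1$ there; you observe this yourself right afterwards. Second, the phrase ``$\mathrm{Vol}(B_t(x))\asymp\mathrm{Vol}(B_t(x))$'' should simply say that both integrals involve the same volume function.
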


\begin{proof}
Set $D:=\max_{x_0\in V}d(o,x_0)$. The conclusion is immediate if $D=0$, so assume that $D>0$.
By the lower Green function estimate recalled in
Theorem~\ref{thm:bounGreen's}, together with
\eqref{bishopgromovD}, there exists $c_0=c_0(n,s)>0$ such that
\[
 G^s_M(x,y)\geq
 \frac{c_0}{d(x,y)^{n-2s}},
 \qquad x\neq y.
\]
Therefore,
\[
 G^s_M(o,x_0)\geq
 m:=c_0D^{-(n-2s)}>0,
 \qquad x_0\in V\setminus\{o\}.
\]

By \cite[Corollary~2.5]{GuHuangSun} if $s\in(0,1)$ or by \cite{GriSunVer} if $s=1$, the Green function satisfies
the quasi-metric property: there exists $\kappa\geq 1$ such that
\[
 G^s_M(x,y)\wedge G^s_M(y,z)
 \leq
 \kappa G^s_M(x,z)
\]
whenever the terms involved are finite. By symmetry and the upper bound in Theorem~\ref{thm:bounGreen's}, together with Theorem~\ref{thm:charnonpar},
\[
 G^s_M(x,o)\longrightarrow 0
 \qquad\text{as }d(x,o)\longrightarrow\infty.
\]
Hence, after choosing $R>D+1$ sufficiently large, one has
\[
 G^s_M(x,o)<\frac{m}{\kappa},
 \qquad x\in M\setminus B_R(o).
\]
Applying the quasi-metric property twice, with intermediate poles
$o$ and $x_0$, respectively, gives
\begin{equation}
 \kappa^{-1}G^s_M(x,o)
 \leq
 G^s_M(x,x_0)
 \leq
 \kappa G^s_M(x,o)
\label{eq:uniform-compact-poles}
\end{equation}
for every $x_0\in V$ and every $x\in M\setminus B_R(o)$, the case
$x_0=o$ being immediate.

The standard off-diagonal continuity of $G^s_M$ yields
\[
 A:=
 \sup_{x_0\in V}\;
 \sup_{x\in \overline{B_R(o)}\setminus B_1(x_0)}
 G^s_M(x,x_0)
 <\infty.
\]
The lower Green function estimate also gives
\[
\begin{aligned}
 \int_{B_R(o)}|v|\,d\mu
 &\leq
 \int_{B_1(o)}|v|\,d\mu
 +
 \frac{R^{n-2s}}{c_0}
 \int_{B_R(o)\setminus B_1(o)}
 |v(x)|G^s_M(x,o)\,d\mu(x) \\
 &\leq
 \left(1+\frac{R^{n-2s}}{c_0}\right)
 \|v\|_{L^1_{G^s_M,o}(M)}.
\end{aligned}
\]
Since $B_1(x_0)\subset B_R(o)$ for every $x_0\in V$, from
\eqref{L1Gnorm} and \eqref{eq:uniform-compact-poles} we obtain
\begin{equation*}
 \|v\|_{L^1_{G^s_M,x_0}(M)}\leq (1+A)\int_{B_R(o)}|v|\,d\mu+
 \kappa\int_{M\setminus B_R(o)}
 |v(x)|G^s_M(x,o)\,d\mu(x)
 \leq  C\|v\|_{L^1_{G^s_M,o}(M)},
\end{equation*}
with $C$ independent of $x_0\in V$. Taking the supremum over
$x_0\in V$ proves
\[
 \|v\|_{L^1_{G^s_M,V}(M)}
 \leq
 C\|v\|_{L^1_{G^s_M,o}(M)}.
\]

Finally, fix any $x_1\in V$. Applying the estimate just proved with
the compact set $\{o\}$ and pole $x_1$ yields
\[
 \|v\|_{L^1_{G^s_M,o}(M)}
 \leq
 C\|v\|_{L^1_{G^s_M,x_1}(M)}
 \leq
 C\|v\|_{L^1_{G^s_M,V}(M)},
\]
which proves the reverse inequality.
\end{proof}

For a measurable function $v$ on $M$, with no a priori integrability
assumption, we use the notation
\[
(-\Delta)^{-s}|v|(x)
:=
\int_M G_M^s(x,y)|v(y)|\,d\mu(y)
\in [0,+\infty].
\]

\begin{prop}\label{prop:properembedding}
Let $M$ be a complete, noncompact, $s$-nonparabolic $n$-dimensional
Riemannian manifold, $n>2s$, with $\operatorname{Ric}\geq 0$, and let $o\in M$.
Then, for every measurable function $v$ on $M$,
\[
v\in L^1_{G_M^s,o}(M)
\quad\Longleftrightarrow\quad
(-\Delta)^{-s}|v|\in L^1_{\mathrm{loc}}(M).
\]
Furthermore, both continuous inclusions
\[
L^1(M)\hookrightarrow L^1_{G_M^s,o}(M),
\qquad
L^1_{G_M^s}(M)\hookrightarrow L^1_{G_M^s,o}(M)
\]
are strict.
\end{prop}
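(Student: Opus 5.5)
The proof has three parts: the characterization of $L^1_{G_M^s,o}(M)$ through $(-\Delta)^{-s}|v|$, the continuity of the two inclusions, and their strictness.

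\textbf{Characterization.} For ``$\Leftarrow$'', assume $(-\Delta)^{-s}|v|\in L^1_{\mathrm{loc}}(M)$. Integrate over $B_1(o)$ and use Tonelli:
\[
\int_{B_1(o)}(-\Delta)^{-s}|v|\,d\mu=\int_M |v(y)|\,\Phi(y)\,d\mu(y),\qquad \Phi(y):=\int_{B_1(o)}G^s_M(x,y)\,d\mu(x).
\]
It then suffices to show $\Phi\ge c>0$ on $B_1(o)$ and $\Phi(y)\ge c\,G^s_M(y,o)$ for $y\notin B_1(o)$. On $B_1(o)$ the lower bound $G^s_M(x,y)\ge c_0 d(x,y)^{-(n-2s)}\ge c_0 2^{-(n-2s)}$ from Theorem~\ref{thm:bounGreen's} gives $\Phi\ge c$. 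Off $B_1(o)$, I would restrict to $x\in B_{1/2}(o)$ and apply the argument of \eqref{eq:uniform-compact-poles} (quasi-metric property plus decay of $G$) to get $G(x,y)\ge \kappa^{-1}G(o,y)$ for $d(y,o)$ large. For $y$ in a bounded annulus, both $G(o,y)$ and $\Phi(y)$ are bounded above and below by positive constants, by off-diagonal continuity and the lower Green bound.

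For ``$\Rightarrow$'', fix a compact $K\subset B_\rho(o)$ and bound $\int_K(-\Delta)^{-s}|v|$ by $\int |v(y)|\Phi_K(y)\,d\mu(y)$, where $\Phi_K(y):=\int_K G(x,y)\,d\mu(x)$. For $d(y,o)$ large, the same uniform comparison gives $G(x,y)\le\kappa G(o,y)$ for all $x\in K$, so $\Phi_K\le C\,G(\cdot,o)$ there. For $y$ in a bounded set, $\Phi_K(y)\le C$: by the upper estimate in Theorem~\ref{thm:bounGreen's}, $G(x,y)\lesssim \int_{d(x,y)}^\infty r^{2s-1}/\mathrm{Vol}(B_r(x))\,dr$, which is locally integrable near the diagonal. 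This is the content of Lemma~\ref{lem_leftinv}, mapping into $L^\infty_{\mathrm{loc}}$. Combining both regions gives $\int_K(-\Delta)^{-s}|v|\le C\|v\|_{L^1_{G_M^s,o}(M)}$.

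\textbf{Continuity of the inclusions.} Since $G(x,o)\to0$ at infinity and $G$ is bounded on bounded sets away from the pole, $G(\cdot,o)$ is bounded on $M\setminus B_1(o)$. This gives $\|v\|_{L^1_{G_M^s,o}(M)}\le C\|v\|_{L^1(M)}$. The second inclusion is trivial, since $\|v\|_{L^1_{G_M^s,o}(M)}\le \|v\|_{L^1_{G_M^s}(M)}$.

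\textbf{Strictness.} This is the main obstacle, since it needs explicit functions.

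For the first inclusion, take $v(x)=\phi(d(x,o))$ radial. Then $\|v\|_{L^1}$ is comparable to $\int \phi(r)\,\mathrm{Meas}(\partial B_r(o))\,dr$, while the weighted norm carries the extra factor $G$. The upper Green bound together with Bishop--Gromov gives $G(x,o)\lesssim \int_{d}^\infty r^{2s-1}/\mathrm{Vol}(B_r(o))\,dr$, and this tends to $0$. So I would choose $v$ to be a sum of bumps $a_k\mathbf 1_{A_k}$ on disjoint far-away annuli $A_k$ with $\sup_{A_k}G(\cdot,o)\le 2^{-k}$, and $a_k\int_{A_k}d\mu=1$. Then $v\notin L^1(M)$ but $\|v\|_{L^1_{G_M^s,o}(M)}\le C+\sum_k 2^{-k}<\infty$.

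For the second inclusion, I would use the failure of uniformity in the pole: take $v=\sum_k b_k\mathbf 1_{B_{r_k}(x_k)}$ with $d(x_k,o)\to\infty$ very fast. Take $b_k\,\mathrm{Vol}(B_{r_k}(x_k))=k$, which keeps $\|v\|_{L^1_{G_M^s,x_k}(M)}\ge\int_{B_1(x_k)}v\ge k$ (with $r_k\le 1$), so $v\notin L^1_{G_M^s}(M)$. Choose the $x_k$ so far away that $k\sup_{B_{r_k}(x_k)}G(\cdot,o)\le 2^{-k}$, which puts $v$ in $L^1_{G_M^s,o}(M)$.
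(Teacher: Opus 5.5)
Your proposal is correct and follows essentially the same route as the paper. Both use a Tonelli identity to turn $\int (-\Delta)^{-s}|v|$ over a ball into $\int |v|$ against the Green potential of an indicator, then compare that potential from above and below with the weight $\mathbf 1_{B_1(o)}+G_M^s(\cdot,o)\mathbf 1_{M\setminus B_1(o)}$, and prove strictness with far-away bumps where $G_M^s(\cdot,o)$ is small. The paper packages the comparison as Propositions~\ref{thm:lem3.6} and~\ref{prop:compact-poles-equivalence} plus a finite covering, whereas you re-derive it inline via the quasi-metric comparison; and the paper uses the single function $\sum_j j\,\mathbf 1_{B_1(o_j)}/\operatorname{Vol}(B_1(o_j))$ for both strict inclusions, which your second construction already achieves since its total mass $\sum_k k$ is infinite.
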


The proof of this result will be given in the Appendix.

\begin{rmk}
If, in addition, Assumptions \ref{A1} and \ref{A2} hold, then
\[
L^1(M)\subsetneq L^1_{G_M^s}(M)
\subsetneq L^1_{G_M^s,o}(M).
\]
Indeed, the first strict inclusion is proved in Remark~\ref{rmk:appendix},
while the second one follows from Proposition~\ref{prop:properembedding}.
\end{rmk}

\subsection{Estimates for the Fractional Green Function} \label{sec:fracgreenest}

The existence of a global Green function is central to our approach. It is well known that a complete, noncompact Riemannian manifold $M$ with $\partial M = \emptyset$ and $\mathrm{Ric}\geq0$ satisfies both the volume doubling condition and the (scale-invariant) Poincaré inequality. The following result is then a consequence of Proposition 2.4
in \cite{GuHuangSun}, when $s\in(0,1)$. For the case $s=1$ we refer to \cite{liyau}.
\begin{thm}\label{thm:bounGreen's}
Let $M$ be a complete, noncompact manifold with $\partial M = \emptyset$ and $\mathrm{Ric}\geq0$. There exist constants $C_1,C_2>0$ depending only on $n=\mathrm{dim}(M)$ and $s$ such that for any distinct points $x,y\in M$:
\begin{equation*}
C_1\int_{d(x,y)}^{\infty} \frac{t^{2s-1}}{\mathrm{Vol}(B_t(x))}\;dt \leq G_M^s(x,y) \leq C_2 \int_{d(x,y)}^{\infty} \frac{t^{2s-1}}{\mathrm{Vol}(B_t(x))}\;dt.
\end{equation*}
\end{thm}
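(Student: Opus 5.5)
The plan is to derive both bounds directly from the subordination formula \eqref{fracGreen'sheat}, $G_M^s(x,y)=\Gamma(s)^{-1}\int_0^\infty p_M(t,x,y)\,t^{s-1}\,dt$, combined with the Li--Yau two-sided Gaussian heat kernel estimates \cite{liyau}. Li--Yau applies since $\mathrm{Ric}\ge0$. In the form we need, there are constants $c_1,c_2,C_3,C_4>0$ depending only on $n$ such that
\[
\frac{C_3}{\mathrm{Vol}(B_{\sqrt t}(x))}\,e^{-d(x,y)^2/(c_1 t)}\;\le\; p_M(t,x,y)\;\le\;\frac{C_4}{\mathrm{Vol}(B_{\sqrt t}(x))}\,e^{-d(x,y)^2/(c_2 t)} .
\]
The volumes centred at $y$ that appear in the symmetric version are replaced by volumes centred at $x$ using doubling \eqref{rmk:doubling}. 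On $\mathrm{Ric}\ge0$, Bishop--Gromov \eqref{bishopgromov} gives $C_d\le 2^n$, so all constants depend only on $n$ and $s$. Throughout, set $d=d(x,y)>0$ and $V(r)=\mathrm{Vol}(B_r(x))$.

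\emph{Lower bound.} Restrict the $t$-integral to $t\ge d^2$, where $e^{-d^2/(c_1t)}\ge e^{-1/c_1}$. This gives
\[
G_M^s(x,y)\ge \frac{C_3e^{-1/c_1}}{\Gamma(s)}\int_{d^2}^\infty\frac{t^{s-1}}{V(\sqrt t)}\,dt .
\]
The substitution $t=r^2$, $dt=2r\,dr$, turns the right-hand side into a constant times $\int_d^\infty r^{2s-1}V(r)^{-1}\,dr$, which is the claimed lower bound.

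\emph{Upper bound.} Split the integral at $t=d^2$. The piece over $t\ge d^2$ is handled exactly as above, using $e^{-d^2/(c_2t)}\le1$, and is bounded by $C\int_d^\infty r^{2s-1}V(r)^{-1}\,dr$. The short-time piece over $t<d^2$ is the only real obstacle, because it involves $V(\sqrt t)$ at radii below $d$, where the target integral has no control. The plan here is to pass to the radius $d$ with Bishop--Gromov, $V(d)/V(\sqrt t)\le (d/\sqrt t)^n$, and then let the Gaussian factor absorb the resulting polynomial singularity at $t=0$:
\[
\int_0^{d^2}\frac{t^{s-1}e^{-d^2/(c_2t)}}{V(\sqrt t)}\,dt\le\frac{1}{V(d)}\int_0^{d^2}t^{s-1}\Big(\frac{d^2}{t}\Big)^{n/2}e^{-d^2/(c_2t)}\,dt=\frac{d^{2s}}{V(d)}\int_0^1\tau^{s-1-n/2}e^{-1/(c_2\tau)}\,d\tau .
\]
Here $t=d^2\tau$, and the last integral is a finite constant depending only on $n,s$, since the exponential kills the power at $\tau=0$.

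It remains to dominate $d^{2s}/V(d)$ by the tail integral. For $r\in[d,2d]$, monotonicity and doubling give $V(r)\le V(2d)\le C_dV(d)$. Also $r^{2s-1}\ge \min(1,2^{2s-1})\,d^{2s-1}$, which covers both signs of $2s-1$. Therefore
\[
\int_d^\infty\frac{r^{2s-1}}{V(r)}\,dr\;\ge\;\int_d^{2d}\frac{r^{2s-1}}{V(r)}\,dr\;\ge\; c(n,s)\,\frac{d^{2s}}{V(d)} .
\]
Adding the two pieces yields the upper bound with $C_2=C_2(n,s)$. The same computation works verbatim for $s=1$. If $M$ is parabolic, both sides are infinite simultaneously, which is consistent with Theorem~\ref{thm:charnonpar}.
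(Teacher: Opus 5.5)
Your argument is correct. The paper does not prove this statement itself. It imports it from \cite[Proposition~2.4]{GuHuangSun} for $s\in(0,1)$, after noting that $\mathrm{Ric}\geq0$ gives volume doubling and a scale-invariant Poincar\'e inequality, and from \cite{liyau} for $s=1$. You instead give a self-contained derivation: you insert the Li--Yau two-sided Gaussian bounds into the subordination formula \eqref{fracGreen'sheat}. The range $t\geq d^2$, after $t=r^2$, yields the lower bound and the long-time half of the upper bound. For $t<d^2$ you use Bishop--Gromov at radius $d$ and let the Gaussian absorb the resulting power of $d^2/t$. You then finish with $d^{2s}/\mathrm{Vol}(B_d(x))\lesssim\int_d^{2d}r^{2s-1}\mathrm{Vol}(B_r(x))^{-1}\,dr$. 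This is the natural time-integration mechanism behind results of this type, specialized to $\mathrm{Ric}\geq0$, where Li--Yau supply the heat kernel bounds directly. Your version treats $s\in(0,1)$ and $s=1$ uniformly. It also checks explicitly that $C_1,C_2$ depend only on $n$ and $s$, through the Li--Yau constants at a fixed $\delta$, $C_d\leq2^n$, $\Gamma(s)$, and $\int_0^1\tau^{s-1-n/2}e^{-1/(c_2\tau)}\,d\tau$; in the paper this dependence is simply inherited from the citations. The cited route, in turn, applies on general doubling--Poincar\'e spaces. One phrasing point: replacing $\mathrm{Vol}(B_{\sqrt t}(y))$ by $\mathrm{Vol}(B_{\sqrt t}(x))$ is not doubling alone. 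Balls of equal radius with centres at distance $d$ can differ by a factor up to $(1+d/\sqrt t)^n$, which must be absorbed by adjusting $c_1,c_2$. In your argument this is harmless, because the factor is at most $2^n$ when $t\geq d^2$ and is killed by the exponential when $t<d^2$. Li--Yau in fact state the bounds with volumes centred at $x$.
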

The following theorem is a direct consequence of Theorem \ref{thm:bounGreen's}. 

\begin{thm}\label{thm:charnonpar}
Let $M$ be a complete Riemannian manifold with $\partial M =\emptyset$ and $\mathrm{Ric}\geq0$. Then $M$ is $s$-nonparabolic if and only if for some, and hence for all, $x\in M$ and $r>0$
\begin{equation*}
    \int_{r}^{\infty} \frac{t^{2s-1}}{\mathrm{Vol}(B_t(x))}\;dt<+\infty.
\end{equation*}
\end{thm}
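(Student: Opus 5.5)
The plan is to read both directions of the equivalence off the two-sided Green function bound of Theorem~\ref{thm:bounGreen's}, combined with volume doubling. Set
\[
g_x(r):=\int_r^\infty \frac{t^{2s-1}}{\mathrm{Vol}(B_t(x))}\,dt ,
\]
so that $C_1 g_x(d(x,y))\le G_M^s(x,y)\le C_2 g_x(d(x,y))$.

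The first step is to show that $s$-nonparabolicity implies $g_x(r)<\infty$ for every $x$ and every $r>0$. Fix $x\neq y$ with $d(x,y)=r$. The lower bound gives $C_1 g_x(r)\le G_M^s(x,y)<\infty$. This holds for every $r>0$, since every distance $r$ is attained on a complete noncompact connected manifold: the function $d(x,\cdot)$ is continuous, unbounded, and vanishes at $x$. In fact finiteness at a single $r$ already suffices, because $g_x(r')-g_x(r)$ is an integral over a compact interval when $r'<r$. So this direction is essentially immediate.

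The second step is the converse: assume $g_{x_0}(r_0)<\infty$ for some $x_0$ and $r_0>0$, and show that $G_M^s(x,y)$ is finite and positive for all $x\neq y$.
\begin{itemize}
\item \emph{Change of radius.} As just noted, $g_{x_0}(r)<\infty$ for every $r>0$.
\item \emph{Change of pole.} Let $D=d(x,x_0)$. For $t\ge D$ we have $B_t(x_0)\subset B_{2t}(x)$. Doubling then gives $\mathrm{Vol}(B_t(x))\ge C_d^{-1}\mathrm{Vol}(B_{2t}(x))\ge C_d^{-1}\mathrm{Vol}(B_t(x_0))$. Hence the tail $\int_{\max(D,r)}^\infty$ of $g_x(r)$ is at most $C_d$ times the corresponding tail of $g_{x_0}$, which is finite. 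The remaining piece $\int_r^{\max(D,r)}$ is over a compact interval where the volume is bounded below, so it is finite.
\item \emph{Conclusion.} The upper bound in Theorem~\ref{thm:bounGreen's} now gives $G_M^s(x,y)\le C_2 g_x(d(x,y))<\infty$.
\end{itemize}

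Positivity does not come from the upper bound but from the lower bound: $G_M^s(x,y)\ge C_1 g_x(d(x,y))>0$, because the integrand is strictly positive. One could also see it from the strict positivity of the heat kernel in \eqref{fracGreen'sheat}.

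The only point I expect to need care is that Theorem~\ref{thm:bounGreen's} must be valid as an inequality in $[0,\infty]$ without assuming nonparabolicity a priori. I will take it in that form, with $G_M^s$ defined by \eqref{fracGreen'sheat} as a possibly infinite integral. If instead it is only available under nonparabolicity, I would argue directly from \eqref{fracGreen'sheat}. Li--Yau type (or, for general $s$, Gaussian) heat kernel upper bounds $p_M(t,x,y)\le C/\mathrm{Vol}(B_{\sqrt t}(x))$ give, after the substitution $t=\tau^2$, finiteness of the large-time part of the integral, namely $\int_1^\infty t^{s-1}/\mathrm{Vol}(B_{\sqrt t}(x))\,dt\approx\int_1^\infty \tau^{2s-1}/\mathrm{Vol}(B_\tau(x))\,d\tau$. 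The small-time part is finite off the diagonal by Gaussian decay.
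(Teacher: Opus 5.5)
Your proposal is correct and follows the paper's route. The paper simply records the equivalence as a direct consequence of the two-sided bound in Theorem~\ref{thm:bounGreen's}; your change-of-radius and change-of-pole steps (the latter via $B_t(x_0)\subset B_{2t}(x)$ and volume doubling, as in the proof of Lemma~\ref{lemlastlem}) make the ``for some, hence for all'' part explicit, and your caveat about needing the bounds in $[0,\infty]$ without assuming nonparabolicity is correctly handled by the Li--Yau heat kernel fallback.
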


An immediate consequence of Theorem \ref{thm:charnonpar} is the following.

\begin{cor}
Let $M$ be a complete Riemannian manifold with $\partial M =\emptyset$ and $\mathrm{Ric}\geq0$. If there exist a point $x \in M$, a constant $C > 0$, and a radius $R>0$ such that
\begin{equation*}
\mathrm{Vol}(B_r(x)) \leq C r^{2s} \quad \text{for all } r > R,
\end{equation*}
then $M$ is $s$-parabolic.
\end{cor}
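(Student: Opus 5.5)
By Theorem~\ref{thm:charnonpar}, $M$ is $s$-parabolic as soon as
\[
\int_{r}^{\infty} \frac{t^{2s-1}}{\mathrm{Vol}(B_t(x))}\,dt=+\infty
\]
for some $r>0$. The plan is therefore to show that this integral diverges at the given point $x$, taking $r=R$ (or any $r\geq R$). Since Theorem~\ref{thm:charnonpar} states that finiteness is independent of the choice of $x$ and $r$, divergence at one point and one radius is enough.

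The estimate is direct. For $t>R$ the hypothesis gives $\mathrm{Vol}(B_t(x))\leq C t^{2s}$, so
\[
\frac{t^{2s-1}}{\mathrm{Vol}(B_t(x))}\geq \frac{t^{2s-1}}{C t^{2s}}=\frac{1}{C t}.
\]
Integrating over $(R,\infty)$ bounds the integral below by $C^{-1}\int_R^\infty t^{-1}\,dt=+\infty$. Hence the integral diverges, and by the ``only if'' direction of Theorem~\ref{thm:charnonpar} the manifold is not $s$-nonparabolic.

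It remains to check that ``not $s$-nonparabolic'' is what the paper means by ``$s$-parabolic''. The characterization also follows from the lower bound in Theorem~\ref{thm:bounGreen's}: for $d(x,y)>R$ it gives $G^s_M(x,y)\geq C_1\int_{d(x,y)}^\infty t^{2s-1}/\mathrm{Vol}(B_t(x))\,dt=+\infty$, so $G^s_M$ fails to be finite. This directly violates the definition of $s$-nonparabolicity.

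I expect no real obstacle here. The only point needing care is that the volume is positive and finite for $t>R$, which holds trivially, so the pointwise lower bound on the integrand is valid throughout $(R,\infty)$.
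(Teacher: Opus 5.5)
Your proposal is correct and is exactly the argument the paper has in mind: the paper states this corollary as an immediate consequence of Theorem~\ref{thm:charnonpar}, and the lower bound $t^{2s-1}/\mathrm{Vol}(B_t(x))\geq 1/(Ct)$ on $(R,\infty)$ makes the integral diverge, so $M$ is not $s$-nonparabolic. Your additional check through the lower Green-function bound is a harmless redundancy.
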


\section{Statements of the Main Results}
\label{S3}

\begin{thm}[Existence of Weak Dual Solutions]\label{thm:existence}
Let $M$ be a complete, noncompact, s-nonparabolic $n$-dimensional Riemannian manifold, $n>2s$, $s\in(0,1]$, with $\mathrm{Ric} \geq 0$ and let $o\in M$ be fixed. For any nonnegative initial datum $u_0 \in L^1_{G_M^s,o}(M)$ there exists a weak dual solution $u$ to problem \eqref{maineq} in the sense of Definition \ref{defn:wds}.
\end{thm}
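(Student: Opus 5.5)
The plan is to construct the solution as a monotone limit of solutions with nice data, and to pass to the limit in the weak dual formulation using Green function estimates. The introduction announces that mild solutions are built by time discretization and then identified as weak dual solutions.

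\textbf{Step 1: good data.} For nonnegative $u_0\in L^1(M)\cap L^\infty(M)$, the operator $A(u)=(-\Delta)^s(u^m)$ is m-accretive in $L^1(M)$. This holds because $(-\Delta)^s$ generates a sub-Markovian symmetric semigroup (subordinated to the heat semigroup) and $u\mapsto u^m$ is monotone. The Crandall--Liggett theorem then gives a mild solution as the limit of the implicit Euler scheme
\[
u_k+h(-\Delta)^s(u_k^m)=u_{k-1}.
\]
From the scheme we get the following properties:
\begin{itemize}
\item $L^1$-contraction, comparison, conservation of nonnegativity, and $L^\infty$ bounds;
\item applying $(-\Delta)^{-s}$ (the left inverse of Lemma~\ref{lem_leftinv}) to each step gives the discrete identity $(-\Delta)^{-s}u_k-(-\Delta)^{-s}u_{k-1}=-h\,u_k^m$.
\end{itemize}
Summing against a test function $\psi$ and letting $h\to0$ yields \eqref{wds-identity}. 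Continuity in $L^1_{G_M^s,x_0}$ follows from $L^1$-continuity and the embedding $L^1\hookrightarrow L^1_{G_M^s,x_0}$ of Proposition~\ref{prop:properembedding}.

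\textbf{Step 2: a priori estimates in weighted norms.} The key estimate is an $L^1_{G_M^s,x_0}$-type stability bound. Formally, testing with $\psi=G^s_M(\cdot,x_0)$ gives
\[
\partial_t\int u\,G^s_M(\cdot,x_0)=-u^m(x_0)\le 0,
\]
so the potential $(-\Delta)^{-s}u(t)$ is pointwise nonincreasing in $t$. More usefully, for ordered data $u_0\le v_0$ one gets $(-\Delta)^{-s}(v(t)-u(t))\le(-\Delta)^{-s}(v_0-u_0)$, by the $T$-contraction in the dual/weighted norm. Since this weight is singular at $x_0$, I would test with $\min\{G^s_M(\cdot,x_0),k\}$-type truncations, or with $(-\Delta)^{-s}\chi_{B_1(x_0)}$. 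By Theorem~\ref{thm:bounGreen's} and Condition~\ref{B} (Proposition~\ref{general}), the latter is comparable to $1$ on $B_1(x_0)$ and to $G^s_M(\cdot,x_0)$ outside. The resulting weighted norm is equivalent to $\|\cdot\|_{L^1_{G^s_M,x_0}}$, and Proposition~\ref{prop:compact-poles-equivalence} makes the bounds uniform for poles in compact sets. We also integrate the identity to control $\int_0^T\int_{B} u^m$ on compacts. Testing with $\psi=\chi_B$, we have $(-\Delta)^{-s}\chi_B\gtrsim c_B>0$ on $B$, so
\[
\int_0^T\!\!\int_B u^m\le C\int u_0(-\Delta)^{-s}\chi_B\le C\|u_0\|_{L^1_{G^s_M,o}},
\]
which is needed for (ii).

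\textbf{Step 3: approximation.} For general $u_0\ge0$ in $L^1_{G_M^s,o}$, take $u_{0,k}=\min\{u_0,k\}\chi_{B_k(o)}\uparrow u_0$. By comparison the solutions $u_k$ increase, and $u=\lim u_k$. Then:
\begin{itemize}
\item Monotone convergence plus the uniform bounds of Step 2 pass (ii) and \eqref{wds-identity} to the limit. Here $(-\Delta)^{-s}u_k\to(-\Delta)^{-s}u$ in $L^1_{\rm loc}$ by monotone convergence and Proposition~\ref{prop:properembedding}.
\item The weighted contraction estimate, applied to differences $u_k-u_j$ (ordered), gives $\sup_t\|u_k(t)-u_j(t)\|_{L^1_{G^s_M,x_0}}\le C\|u_{0,k}-u_{0,j}\|_{L^1_{G^s_M,x_0}}\to 0$. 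This uniform Cauchy property yields $u\in C([0,T];L^1_{G^s_M,x_0})$ and $u(0)=u_0$.
\end{itemize}

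\textbf{Main obstacle.} The hard step is the rigorous weighted contraction/continuity estimate of Step 2. The weight $G^s_M(\cdot,x_0)$ is neither bounded nor in the domain of $(-\Delta)^s$, and $(-\Delta)^s$ of a truncated Green function has a sign only in one direction. I would first try the test function $\phi=(-\Delta)^{-s}\chi_{B_1(x_0)}$, for which $(-\Delta)^s\phi=\chi_{B_1(x_0)}\ge0$. This gives $\frac{d}{dt}\int (v-u)\phi=-\int_{B_1(x_0)}(v^m-u^m)\le0$, which is exactly the contraction. The equivalence $\phi\asymp 1\wedge G^s_M(\cdot,x_0)$ must then be proved from Theorem~\ref{thm:bounGreen's}, doubling, and the volume-ratio bound \eqref{fcondition1}. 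Justifying this identity for $L^1\cap L^\infty$ solutions requires a cutoff argument with tail control from \eqref{betacondition1}.
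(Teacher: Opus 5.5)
Your proposal is correct and follows essentially the paper's route: mild solutions for $L^1\cap L^\infty$ data are identified as weak dual solutions through the implicit Euler scheme and the left inverse of Lemma~\ref{lem_leftinv}, the weighted stability comes from testing with the potential of a normalized indicator of a small ball around $x_0$ (the paper's $\psi_{x_0}$ in Propositions~\ref{thm:lem3.6} and~\ref{lastpropwdsprops}), and the general case follows from the monotone approximation $\min\{u_0,k\}\mathbf 1_{B_k(o)}$ together with a uniform Cauchy argument in $C([0,T];L^1_{G_M^s,x_0}(M))$. The cutoff argument you anticipate is unnecessary, since $\chi_{B_1(x_0)}$ is already an admissible bounded, compactly supported spatial test function and the duality $\int_M \chi_B\,(-\Delta)^{-s}u\,d\mu=\int_M u\,(-\Delta)^{-s}\chi_B\,d\mu$ is just Tonelli's theorem; also, your local bound on $\int_0^T\int_B u^m$ uses the upper bound of $(-\Delta)^{-s}\chi_B$ by the Green weight, not the lower bound you cite.
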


\begin{rmk}
The WDS in Theorem \ref{thm:existence} is constructed as the monotone limit of WDS's associated with an increasing sequence of initial data in $L^1(M)\cap L^{\infty}(M)$, converging to $u_0$. Following the argument in Step 4 of \cite[Proof of Theorem 3.6]{ BonforteVazquez2015}, one can verify that this limit is independent of the choice of monotone approximating sequence, and hence uniquely determined within the class of weak dual solutions obtained by monotone approximation. In addition, the solution depends continuously on the initial datum, see Remark \ref{rem2222}. The key ingredient in this proof is the order preservation property \eqref{orderingpos} for mild solutions, established in Proposition \ref{prop:4.1} below.
\end{rmk}

In what follows, unless otherwise specified, weak dual solutions are always understood to be those obtained by monotone approximation as in Theorem \ref{thm:existence}.
We first establish local smoothing estimates under the basic
geometric assumptions, which hold for every weak dual solution with $u_0\in L^1(M)$ or with $u_0\in L^1_{G^s_M,o}(M)$. If the additional Assumptions \ref{A1} and \ref{A2}
hold, the relevant estimates can be made uniform with respect to
the pole, yielding global smoothing estimates on $M$ for solutions with data in $L^1(M)$ or in $L^1_{G^s_M}(M)$.

\begin{thm}[Local Smoothing Estimates for $L^1$-data]
\label{thm:smoothing}
Let $M$ be a complete, noncompact, $s$-nonparabolic
$n$-dimensional Riemannian manifold, $n>2s$, $s\in(0,1]$, with $\operatorname{Ric}\geq 0$.
Let $u$ be the weak dual solution to \eqref{maineq} (in the sense of Definition \ref{defn:wds}) with nonnegative initial datum $u_0 \in L^1(M)$ . If $u_0\equiv 0$, then $u\equiv 0$.
If $u_0\not\equiv 0$, then, for every nonempty compact set
$K\Subset M$, there exists a constant $C_K>0$ such that the
following estimates hold.

\textbf{Short-time behavior.} One has
\[
\|u(t)\|_{L^\infty(K)}
\leq
C_K
t^{-\frac{n}{n(m-1)+2s}}
\|u_0\|_{L^1(M)}^{\frac{2s}{n(m-1)+2s}}\qquad\textrm{ for every }0<t\leq \|u_0\|_{L^1(M)}^{-(m-1)},
\]

\textbf{Long-time behavior.} One has
\[
\|u(t)\|_{L^\infty(K)}
\leq
C_K t^{-1/m}\|u_0\|_{L^1(M)}^{1/m}\qquad\textrm{ for every }t\geq \|u_0\|_{L^1(M)}^{-(m-1)}.
\]

\textbf{Refined long-time behavior.} Fix $o\in M$ and, for $R\geq 1$, define
\[
h_o(R):=
\operatorname{Vol}(B_R(o))
\int_R^{+\infty}
\frac{r^{2s-1}}{\operatorname{Vol}(B_r(o))}\,dr
+\frac{R^{2s}}{2s},
\]
and
\[
\theta_o(R):=
\operatorname{Vol}(B_R(o))
h_o(R)^{\frac{1}{m-1}}.
\]
There exists a constant $C=C(s,m,n)>0$ such that for every nonempty compact set $K\Subset M$ there exists
$R_{K,o}\geq 1$ such that
one has the refined local large-time estimate
\[
\|u(t)\|_{L^\infty(K)}
\leq
Ct^{-\frac{1}{m-1}}
\left[
h_o\!\left(
\theta_o^{-1}\!\left(
t^{\frac{1}{m-1}}\|u_0\|_{L^1(M)}
\right)
\right)
\right]^{\frac{1}{m-1}}
\]
for every $t\geq
\theta_o(R_{K,o})^{m-1}
\|u_0\|_{L^1(M)}^{-(m-1)}$.
\end{thm}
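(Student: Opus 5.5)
The argument will follow the Green-function approach of Bonforte--Vázquez for weak dual solutions, adapted to the manifold setting. By the monotone construction in Theorem~\ref{thm:existence} and order preservation, I will first work with data in $L^1(M)\cap L^\infty(M)$ and pass to the limit at the end; the estimates depend only on $\|u_0\|_{L^1(M)}$, so they survive the limit. If $u_0\equiv0$, the approximating data may be taken identically zero, the mild solutions vanish, and hence $u\equiv0$. For $u_0\not\equiv 0$ I will use three ingredients proved in Section~\ref{sec: propswds}:
\begin{enumerate}
\item[(a)] the time monotonicity $t\mapsto t^{\frac{1}{m-1}}u(t,x)$ is nondecreasing (Bénilan--Crandall);
\item[(b)] the $L^1$-contraction/mass bound $\|u(t)\|_{L^1}\le\|u_0\|_{L^1}$;
\item[(c)] the key pointwise inequality obtained from \eqref{wds-identity} by integrating in time and using (a): for a.e.\ $x_0$ and $0<t_0\le t$,
\[
u^m(t_0,x_0)\le \frac{C(m)}{t_0}\int_M u(t_0,x)\,G^s_M(x,x_0)\,d\mu(x).
\]
This follows from $(-\Delta)^{-s}u(t_0)-(-\Delta)^{-s}u(t)=\int_{t_0}^t u^m$ together with (a), arguing via Lebesgue points and Lemma~\ref{rmk:contloc}.
\end{enumerate}

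Next I will bound the Green potential by splitting $M$ into $B_R(x_0)$ and its complement. For the outer part, Theorem~\ref{thm:bounGreen's} gives
\[
G^s_M(x,x_0)\le C\int_R^\infty \frac{r^{2s-1}}{\mathrm{Vol}(B_r(x_0))}\,dr \quad\text{on } M\setminus B_R(x_0),
\]
which yields a contribution at most $\|u_0\|_{L^1}$ times this tail integral. For the inner part, I will use $\|u(t_0)\|_{L^\infty(B_R)}$ together with
\[
\int_{B_R(x_0)}G^s_M\,d\mu\le C\,h_{x_0}(R),
\]
obtained by writing the Green bound in layer-cake form via the coarea formula and exchanging the order of integration. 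This gives the $\mathrm{Vol}(B_R)\int_R^\infty$ term plus the $\int_0^R r^{2s-1}\,dr=R^{2s}/2s$ term. The result is the nonlinear inequality
\[
u^m(t,x_0)\le \frac{C}{t}\Bigl(\|u(t)\|_{L^\infty(B_R(x_0))}\,h_{x_0}(R)+\|u_0\|_{L^1}\,\frac{h_{x_0}(R)}{\mathrm{Vol}(B_R(x_0))}\Bigr).
\]
Since $u$ is not bounded globally a priori, I will take the supremum over $x_0$ in a ball and absorb the first term through a De Giorgi / iteration-on-shrinking-balls lemma, as in \cite{BonforteVazquez2015}, using doubling to compare $h_{x_0}$ across nearby poles. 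Young's inequality $ab\le \varepsilon a^m+C_\varepsilon b^{m/(m-1)}$ then yields
\[
\|u(t)\|_{L^\infty(K)}\lesssim \Bigl(\frac{h(R)}{t}\Bigr)^{\frac{1}{m-1}}+\Bigl(\frac{\|u_0\|_{L^1}}{t}\,\frac{h(R)}{\mathrm{Vol}(B_R)}\Bigr)^{\frac1m}.
\]

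The final step is to optimize in $R$. Balancing the two terms gives the condition $\theta_o(R)=t^{\frac{1}{m-1}}\|u_0\|_{L^1}$, which is exactly the refined estimate. Poles in $K$ are traded for $o$ via Proposition~\ref{general}, Condition~\ref{B}, and doubling, and the requirement $R\ge R_{K,o}$ produces the time threshold. The short- and long-time power laws come from cruder choices:
\begin{enumerate}
\item[(i)] for small $R\le1$, where Bishop--Gromov gives $\mathrm{Vol}(B_R)\sim c_K R^n$ on $K$ and $h(R)\sim R^{2s}$ plus a bounded tail, balancing yields $t^{-n/(n(m-1)+2s)}\|u_0\|_{L^1}^{2s/(n(m-1)+2s)}$ when $t\le\|u_0\|^{-(m-1)}$;
\item[(ii)] for large times, fixing $R=1$ makes the first term $O(t^{-1/(m-1)})$, which is dominated by $t^{-1/m}\|u_0\|_{L^1}^{1/m}$ in that regime.
\end{enumerate}
The constants depend on $K$ through $\inf_K\mathrm{Vol}(B_1)$ and the tail integral.

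I expect the main obstacle to be the absorption step, namely passing from the pointwise inequality to an $L^\infty$ bound without knowing a priori that $u(t)$ is bounded on a slightly larger ball. I will first try working with bounded approximating data, where the absorption is legitimate with constants independent of the approximation, and then use a covering argument with balls of radius $R/2$.
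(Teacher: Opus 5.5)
Your proposal is correct and follows the paper's proof in all essentials. The shared steps are:
\begin{enumerate}
\item the pointwise bound $u^m(t,x_0)\le \frac{C}{t}\int_M u(t,x)G_M^s(x,x_0)\,d\mu(x)$, obtained from the B\'enilan--Crandall monotonicity;
\item the splitting of the potential at radius $R$, with $\int_{B_R(x_0)}G_M^s\,d\mu\lesssim h_{x_0}(R)$ and tail bound $\lesssim h_{x_0}(R)/\operatorname{Vol}(B_R(x_0))$;
\item Young's inequality combined with an iteration over nested neighbourhoods of $K$, closed off by the $L^\infty$ bound of the bounded approximations;
\item doubling to trade poles near $K$ for $o$, followed by optimization in $R$.
\end{enumerate}
The only difference is cosmetic: for the refined estimate the paper iterates over expanding neighbourhoods $K_j$ split at radii $3^jR$, controlled by $h_o(\lambda R)\le\lambda^n h_o(R)$. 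Your shrinking-radius iteration needs instead the Bishop--Gromov bound $h_{x_0}(r)/\operatorname{Vol}(B_r(x_0))\lesssim (R/r)^n\, h_{x_0}(R)/\operatorname{Vol}(B_R(x_0))$ for $r\le R$, which also holds, so both versions work.
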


\begin{rmk}\label{rem111}
It is easy to see that $h_o$ and $\theta_o$ are increasing and
$\theta_o(R)\to+\infty$ as $R\to+\infty$.

We actually prove the following local master estimate.
With the notation of Theorem~\ref{thm:smoothing}, for every
nonempty compact set $K\Subset M$ there exist $R_{K,o}\geq 1$ such that, for every $t>0$,
\[
\|u(t)\|_{L^\infty(K)}
\leq
Ct^{-\frac{1}{m-1}}
\inf_{R\geq R_{K,o}}
\left\{
h_o(R)^{\frac{1}{m-1}}
\left(
1+
\frac{
t^{\frac{1}{m-1}}\|u_0\|_{L^1(M)}
}{
\theta_o(R)
}
\right)^{\frac{1}{m}}
\right\}.
\]
In particular, whenever $t^{\frac{1}{m-1}}\|u_0\|_{L^1(M)}\geq\theta_o(R_{K,o})$, the refined local large-time estimate in
Theorem~\ref{thm:smoothing} follows by taking $R=\theta_o^{-1}\!\left(t^{\frac{1}{m-1}}\|u_0\|_{L^1(M)}\right)$.
\end{rmk}

\begin{cor}[Global Smoothing Estimates for $L^1$-data]
\label{cor:global-smoothing-L1}
Under the assumptions of Theorem~\ref{thm:smoothing}, assume in
addition that Assumptions \ref{A1} and \ref{A2} hold. For $R\geq 1$,
define
\begin{equation}
\label{defnfnch}
h(R):=
R^{2s-1}f(R)
\int_R^{+\infty}\frac{dr}{f(r)}
+\frac{R^{2s}}{2s},
\end{equation}
\begin{equation*}
F(R):=
\inf_{x\in M}\operatorname{Vol}(B_R(x)),
\end{equation*}
and
\begin{equation}
\label{thetadefn}
\theta(R):=
F(R)h(R)^{\frac{1}{m-1}},
\end{equation}
where $f$ and the constant $R_0\geq 1$ are those in
Assumption \ref{A2}. Then there exists a constant $C>0$ such that
the following estimates hold.

\textbf{Short-time behavior.} One has
\begin{equation}
\label{eq:shorttime}
\|u(t)\|_{L^\infty(M)}
\leq
Ct^{-\frac{n}{n(m-1)+2s}}
\|u_0\|_{L^1(M)}^{\frac{2s}{n(m-1)+2s}}\qquad\textrm{ for every }0<t\leq \|u_0\|_{L^1(M)}^{-(m-1)}.
\end{equation}

\textbf{Long-time behavior.} One has
\begin{equation}
\label{eq:longtime}
\|u(t)\|_{L^\infty(M)}
\leq
Ct^{-1/m}\|u_0\|_{L^1(M)}^{1/m}\qquad\textrm{ for every }t\geq \|u_0\|_{L^1(M)}^{-(m-1)}.
\end{equation}
In addition, one has
\begin{equation}
\label{eq:improved}
\|u(t)\|_{L^\infty(M)}
\leq
Ct^{-\frac{1}{m-1}}
\left[
h\!\left(
\theta^{-1}\!\left(
t^{\frac{1}{m-1}}\|u_0\|_{L^1(M)}
\right)
\right)
\right]^{\frac{1}{m-1}}.
\end{equation}
for every $t\geq \theta(R_0)^{m-1} \|u_0\|_{L^1(M)}^{-(m-1)}$.
\end{cor}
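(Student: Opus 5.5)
The plan is to derive Corollary~\ref{cor:global-smoothing-L1} from the pointwise master estimate behind Theorem~\ref{thm:smoothing} (Remark~\ref{rem111}), by showing that every constant there can be chosen independently of the pole under Assumptions~\ref{A1} and~\ref{A2}. We would work pointwise: fix $x_0\in M$ and a Lebesgue point of $u(t)$ at $x_0$. We would rerun the potential argument with pole $x_0$. The two basic inputs are the following.

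First, since $u^m$ is nonincreasing in time along weak dual solutions (the B\'enilan--Crandall type estimate $t\,\partial_t u\ge -u/(m-1)$ should be among the properties in Section~\ref{sec: propswds}), we get
\[
u(t,x_0)^m \lesssim t^{-1}\,(-\Delta)^{-s}u(t)(x_0)\cdot\frac{1}{\int_{B_R(x_0)}G_M^s(x_0,y)\,d\mu(y)}\;(\text{up to a localized version}).
\]
Concretely, one compares $(-\Delta)^{-s}u(t)(x_0)\ge \int_{B_R(x_0)}G^s_M(x_0,y)u(t,y)\,d\mu$ with the bound $(-\Delta)^{-s}u(t)(x_0)\le \int_M G^s_M(x_0,y)u_0\,d\mu$, and uses the time monotonicity of $t^{1/(m-1)}u$.

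Second, the upper bound $\int_M G_M^s(x_0,y)u_0(y)\,d\mu(y)$ is split as $\int_{B_R(x_0)}+\int_{M\setminus B_R(x_0)}$. On the complement we use $G^s_M(x_0,y)\le G$-bound at distance $R$. Then we bound the two Green integrals through Theorem~\ref{thm:bounGreen's}:
\[
\int_R^\infty \frac{r^{2s-1}}{\mathrm{Vol}(B_r(x_0))}dr \le \frac{\gamma R^{2s-1}f(R)}{\mathrm{Vol}(B_R(x_0))}\int_R^\infty\frac{dr}{f(r)}
\]
by \eqref{fcondition} with $R_1=R\ge R_0$, and $\int_{B_R(x_0)}G^s_M(x_0,y)\,d\mu\lesssim \int_0^R \frac{r^{2s-1}}{\mathrm{Vol}(B_r)}\mathrm{Vol}(B_r)dr+\mathrm{Vol}(B_R)\int_R^\infty(\cdots)$. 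The last expression equals $h(R)$ up to constants, with $h$ as in \eqref{defnfnch}. The key point is that the constants are uniform in $x_0$, because $R_0$ and $\gamma$ in Assumption~\ref{A2} are pole-independent. Combining the two inputs yields the global master inequality
\[
u(t,x_0)\le C t^{-\frac1{m-1}}\inf_{R\ge R_0} h(R)^{\frac1{m-1}}\Bigl(1+\tfrac{t^{1/(m-1)}\|u_0\|_{L^1}}{F(R)h(R)^{1/(m-1)}}\Bigr)^{\frac1m},
\]
where $\mathrm{Vol}(B_R(x_0))$ in the denominator is replaced by its infimum $F(R)$. Here Assumption~\ref{A1} guarantees $F(R)\ge\alpha\,(R/R')^n$-type positivity, so that $\theta$ is well-defined, increasing, and unbounded. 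Taking $R=\theta^{-1}(t^{1/(m-1)}\|u_0\|_1)$ gives \eqref{eq:improved} for $t\ge\theta(R_0)^{m-1}\|u_0\|_1^{-(m-1)}$.

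For \eqref{eq:shorttime} and \eqref{eq:longtime} we would instead use small radii $R\le 1$ and $R\sim 1$ together with local bounds. For $r\le1$, Assumption~\ref{A1} and Bishop--Gromov give $\alpha r^n\le\mathrm{Vol}(B_r(x_0))\le\omega_n r^n$ uniformly. Hence $\int_{B_R}G\,d\mu\gtrsim R^{2s}$ and $\sup_{M\setminus B_R}G(x_0,\cdot)\lesssim R^{2s-n}$ plus a uniform tail constant. Optimizing in $R$ gives the Euclidean rate $t^{-n/(n(m-1)+2s)}\|u_0\|_1^{2s/(n(m-1)+2s)}$, provided the optimal $R=(t\|u_0\|_1^{m-1})^{1/(n(m-1)+2s)}\le1$, which is precisely the short-time range. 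For larger $t$ we freeze $R=1$, where $G^s_M(x_0,\cdot)\le C$ off $B_1$ uniformly. This gives $u^m\lesssim t^{-1}\|u_0\|_1$, that is \eqref{eq:longtime}.

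The main obstacle is the first input, namely passing from the potential inequality to a pointwise bound on $u(t,x_0)$ with constants independent of $x_0$. This requires the pole-uniform lower bound $\int_{B_R(x_0)}G_M^s(x_0,y)\,d\mu\gtrsim h(R)$ and the control of $\int_{B_R}G u$ from below via the time monotonicity. I expect to adapt verbatim the local argument of Theorem~\ref{thm:smoothing}, checking each constant against Assumption~\ref{A1}/\ref{A2}.
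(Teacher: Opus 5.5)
Your overall plan matches the paper's: rerun the argument of Theorem~\ref{thm:smoothing} with pole-uniform constants from Assumptions~\ref{A1}--\ref{A2}, derive the master inequality, then choose $R$. The master inequality you write down is also the right one. However, the mechanism you describe does not produce it, and the ingredient you single out as the ``main obstacle'' is not available.

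The paper's pointwise input is
\[
u^m(t,x_0)\le \frac{C}{t}\int_M u(t,x)\,G_M^s(x,x_0)\,d\mu(x)\qquad\text{for a.e. }x_0,
\]
which is \eqref{wdsmonf} with $t_1=2t_0$ (Proposition~\ref{thm:estonwds}). It comes from testing the dual formulation against $\eta_k\to\delta_{x_0}$, which gives $\int_t^{2t}u^m(\tau,x_0)\,d\tau\le\int_M u(t)G_M^s(\cdot,x_0)\,d\mu$. The B\'enilan--Crandall monotonicity of $\tau\mapsto\tau^{1/(m-1)}u(\tau,x_0)$ at the \emph{same} point then bounds the time integral from below by $c\,t\,u^m(t,x_0)$. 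There is no division by $\int_{B_R(x_0)}G_M^s\,d\mu$.

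Your alternative cannot work. Bounding $\int_{B_R(x_0)}G_M^s u(t)\,d\mu$ from below by $u(t,x_0)\int_{B_R(x_0)}G_M^s\,d\mu$ ``via time monotonicity'' would need a comparison of $u(t,y)$ with $u(t,x_0)$ for $y\neq x_0$. Monotonicity in $t$ gives nothing of the kind; that would be a spatial Harnack inequality, which is not available. Note also that $u^m$ is not nonincreasing in time.

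The lower bound $\int_{B_R(x_0)}G_M^s\,d\mu\gtrsim h(R)$ that you say is required also does not follow from the hypotheses, because Assumption~\ref{A2} is one-sided. On $\mathbb{R}^n$, the choice $f(R)=R(1+\log R)^2$ is admissible and gives $h(R)\asymp R^{2s}\log R$, whereas $\int_{B_R}G^s_{\mathbb{R}^n}\,dx\asymp R^{2s}$.

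The missing idea is absorption. Split $\int_M u(t)G_M^s(\cdot,x_0)\,d\mu$ using $u(t)$, not $u_0$:
\begin{itemize}
\item Bound the near part by $\|u(t)\|_{L^\infty(M)}\int_{B_R(x_0)}G_M^s\,d\mu\le C h(R)\|u(t)\|_{L^\infty(M)}$. This is an \emph{upper} bound, from Lemma~\ref{thm:lem1}(vi), or $CR^{2s}$ for $R\le1$ by Remark~\ref{rmk:gamcont}.
\item Bound the far part by $C\frac{h(R)}{F(R)}\|u_0\|_{L^1(M)}$, using Lemma~\ref{thm:lem1}(ii), Assumption~\ref{A2} and $L^1$-nonexpansivity.
\end{itemize}
Taking the essential supremum over $x_0$ gives $\|u(t)\|_{L^\infty(M)}^m\le\frac{C}{t}\bigl[h(R)\|u(t)\|_{L^\infty(M)}+\frac{h(R)}{F(R)}\|u_0\|_{L^1(M)}\bigr]$. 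Young's inequality then absorbs the first term.

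This absorption is legitimate only because one first reduces to $u_0\in L^1(M)\cap L^\infty(M)$, so that $\|u(t)\|_{L^\infty(M)}\le\|u_0\|_{L^\infty(M)}<\infty$. The resulting bound does not depend on $\|u_0\|_{L^\infty(M)}$, so it passes to the monotone limit.

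If instead you split $\int_M G_M^s(x_0,\cdot)u_0\,d\mu$ as you propose, the near part is controlled only by $\|u_0\|_{L^\infty(M)}$, which cannot appear in the estimate. The same gap affects your claim at $R=1$ that $u^m\lesssim t^{-1}\|u_0\|_{L^1(M)}$: there the near-pole contribution has simply been dropped rather than absorbed.
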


\begin{rmk}\label{rem3333}
We actually prove the following global master estimate: for
every $t>0$,
\[
\|u(t)\|_{L^\infty(M)}
\leq
Ct^{-\frac{1}{m-1}}
\inf_{R\geq R_0}
\left\{
h(R)^{\frac{1}{m-1}}
\left(
1+
\frac{
t^{\frac{1}{m-1}}\|u_0\|_{L^1(M)}
}{
\theta(R)
}
\right)^{\frac{1}{m}}
\right\}.
\]
In particular, \eqref{eq:improved} follows by taking $R=\theta^{-1}\!\left(t^{\frac{1}{m-1}}\|u_0\|_{L^1(M)}\right)$.
More generally, \eqref{eq:improved} remains valid with $\theta$
replaced by any increasing function $\widehat{\theta}$ such that
\[
\widehat{\theta}(R)\longrightarrow+\infty
\qquad\text{as }R\to+\infty,
\]
and
\[
\widehat{\theta}(R)
\leq
C F(R)h(R)^{\frac{1}{m-1}}
\qquad\text{for every }R\geq R_0,
\]
with the corresponding large-time condition $t\geq \widehat{\theta}(R_0)^{m-1}\|u_0\|_{L^1(M)}^{-(m-1)}$.
\end{rmk}

The following corollary establishes the time decay rate of solutions in the global $L^\infty$-norm based on their initial $L^1$-norm, under different volume growth conditions:

\begin{cor}\label{cor1}
Let $M$ be a complete, noncompact $n$-dimensional Riemannian manifold with $\mathrm{Ric}\geq 0$ satisfying Assumptions \ref{A1}, \ref{A2}.
\begin{itemize}
\item[(i)] Let $f(R) = R^{k-(2s-1)}(\log R)^\delta$ for some $k \in (2s,n]$ and $\delta \in \mathbb{R}$, for $R$ large. Suppose there exists $\lambda \in (2s,n] $ and some constant $C>0$ such that for every $x\in M$,
\begin{equation*}
\mathrm{Vol}(B_R(x)) \geq C R^\lambda \quad \text{for all sufficiently large } R>0.
\end{equation*}
Then there exists a constant $C>0$ such that for any weak dual solution $u$ of \eqref{maineq} (in the sense of Definition~\ref{defn:wds}) with nonnegative initial datum $u_0 \in L^1(M)$
\begin{equation*}
\|u(t)\|_{L^\infty(M)} \leq \frac{C}{t^{\frac{\lambda}{\lambda(m-1) + 2s}}} \|u_0\|_{L^1(M)}^{\frac{2s}{\lambda(m-1) + 2s}}
\end{equation*}
 for every sufficiently large $t>0$.
\item[(ii)] Let $f(R) = R(\log R)^\delta$ for some $\delta > 1$. Suppose there exists $\lambda \in (2s,n]$, $\sigma\in\mathbb{R}$ and $C>0$ such that for every $x\in M$,
\begin{equation*}
\mathrm{Vol}(B_R(x)) \geq C R^\lambda (\log R)^\sigma \quad \text{for all sufficiently large } \, R>0.
\end{equation*}
Then there exists a constant $C>0$ such that for any weak dual solution $u$ of \eqref{maineq} with nonnegative initial datum $u_0 \in L^1(M)$
\begin{equation*}
\|u(t)\|_{L^\infty(M)} \leq \frac{C}{t^{\frac{1}{m-1}}} G\left(t^{\frac{1}{m-1}} \|u_0\|_{L^1(M)}\right)^{\frac{2s}{m-1}} \left(\log G\left(t^{\frac{1}{m-1}} \|u_0\|_{L^1(M)}\right)\right)^{\frac{1}{m-1}}
\end{equation*}
for every sufficiently large $t>0$, where for $r$ sufficiently large
\begin{equation*}
G(r) = \begin{cases}
e^{\frac{b}{a} W_0\left(\frac{a}{b} r^{\frac{1}{b}}\right)} & \text{if } b > 0, \\
r^{\frac{m-1}{(m-1)\lambda + 2s}} & \text{if } b = 0,\\
e^{\frac{b}{a} W_{-1}\left(\frac{a}{b} r^{\frac{1}{b}}\right)} & \text{if } b < 0,
\end{cases}
\end{equation*}
with $a = \lambda + \frac{2s}{m-1}$, $b = \sigma + \frac{1}{m-1}$, $W_0$ is the inverse of $w(r) = re^r$ on $[-1,\infty)$ and $W_{-1}$ is the inverse of $w(r) = re^r$ on $(-\infty,-1]$.
\end{itemize}
\end{cor}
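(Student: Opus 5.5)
The plan is to deduce both items from the global estimate \eqref{eq:improved} of Corollary~\ref{cor:global-smoothing-L1}, using the flexibility in Remark~\ref{rem3333}. That remark lets us replace $\theta$ by any increasing $\widehat\theta\to\infty$ with $\widehat\theta(R)\le C F(R)h(R)^{1/(m-1)}$. Inverting $\widehat\theta$ explicitly then gives the stated rates.

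\textbf{Step 1: asymptotics of $h$.} First compute $h$ from \eqref{defnfnch} for large $R$.
\begin{itemize}
\item[(i)] For $f(r)=r^{k-2s+1}(\log r)^\delta$ with $k>2s$ we have $k-2s+1>1$. Standard asymptotics give $\int_R^\infty dr/f(r)\sim R^{2s-k}(\log R)^{-\delta}/(k-2s)$. Hence $R^{2s-1}f(R)\int_R^\infty dr/f\sim R^{2s}/(k-2s)$, and so $h(R)\asymp R^{2s}$.
\item[(ii)] For $f(r)=r(\log r)^\delta$ with $\delta>1$ we get $\int_R^\infty dr/f=(\log R)^{1-\delta}/(\delta-1)$. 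Hence $h(R)\asymp R^{2s}\log R$.
\end{itemize}
In both cases the upper bounds on $h$ hold for $R\ge R_1$, with $R_1$ large.

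\textbf{Step 2: lower bound on $\theta$ and choice of $\widehat\theta$.} The volume hypothesis gives $F(R)\ge CR^\lambda$, respectively $F(R)\ge CR^\lambda(\log R)^\sigma$. Combining with Step 1 yields
\[
\theta(R)\ge c\,R^{\lambda+\frac{2s}{m-1}},\qquad \text{respectively}\qquad \theta(R)\ge c\,R^{a}(\log R)^{b},
\]
with $a,b$ as in the statement.

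We then set $\widehat\theta(R)$ equal to these lower bounds, times a small constant, for $R$ large. In case (ii) with $b<0$, the function $R^a(\log R)^b$ is increasing only for $\log R>-b/a$, so we take $R$ beyond that point; it still tends to infinity since $a>0$. Remark~\ref{rem3333} then gives
\[
\|u(t)\|_\infty\le Ct^{-1/(m-1)}h(\widehat\theta^{-1}(\rho))^{1/(m-1)},\qquad \rho=t^{1/(m-1)}\|u_0\|_1,
\]
for $t$ large. Here $h$ is evaluated using only its upper asymptotic bound.

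\textbf{Step 3: inversion.}
\begin{itemize}
\item[(i)] We have $\widehat\theta^{-1}(\rho)\asymp\rho^{(m-1)/(\lambda(m-1)+2s)}$. Then $h^{1/(m-1)}\asymp\rho^{2s/(\lambda(m-1)+2s)}$. Multiplying by $t^{-1/(m-1)}$ and simplifying exponents gives
\[
t^{-\frac{1}{m-1}+\frac{2s}{(m-1)(\lambda(m-1)+2s)}}\|u_0\|_1^{\frac{2s}{\lambda(m-1)+2s}}=t^{-\frac{\lambda}{\lambda(m-1)+2s}}\|u_0\|_1^{\frac{2s}{\lambda(m-1)+2s}}.
\]
\item[(ii)] We solve $R^a(\log R)^b=c\rho$. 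Writing $R=e^y$ gives $y^b e^{ay}=c\rho$. For $b\ne0$ this becomes $\tfrac ab y\,e^{\frac ab y}=\tfrac ab (c\rho)^{1/b}$, so $y=\tfrac ba W(\tfrac ab (c\rho)^{1/b})$. The branch is $W_0$ when $b>0$, and $W_{-1}$ when $b<0$, since then the argument is negative and small and $y$ is large. For $b=0$ we get $R\asymp\rho^{1/a}$, which matches $G$.
\end{itemize}
In case (ii), inserting into $h\asymp R^{2s}\log R$ gives $h^{1/(m-1)}\asymp G^{2s/(m-1)}(\log G)^{1/(m-1)}$.

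\textbf{Main obstacle.} The delicate point is absorbing the multiplicative constant $c$ into $G$. This relies on the Lambert asymptotics $W_0(z)\sim\log z$ and $W_{-1}(z)\sim\log(-1/z)$. With these, replacing $\rho$ by $c\rho$ changes $G$ only by bounded factors in the regime where $R$ is large, and it changes $\log G$ by a bounded factor. Doubling properties of these slowly varying functions then let us absorb all constants into $C$, valid for $t$ sufficiently large.
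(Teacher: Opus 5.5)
Your proposal is correct and follows the paper's own argument. You compute $h(R)\asymp R^{2s}$ in case (i) and $h(R)\asymp R^{2s}\log R$ in case (ii), use the volume lower bound on $F$ to get $\widehat\theta(R)=R^{a}(\log R)^{b}\le C F h^{1/(m-1)}$ (with $b=0$ in case (i)), apply Remark~\ref{rem3333}, and invert $\widehat\theta$ through the Lambert branches. The only difference is that your ``main obstacle'' is self-imposed: Remark~\ref{rem3333} already allows a multiplicative constant in $\widehat\theta\le C F h^{1/(m-1)}$, so the paper inserts no small factor into $\widehat\theta$, gets $\widehat\theta^{-1}=G$ exactly, and needs no absorption via Lambert asymptotics (your sketch of that absorption is nonetheless correct).
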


\begin{rmk}
  The estimate in Corollary~\ref{cor1}(ii) can be rewritten in a more explicit asymptotic form by using the classical expansion
$$\begin{aligned}W_0(z)&=\log z-\log|\log z|+o(1)\qquad\text{as }z\to+\infty,\\ W_{-1}(z)&=\log |z|-\log|\log|z||+o(1)\qquad\text{as }z\to0^-\end{aligned}$$
Let $a=\lambda+\frac{2s}{m-1}$, $b=\sigma+\frac{1}{m-1}$. Then, independently of the sign of $b$, one has
$$G(r)\asymp r^{1/a}(\log r)^{-b/a}\qquad\text{as }r\to\infty,$$
and therefore
$$\|u(t)\|_{L^\infty(M)}\leq C t^{-\frac{\lambda}{\lambda(m-1)+2s}}\|u_0\|_{L^1(M)}^{\frac{2s}{\lambda(m-1)+2s}}\left(\log\bigl(e+t^{1/(m-1)}\|u_0\|_{L^1(M)}\bigr)\right)^{\frac{\lambda-2s\sigma}{\lambda(m-1)+2s}}$$
for every sufficiently large $t$.
\end{rmk}

An immediate consequence of Corollary \ref{cor:global-smoothing-L1} and Corollary \ref{cor1} is the following.
\begin{cor}\label{cor:avr-decay}
Let \(M\) be a complete, noncompact \(n\)-dimensional Riemannian
manifold with \(\operatorname{Ric}\geq 0\) and
\[
\operatorname{AVR}(M)
:=
\lim_{R\to\infty}
\frac{\operatorname{Vol}(B_R(x))}{\omega_n R^n}
>0.
\]
Then Assumptions \ref{A1} and \ref{A2} hold, with $f(R)=R^{n-(2s-1)}$. Hence there exists a
constant \(C>0\), depending only on \(n\), \(s\), \(m\), and
\(\operatorname{AVR}(M)\), such that if \(u\) is the weak dual solution with nonnegative initial datum \(u_0\in L^1(M)\) one has
\[
\|u(t)\|_{L^\infty(M)}
\leq
C
t^{-\frac{n}{n(m-1)+2s}}
\|u_0\|_{L^1(M)}^{\frac{2s}{n(m-1)+2s}}\qquad\textrm{ for every }t>0.
\]
\end{cor}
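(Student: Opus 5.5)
We verify the two assumptions directly and then obtain the decay bound from Corollary~\ref{cor:global-smoothing-L1}, checking that its short-time, long-time and refined regimes all reduce to the single power law. The basic tool is the Bishop--Gromov monotonicity \eqref{bishopgromov}. It shows that $R\mapsto \mathrm{Vol}(B_R(x))/(\omega_nR^n)$ is nonincreasing, with limit $\operatorname{AVR}(M)=:\nu>0$, and this limit does not depend on $x$. Hence
\[
\nu\,\omega_n R^n\le \mathrm{Vol}(B_R(x))\le \omega_n R^n\qquad\text{for all }x\in M,\ R>0 .
\]

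Assumption~\ref{A1} follows at once, since $\mathrm{Vol}(B_1(x))\ge \nu\omega_n$. For Assumption~\ref{A2} we choose $f(R)=R^{n-(2s-1)}$. Then $R^{2s-1}f(R)=R^n$ is increasing, and $\beta=\int_1^\infty t^{2s-1-n}\,dt<\infty$ because $n>2s$. Inequality \eqref{fcondition} holds with $R_0=1$ and $\gamma=\nu^{-1}$, because
\[
\frac{R_2^n}{\mathrm{Vol}(B_{R_2}(x))}\le \frac{1}{\nu\omega_n}
\qquad\text{and}\qquad
\frac{R_1^n}{\mathrm{Vol}(B_{R_1}(x))}\ge\frac1{\omega_n}.
\]
By Theorem~\ref{thm:charnonpar}, the same two-sided bound also gives $s$-nonparabolicity, so Corollary~\ref{cor:global-smoothing-L1} applies.

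Next we compute the quantities entering the refined estimate. We get $h(R)=R^n\int_R^\infty r^{2s-1-n}\,dr+R^{2s}/(2s)=c_{n,s}R^{2s}$, and $F(R)\ge \nu\omega_nR^n$. By Remark~\ref{rem3333} we may therefore replace $\theta$ with the increasing, unbounded function $\widehat\theta(R)=c\,R^{\,n+\frac{2s}{m-1}}$. Setting $\widehat\theta(R)=t^{\frac1{m-1}}\|u_0\|_{L^1(M)}$ gives $R\simeq \bigl(t^{\frac1{m-1}}\|u_0\|_{L^1(M)}\bigr)^{\frac{m-1}{n(m-1)+2s}}$. Substituting this into \eqref{eq:improved} yields
\[
\|u(t)\|_{L^\infty(M)}\le Ct^{-\frac1{m-1}}R^{\frac{2s}{m-1}}= C\,t^{-\frac{n}{n(m-1)+2s}}\|u_0\|_{L^1(M)}^{\frac{2s}{n(m-1)+2s}},
\]
valid for $t\ge \widehat\theta(1)^{m-1}\|u_0\|_{L^1(M)}^{-(m-1)}$. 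Equivalently, this is Corollary~\ref{cor1}(i) with $k=\lambda=n$ and $\delta=0$.

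The main point to handle is covering all $t>0$, because the refined bound is only available beyond the threshold $c'\|u_0\|_{L^1(M)}^{-(m-1)}$. For $t\le \|u_0\|_{L^1(M)}^{-(m-1)}$ we use the short-time estimate \eqref{eq:shorttime}, which already has the desired form. On the remaining bounded window, where $t\|u_0\|_{L^1(M)}^{m-1}$ lies between $1$ and $c'$, we use the long-time estimate \eqref{eq:longtime}. There the ratio of the bound $t^{-1/m}\|u_0\|_{L^1(M)}^{1/m}$ to the target bound is a power of $t\|u_0\|_{L^1(M)}^{m-1}$, which is bounded on that window, so the power law holds there too after enlarging $C$.

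All constants depend only on $n,s,m$ and $\nu$. This requires checking that the constant in Corollary~\ref{cor:global-smoothing-L1} depends only on $n,s,m,\alpha,\gamma,\beta$ and the choice of $f$, and here these are $\alpha=\nu\omega_n$, $\gamma=\nu^{-1}$, $\beta=(n-2s)^{-1}$ and $f(R)=R^{n-(2s-1)}$.
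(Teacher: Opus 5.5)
Your proposal is correct and follows essentially the paper's route: the maximal-volume-growth example in Section~\ref{S6} verifies \ref{A1} and \ref{A2} with $f(R)=R^{n-(2s-1)}$, $R_0=1$, $\gamma=\operatorname{AVR}(M)^{-1}$, and the decay then follows from Corollary~\ref{cor:global-smoothing-L1} combined with Corollary~\ref{cor1}(i) for $k=\lambda=n$, $\delta=0$. Your explicit treatment of the intermediate window $1\le t\|u_0\|_{L^1(M)}^{m-1}\le \widehat\theta(1)^{m-1}$ via \eqref{eq:longtime} supplies a step the paper leaves implicit in calling the result an immediate consequence; it is the same absorption argument used in the proof of Corollary~\ref{cor:subcritical-global-smoothing}.
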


\begin{thm}[Local Smoothing Estimates for
$L^1_{G_M^s,o}$-data]
\label{thm:smoothing2}
Let $M$ be a complete, noncompact, $s$-nonparabolic
$n$-dimensional Riemannian manifold, $n>2s$, $s\in(0,1]$, with $\operatorname{Ric}\geq 0$,
and let $o\in M$ be fixed.
Let $u$ be the weak dual solution to \eqref{maineq} with nonnegative initial datum $u_0\in L^1_{G_M^s,o}(M)$. If $u_0\equiv 0$, then $u\equiv 0$.
If $u_0\not\equiv 0$, then, for every nonempty compact set
$K\Subset M$, there exists a constant $C_{K,o}>0$ such that the
following estimates hold.

\textbf{Short-time behavior.} One has
\[
\|u(t)\|_{L^\infty(K)}
\leq
C_{K,o}
t^{-\frac{n}{n(m-1)+2s}}
\|u_0\|_{L^1_{G_M^s,o}(M)}^{
\frac{2s}{n(m-1)+2s}
}\qquad\textrm{ for every }0<t\leq
\|u_0\|_{L^1_{G_M^s,o}(M)}^{-(m-1)}.
\]

\textbf{Long-time behavior.} One has
\[
\|u(t)\|_{L^\infty(K)}
\leq
C_{K,o}
t^{-1/m}
\|u_0\|_{L^1_{G_M^s,o}(M)}^{1/m}\qquad\textrm{ for every }t\geq
\|u_0\|_{L^1_{G_M^s,o}(M)}^{-(m-1)}.
\]
\end{thm}

\begin{cor}[Global Smoothing Estimates for
$L^1_{G_M^s}$-data]
\label{cor:global-smoothing-weighted}
Under the assumptions of Theorem~\ref{thm:smoothing2}, assume in
addition that Assumptions \ref{A1} and \ref{A2} hold and that $u_0\in L^1_{G_M^s}(M)$.
Then there exists a constant $C>0$ such that the following
estimates hold.

\textbf{Short-time behavior.} One has
\[
\|u(t)\|_{L^\infty(M)}
\leq
C
t^{-\frac{n}{n(m-1)+2s}}
\|u_0\|_{L^1_{G_M^s}(M)}^{
\frac{2s}{n(m-1)+2s}
}\textrm{ for every }0<t\leq
\|u_0\|_{L^1_{G_M^s}(M)}^{-(m-1)}.
\]

\textbf{Long-time behavior.} One has
\[
\|u(t)\|_{L^\infty(M)}
\leq
C
t^{-1/m}
\|u_0\|_{L^1_{G_M^s}(M)}^{1/m}\qquad\textrm{ for every }t\geq
\|u_0\|_{L^1_{G_M^s}(M)}^{-(m-1)}.
\]
\end{cor}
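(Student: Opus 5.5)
The plan is to run the local argument behind Theorem~\ref{thm:smoothing2} with every pole $x_0\in M$ and to check that, under Assumptions~\ref{A1} and~\ref{A2}, all constants stop depending on $x_0$. The starting point is the pointwise potential estimate used in the proofs of the local results. For the monotone-approximation WDS, the function $t\mapsto t^{\frac{m}{m-1}}u(t,x)$ is essentially nondecreasing; this follows from the B\'enilan--Crandall type time monotonicity that is inherited from the mild solutions and is preserved under monotone limits. Combining this with the weak dual identity \eqref{wds-identity}, tested against functions concentrating at $x_0$, gives for a.e.\ $x_0$ and $t>0$
\[
u(t,x_0)^m\le \frac{C}{t}\int_M G^s_M(x_0,y)\,u_0(y)\,d\mu(y),
\]
together with the companion inequality that controls $\int_{B_R(x_0)}u(t)\,d\mu$ by $(-\Delta)^{-s}u(t)$ near $x_0$. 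This estimate is pole-wise and holds on any $M$ of the type considered. The geometry enters only when we bound $\int_M G^s_M(x_0,y)\,u(t,y)\,d\mu(y)$.

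Next I split this potential into the regions $B_R(x_0)$ and $M\setminus B_R(x_0)$. Inside the ball, the upper bound of Theorem~\ref{thm:bounGreen's}, combined with \eqref{bishopgromov} and the noncollapsing bound $\operatorname{Vol}(B_r(x_0))\ge\alpha r^n$ for $r\le1$ (from Assumption~\ref{A1} and the remark following it), gives $G^s_M(x_0,y)\le C d(x_0,y)^{-(n-2s)}$ for $d\le1$. For $1\le d\le R$, Assumption~\ref{A2} with $R_0$ independent of $x_0$ gives $G^s_M(x_0,y)\le C$ uniformly in $x_0$. Then Hölder interpolation between $\|u(t)\|_{L^\infty}$ and the mass inside the ball produces the term $\|u(t)\|_\infty^{1-2s/n}\cdot(\text{mass})^{2s/n}$ plus a bounded-kernel term. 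Outside the ball, $G^s_M(x_0,y)$ is exactly the weight appearing in $\|u(t)\|_{L^1_{G^s_M,x_0}}$, and this tail is controlled through $\|u(t)\|_{L^1_{G^s_M,x_0}}\le C\|u_0\|_{L^1_{G^s_M,x_0}}\le C\|u_0\|_{L^1_{G^s_M}}$.

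That monotonicity in the weighted norm is the step I expect to be the main obstacle. It has to be obtained from the weak dual formulation: since $\partial_t(-\Delta)^{-s}u=-u^m\le0$, the potential $(-\Delta)^{-s}u(t)$ is nonincreasing, and one must transfer this back to the weighted $L^1$ norm with a constant uniform in the pole. To do this I will use the comparison estimates for Green potentials from Section~\ref{S7}. Those estimates bound $\int_{B_1(x_0)}(\cdot)+\int_{M\setminus B_1(x_0)}G^s_M(\cdot,x_0)(\cdot)$ above and below by averages of $(-\Delta)^{-s}|v|$ over $B_1(x_0)$, and their constants depend only on $\alpha$, $\gamma$, $f$, $R_0$ and the doubling constant $C_d$. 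I also need to verify that the quasi-metric constant and the Green function bounds used there are pole-independent; Theorem~\ref{thm:bounGreen's} and Assumption~\ref{A2} supply exactly this.

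Finally, inserting these bounds into the pointwise estimate gives, uniformly in $x_0$,
\[
\|u(t)\|_\infty^m\le \frac{C}{t}\Bigl(\|u(t)\|_\infty^{1-\frac{2s}{n}}\|u_0\|_{L^1_{G^s_M}}^{\frac{2s}{n}}+\|u_0\|_{L^1_{G^s_M}}\Bigr).
\]
This is the same algebraic inequality that appears in the local proof. Solving it, and using the fact that the first term dominates exactly when $t\le\|u_0\|_{L^1_{G^s_M}}^{-(m-1)}$, yields the two rates $t^{-\frac{n}{n(m-1)+2s}}$ and $t^{-1/m}$. Taking the supremum over $x_0\in M$ then gives the global $L^\infty(M)$ bounds, first for the approximating $L^1\cap L^\infty$ data and then for $u$ by monotone convergence.
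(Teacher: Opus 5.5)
Your proposal is correct and follows essentially the paper's proof. The common ingredients are the pointwise bound $u^m(t,x_0)\le \frac{C}{t}\int_M u(t,x)G_M^s(x,x_0)\,d\mu(x)$, a split of this potential near the pole using the pole-uniform small-scale Green bounds of Remark~\ref{rmk:gamcont}, and control of the local mass and of the tail by the pole-uniform weighted monotonicity of Proposition~\ref{lastpropwdsprops}, which the paper obtains exactly as you suggest from \eqref{5.3N} and Proposition~\ref{thm:lem3.6}. The only difference is cosmetic: the paper keeps the splitting radius $R\in(0,1]$ free, absorbs $\|u(t)\|_{L^\infty(M)}$ by Young's inequality and then chooses $R$ in terms of $t$ and $\|u_0\|_{L^1_{G_M^s}(M)}$, whereas you optimize the radius first to get the term $\|u(t)\|_{L^\infty(M)}^{1-2s/n}\|u_0\|_{L^1_{G_M^s}(M)}^{2s/n}$. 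That term comes from this radius splitting, not from plain H\"older, which would miss the endpoint exponent $2s/n$.
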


\begin{rmk}
The \(L^\infty\)-smoothing estimates above also yield corresponding \(L^q\)-estimates, \(1\le q<\infty\), by interpolation. For \(L^1\)-data this follows directly from the \(L^1\)-nonexpansivity, while in the Green-weighted setting the analogous local estimates follow from the local \(L^1\)-control provided by the weighted estimates. We omit the resulting explicit formulas.
\end{rmk}

\subsection{Optimality}\label{sec:optimality}


We now discuss the sharpness of the smoothing estimates established above, summarized in the four propositions below. Two notions of sharpness will be considered. We say that an estimate is \emph{uniformly sharp} if its exponents cannot be improved while keeping the constant independent of both the initial datum and time in the relevant regime. We say that a rate is \emph{trajectory-wise sharp} if it is attained, at least along a sequence of times, by a single fixed solution.

The basic $L^1$-long-time bound with decay $t^{-1/m}$ in
Theorem~\ref{thm:smoothing} and Corollary~\ref{cor:global-smoothing-L1}
will only be used as a coarse intermediate estimate, and no sharpness claim is made for it.

\begin{prop}[Short-time sharpness]
\label{prop:optimality-short}
The short-time estimates in Theorems~\ref{thm:smoothing} and~\ref{thm:smoothing2} and Corollaries~\ref{cor:global-smoothing-L1} and~\ref{cor:global-smoothing-weighted} are uniformly sharp on $\mathbb{R}^n$, both in the time exponent and in the initial-data exponent.
\end{prop}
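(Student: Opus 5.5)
The natural tool is the explicit Barenblatt-type source solution on $\mathbb{R}^n$ together with the scaling invariance of \eqref{maineq}. On $\mathbb{R}^n$ with $s\in(0,1]$, the results of \cite{Vazquez2014Barenblatt} (and classical theory for $s=1$) give, for every mass $M_0>0$, a self-similar solution
\[
\mathcal{B}_{M_0}(t,x)=t^{-\alpha}\,F_{M_0}\!\left(x\,t^{-\alpha/n}\right),\qquad \alpha=\frac{n}{n(m-1)+2s},
\]
with $F_{M_0}$ bounded, positive at the origin, and with $\mathcal{B}_{M_0}(t)\to M_0\delta_0$ as $t\to0$. Mass scaling gives $F_{M_0}(y)=M_0^{2s\alpha/n}F_1\bigl(M_0^{(m-1)\alpha/n}y\bigr)$. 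Hence
\[
\|\mathcal{B}_{M_0}(t)\|_{L^\infty(K)}\ge \mathcal{B}_{M_0}(t,0)=F_1(0)\,t^{-\alpha}M_0^{\frac{2s}{n(m-1)+2s}}
\]
for any compact $K\ni0$, which is exactly the right-hand side of the short-time estimates, up to a constant. We cannot use $\mathcal{B}$ directly, since its datum is a measure. So we take data $u_{0,\varepsilon}=\mathcal{B}_{M_0}(\varepsilon,\cdot)\in L^1\cap L^\infty$ and use the translated solution $u_\varepsilon(t)=\mathcal{B}_{M_0}(t+\varepsilon)$. This is the weak dual solution obtained by monotone approximation, by uniqueness for $L^1\cap L^\infty$ data. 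Its $L^1$ norm is $M_0$.

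The argument proceeds by contradiction. Suppose that, for some exponents $(a,b)\ne(\alpha,\frac{2s}{n(m-1)+2s})$, a bound $\|u(t)\|_{L^\infty(K)}\le C t^{-a}\|u_0\|_{L^1}^{b}$ held with $C$ independent of $u_0$ and $t$ in the regime $0<t\le\|u_0\|_1^{-(m-1)}$. Taking $t=\|u_0\|_1^{-(m-1)}\tau$ with $\tau\in(0,1]$, and letting $\varepsilon\to0$, we would get
\[
F_1(0)\,\tau^{-\alpha}M_0^{\alpha(m-1)+\frac{2s\alpha}{n}}\le C\tau^{-a}M_0^{a(m-1)+b}.
\]
Note that $\alpha(m-1)+2s\alpha/n=1$. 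Varying $\tau\to0$ with $M_0$ fixed forces $a\ge\alpha$. Varying $M_0$ along the curve $t\asymp M_0^{-(m-1)}$ (i.e.\ $\tau$ fixed) forces $a(m-1)+b=1$ in both directions, since $M_0\to0$ and $M_0\to\infty$ are both allowed. Combined with the upper bound of Theorem~\ref{thm:smoothing}, this shows that no exponent of $t$ larger than $-\alpha$ (i.e.\ no faster blow-up improvement) and no other data exponent can hold with a uniform constant. Equivalently, one shows that the ratio of the left side to $t^{-\alpha}M_0^{2s/(n(m-1)+2s)}$ is bounded below by $F_1(0)$ along the two-parameter family, so neither exponent can be perturbed.

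For the global versions in Corollary~\ref{cor:global-smoothing-L1}, $\mathbb{R}^n$ satisfies Assumptions \ref{A1} and \ref{A2} (by Corollary~\ref{cor:avr-decay}), and $\|\cdot\|_{L^\infty(\mathbb{R}^n)}\ge\|\cdot\|_{L^\infty(K)}$, so the same family applies. For Theorem~\ref{thm:smoothing2} and Corollary~\ref{cor:global-smoothing-weighted} we use $\|u_0\|_{L^1_{G,o}}\le\|u_0\|_{L^1_G}\le C\|u_0\|_{L^1}$ and the corresponding lower comparison. We concentrate the data near the pole, at scale $\varepsilon^{\alpha/n}\ll1$, so that for data supported in $B_1(o)$ both weighted norms are comparable to the $L^1$ norm. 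This holds because $G(x,o)\le C$ outside $B_1(o)$ uniformly in the pole, and the norm restricted to $B_1(x_0)$ dominates. Since the Barenblatt profile has an unbounded support tail for $s<1$, we instead use compactly supported approximations: the datum $\mathcal{B}_{M_0}(\varepsilon)\chi_{B_{\rho}}$ with $\rho$ small. We then use comparison (order preservation, Proposition~\ref{prop:4.1}) from below against the solution with this truncated datum, and lower bounds at the origin from positivity and continuity of the mass-$M_0$ limit.

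The main obstacle is this last step: justifying that truncated, concentrated data still produce solutions whose value at the origin is $\gtrsim t^{-\alpha}M_0^{2s/(n(m-1)+2s)}$. I would handle it by approximating the Dirac mass. Truncated data converge to $M_0\delta_0$ in the sense of measures, and the corresponding solutions converge to $\mathcal{B}_{M_0}$ locally uniformly for $t>0$, by the $L^1$-contraction and the existence of a unique source-type solution. Scaling then transfers the bound from one fixed $M_0$ to all masses and times.
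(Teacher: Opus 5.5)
Your argument for the $L^1$-data estimates (Theorem~\ref{thm:smoothing}, Corollary~\ref{cor:global-smoothing-L1}) is essentially the paper's. The paper's family $v_{A,R}$ with $A=\lambda R^{-n}$ is exactly the mass-$\lambda$ Barenblatt solution shifted in time by $\varepsilon=t_{\lambda,R}=\lambda^{1-m}R^{n(m-1)+2s}$ and evaluated at $t=\varepsilon$. You instead evaluate at $t=\tau M_0^{-(m-1)}$ and send $\varepsilon\to0$. Your bookkeeping through $a\ge\alpha$ and $a(m-1)+b=1$ is equivalent to the paper's and slightly cleaner. There is one slip: the mass scaling is $F_{M_0}(y)=M_0^{2s\alpha/n}F_1\bigl(M_0^{-(m-1)\alpha/n}y\bigr)$, with a minus sign, since otherwise the mass is not $M_0$. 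This is harmless, because you only use $F_{M_0}(0)$.

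For the Green-weighted estimates, however, your route contains a step that is not justified, and the step is also unnecessary.

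\begin{itemize}
\item You truncate the datum to force support in $B_1(o)$. You then need a lower bound at the origin for the truncated problem, which you justify by saying the solutions converge to $\mathcal B_{M_0}$ locally uniformly ``by the $L^1$-contraction and the uniqueness of the source-type solution''. This does not follow.
\item Data concentrating to $M_0\delta_0$ at very different scales are at mutual $L^1$ distance close to $2M_0$. So the contraction yields no convergence.
\item Even $L^1$ convergence of solutions would not give locally uniform convergence without equicontinuity estimates, which are not available here.
\item Order preservation only places the truncated solution below the untruncated one. That is the wrong direction for a lower bound.
\end{itemize}

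What you are missing, and what the paper uses, is that no truncation is needed. In $\mathbb R^n$ one has $G^s_{\mathbb R^n}(x,x_0)\le c_{n,s}$ for $|x-x_0|\ge1$. Hence $\|u_0\|_{L^1_{G^s_{\mathbb R^n}}}\le\max\{1,c_{n,s}\}\|u_0\|_{L^1}$ for every $u_0\in L^1$, uniformly in the pole. In the other direction,
\[
\|\mathcal B_{M_0}(\varepsilon)\|_{L^1_{G^s_{\mathbb R^n},0}}\geq\int_{B_1(0)}\mathcal B_{M_0}(\varepsilon,x)\,dx=M_0\int_{\{|z|<(\varepsilon M_0^{m-1})^{-\alpha/n}\}}F_1(z)\,dz\longrightarrow M_0
\]
as $\varepsilon\to0$.

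So both weighted norms of the untruncated datum are comparable with $M_0$. The weighted short-time range then contains $t=\tau M_0^{-(m-1)}$ for $\tau$ below a fixed constant. With this, your $L^1$ argument transfers verbatim, and this comparability is exactly what the paper establishes for $A\phi(\cdot/R)$ with $R$ small.
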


\begin{prop}[Green-weighted long-time sharpness]
\label{prop:optimality-weighted}
The long-time estimates in Theorem~\ref{thm:smoothing2} and Corollary~\ref{cor:global-smoothing-weighted} are uniformly sharp on $\mathbb{R}^n$, both in the time and initial-data exponents. Furthermore, the short- and long-time rates in Corollary~\ref{cor:global-smoothing-weighted} are simultaneously trajectory-wise sharp along a single solution.
\end{prop}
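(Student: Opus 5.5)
The plan is to test everything on the Euclidean space $\mathbb{R}^n$, using the Barenblatt-type self-similar solutions of the fractional porous medium equation and the scaling invariances of \eqref{maineq}. On $\mathbb{R}^n$ we have $G^s_{\mathbb{R}^n}(x,y)=c_{n,s}|x-y|^{-(n-2s)}$, and $L^1_{G^s}$ consists of data with $\int |u_0|(1+|x-x_0|)^{-(n-2s)}\,dx$ bounded uniformly in $x_0$. The equation has a two-parameter scaling group: if $u$ solves \eqref{maineq}, then so does
\[
u_{\lambda,\kappa}(t,x)=\kappa\,u(\kappa^{m-1}\lambda^{2s}t,\lambda x),\qquad \lambda,\kappa>0.
\]
This is compatible with the weak dual formulation, since $(-\Delta)^{-s}$ scales homogeneously. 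Uniqueness among monotone-approximation solutions, noted in the remark after Theorem~\ref{thm:existence}, ensures that the rescaled solution is the weak dual solution with the rescaled datum.

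To prove uniform sharpness of the long-time exponents, suppose the estimate held with exponents $a,b$ in place of $1/m$, namely $\|u(t)\|_{L^\infty(K)}\le C t^{-a}\|u_0\|^{b}$ for $t\ge\|u_0\|^{-(m-1)}$, with the Green-weighted norm. I would take a fixed nontrivial bounded datum $\phi\ge0$ that is not integrable but lies in $L^1_{G^s}$, for instance $\phi\equiv1$ or $\phi(x)=(1+|x|)^{-\epsilon}$. For $\phi\equiv 1$ the solution is explicit and spatially constant:
\[
u(t)=\bigl(1+(m-1)t\bigr)^{-1/(m-1)}.
\]
(One checks that it is the monotone limit of the approximating solutions, or at least that it bounds them from below by comparison on large balls.) Using constants $\kappa\phi$, we have $\|\kappa\|_{L^1_{G^s}}=c\kappa$, and the solution equals $(\kappa^{1-m}+(m-1)t)^{-1/(m-1)}$. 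For $t\ge (c\kappa)^{-(m-1)}$ this is comparable to $t^{-1/(m-1)}$, so it decays faster than $t^{-1/m}$. This shows that constants do not detect sharpness, so the extremals must be localized data.

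The better choice is a dilated bump: $u_0=\kappa\chi_{B_\lambda}$, or the Barenblatt profile $U(t,x)=t^{-\alpha}F(|x|t^{-\alpha/n})$ with $\alpha=n/(n(m-1)+2s)$ and $F$ with power tail $|y|^{-(n+2s)}$. For $u_0=\chi_{B_\lambda}$ with $\lambda\ge1$,
\[
\|u_0\|_{L^1_{G^s}}\asymp\int_{B_\lambda}|x|^{-(n-2s)}\,dx\asymp\lambda^{2s}.
\]
The regime $t\ge\|u_0\|^{-(m-1)}$ together with the lower bound $u(t,0)\gtrsim t^{-1/(m-1)}$, valid up to times $t\sim\lambda^{2s}$ before the spreading of the data is felt (obtained from the subsolution given by the constant solution restricted via comparison, or from a Barenblatt-type lower bound), gives at $t=\lambda^{2s}$ a value $\gtrsim \lambda^{-2s/(m-1)}$. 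The right-hand side of the estimate is $C\lambda^{-2sa}\lambda^{2sb}$. Varying $\lambda\to\infty$ and also the amplitude $\kappa$, and matching the powers, should force $a=1/m$, $b=1/m$. The key computation is to choose the two-parameter family so that the $\kappa$- and $\lambda$-exponents of the two sides give independent linear constraints. I expect this lower-bound step to be the main obstacle: one needs a quantitative pointwise lower bound for the weak dual solution at the centre up to times comparable to $\lambda^{2s}\kappa^{1-m}$. I would first try to obtain it from the weak dual identity by testing against $G^s(\cdot,0)$-type weights, using the potential lower bounds of Theorem~\ref{thm:bounGreen's}. If that fails, I would fall back on the Euclidean Barenblatt lower bounds of the De Pablo–Quirós–Rodríguez–Vázquez theory.

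For trajectory-wise sharpness, I would take a single datum built as a superposition $u_0=\sum_k \kappa_k\chi_{B_{\lambda_k}(x_k)}$, with bumps at widely separated centres and scales chosen so that $u_0\in L^1_{G^s}$. By comparison, $u\ge$ each individual bump solution. Along the sequence $t_k$ adapted to the $k$-th bump, the previous lower bound then shows that the solution attains the long-time rate. In the limit of small times, a concentrating bump attains the short-time rate $t^{-n/(n(m-1)+2s)}$, which is the Barenblatt rate, so both rates are attained by one solution.
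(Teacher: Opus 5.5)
Your overall architecture is the right one and matches the paper's: rescaled bumps on $\mathbb{R}^n$, then a superposition of widely separated bumps with order preservation. The central quantitative step, however, fails.

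\textbf{The gap.} For $u_0=\chi_{B_\lambda}$ you correctly find a Green-weighted norm $\asymp\lambda^{2s}$. So at $t=\lambda^{2s}$ the bound $Ct^{-1/m}\|u_0\|^{1/m}$ is of order $1$, whereas your lower bound $u(t,0)\gtrsim t^{-1/(m-1)}$ only gives $\lambda^{-2s/(m-1)}\to0$. The two sides never meet. With the norm held fixed and $t\to\infty$ in the long-time range, a lower bound of size $t^{-1/(m-1)}$ can at best rule out decay exponents larger than $1/(m-1)$, not larger than $1/m$. So no sharpness follows.

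The justification of that lower bound is also wrong.
\begin{itemize}
\item Spatial constants are \emph{stationary} for $\partial_t u+(-\Delta)^s u^m=0$, since $(-\Delta)^s$ annihilates constants. The function $(1+(m-1)t)^{-1/(m-1)}$ solves $u'=-u^m$, not the equation.
\item Constants do not belong to $L^1_{G^s_{\mathbb{R}^n}}(\mathbb{R}^n)$ at all, because $|x|^{-(n-2s)}$ is not integrable at infinity. Likewise $(1+|x|)^{-\epsilon}$ belongs to it only for $\epsilon>2s$.
\item Comparison with a larger datum gives upper bounds, not lower bounds.
\end{itemize}
A bound $u(t,0)\gtrsim t^{-1/(m-1)}$ for $t\geq1$ can be recovered from B\'enilan--Crandall monotonicity. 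It is still far from the true size, which is of order $1$ at $t=\lambda^{2s}$.

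\textbf{The missing idea.} No PDE lower-bound estimate is needed at all, neither Green-weight testing nor Barenblatt lower bounds. The scaling you already wrote down gives the exact value at the natural time. Let $w$ be the solution with a fixed datum $g$. The paper takes $g=B_n(1,\cdot)$, so that $w(t)=B_n(t+1)$ and $w(1,0)=B_n(2,0)>0$. The datum $\kappa g(\cdot/\lambda)$ then generates $\kappa\,w(\kappa^{m-1}\lambda^{-2s}t,\cdot/\lambda)$. At $t=\kappa^{1-m}\lambda^{2s}$ its value at $0\in K=\overline{B_1(0)}$ is exactly $\kappa\,w(1,0)$.

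For $\lambda$ large, both Green-weighted norms of this datum are $\asymp\kappa\lambda^{2s}$. So that value equals, up to constants, $t^{-1/m}\|u_0\|^{1/m}$. Moreover $t\|u_0\|^{m-1}\asymp\lambda^{2sm}$, which places $t$ in the long-time range. Taking $\kappa=\mu\lambda^{-2s}$ freezes the norm at $\asymp\mu$. Letting $\lambda\to\infty$ shows the time power $1/m$ cannot be increased, and varying $\mu$ then pins the norm power.

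\textbf{Trajectory-wise step.} The same exact scaling is what this step needs. Set $c_*=\|w(1)\|_{L^\infty}>0$.
\begin{itemize}
\item The bumps $r_k^{-n}g(\cdot/r_k)$ at $\tau_k=r_k^{n(m-1)+2s}$ reach exactly $c_*\tau_k^{-n/(n(m-1)+2s)}$. Their Green-weighted norms are uniformly bounded by their mass.
\item The bumps $R_k^{-2s}g(\cdot/R_k)$ at $T_k=R_k^{2sm}$ reach exactly $c_*T_k^{-1/m}$. Their norms are uniformly bounded because $\sup_{z_0}\int g(z)|z-z_0|^{-(n-2s)}\,dz<\infty$.
\end{itemize}
Hence both families can be translated far enough apart that their sum stays in $L^1_{G^s_{\mathbb{R}^n}}(\mathbb{R}^n)$. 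Your order-preservation argument then concludes.
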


\begin{prop}[Sharpness of the refined $L^1$-estimate]
\label{prop:optimality-refined-L1}
On product manifolds $M=\mathbb{R}^k\times N$, with $k>2s$ and $N$ compact with nonnegative Ricci curvature, the refined long-time estimates in Theorem~\ref{thm:smoothing} and Corollary~\ref{cor:global-smoothing-L1} are trajectory-wise sharp in the time exponent and uniformly sharp in the $L^1$-norm exponent.
\end{prop}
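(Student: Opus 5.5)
We work on $M=\mathbb{R}^k\times N$ with the product metric. Then $\operatorname{Ric}\ge0$, and for $R\ge R_*:=\operatorname{diam}N$ we have $\operatorname{Vol}(B_R(x))\asymp R^k$ uniformly in $x$. Hence Assumptions \ref{A1} and \ref{A2} hold with $f(R)=R^{k-(2s-1)}$, and $s$-nonparabolicity follows from $k>2s$ via Theorem~\ref{thm:charnonpar}. With this $f$ we get $h(R)\asymp R^{2s}$ and $\theta(R)\asymp R^{k}R^{2s/(m-1)}$. Therefore $\theta^{-1}(r)\asymp r^{(m-1)/(k(m-1)+2s)}$, and the refined estimate \eqref{eq:improved} (and its local version in Theorem~\ref{thm:smoothing}, where $h_o,\theta_o$ have the same asymptotics) reduces for large $t$ to
\[
\|u(t)\|_{L^\infty}\le C\,t^{-\frac{k}{k(m-1)+2s}}\|u_0\|_{L^1(M)}^{\frac{2s}{k(m-1)+2s}} .
\]
So we must show two things: the time exponent $k/(k(m-1)+2s)$ is attained along a single solution, and the exponent $2s/(k(m-1)+2s)$ of $\|u_0\|_{L^1}$ cannot be lowered or raised with a uniform constant.

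\textbf{Trajectory-wise sharpness in $t$.} Fix a nonnegative $u_0\in L^1\cap L^\infty$, $u_0\not\equiv0$. By the $L^1$ contraction and mass conservation for weak dual solutions with $L^1$ data (established in Section~\ref{sec: propswds}), we have $\|u(t)\|_{L^1(M)}=\|u_0\|_{L^1(M)}=:\mathfrak m$. The plan is to control spatial spreading by a finite-speed-type moment bound. I would test the weak dual formulation with functions approximating $G$-potentials, or compare with a barrier, to show that a fixed fraction of the mass stays in $B_{\rho(t)}(o)$ with $\rho(t)\lesssim t^{\frac{1}{k(m-1)+2s}}$. This is the scale from the Euclidean $\mathbb{R}^k$ Barenblatt of \cite{Vazquez2014Barenblatt}, lifted to $M$ as an $N$-independent function that solves the equation exactly because $(-\Delta_M)^s$ acts on functions of the $\mathbb{R}^k$ variable as $(-\Delta_{\mathbb{R}^k})^s$.

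The cleanest route is to choose $u_0$ as the lift of a Euclidean datum and use uniqueness within the monotone-approximation class. Then $u(t,(z,y))=U(t,z)$, where $U$ is the $\mathbb{R}^k$ solution, and $U$ is asymptotic to the fractional Barenblatt profile. This gives $\|u(t)\|_\infty\asymp t^{-k/(k(m-1)+2s)}$ for all large $t$, which is in fact more than what is needed along a sequence. If lifting is problematic, we fall back on $\|u(t)\|_\infty\ge \mathfrak m/\operatorname{Vol}(B_{\rho(t)})\gtrsim \mathfrak m\,\rho(t)^{-k}$ along a sequence.

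\textbf{Uniform sharpness in $\|u_0\|_{L^1}$.} We use the scaling invariance within the lifted class. If $U$ solves the equation on $\mathbb{R}^k$, then $U_\lambda(t,z)=\lambda U(\lambda^{m-1}t,z)$ also solves it, with mass $\lambda\mathfrak m$. Lifting both gives solutions $u$ and $u_\lambda$ on $M$. Suppose the estimate held with exponent $a$ on the mass and the same time exponent, for all large $t$. Evaluating at $\tau=\lambda^{m-1}t$ gives $\lambda\|u(\tau)\|_\infty\le C\,t^{-k/(k(m-1)+2s)}(\lambda\mathfrak m)^a$. We then use the Barenblatt lower bound for $\|u(\tau)\|_\infty$ and let $\lambda\to0$ and $\lambda\to\infty$, each time within the large-time regime, which is possible because the regime threshold scales consistently. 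This forces $a=2s/(k(m-1)+2s)$.

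The main obstacle is justifying that lifting preserves the solution concept. One must check that the $N$-independent function is the monotone-limit weak dual solution on $M$, using that $G_M^s$ integrated over the $N$-fiber equals $\operatorname{Vol}(N)$ times the Euclidean Riesz kernel; this follows from the heat kernel factorization $p_M=p_{\mathbb{R}^k}p_N$ and $\int_N p_N=1$. The difficulty is that such data are not in $L^1(M)$ unless they are compactly supported in $z$; since $N$ is compact, compact support in $z$ suffices, so this should work. The kernel identity then transfers the weak dual identity \eqref{wds-identity} directly.
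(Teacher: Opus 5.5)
Your proposal follows essentially the same route as the paper: uniform volume growth $\operatorname{Vol}(B_R)\asymp R^k$ gives $h_o\asymp h\asymp R^{2s}$ and $\theta_o\asymp\theta\asymp R^{k+\frac{2s}{m-1}}$, so both refined bounds reduce to $C t^{-\frac{k}{k(m-1)+2s}}\|u_0\|_{L^1(M)}^{\frac{2s}{k(m-1)+2s}}$. An $N$-independent lift of a Euclidean Barenblatt-type solution attains this rate, and mass scaling pins down the $L^1$ exponent. The one difference is that the paper lifts the time-shifted Barenblatt $B_{k,\mathcal M}(t+\varepsilon,\cdot)$ directly; its value at the origin is explicit, so no asymptotic-convergence theorem is needed.

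Three small points need fixing.

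\begin{enumerate}
\item You have the fiber integral off by a constant. In fact $\int_N G_M^s((x,p),(y,q))\,d\mu_N(q)=G^s_{\mathbb{R}^k}(x,y)$ exactly, with no factor $\operatorname{Vol}(N)$. This is precisely what $\int_N p_N=1$ gives. With the spurious factor, the transfer of the weak dual identity would be off by a time rescaling.

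\item Compact support in $z$ is unnecessary. The lift of any $U_0\in L^1(\mathbb{R}^k)$ already lies in $L^1(M)$, because $\operatorname{Vol}(N)<\infty$. So the non-compactly supported fractional Barenblatt can be used directly, as the paper does.

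\item The identification of the lift with the monotone-approximation (mild) solution, which is the solution the estimates actually apply to, should go through the generator identity $(-\Delta_M)^s(w\otimes 1)=((-\Delta_{\mathbb{R}^k})^s w)\otimes 1$ on $L^1$. That identity makes the implicit Euler scheme commute with lifting. Checking the weak dual identity alone is not enough, since no uniqueness is available for general weak dual solutions.
\end{enumerate}
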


\begin{cor}\label{cor:optimality-refined-L1}
Corollary~~\ref{cor:avr-decay} is sharp on $\mathbb{R}^n$, with trajectory-wise sharpness in the time exponent and uniform sharpness in the $L^1$-norm exponent.
\end{cor}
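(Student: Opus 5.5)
Corollary~\ref{cor:optimality-refined-L1} asserts that the estimate of Corollary~\ref{cor:avr-decay} on $\mathbb{R}^n$, i.e. $\|u(t)\|_{L^\infty}\le C t^{-\frac{n}{n(m-1)+2s}}\|u_0\|_{L^1}^{\frac{2s}{n(m-1)+2s}}$ for all $t>0$, cannot be improved. The plan is to obtain it as a special case of Proposition~\ref{prop:optimality-refined-L1} with $N$ trivial. If one prefers not to regard a point as a compact manifold, the argument below can be run directly on $\mathbb{R}^n$, and this is what we describe.

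\textbf{Trajectory-wise sharpness in time.} Take a fixed nonnegative datum $u_0\in L^1(\mathbb{R}^n)\cap L^\infty(\mathbb{R}^n)$, $u_0\not\equiv0$. The weak dual solution is mass conserving, so $\|u(t)\|_{L^1}=\|u_0\|_{L^1}$ for all $t$. On $\mathbb{R}^n$ conservation of mass is standard for the FPME, and in our framework it follows by testing \eqref{wds-identity} against cut-offs and using the Green function decay. Next we need a lower bound on the mass carried by a ball of radius $\sim t^{1/(n(m-1)+2s)}$. We would obtain it by comparison with the Barenblatt-type self-similar solution $U_M(t,x)=t^{-\alpha}F(xt^{-\alpha/n})$, $\alpha=\frac{n}{n(m-1)+2s}$, from \cite{Vazquez2014Barenblatt}. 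Concretely, we would use $L^1$-convergence of $t^{\alpha}u(t,t^{\alpha/n}\cdot)$ to the profile $F_M$ of the same mass, which gives $\liminf_{t\to\infty}t^{\alpha}\|u(t)\|_\infty\ge \|F_M\|_\infty>0$. If we want to avoid that convergence result, we would instead use a tail estimate: $\int_{|x|>R(t)}u(t)\le \frac12\|u_0\|_1$ for $R(t)=Ct^{\alpha/n}$, obtained from the weak dual formulation by testing with the Riesz potential of a cutoff. Then
\[
\|u(t)\|_\infty\ \ge\ \frac{\|u_0\|_1}{2\,\omega_n R(t)^n}\ \ge\ c\,t^{-\alpha}.
\]
This shows that the time exponent is attained along the single trajectory.

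\textbf{Uniform sharpness in the $L^1$ exponent.} Here we use scaling. If $u$ is a solution, then $u_\lambda(t,x)=\lambda u(\lambda^{m-1}t,x)$ is again a solution, with $\|u_\lambda(0)\|_1=\lambda\|u_0\|_1$. This transformation mixes time and mass. We therefore also use the spatial scaling $u^{\mu}(t,x)=\mu^{n}u(\mu^{n(m-1)+2s}t,\mu x)$, which preserves mass. Combining the two scalings, the estimate $\|u(t)\|_\infty\le Ct^{-a}\|u_0\|_1^{b}$ can hold uniformly in $t$ and $u_0$ only if $(a,b)$ is exactly $\left(\frac{n}{n(m-1)+2s},\frac{2s}{n(m-1)+2s}\right)$. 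Indeed, invariance under both scalings forces $b=1-(m-1)a$ and $a=\frac{n}{n(m-1)+2s}$. Applying this to the Barenblatt solution, or to the fixed trajectory above where both sides are positive, shows that any other pair of exponents fails as $\lambda\to0$ or $\lambda\to\infty$.

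\textbf{Main obstacle.} The main obstacle is the lower bound $\|u(t)\|_\infty\ge ct^{-\alpha}$ for large $t$. It requires either the asymptotic convergence to the fractional Barenblatt profile or a quantitative tail control of the mass. We would try the tail estimate first, via the weak dual identity with a test function of the form $\psi=(-\Delta)^s\varphi_R$, $\varphi_R$ a smooth cutoff. The bound $|(-\Delta)^s\varphi_R|\lesssim R^{-2s}(1+|x|/R)^{-n-2s}$ then gives
\[
\left|\frac{d}{dt}\int u\varphi_R\right|\lesssim R^{-2s}\int u^m\le R^{-2s}\|u\|_\infty^{m-1}\|u_0\|_1 .
\]
Combining this with the upper smoothing bound yields $R(t)\sim t^{\alpha/n}$.
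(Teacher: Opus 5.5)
Your opening reduction (take $k=n$ and $N$ a point in Proposition~\ref{prop:optimality-refined-L1}) is exactly the paper's proof. Your two-scaling argument for the $L^1$ exponent is correct and amounts to the paper's ``vary $\mathcal M$'' step, since those scalings generate the Barenblatt family of mass $\mathcal M$.

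Where you genuinely diverge is the trajectory-wise part. The paper needs no lower bound for a generic solution. It takes the single trajectory $u(t,x)=B_{n,\mathcal M}(t+\varepsilon,x)$, whose datum $B_{n,\mathcal M}(\varepsilon,\cdot)$ lies in $L^1\cap L^\infty$, and reads off $\|u(t)\|_{L^\infty}\geq u(t,0)=c_n(t+\varepsilon)^{-\frac{n}{n(m-1)+2s}}\mathcal M^{\frac{2s}{n(m-1)+2s}}$. So what you call the main obstacle does not arise: the Barenblatt solution started at a positive time is itself an admissible trajectory.

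Your direct route proves more, namely that every nontrivial trajectory attains the rate. It costs extra input, though. One option is the asymptotic convergence to the Barenblatt profile from \cite{Vazquez2014Barenblatt}, which also requires identifying the paper's weak dual solutions with the solutions considered there. The other is your mass-confinement estimate, and that does close. Integrating $\bigl|\tfrac{d}{dt}\int u\varphi_R\bigr|\lesssim R^{-2s}\|u(t)\|_{L^\infty}^{m-1}\|u_0\|_{L^1}$ against the upper bound of Corollary~\ref{cor:avr-decay} gives an error $\lesssim R^{-2s}t^{\frac{2s}{n(m-1)+2s}}$. The time integral converges at $0$ because $\frac{n(m-1)}{n(m-1)+2s}<1$, so $R=At^{\frac{1}{n(m-1)+2s}}$ with $A$ large works.

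There is one technical point to fix. For $s<1$ the function $(-\Delta)^s\varphi_R$ is not compactly supported, so it is not an admissible test function in Definition~\ref{defn:wds}. You would have to justify $\tfrac{d}{dt}\int u\varphi_R=-\int u^m(-\Delta)^s\varphi_R$ by approximation, or at the level of the implicit Euler scheme. Also, what you test with is $(-\Delta)^s\varphi_R$, not the Riesz potential of the cutoff as you wrote in one place.
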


\begin{prop}[Local nature of Green-weighted smoothing]
\label{prop:optimality-local-global}
The local character of Theorem~\ref{thm:smoothing2} is optimal already on $\mathbb{R}^n$. More precisely, there exists $u_0\in L^1_{G^s_{\mathbb{R}^n},0}(\mathbb{R}^n)$ such that the corresponding weak dual solution satisfies
\[
\|u(t)\|_{L^\infty(\mathbb{R}^n)}=+\infty
\qquad\text{for every }t>0.
\]
\end{prop}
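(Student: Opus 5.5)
The plan is to build $u_0$ as a superposition of translated Barenblatt profiles whose masses tend to infinity. Their centres are pushed to infinity fast enough that the Green weight $G^s_{\mathbb{R}^n}(x,0)=c_{n,s}|x|^{2s-n}$ keeps the weighted norm finite. Comparison then forces $u(t)$ to dominate each Barenblatt, so $\sup u(t)=+\infty$ for every $t>0$.

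\emph{Step 1 (Barenblatt solutions and scaling).} On $\mathbb{R}^n$, $n>2s$, I use the fractional Barenblatt solutions $B_M(t,x)$ of mass $M$ from \cite{Vazquez2014Barenblatt}. With $\alpha=\frac{n}{n(m-1)+2s}$ and $\vartheta=\frac{1}{n(m-1)+2s}$ they have the self-similar form $B_M(t,x)=t^{-\alpha}F_M(xt^{-\vartheta})$. The invariance $u\mapsto \lambda u(\lambda^{m-1}t,x)$ of \eqref{maineq} gives
\[
\|B_M(t)\|_{L^\infty}=c\,M^{\frac{2s}{n(m-1)+2s}}t^{-\alpha}.
\]
The profile $F_M$ is positive with tail $\asymp |y|^{-(n+2s)}$ and spatial scale $L_M\asymp M^{(m-1)\vartheta}$. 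I need $B_M(1+\cdot,\cdot)$, which has $L^1\cap L^\infty$ datum $B_M(1,\cdot)$, to coincide with the weak dual solution from Theorem~\ref{thm:existence}. I would get this from uniqueness of weak dual solutions for $L^1\cap L^\infty$ data on $\mathbb{R}^n$ \cite{Vazquez2014Barenblatt,BonforteVazquez2015}, since Barenblatt solutions are weak dual solutions there.

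\emph{Step 2 (the datum).} Take $M_k=2^k$ and points $x_k$ with $|x_k|\to\infty$ so fast that $|x_k|\geq 2L_{M_k}$ and $\sum_k M_k|x_k|^{2s-n}<\infty$. Set $u_0=\sum_k B_{M_k}(1,\cdot-x_k)\geq0$. To check $u_0\in L^1_{G^s_{\mathbb{R}^n},0}(\mathbb{R}^n)$, I bound each term's contribution to \eqref{L1Gnorm} by splitting the domain:
\begin{itemize}
\item on $B_1(0)$, by $\|B_{M_k}(1)\|_{L^\infty}\,|B_1|$;
\item on $\{|x-x_k|<|x_k|/2\}$, by $CM_k|x_k|^{2s-n}$;
\item on the remaining region, the tail bound $B_{M_k}(1,y)\lesssim M_k L_{M_k}^{2s}|y|^{-n-2s}$ for $|y|\ge L_{M_k}$ combined with $|x|^{2s-n}$, which again gives $\lesssim M_k|x_k|^{2s-n}$ up to harmless factors.
\end{itemize}
The $B_1(0)$ contribution is also summable once $|x_k|$ is chosen larger. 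This tail bookkeeping is the main obstacle. If it becomes messy, I would instead use a compactly supported datum under each Barenblatt, namely a positive multiple of the indicator of a ball of radius $L_{M_k}$ centred at $x_k$, which lies below $B_{M_k}(1,\cdot-x_k)$ up to a time shift and constant. Then only the middle region matters.

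\emph{Step 3 (comparison and conclusion).} Let $u_0^{(N)}=\sum_{k\le N}$ be the partial sums. They lie in $L^1\cap L^\infty$ and increase to $u_0$, so by the construction in Theorem~\ref{thm:existence} (independence of the approximating sequence, see the remark after it) $u$ is their monotone limit. By the order preservation \eqref{orderingpos} of Proposition~\ref{prop:4.1}, for every $k\le N$ we get $u^{(N)}(t)\ge B_{M_k}(1+t,\cdot-x_k)$, using translation invariance on $\mathbb{R}^n$. Hence $u(t)\geq B_{M_k}(1+t,\cdot-x_k)$ a.e. for all $k$. This gives
\[
\|u(t)\|_{L^\infty}\ge c\,2^{\frac{2sk}{n(m-1)+2s}}(1+t)^{-\alpha}\to\infty
\]
as $k\to\infty$, for every $t>0$, which proves Proposition~\ref{prop:optimality-local-global}.
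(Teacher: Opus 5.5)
Your argument is correct, but it takes a different route from the paper's. The paper fixes one compactly supported bump $g$ with solution $w$ and uses the doubly indexed family $j\,g\bigl((x-x_{k,j})/R_{k,j}\bigr)$, with $R_{k,j}=(\tau_kj^{m-1})^{1/(2s)}$. This is tuned so that each rescaled solution has supremum exactly $jc_*$ at the single time $\tau_k=2^{-k}$. Order preservation then gives $\|u(\tau_k)\|_{L^\infty}=+\infty$, and the B\'enilan--Crandall monotonicity \eqref{eq:prop4.1mon} is needed to propagate this to every $t\geq\tau_k$, hence to every $t>0$.

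Your choice of Barenblatt profiles with masses $M_k\to\infty$ gives the explicit law $\|B_{M}(1+t)\|_{L^\infty}=cM^{\frac{2s}{n(m-1)+2s}}(1+t)^{-\alpha}$. This diverges at every fixed $t$ simultaneously, so a single index suffices and the time-monotonicity step disappears. The price is heavier external input:
\begin{itemize}
\item existence and mass scaling of the fractional Barenblatt;
\item its tail decay, namely a power tail $|y|^{-(n+2s)}$ for $s<1$ (for $s=1$ the profile is compactly supported rather than positive);
\item the identification of $B_M(1+t,\cdot)$ with the mild solution issued from $B_M(1,\cdot)$.
\end{itemize}
The paper's route only needs translation and scaling invariance of the implicit Euler scheme on $\mathbb{R}^n$. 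Its weighted-norm bookkeeping is also trivial, because the data are compactly supported.

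There are two small slips to repair.
\begin{enumerate}
\item On $B_1(0)$, your bound $\|B_{M_k}(1)\|_{L^\infty}|B_1|$ grows with $k$ and does not improve as $|x_k|\to\infty$, so it cannot give summability as written. Use the tail bound there as well. More simply, avoid region-splitting altogether: the weight $\mathbf 1_{B_1(0)}+c_{n,s}|x|^{2s-n}\mathbf 1_{\{|x|\geq1\}}$ is bounded and tends to $0$ at infinity, and $B_{M_k}(1,\cdot)\in L^1$. Dominated convergence then gives $\|B_{M_k}(1,\cdot-x_k)\|_{L^1_{G^s_{\mathbb{R}^n},0}}\to0$ as $|x_k|\to\infty$, so you can choose $x_k$ making the $k$-th contribution at most $2^{-k}$, as the paper does.
\item Your fallback with indicator data points the wrong way. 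A datum lying below the Barenblatt only yields an upper bound for its solution, and for $s<1$ no compactly supported datum dominates a Barenblatt. That variant would therefore need the paper's scaling argument to obtain the lower bound. Your main route does not rely on it.
\end{enumerate}
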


\begin{rem}
Proposition~\ref{prop:optimality-local-global} shows that the uniform-in-pole assumption in Corollary~\ref{cor:global-smoothing-weighted} is necessary for global smoothing.
\end{rem}

\begin{proof}[Proof of Proposition~\ref{prop:optimality-short}]
Let \(B_n\) be the unit-mass Barenblatt solution on \(\mathbb{R}^n\);
see \cite{Vazquez2014Barenblatt} if \(0<s<1\) and
\cite{Vazquez2007} if \(s=1\). Thus
\[
B_n(t,x)
=
t^{-\frac{n}{n(m-1)+2s}}
F_n\left(t^{-\frac{1}{n(m-1)+2s}}x\right),
\]
where
\[
F_n\in L^1(\mathbb{R}^n)\cap L^\infty(\mathbb{R}^n),
\qquad
F_n\geq0,
\qquad
\int_{\mathbb{R}^n}F_n\,dx=1,
\qquad
F_n(0)>0.
\]
Set
\[
v(t,x):=B_n(t+1,x),
\qquad
\phi(x):=B_n(1,x).
\]
For every \(A,R>0\), the scaling invariance of the equation yields the solution
\begin{equation*}
v_{A,R}(t,x)
:=
A\,v\left(A^{m-1}R^{-2s}t,\frac{x}{R}\right),
\qquad
v_{A,R}(0,x)
=
A\phi\left(\frac{x}{R}\right).
\end{equation*}

Since
\[
G^s_{\mathbb{R}^n}(x,y)
=
c_{n,s}|x-y|^{-(n-2s)},
\]
there exist \(R_*\in(0,1)\) and \(c,C>0\), independent of \(A,R\),
such that
\begin{equation}\label{eq:optimality-weighted-small}
cAR^n
\leq
\left\|
A\phi\left(\frac{\cdot}{R}\right)
\right\|_{L^1_{G^s_{\mathbb{R}^n},0}}
\leq
\left\|
A\phi\left(\frac{\cdot}{R}\right)
\right\|_{L^1_{G^s_{\mathbb{R}^n}}}
\leq
CAR^n
\end{equation}
for \(0<R\leq R_*\). Indeed,
\[
\int_{B_1(0)}
A\phi\left(\frac{x}{R}\right)\,dx
=
AR^n\int_{B_{1/R}(0)}\phi(z)\,dz
\sim AR^n,
\]
whereas, uniformly in \(x_0\in\mathbb{R}^n\),
\[
\int_{B_1(x_0)}
A\phi\left(\frac{x}{R}\right)\,dx
+
\int_{\mathbb{R}^n\setminus B_1(x_0)}
A\phi\left(\frac{x}{R}\right)
G^s_{\mathbb{R}^n}(x,x_0)\,dx
\leq
C
\left\|
A\phi\left(\frac{\cdot}{R}\right)
\right\|_{L^1(\mathbb{R}^n)}.
\]

Fix \(\lambda>0\) and take \(A=\lambda R^{-n}\). Then
\[
\|v_{A,R}(0)\|_{L^1(\mathbb{R}^n)}=\lambda,
\]
while both Green-weighted norms are comparable with \(\lambda\),
uniformly as \(R\downarrow0\). Setting
\[
t_{\lambda,R}
:=
\lambda^{1-m}R^{n(m-1)+2s},
\]
we obtain
\begin{equation}\label{eq:optimality-short}
\|v_{A,R}(t_{\lambda,R})\|_{L^\infty(\mathbb{R}^n)}
\geq
v_{A,R}\left(t_{\lambda,R},0\right)
=
B_n(2,0)
t_{\lambda,R}^{-\frac{n}{n(m-1)+2s}}
\lambda^{\frac{2s}{n(m-1)+2s}}.
\end{equation}
For \(K=\overline{B_1(0)}\), the same lower bound holds locally
\begin{equation}\label{eq1234}
\|v_{A,R}(t_{\lambda,R})\|_{L^\infty(K)}
\geq
B_n(2,0)
t_{\lambda,R}^{-\frac{n}{n(m-1)+2s}}
\lambda^{\frac{2s}{n(m-1)+2s}}.
\end{equation}
Furthermore,
\[
t_{\lambda,R}\|v_{A,R}(0)\|_{L^1}^{m-1}
=
R^{n(m-1)+2s},
\]
and, by \eqref{eq:optimality-weighted-small},
\[
t_{\lambda,R}
\|v_{A,R}(0)\|_{L^1_{G^s_{\mathbb{R}^n},0}}^{m-1}
\leq
CR^{n(m-1)+2s},\qquad
t_{\lambda,R}
\|v_{A,R}(0)\|_{L^1_{G^s_{\mathbb{R}^n}}}^{m-1}
\leq
CR^{n(m-1)+2s}.
\]
Hence \(t_{\lambda,R}\) lies in the short-time range of all four
estimates for \(R\) sufficiently small.

Since \(t_{\lambda,R}\downarrow0\), while the relevant initial-data
norms remain comparable with \(\lambda\),
\eqref{eq:optimality-short}--\eqref{eq1234} show that the time power $n/(n(m-1)+2s)$
cannot be decreased. Once this power is fixed, varying \(\lambda\) shows that the norm power $2s/(n(m-1)+2s)$
cannot be changed either. Thus all four short-time estimates are
uniformly sharp. 
\end{proof}

\begin{proof}[Proof of Proposition~\ref{prop:optimality-weighted}]
Let $\phi(x):=B_n(1,x)$, as in the proof of Proposition~\ref{prop:optimality-short}. There exist \(R_*>1\) and \(c,C>0\), independent of \(A,R\), such that
\begin{equation}\label{eq:optimality-weighted-large}
cAR^{2s}
\leq
\left\|
A\phi\left(\frac{\cdot}{R}\right)
\right\|_{L^1_{G^s_{\mathbb{R}^n},0}}
\leq
\left\|
A\phi\left(\frac{\cdot}{R}\right)
\right\|_{L^1_{G^s_{\mathbb{R}^n}}}
\leq
CAR^{2s}
\end{equation}
for \(R\geq R_*\). Indeed,
\[
\int_{\mathbb{R}^n\setminus B_1(0)}
A\phi\left(\frac{x}{R}\right)
G^s_{\mathbb{R}^n}(x,0)\,dx
=
c_{n,s}AR^{2s}
\int_{|z|\geq1/R}
\frac{\phi(z)}{|z|^{n-2s}}\,dz
\asymp AR^{2s},
\]
whereas
\[
\sup_{z_0\in\mathbb{R}^n}
\int_{\mathbb{R}^n}
\frac{\phi(z)}{|z-z_0|^{n-2s}}\,dz
<+\infty
\]
gives the corresponding upper bound.

Fix \(\lambda>0\) and take \(A=\lambda R^{-2s}\). By
\eqref{eq:optimality-weighted-large}, both Green-weighted norms of the
initial datum are comparable with \(\lambda\). Setting
\[
t_{\lambda,R}
:=
\lambda^{1-m}R^{2sm},
\]
we obtain
\begin{equation*}
\|v_{A,R}(t_{\lambda,R})\|_{L^\infty(\mathbb{R}^n)}
\geq
B_n(2,0)t_{\lambda,R}^{-1/m}\lambda^{1/m},
\end{equation*}
and, for \(K=\overline{B_1(0)}\),
\begin{equation*}
\|v_{A,R}(t_{\lambda,R})\|_{L^\infty(K)}
\geq
B_n(2,0)t_{\lambda,R}^{-1/m}\lambda^{1/m}.
\end{equation*}
Also, by \eqref{eq:optimality-weighted-large},
\[
t_{\lambda,R}
\|v_{A,R}(0)\|_{L^1_{G^s_{\mathbb{R}^n},0}}^{m-1}
\geq
cR^{2sm},
\qquad
t_{\lambda,R}
\|v_{A,R}(0)\|_{L^1_{G^s_{\mathbb{R}^n}}}^{m-1}
\geq
cR^{2sm}.
\]
Hence these times lie in the long-time range for \(R\) sufficiently
large. Since \(t_{\lambda,R}\to+\infty\), the decay power \(1/m\)
cannot be increased, and, once it is fixed, varying \(\lambda\) shows
that the norm power \(1/m\) cannot be changed either. Thus the
long-time estimates in Theorem~\ref{thm:smoothing2} and
Corollary~\ref{cor:global-smoothing-weighted} are uniformly sharp.

We now establish the trajectory-wise sharpness asserted in Proposition~\ref{prop:optimality-weighted} by constructing a single solution that attains both optimal time rates along suitable sequences. Let
\[
0\not\equiv g\in C_c^\infty(B_1(0)),
\qquad
g\geq0,
\]
and let \(w\) be the corresponding solution on \(\mathbb{R}^n\). Set
\[
c_*:=\|w(1)\|_{L^\infty(\mathbb{R}^n)}>0.
\]
The datum
\[
g_{a,R,x_0}(x)
:=
a\,g\left(\frac{x-x_0}{R}\right)
\]
generates the solution
\[
v_{a,R,x_0}(t,x)
=
a\,w\left(
a^{m-1}R^{-2s}t,
\frac{x-x_0}{R}
\right),
\]
so that, at \(t=a^{1-m}R^{2s}\),
\[
\|v_{a,R,x_0}(t)\|_{L^\infty(\mathbb{R}^n)}
=
ac_*.
\]

Let $x_k^\pm\in\mathbb{R}^n$ be translation parameters, to be chosen below. For \(r_k:=2^{-k}\) and \(a_k^-:=r_k^{-n}\), set
\[
\tau_k
:=
r_k^{n(m-1)+2s}\longrightarrow0.
\]
Then
\[
\left\|
v_{a_k^-,r_k,x_k^-}(\tau_k)
\right\|_{L^\infty}
=
c_*\tau_k^{-\frac{n}{n(m-1)+2s}}.
\]
For \(R_k:=2^k\) and \(a_k^+:=R_k^{-2s}\), set
\[
T_k:=R_k^{2sm}\longrightarrow+\infty.
\]
Then
\[
\left\|
v_{a_k^+,R_k,x_k^+}(T_k)
\right\|_{L^\infty}
=
c_*T_k^{-1/m}.
\]

The two families of initial bumps have uniformly bounded
\(L^1_{G^s_{\mathbb{R}^n}}\)-norm. For the first one this follows from
their uniformly bounded \(L^1\)-norm. For the second one it follows,
after scaling, from
\[
\sup_{z_0\in\mathbb{R}^n}
\int_{\mathbb{R}^n}
\frac{g(z)}{|z-z_0|^{n-2s}}\,dz
<+\infty.
\]

We shall use the elementary fact that any sequence
\((h_j)\subset L^1_{G^s_{\mathbb{R}^n}}(\mathbb{R}^n)\) of
nonnegative compactly supported functions with uniformly bounded
global Green-weighted norm can be translated sufficiently far apart so
that their sum still belongs to
\(L^1_{G^s_{\mathbb{R}^n}}(\mathbb{R}^n)\). Indeed, choose \(L_j\geq2\) so large that
\[
\operatorname{supp}h_j\subset B_{L_j}(0),
\qquad
c_{n,s}\|h_j\|_{L^1}L_j^{-(n-2s)}\leq2^{-j},
\]
and choose translations \(y_j\) such that the balls \(B_{2L_j}(y_j)\)
are pairwise disjoint. For every pole \(y\), at most one bump has
\(y\in B_{2L_j}(y_j)\); its contribution is uniformly bounded, while
all the others contribute at most \(\sum_j2^{-j}\).
Applying this observation to the preceding two families, we can choose
the centers \(x_k^\pm\) so that
\[
u_0
:=
\sum_{k=1}^\infty
g_{a_k^-,r_k,x_k^-}
+
\sum_{k=1}^\infty
g_{a_k^+,R_k,x_k^+}
\in
L^1_{G^s_{\mathbb{R}^n}}(\mathbb{R}^n).
\]
Let \(u\) be the corresponding weak dual solution obtained by monotone
approximation. By order preservation,
\[
u(t)\geq v_{a_k^-,r_k,x_k^-}(t),
\qquad
u(t)\geq v_{a_k^+,R_k,x_k^+}(t),
\]
and hence
\[
\tau_k^{\frac{n}{n(m-1)+2s}}
\|u(\tau_k)\|_{L^\infty(\mathbb{R}^n)}
\geq c_*,
\qquad
T_k^{1/m}
\|u(T_k)\|_{L^\infty(\mathbb{R}^n)}
\geq c_*.
\]
The corresponding upper bounds follow from
Corollary~\ref{cor:global-smoothing-weighted}.
Therefore, using also the Bénilan–Crandall monotonicity \eqref{prop:4.1} and the fact that both $(\tau_k)_k$ and $(T_k)_k$ are geometric sequences, the preceding lower bounds extend to all sufficiently small and sufficiently large times, respectively. Together with the upper bounds of Corollary \ref{cor:global-smoothing-weighted}, this yields
\[
0<\liminf_{t\to0}
t^{\frac{n}{n(m-1)+2s}}
\|u(t)\|_{L^\infty(\mathbb{R}^n)}
\leq\limsup_{t\to0}
t^{\frac{n}{n(m-1)+2s}}
\|u(t)\|_{L^\infty(\mathbb{R}^n)}
<+\infty
\]
and
\[
0<
\liminf_{t\to+\infty}
t^{1/m}
\|u(t)\|_{L^\infty(\mathbb{R}^n)}
\leq
\limsup_{t\to+\infty}
t^{1/m}
\|u(t)\|_{L^\infty(\mathbb{R}^n)}
<+\infty.
\]
Thus both time exponents in
Corollary~\ref{cor:global-smoothing-weighted} are sharp along one and
the same solution.
\end{proof}

\begin{proof}[Proof of Proposition~\ref{prop:optimality-refined-L1}]
For the product manifold $M=\mathbb{R}^k\times N$ introduced in the statement, functions independent of the $N$-variable satisfy
\[
(-\Delta_M)^s w = (-\Delta_{\mathbb{R}^k})^s w.
\]
For \(0<s<1\) this follows from factorization of the heat semigroup and
conservativity on \(N\), while for \(s=1\) it is immediate from the
product formula for the Laplacian.

Let \(B_{k,\mathcal M}\) be the Euclidean Barenblatt solution of mass
\(\mathcal M>0\). Then
\[
B_{k,\mathcal M}(t,x)
=
\mathcal M^{\frac{2s}{k(m-1)+2s}}
B_{k,1}\left(
t,
\mathcal M^{-\frac{m-1}{k(m-1)+2s}}x
\right),
\]
and therefore
\begin{equation*}
B_{k,\mathcal M}(t,0)
=
c_k
t^{-\frac{k}{k(m-1)+2s}}
\mathcal M^{\frac{2s}{k(m-1)+2s}},
\qquad
c_k>0.
\end{equation*}

Fix \(\varepsilon>0\) and set
\[
u(t,(x,p))
:=
B_{k,\mathcal M}(t+\varepsilon,x).
\]
This is a weak dual solution on \(M\) with
\[
u(0)\in L^1(M)\cap L^\infty(M),
\qquad
\|u(0)\|_{L^1(M)}
=
\mathcal M\operatorname{Vol}(N).
\]
If \(K\Subset M\) is a compact neighborhood of a point \((0,p)\), then
\begin{equation}\label{eq:optimality-product-lower}
\|u(t)\|_{L^\infty(M)}
\geq
\|u(t)\|_{L^\infty(K)}
\geq
c\,
t^{-\frac{k}{k(m-1)+2s}}
\mathcal M^{\frac{2s}{k(m-1)+2s}}
\end{equation}
for all sufficiently large \(t\). By Example~\ref{exm:last},
\[
\operatorname{Vol}(B_R(x_0))\asymp R^k
\qquad (R\to\infty),
\]
uniformly in \(x_0\). Hence
\[
h_o(R)\asymp h(R)\asymp R^{2s},
\qquad
F(R)\asymp R^k,\qquad
\theta_o(R)\asymp\theta(R)
\asymp
R^{k+\frac{2s}{m-1}}.
\]
Both refined estimates therefore reduce, for large times, to
\[
\|u(t)\|_{L^\infty}
\leq
C
t^{-\frac{k}{k(m-1)+2s}}
\|u(0)\|_{L^1(M)}^{\frac{2s}{k(m-1)+2s}}.
\]
Comparison with \eqref{eq:optimality-product-lower} proves
trajectory-wise sharpness of the time exponent. Varying
\(\mathcal M\) then proves optimality of the \(L^1\)-norm exponent.
\end{proof}

\begin{proof}[Proof of Corollary~\ref{cor:optimality-refined-L1}]
The conclusion follows from Proposition~\ref{prop:optimality-refined-L1} by taking $k=n$ and $N$ to be a single point.
\end{proof}

\begin{proof}[Proof of Proposition~\ref{prop:optimality-local-global}]
Let
\[
0\not\equiv g\in C_c^\infty(B_1(0)),
\qquad
g\geq0,
\]
let \(w\) be the corresponding solution, and set
\[
c_*:=\|w(1)\|_{L^\infty(\mathbb{R}^n)}>0,
\qquad
\tau_k:=2^{-k}.
\]
For \(k,j\geq1\), let
\[
R_{k,j}
:=
\left(\tau_kj^{m-1}\right)^{1/(2s)},
\qquad
g_{k,j}(x)
:=
j\,g\left(\frac{x-x_{k,j}}{R_{k,j}}\right).
\]
The corresponding solution is
\[
v_{k,j}(t,x)
=
j\,w\left(
j^{m-1}R_{k,j}^{-2s}t,
\frac{x-x_{k,j}}{R_{k,j}}
\right),
\]
so that
\[
\|v_{k,j}(\tau_k)\|_{L^\infty(\mathbb{R}^n)}
=
jc_*.
\]

Since
\[
G^s_{\mathbb{R}^n}(x,0)
=
c_{n,s}|x|^{-(n-2s)},
\]
the centers \(x_{k,j}\) can be chosen sufficiently far from the origin
and from one another so that the supports are pairwise disjoint and
\[
\|g_{k,j}\|_{L^1_{G^s_{\mathbb{R}^n},0}}
\leq
2^{-k-j}.
\]
Hence
\[
u_0:=\sum_{k,j\geq1}g_{k,j}
\in
L^1_{G^s_{\mathbb{R}^n},0}(\mathbb{R}^n).
\]
By order preservation,
\[
u(\tau_k)\geq v_{k,j}(\tau_k)
\]
for every \(j\), and therefore
\[
\|u(\tau_k)\|_{L^\infty(\mathbb{R}^n)}
=
+\infty
\qquad\text{for every }k.
\]
Since \(\tau_k\downarrow0\), the Bénilan--Crandall monotonicity
\eqref{eq:prop4.1mon} yields
\[
\|u(t)\|_{L^\infty(\mathbb{R}^n)}
=
+\infty
\qquad\text{for every }t>0.
\]
This completes the proof.
\end{proof}

\section{The Main Results in the Range $0<s<1/2$}
\label{subsec:subcritical-range}

In the range $0<s<1/2$, Assumption \ref{A2} and
$s$-nonparabolicity follow directly from the geometry, as we show below in Proposition \ref{prop:subcritical-A2}. Hence, in such range our results hold without assumptions. We state them in the present Section, together with the modifications needed in the proofs.

\begin{prop}\label{prop:subcritical-A2}
Let $M$ be a complete, noncompact, $n$-dimensional Riemannian
manifold without boundary, with $\mathrm{Ric}\geq0$, and let
$0<s<1/2$. Then Assumption \ref{A2} holds with
\[
R_0=1,
\qquad
f(R)=R^{2-2s}
\qquad\textrm{for all } R\geq1.
\]
In particular, $M$ is $s$-nonparabolic.
\end{prop}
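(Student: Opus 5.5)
With $f(R)=R^{2-2s}$ we get $R^{2s-1}f(R)=R$. This is increasing and continuous on $[1,\infty)$, and
\[
\beta=\int_1^\infty t^{2s-2}\,dt=\frac{1}{1-2s}<\infty
\]
exactly because $s<1/2$. So \eqref{betacondition} and the structural requirements on $f$ are immediate. What remains is to show that \eqref{fcondition} holds with $R_0=1$ and some $\gamma$ independent of $x$, namely
\[
\frac{R_2}{\mathrm{Vol}(B_{R_2}(x))}\leq \gamma\,\frac{R_1}{\mathrm{Vol}(B_{R_1}(x))},\qquad 1\le R_1\le R_2,\ x\in M.
\]
Equivalently, $\mathrm{Vol}(B_{R_2}(x))/\mathrm{Vol}(B_{R_1}(x))\geq \gamma^{-1}R_2/R_1$. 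This is a uniform lower bound of at least linear volume growth, which is the well-known property of complete noncompact manifolds with $\mathrm{Ric}\ge0$ (Calabi--Yau). I need it in relative, scale-invariant form.

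To prove the relative linear growth, fix $x$ and $r\ge R_1$. Since $M$ is noncompact and complete, there is a ray from $x$, so for every $r$ there is a point $y$ with $d(x,y)=r+R_1$ (any $r$ will do). For $R_1\le R_2$ I set $k=\lfloor R_2/(4R_1)\rfloor$, assuming first that $R_2\ge 4R_1$. Along a ray $\sigma$ from $x$ I pick points $y_j=\sigma(3jR_1)$ for $j=1,\dots,k$. The balls $B_{R_1}(y_j)$ are pairwise disjoint and contained in $B_{R_2}(x)$. Since $B_{R_1}(x)\subset B_{(3j+1)R_1}(y_j)$, Bishop--Gromov \eqref{bishopgromov} centered at $y_j$ gives
\[
\mathrm{Vol}(B_{R_1}(y_j))\ge (3j+1)^{-n}\,\mathrm{Vol}(B_{R_1}(x)).
\]
However, summing $(3j+1)^{-n}$ only gives a bounded sum, so this naive step fails. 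This is the main obstacle.

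To get around it I will use the genuine Calabi--Yau argument, in the form of Yau's lemma: for $\mathrm{Ric}\ge0$, $p\in M$, and $d(p,q)=\rho\ge 2r$,
\[
\mathrm{Vol}(B_{\rho+r}(p))-\mathrm{Vol}(B_{\rho-r}(p))\;\ge\; c\,\frac{r}{\rho}\,\mathrm{Vol}(B_{\rho-r}(p))\quad\text{or}\quad\mathrm{Vol}(B_r(q))\ \ge\ c(n)\frac{r}{\rho}\mathrm{Vol}(B_{\rho}(q)).
\]
Concretely, the proof uses Bishop--Gromov centered at $q$: $\mathrm{Vol}(B_{\rho+r}(q)\setminus\ldots)$, comparing $B_r(q)$ with the annulus $B_{\rho+r}(q)\setminus B_{\rho-r}(q)$, which contains $B_{\dots}$. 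Rather than that route, the cleaner path I will take is the scaled statement: for $\mathrm{Ric}\ge0$ and $d(x,q)=\rho\ge r$, the annulus $A=B_{\rho+r}(x)\setminus B_{\rho-r}(x)$ satisfies $\mathrm{Vol}(B_{\rho+r}(x)\setminus B_{\rho-r}(x))\ge c\,(r/\rho)\mathrm{Vol}(B_\rho(x))$. This follows from Bishop--Gromov in polar coordinates at $x$, since the angular density of annuli relative to the total is at least that of the Euclidean model, $\approx r/\rho$, along directions that extend, and there are enough such directions thanks to the existence of the ray. Granting this annulus estimate, I apply it on dyadic shells $\rho=2^iR_1$ with $r=\rho/2$. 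Each step multiplies volume by at least $1+c$ without loss, but that gives polynomial, not linear, growth. For the linear statement I will instead sum the annulus estimate with fixed $r=R_1$ over $\rho=R_1,3R_1,5R_1,\dots$ up to $R_2$. Each disjoint annulus contributes at least $c(R_1/\rho)\mathrm{Vol}(B_\rho(x))$, and by Bishop--Gromov $\mathrm{Vol}(B_\rho(x))\ge(\rho/R_2)^n\mathrm{Vol}(B_{R_2}(x))$. This again has the wrong direction, so I will run the argument as a differential inequality. Setting $V(\rho)=\mathrm{Vol}(B_\rho(x))$, the annulus estimate gives $V(\rho+r)-V(\rho-r)\ge c\,(r/\rho)V(\rho)$. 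Taking $r=\rho/2$ yields $V(3\rho/2)\ge(1+c/2)V(\rho/2)$, and iterating from $R_1$ to $R_2$ gives $V(R_2)/V(R_1)\ge (R_2/R_1)^{\epsilon}$ for some $\epsilon=\epsilon(n)>0$. If $\epsilon\ge1$ is not reached, I fall back on the sharp Calabi--Yau form $V(R_2)\ge c\,(R_2/R_1)V(R_1)$, which holds in exactly this relative form (it is proved via the Laplacian comparison $\Delta d_q\le (n-1)/d_q$ integrated against the Busemann-type function along the ray). I will cite this, for instance from Yau's or Petersen's treatment. The case $R_2\le 4R_1$ is trivial, with $\gamma\ge4$.

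For $s$-nonparabolicity, the linear lower bound gives $\mathrm{Vol}(B_t(x))\ge c\,t\,\mathrm{Vol}(B_1(x))$ for $t\ge1$. Hence
\[
\int_1^\infty \frac{t^{2s-1}}{\mathrm{Vol}(B_t(x))}\,dt\le C\int_1^\infty t^{2s-2}\,dt<\infty,
\]
and Theorem \ref{thm:charnonpar} concludes.
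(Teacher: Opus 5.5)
Once the abandoned detours are stripped away, your proposal is the paper's proof: with $R^{2s-1}f(R)=R$ you reduce \eqref{fcondition} to the scale-invariant Calabi--Yau bound $\mathrm{Vol}(B_{R_2}(x))\geq c_n\frac{R_2}{R_1}\mathrm{Vol}(B_{R_1}(x))$, which the paper also simply cites, you check $\beta=\frac{1}{1-2s}$, and you obtain $s$-nonparabolicity from Theorem~\ref{thm:charnonpar}. The exploratory material should be dropped, in particular the ``Yau's lemma'' display and the ``angular density'' annulus claim (Bishop--Gromov centered at $x$ bounds relative annulus volume from above, not below). If you want to avoid the citation, the route you started and then abandoned already works. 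Take a ray $\sigma$ from $x$, set $q=\sigma(t)$ with $t\geq 2r$, and apply Bishop--Gromov for annuli centered at $q$, using $B_r(x)\subset B_{t+r}(q)\setminus B_{t-r}(q)$ and $B_{t-r}(q)\subset B_{2t}(x)$. This gives $\mathrm{Vol}(B_{2t}(x))\geq c_n\frac{t}{r}\mathrm{Vol}(B_r(x))$ uniformly in $x$.
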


\begin{proof}
By the scale-invariant Calabi--Yau volume growth estimate
\cite{Yau1976,PLi}, there exists $c_n>0$ such that
\begin{equation}\label{eq:subcritical-reverse-doubling}
\operatorname{Vol}(B_{R_2}(x))
\geq
c_n\frac{R_2}{R_1}
\operatorname{Vol}(B_{R_1}(x))
\end{equation}
for every $x\in M$ and $1\leq R_1\leq R_2$. Since $R^{2s-1}f(R)=R$, \eqref{eq:subcritical-reverse-doubling} gives
\[
\frac{R_2^{2s-1}f(R_2)}
{\operatorname{Vol}(B_{R_2}(x))}
\leq
\frac{1}{c_n}
\frac{R_1^{2s-1}f(R_1)}
{\operatorname{Vol}(B_{R_1}(x))}
\]
for every $x\in M$ and $1\leq R_1\leq R_2$, which is \eqref{fcondition}. One also immediately sees that \eqref{betacondition} holds, since $s\in(0,1/2)$. Thus
Assumption \ref{A2} is satisfied, and
$s$-nonparabolicity follows from a result of \cite{GuHuangSun} (see Theorem~\ref{thm:charnonpar}), since for every $x\in M$ by \eqref{eq:subcritical-reverse-doubling}
$$
\int_{1}^{\infty} \frac{t^{2s-1}}{\mathrm{Vol}(B_t(x))}\,dt\leq \int_{1}^{\infty} \frac{t^{2s-2}}{c_n\mathrm{Vol}(B_1(x))}\,dt\leq\frac{1}{c_n(1-2s)\mathrm{Vol}(B_1(x))}<+\infty\,.
$$

\end{proof}

We are now ready to state and prove our results in the case $s\in(0,1/2)$.

\begin{cor}\label{cor:subcritical-existence}
Let $M$ be a complete, noncompact, $n$-dimensional Riemannian
manifold without boundary, with $\mathrm{Ric}\geq0$, let
$0<s<1/2$ and $o\in M$. Then, for
every nonnegative initial datum $u_0\in L^1_{G_M^s,o}(M)$
there exists a weak dual solution to \eqref{maineq}.
\end{cor}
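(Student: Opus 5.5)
This corollary is a direct combination of Proposition~\ref{prop:subcritical-A2} with Theorem~\ref{thm:existence}. The only hypothesis of Theorem~\ref{thm:existence} that is not explicitly assumed in Corollary~\ref{cor:subcritical-existence} is $s$-nonparabolicity of $M$. Everything else is already in place: $M$ is complete, noncompact and without boundary, $\mathrm{Ric}\geq0$, $s\in(0,1/2)\subset(0,1]$, and $o\in M$ is fixed.

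The first step is to check the dimensional condition $n>2s$. Since $s<1/2$, we have $2s<1\leq n$, so $n>2s$ holds automatically for every $n\geq1$. There is one small point to address. If $n=1$, a complete noncompact manifold with $\mathrm{Ric}\geq0$ is isometric to $\mathbb{R}$ or a half-line, and the boundaryless case is $\mathbb{R}$. Proposition~\ref{prop:subcritical-A2} and the Calabi--Yau estimate are still stated for all $n$, and in $\mathbb{R}$ the linear volume growth makes the integral in Theorem~\ref{thm:charnonpar} finite because $2s-2<-1$. So no separate treatment is needed.

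The second step is to invoke Proposition~\ref{prop:subcritical-A2}. It shows that $M$ is $s$-nonparabolic via Theorem~\ref{thm:charnonpar}: the reverse-doubling bound \eqref{eq:subcritical-reverse-doubling} gives
\[
\int_1^\infty \frac{t^{2s-1}}{\mathrm{Vol}(B_t(x))}\,dt<\infty .
\]
Hence the minimal fractional Green function $G_M^s$ exists and is finite and positive off the diagonal. In particular, the weighted space $L^1_{G_M^s,o}(M)$ in the statement is well defined.

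The third step is to apply Theorem~\ref{thm:existence} to the nonnegative datum $u_0\in L^1_{G_M^s,o}(M)$. This produces a weak dual solution in the sense of Definition~\ref{defn:wds}, obtained by monotone approximation, which concludes the proof. There is no substantive obstacle. The only point needing care is to confirm that the statement makes sense before nonparabolicity is known, since $L^1_{G_M^s,o}(M)$ presupposes $G_M^s$. Proposition~\ref{prop:subcritical-A2} settles this unconditionally in the range $0<s<1/2$.
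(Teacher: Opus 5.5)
Your proposal is correct and matches the paper's proof. Proposition~\ref{prop:subcritical-A2} gives $s$-nonparabolicity, via the Calabi--Yau reverse-doubling estimate and Theorem~\ref{thm:charnonpar}, and the conclusion then follows immediately from Theorem~\ref{thm:existence}; your extra checks that $n>2s$ is automatic and that $L^1_{G_M^s,o}(M)$ is well defined are harmless details the paper leaves implicit.
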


\begin{proof}
By Proposition~\ref{prop:subcritical-A2}, the manifold is
$s$-nonparabolic. The conclusion is therefore an immediate
consequence of Theorem~\ref{thm:existence}.
\end{proof}

The same geometric input also makes the smoothing results of
Section~\ref{S3} more explicit in the range $0<s<1/2$.
By Proposition~\ref{prop:subcritical-A2}, $s$-nonparabolicity is
automatic under the sole assumption $\operatorname{Ric}\geq0$, so the local
smoothing results of Theorems~\ref{thm:smoothing} and
\ref{thm:smoothing2} apply without any further geometric hypotheses. If Assumption~\ref{A1} is also imposed, Proposition~\ref{prop:subcritical-A2}
automatically yields Assumption~\ref{A2}, so the global smoothing results of
Corollaries~\ref{cor:global-smoothing-L1} and
\ref{cor:global-smoothing-weighted} apply as well.
The scale-invariant Calabi--Yau volume growth estimate also makes
the refined $L^1$-long-time decay explicit.

\begin{cor}[Local smoothing in the range $0<s<1/2$]
\label{cor:subcritical-local-smoothing}
Let $M$ be a complete, noncompact, $n$-dimensional Riemannian
manifold without boundary, with $\operatorname{Ric}\geq0$, and let $0<s<1/2$.
Then the following local smoothing estimates hold.
\begin{enumerate}
\item
For every nonempty compact set $K\Subset M$ there exists $C_K>0$ such that, if $u_0\in L^1(M)$ is nonnegative,
\[
\|u(t)\|_{L^\infty(K)}
\leq
C_K
t^{-\frac{n}{n(m-1)+2s}}
\|u_0\|_{L^1(M)}^{\frac{2s}{n(m-1)+2s}}\qquad\textrm{ for every }
0<t\leq
\|u_0\|_{L^1(M)}^{-(m-1)},
\]
whereas
\[
\|u(t)\|_{L^\infty(K)}
\leq
C_K
t^{-\frac{1}{m-1+2s}}
\|u_0\|_{L^1(M)}^{\frac{2s}{m-1+2s}}\qquad\textrm{ for every }
t\geq
\|u_0\|_{L^1(M)}^{-(m-1)}.
\]

\item
Fix $o\in M$. For every nonempty compact set $K\Subset M$, there exists
$C_{K,o}>0$ such that if $u_0\in L^1_{G_M^s,o}(M)$ is nonnegative
\[
\|u(t)\|_{L^\infty(K)}
\leq
C_{K,o}
t^{-\frac{n}{n(m-1)+2s}}
\|u_0\|_{L^1_{G_M^s,o}(M)}^{
\frac{2s}{n(m-1)+2s}}\qquad\textrm{ for every }
0<t\leq
\|u_0\|_{L^1_{G_M^s,o}(M)}^{-(m-1)},
\]
and
\[
\|u(t)\|_{L^\infty(K)}
\leq
C_{K,o}
t^{-1/m}
\|u_0\|_{L^1_{G_M^s,o}(M)}^{1/m}\qquad\textrm{ for every }
t\geq
\|u_0\|_{L^1_{G_M^s,o}(M)}^{-(m-1)}.
\]
\end{enumerate}
\end{cor}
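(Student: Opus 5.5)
Most of Corollary~\ref{cor:subcritical-local-smoothing} follows directly from results already stated, once Proposition~\ref{prop:subcritical-A2} is in hand. That proposition gives $s$-nonparabolicity under the sole assumption $\operatorname{Ric}\geq0$ when $0<s<1/2$. So Theorem~\ref{thm:smoothing} and Theorem~\ref{thm:smoothing2} apply verbatim. The short-time bound in part (1) is the short-time estimate of Theorem~\ref{thm:smoothing}. Both bounds in part (2) are exactly the short- and long-time estimates of Theorem~\ref{thm:smoothing2}. The case $u_0\equiv0$ is covered by those theorems as well. The only genuinely new content is the long-time bound in part (1), with exponent $t^{-1/(m-1+2s)}$. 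I will derive it from the refined local large-time estimate of Theorem~\ref{thm:smoothing}, or more conveniently from the local master estimate of Remark~\ref{rem111}.

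\textbf{Step 1: bound $h_o$.} The plan is to evaluate $h_o$ and $\theta_o$ using the Calabi--Yau bound \eqref{eq:subcritical-reverse-doubling} at the pole $o$. For $r\geq R\geq1$ it gives $\operatorname{Vol}(B_r(o))\geq c_n (r/R)\operatorname{Vol}(B_R(o))$. Hence
\[
\operatorname{Vol}(B_R(o))\int_R^\infty \frac{r^{2s-1}}{\operatorname{Vol}(B_r(o))}\,dr\leq \frac{R}{c_n}\int_R^\infty r^{2s-2}\,dr=\frac{R^{2s}}{c_n(1-2s)},
\]
so $h_o(R)\leq C R^{2s}$ for $R\geq1$, with $C=C(n,s)$.

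\textbf{Step 2: bound $\theta_o$ from below.} Using $h_o(R)\geq R^{2s}/(2s)$ and the same Calabi--Yau bound with $R_1=1$, $R_2=R$, we get
\[
\theta_o(R)\geq c\,\operatorname{Vol}(B_1(o))\, R^{1+\frac{2s}{m-1}} =: \widehat\theta(R).
\]

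\textbf{Step 3: plug into the master estimate.} Fix $t\geq\|u_0\|_{L^1}^{-(m-1)}$ and write $X:=t^{1/(m-1)}\|u_0\|_{L^1(M)}\geq1$. I choose $R$ with $\widehat\theta(R)=X$, i.e.
\[
R\asymp X^{\frac{m-1}{m-1+2s}}.
\]
The bracket in Remark~\ref{rem111} is then at most $(1+X/\theta_o(R))^{1/m}\leq 2^{1/m}$. The bound becomes
\[
\|u(t)\|_{L^\infty(K)}\leq C t^{-\frac1{m-1}} R^{\frac{2s}{m-1}}\asymp t^{-\frac1{m-1}} X^{\frac{2s}{m-1+2s}} = C\, t^{-\frac1{m-1+2s}}\|u_0\|_{L^1}^{\frac{2s}{m-1+2s}},
\]
since $-\frac1{m-1}+\frac{2s}{(m-1)(m-1+2s)}=-\frac{1}{m-1+2s}$. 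Constants depending on $\operatorname{Vol}(B_1(o))$ and on $o$ are absorbed into $C_K$.

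\textbf{Step 4: the admissibility constraint $R\geq R_{K,o}$.} This is the main obstacle. The choice of $R$ above is admissible only when $X\gtrsim \widehat\theta(R_{K,o})$. For $1\leq X\lesssim\widehat\theta(R_{K,o})$, I would instead take $R=R_{K,o}$ in the master estimate. This gives
\[
\|u(t)\|_{L^\infty(K)}\leq C t^{-1/(m-1)}\,C(K)\,(1+X)^{1/m}\leq C'_K\, t^{-1/(m-1)}.
\]
On this bounded range of $X$, the quantity $X^{2s/(m-1+2s)}$ is comparable to $1$ (it lies between $1$ and a $K$-dependent constant). Therefore $t^{-1/(m-1)}\leq C''_K\, t^{-1/(m-1+2s)}\|u_0\|^{2s/(m-1+2s)}$ there, and enlarging $C_K$ covers the whole range $t\geq\|u_0\|_{L^1}^{-(m-1)}$.
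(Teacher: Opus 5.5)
Your proposal is correct and follows essentially the paper's proof: the Calabi--Yau bound \eqref{eq:subcritical-reverse-doubling} gives $h_o(R)\lesssim R^{2s}$ and $\theta_o(R)\gtrsim \operatorname{Vol}(B_1(o))\,R^{1+\frac{2s}{m-1}}$, which are then inserted into the refined/master estimate of Theorem~\ref{thm:smoothing}, while the other three bounds follow directly from Theorems~\ref{thm:smoothing} and~\ref{thm:smoothing2} once $s$-nonparabolicity is known. The only minor difference is in the intermediate range $1\leq t^{\frac{1}{m-1}}\|u_0\|_{L^1(M)}\lesssim\theta_o(R_{K,o})$: the paper uses the basic $t^{-1/m}$ bound together with $\frac{1}{m-1+2s}>\frac1m$, whereas you take $R=R_{K,o}$ in the master estimate to get a $t^{-\frac{1}{m-1}}$ bound and then need only $X\geq1$, which works equally well provided $o$ is fixed in terms of $K$ (e.g.\ $o\in K$, as in the paper).
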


\begin{proof}
The short-time estimate in part~(1) and both estimates in part~(2)
follow directly from Theorems~\ref{thm:smoothing} and
\ref{thm:smoothing2}, since Proposition~\ref{prop:subcritical-A2}
implies that $M$ is $s$-nonparabolic.

We prove the improved long-time estimate in part~(1).
Fix a nonempty compact set $K\Subset M$ and choose $o\in K$.
With the notation of Theorem~\ref{thm:smoothing}, for $R\geq1$,
\[
h_o(R)
=
\operatorname{Vol}(B_R(o))
\int_R^{+\infty}
\frac{r^{2s-1}}{\operatorname{Vol}(B_r(o))}\,dr
+
\frac{R^{2s}}{2s}.
\]
By~\eqref{eq:subcritical-reverse-doubling}, for every
$r\geq R\geq1$,
\[
\operatorname{Vol}(B_r(o))
\geq
c_n\frac{r}{R}\operatorname{Vol}(B_R(o)).
\]
Therefore
\[
h_o(R)
\leq
\frac{R}{c_n}
\int_R^{+\infty}r^{2s-2}\,dr
+
\frac{R^{2s}}{2s}
=
\left(
\frac{1}{c_n(1-2s)}
+
\frac{1}{2s}
\right)R^{2s}.
\]
On the other hand,
\[
h_o(R)\geq\frac{R^{2s}}{2s}.
\]
Again by~\eqref{eq:subcritical-reverse-doubling},
\[
\operatorname{Vol}(B_R(o))
\geq
c_n R\operatorname{Vol}(B_1(o)),
\qquad R\geq1.
\]
Hence
\[
\theta_o(R)
=
\operatorname{Vol}(B_R(o))
h_o(R)^{\frac{1}{m-1}}
\geq
c_{n,s,m}\operatorname{Vol}(B_1(o))
R^{1+\frac{2s}{m-1}},
\qquad R\geq1.
\]
Consequently,
\[
\theta_o^{-1}(r)
\leq
C_o
r^{\frac{m-1}{m-1+2s}},
\qquad
r\geq\theta_o(1).
\]
Using also the preceding upper bound for $h_o$, the refined estimate
in Theorem~\ref{thm:smoothing} gives
\[
\begin{aligned}
\|u(t)\|_{L^\infty(K)}
&\leq
C
t^{-\frac{1}{m-1}}
\left[
h_o\!\left(
\theta_o^{-1}\!\left(
t^{\frac{1}{m-1}}\|u_0\|_{L^1(M)}
\right)
\right)
\right]^{\frac{1}{m-1}}
\\
&\leq
C
t^{-\frac{1}{m-1}}
\left(
t^{\frac{1}{m-1}}\|u_0\|_{L^1(M)}
\right)^{\frac{2s}{m-1+2s}}
=
C
t^{-\frac{1}{m-1+2s}}
\|u_0\|_{L^1(M)}^{\frac{2s}{m-1+2s}}
\end{aligned}
\]
for every $t\geq\theta_o(R_{K,o})^{m-1}\|u_0\|_{L^1(M)}^{-(m-1)}$.

It remains to extend this estimate down to
$t=\|u_0\|_{L^1(M)}^{-(m-1)}$.
If $\theta_o(R_{K,o})^{m-1}\leq1$ there is nothing to prove. Otherwise, for almost every $\|u_0\|_{L^1(M)}^{-(m-1)}\leq t\leq
\theta_o(R_{K,o})^{m-1} \|u_0\|_{L^1(M)}^{-(m-1)}$,
the basic long-time estimate in Theorem~\ref{thm:smoothing} yields
\[
\|u(t)\|_{L^\infty(K)}
\leq
C_K
t^{-1/m}
\|u_0\|_{L^1(M)}^{1/m}.
\]
Since $0<s<1/2$,
\[
\frac{1}{m-1+2s}>\frac{1}{m},
\]
and throughout the preceding time interval
\[
\begin{aligned}
t^{-1/m}\|u_0\|_{L^1(M)}^{1/m}
&=
\left(
t\|u_0\|_{L^1(M)}^{m-1}
\right)^{-1/m}
\|u_0\|_{L^1(M)}
\\
&\leq
\left(
\theta_o(R_{K,o})^{m-1}
\right)^{
\frac{1}{m-1+2s}-\frac{1}{m}
}
\left(
t\|u_0\|_{L^1(M)}^{m-1}
\right)^{-\frac{1}{m-1+2s}}
\|u_0\|_{L^1(M)}
\\
&=
\left(
\theta_o(R_{K,o})^{m-1}
\right)^{
\frac{1}{m-1+2s}-\frac{1}{m}
}
t^{-\frac{1}{m-1+2s}}
\|u_0\|_{L^1(M)}^{\frac{2s}{m-1+2s}}.
\end{aligned}
\]
The fixed factor can be absorbed into the constant.
Since the reference point $o$ is fixed in $K$, all constants arising in the argument
may be absorbed into a constant depending only on $K$. This proves part~(1).
\end{proof}

If the manifold is also noncollapsing,
Proposition~\ref{prop:subcritical-A2} gives the hypotheses needed
for the global results of Section~3, and the preceding estimates
become global.

\begin{cor}[Global smoothing in the range $0<s<1/2$]
\label{cor:subcritical-global-smoothing}
Let $M$ be a complete, noncompact, $n$-dimensional Riemannian
manifold without boundary, with $\operatorname{Ric}\geq0$, let $0<s<1/2$, and
assume that $M$ satisfies Assumption \ref{A1}.
Then the following statements hold for every weak dual solution

\begin{enumerate}
\item
If $u_0\in L^1(M)$ is nonnegative, then there exists $C>0$ such that
\[
\|u(t)\|_{L^\infty(M)}
\leq
C
t^{-\frac{n}{n(m-1)+2s}}
\|u_0\|_{L^1(M)}^{\frac{2s}{n(m-1)+2s}}\qquad\text{ for every }0<t\leq
\|u_0\|_{L^1(M)}^{-(m-1)},
\]
whereas
\[
\|u(t)\|_{L^\infty(M)}
\leq
C
t^{-\frac{1}{m-1+2s}}
\|u_0\|_{L^1(M)}^{\frac{2s}{m-1+2s}}\qquad\text{ for every }
t\geq
\|u_0\|_{L^1(M)}^{-(m-1)}.
\]

\item
If $u_0\in L^1_{G_M^s}(M)$ is nonnegative, then there exists
$C>0$ such that
\[
\|u(t)\|_{L^\infty(M)}
\leq
C
t^{-\frac{n}{n(m-1)+2s}}
\|u_0\|_{L^1_{G_M^s}(M)}^{
\frac{2s}{n(m-1)+2s}}\qquad\textrm{ for every }
0<t\leq
\|u_0\|_{L^1_{G_M^s}(M)}^{-(m-1)},
\]
and
\[
\|u(t)\|_{L^\infty(M)}
\leq
C
t^{-1/m}
\|u_0\|_{L^1_{G_M^s}(M)}^{1/m}\qquad\textrm{ for every }
t\geq
\|u_0\|_{L^1_{G_M^s}(M)}^{-(m-1)}.
\]
\end{enumerate}
\end{cor}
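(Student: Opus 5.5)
This corollary is the global counterpart of Corollary~\ref{cor:subcritical-local-smoothing}, and I would prove it the same way, with Corollaries~\ref{cor:global-smoothing-L1} and~\ref{cor:global-smoothing-weighted} playing the role of Theorems~\ref{thm:smoothing} and~\ref{thm:smoothing2}.

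\textbf{Hypotheses and direct consequences.} By Proposition~\ref{prop:subcritical-A2}, $M$ is $s$-nonparabolic and satisfies Assumption~\ref{A2} with $R_0=1$ and $f(R)=R^{2-2s}$. Together with the assumed Assumption~\ref{A1}, all hypotheses of Corollaries~\ref{cor:global-smoothing-L1} and~\ref{cor:global-smoothing-weighted} hold. Part~(2) and the short-time estimate in part~(1) are then direct restatements of those corollaries. It remains to prove the improved long-time bound in part~(1).

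\textbf{Explicit $h$ and $\theta$.} With $f(R)=R^{2-2s}$ we have $R^{2s-1}f(R)=R$ and $\int_R^\infty r^{2s-2}\,dr=R^{2s-1}/(1-2s)$. Hence
\[
h(R)=\Big(\tfrac{1}{1-2s}+\tfrac{1}{2s}\Big)R^{2s}.
\]
For $\theta$, I would bound $F(R)=\inf_x \mathrm{Vol}(B_R(x))$ from below. The scale-invariant Calabi--Yau estimate \eqref{eq:subcritical-reverse-doubling}, combined with Assumption~\ref{A1}, gives $\mathrm{Vol}(B_R(x))\ge c_n R\,\mathrm{Vol}(B_1(x))\ge c_n\alpha R$ for all $x\in M$ and $R\ge1$. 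Therefore $F(R)\ge c_n\alpha R$.

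\textbf{Applying the refined estimate.} Rather than inverting the exact $\theta$, I would use the flexibility in Remark~\ref{rem3333}. Set
\[
\widehat\theta(R):=c_n\alpha\,C_h^{1/(m-1)}R^{1+\frac{2s}{m-1}},
\]
where $C_h$ is the constant in the formula for $h$. This function is increasing, tends to infinity, and satisfies $\widehat\theta\le F h^{1/(m-1)}$. So \eqref{eq:improved} holds with $\widehat\theta$ in place of $\theta$. Since $\widehat\theta^{-1}(r)\asymp r^{\frac{m-1}{m-1+2s}}$, substituting gives
\[
\|u(t)\|_{L^\infty(M)}\le C\,t^{-\frac{1}{m-1}}\Big(t^{\frac{1}{m-1}}\|u_0\|_{L^1}\Big)^{\frac{2s}{m-1+2s}}=C\,t^{-\frac{1}{m-1+2s}}\|u_0\|_{L^1}^{\frac{2s}{m-1+2s}}.
\]
This holds for $t\ge\widehat\theta(1)^{m-1}\|u_0\|_{L^1}^{-(m-1)}$.

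\textbf{Main obstacle: the intermediate time window.} The remaining task is to cover
\[
\|u_0\|_{L^1}^{-(m-1)}\le t\le \widehat\theta(1)^{m-1}\|u_0\|_{L^1}^{-(m-1)},
\]
and this only arises if $\widehat\theta(1)>1$. On this window I would use the basic long-time bound \eqref{eq:longtime}, $Ct^{-1/m}\|u_0\|_{L^1}^{1/m}$. Write it as $(t\|u_0\|^{m-1})^{-1/m}\|u_0\|$. Because $s<1/2$ gives $\frac{1}{m-1+2s}>\frac1m$, and $t\|u_0\|^{m-1}\le\widehat\theta(1)^{m-1}$ on the window, we get
\[
(t\|u_0\|^{m-1})^{-1/m}\le \widehat\theta(1)^{(m-1)\left(\frac{1}{m-1+2s}-\frac1m\right)}\,(t\|u_0\|^{m-1})^{-\frac{1}{m-1+2s}}.
\]
This is exactly the computation in Corollary~\ref{cor:subcritical-local-smoothing}. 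The resulting factor depends only on $n,s,m,\alpha$, not on the pole, so it can be absorbed into $C$ and the estimate is global.
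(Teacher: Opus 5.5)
Your proposal is correct and matches the paper's proof step by step. You use the same input from Proposition~\ref{prop:subcritical-A2}, the same explicit $h(R)=R^{2s}/(2s(1-2s))$ and lower bound $F(R)\geq c_n\alpha R$, and the same absorption on the intermediate window via $\frac{1}{m-1+2s}>\frac1m$. The only difference is cosmetic: you invert the explicit power $\widehat\theta$ through Remark~\ref{rem3333}, while the paper bounds $\theta^{-1}(r)\leq Cr^{\frac{m-1}{m-1+2s}}$ directly from $\theta(R)\geq C\alpha R^{1+\frac{2s}{m-1}}$, and the two are equivalent.
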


\begin{proof}
By Proposition~\ref{prop:subcritical-A2}, Assumption \ref{A2} holds
with
\[
R_0=1,
\qquad
f(R)=R^{2-2s},
\qquad R\geq1.
\]
Hence Corollaries~\ref{cor:global-smoothing-L1} and
\ref{cor:global-smoothing-weighted} apply.
The short-time estimate in part~(1) and both estimates in part~(2)
follow directly from them. For the refined $L^1$-long-time estimate, the function in
\eqref{defnfnch} is given exactly by
\[
h(R)
=
R^{2s-1}R^{2-2s}
\int_R^{+\infty}r^{-(2-2s)}\,dr
+
\frac{R^{2s}}{2s}
=
\frac{R^{2s}}{2s(1-2s)},
\qquad R\geq1.
\]
Furthermore, by~\eqref{eq:subcritical-reverse-doubling} and
Assumption \ref{A1},
\[
F(R)\geq c_n\alpha R,
\qquad R\geq1.
\]
Thus
\[
\theta(R)
=
F(R)h(R)^{\frac{1}{m-1}}
\geq
C\alpha
R^{1+\frac{2s}{m-1}},
\qquad R\geq1,
\]
and consequently
\[
\theta^{-1}(r)
\leq
C r^{\frac{m-1}{m-1+2s}},
\qquad r\geq\theta(1).
\]
Substituting this estimate and the exact expression for $h$ into
\eqref{eq:improved} yields
\[
\|u(t)\|_{L^\infty(M)}
\leq
C
t^{-\frac{1}{m-1+2s}}
\|u_0\|_{L^1(M)}^{\frac{2s}{m-1+2s}}
\]
for every $t\geq
\theta(1)^{m-1}
\|u_0\|_{L^1(M)}^{-(m-1)}$. Since $F(1)=\alpha$,
\[
\theta(1)^{m-1}
=
F(1)^{m-1}h(1)
=
\frac{\alpha^{m-1}}{2s(1-2s)}.
\]

If $\theta(1)^{m-1}\leq1$, the asserted long-time range follows immediately.
If instead $\theta(1)^{m-1}>1$, then, for every $\|u_0\|_{L^1(M)}^{-(m-1)}
\leq t\leq\theta(1)^{m-1}\|u_0\|_{L^1(M)}^{-(m-1)}$, we use the basic long-time estimate from
Corollary~\ref{cor:global-smoothing-L1}.
As in the proof of
Corollary~\ref{cor:subcritical-local-smoothing},
the inequality $\frac{1}{m-1+2s}>\frac{1}{m}$
allows the resulting fixed factor to be absorbed into $C$.
Hence
\[
\|u(t)\|_{L^\infty(M)}
\leq
C
t^{-\frac{1}{m-1+2s}}
\|u_0\|_{L^1(M)}^{\frac{2s}{m-1+2s}}
\]
for every $t\geq\|u_0\|_{L^1(M)}^{-(m-1)}$.
\end{proof}

\begin{rmk}
The exponent $\frac{1}{m-1+2s}$ in the long-time $L^1$-estimates
of Corollaries~\ref{cor:subcritical-local-smoothing} and
\ref{cor:subcritical-global-smoothing} is sharp within this class.
Indeed, the argument of Section~3.1 applies to
$M=\mathbb{R}\times N$, with $N$ compact and $\operatorname{Ric}_N\geq0$, since
$1>2s$.
\end{rmk}

\section{A Priori Estimates}\label{sec:apriori}
We first state and prove some results on the existence of the Green function and on some related integrals.

\begin{lem}\label{lemlastlem}
Let \(M\) be a complete, noncompact, \(s\)-nonparabolic
\(n\)-dimensional Riemannian manifold, $n>2s$, with
\(\operatorname{Ric}\geq 0\). Then the following properties hold.
\begin{enumerate}
\item For every compact set \(V\subset M\) and every
\(\varepsilon>0\), there exists a constant
\(C=C(V,\varepsilon)>0\) such that
\begin{equation}\label{lastlemeq2}
\sup_{x\in V}\sup_{y\in M\setminus B_\varepsilon(x)}
G_M^s(x,y)
\leq C.
\end{equation}

\item For every \(\psi\in L^1(M)\cap L^\infty(M)\), the integral in $\eqref{defform}$
defining \((-\Delta)^{-s}\psi(x)\) is absolutely convergent for every
\(x\in M\). In addition, for every compact set \(V\subset M\), there
exists a constant \(C=C(V)>0\) such that
\begin{equation}\label{lastlemeq3}
\|(-\Delta)^{-s}\psi\|_{L^\infty(V)}
\leq
C\bigl(
\|\psi\|_{L^1(M)}+\|\psi\|_{L^\infty(M)}
\bigr).
\end{equation}
In particular,
\[
(-\Delta)^{-s}\psi\in L^\infty_{\mathrm{loc}}(M).
\]

\item For every compact set \(V\subset M\), there exists a constant
\(C=C(V)>0\) such that every \(\psi\in L^\infty(M)\) with
\(\operatorname{supp}\psi\subset V\) satisfies
\begin{equation}\label{lastlemeq4}
\|(-\Delta)^{-s}\psi\|_{L^\infty(M)}
\leq
C\|\psi\|_{L^\infty(M)}.
\end{equation}
\end{enumerate}
\end{lem}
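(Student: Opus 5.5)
The whole lemma rests on the two-sided bound of Theorem~\ref{thm:bounGreen's}, combined with Bishop--Gromov \eqref{bishopgromov}, the doubling property \eqref{rmk:doubling} and the $s$-nonparabolicity criterion of Theorem~\ref{thm:charnonpar}. The basic object is
\[
I(x,r):=\int_r^\infty \frac{t^{2s-1}}{\mathrm{Vol}(B_t(x))}\,dt,
\]
which is nonincreasing in $r$. By Theorem~\ref{thm:bounGreen's} we have $G_M^s(x,y)\le C_2\, I(x,d(x,y))$.

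\textbf{Part (1).} I will control $I(x,r)$ uniformly in $x\in V$. Fix $o\in M$ and set $D:=\max_{x\in V}d(x,o)$. For $t\ge \varepsilon$ and $x\in V$, the inclusion $B_t(o)\subset B_{t+D}(x)$ gives
\[
\mathrm{Vol}(B_t(x))\ \ge\ \Big(\tfrac{t}{t+D}\Big)^n\mathrm{Vol}(B_t(o)) \quad\text{via Bishop--Gromov},
\]
hence $\mathrm{Vol}(B_t(x))\ge (\varepsilon/(\varepsilon+D))^n\,\mathrm{Vol}(B_t(o))$. Therefore
\[
\sup_{x\in V} I(x,\varepsilon)\le C(V,\varepsilon)\, I(o,\varepsilon),
\]
and $I(o,\varepsilon)<\infty$ by Theorem~\ref{thm:charnonpar}. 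Monotonicity in $r$ then yields \eqref{lastlemeq2}.

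\textbf{Part (2).} I split $(-\Delta)^{-s}\psi(x)$ into the contributions of $B_1(x)$ and $M\setminus B_1(x)$.
\begin{itemize}
\item \emph{Far region.} Part (1) with $\varepsilon=1$ bounds this piece by $C(V)\|\psi\|_{L^1}$.
\item \emph{Near region.} This piece is at most $\|\psi\|_{L^\infty}\int_{B_1(x)}G_M^s(x,y)\,d\mu(y)$. Writing $G\le C_2 I(x,d(x,y))$ and using the coarea formula and Fubini,
\[
\int_{B_1(x)} I(x,d(x,y))\,d\mu(y)=\int_0^\infty \frac{t^{2s-1}}{\mathrm{Vol}(B_t(x))}\,\mathrm{Vol}\big(B_{\min(t,1)}(x)\big)\,dt .
\]
For $t\le 1$ the integrand is $t^{2s-1}$, which is integrable since $s>0$. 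For $t>1$ it is bounded by the integrand of $I(x,1)$, so that piece is controlled uniformly on $V$ by Part (1).
\end{itemize}
These two bounds give absolute convergence at every $x$ and the estimate \eqref{lastlemeq3}.

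\textbf{Part (3).} Fix $\psi$ with $\mathrm{supp}\,\psi\subset V$. I split according to the position of $x$.
\begin{itemize}
\item If $x$ lies in the compact set $V_1:=\{d(\cdot,V)\le 1\}$, then Part (2) applied on $V_1$ gives the bound, since $\|\psi\|_{L^1}\le \mathrm{Vol}(V)\|\psi\|_{L^\infty}$.
\item If $d(x,V)>1$, then every $y\in V$ satisfies $d(x,y)>1$. By symmetry of $G_M^s$ and Part (1) applied with the compact set $V$ as the set of poles, $G_M^s(x,y)=G_M^s(y,x)\le C(V,1)$. Hence $|(-\Delta)^{-s}\psi(x)|\le C\,\mathrm{Vol}(V)\,\|\psi\|_{L^\infty}$.
\end{itemize}

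The main obstacle is the uniformity in the pole in Part (1), since Theorem~\ref{thm:bounGreen's} involves volumes of balls centred at $x$ rather than at a fixed point. The Bishop--Gromov comparison of $B_t(x)$ with $B_t(o)$ for $t\ge\varepsilon$ resolves this. Everything else is routine Fubini and coarea bookkeeping.
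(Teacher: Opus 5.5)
Your proof is correct and has the same structure as the paper's: uniform control of $\int_\varepsilon^\infty t^{2s-1}/\operatorname{Vol}(B_t(x))\,dt$ over $x\in V$ by comparison with a fixed pole, a near/far split at radius $1$, and the $V_1$ versus $M\setminus V_1$ split for (3). It differs only in two harmless streamlinings: your single Bishop--Gromov bound $\operatorname{Vol}(B_t(x))\geq(\varepsilon/(\varepsilon+D))^n\operatorname{Vol}(B_t(o))$ replaces the paper's compactness-plus-doubling split of $[\varepsilon,\infty)$, and integrating the upper Green bound over $B_1(x)$ by Tonelli replaces the paper's pointwise near-diagonal estimate $G_M^s(x,y)\leq C(V)d(x,y)^{-(n-2s)}$. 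The only slip is that for $t>1$ your integrand is $\operatorname{Vol}(B_1(x))$ times that of $I(x,1)$, which is still fine since $\operatorname{Vol}(B_1(x))\leq\omega_n$ by \eqref{bishopgromovD}.
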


\begin{proof}
We first establish a uniform tail estimate. Let \(K\subset M\) be
compact, let \(\varepsilon>0\), fix \(o\in M\), and set
\[
D:=\max_{x\in K}d(o,x),
\qquad
\rho:=\max\{\varepsilon,2D\}.
\]
By compactness, there exists \(\nu_{K,\varepsilon}>0\) such that
\[
\operatorname{Vol}(B_t(x))
\geq \nu_{K,\varepsilon}
\]
for every $x\in K$, $t\in[\varepsilon,\rho]$. If \(t\geq\rho\), then
\[
B_{t/2}(o)\subset B_t(x)
\qquad
\text{for every }x\in K.
\]
Therefore, by the volume doubling property,
\[
\frac{1}{\operatorname{Vol}(B_t(x))}
\leq
\frac{1}{\operatorname{Vol}(B_{t/2}(o))}
\leq
\frac{C_d}{\operatorname{Vol}(B_t(o))}.
\]
It follows that
\[
\sup_{x\in K}
\int_\varepsilon^\infty
\frac{t^{2s-1}}{\operatorname{Vol}(B_t(x))}\,dt
\leq
\frac{1}{\nu_{K,\varepsilon}}
\int_\varepsilon^\rho t^{2s-1}\,dt
+
C_d\int_\rho^\infty
\frac{t^{2s-1}}{\operatorname{Vol}(B_t(o))}\,dt
<\infty,
\]
where the last integral is finite by \(s\)-nonparabolicity and
Theorem~\ref{thm:charnonpar}. The upper Green-function estimate
recalled in Theorem~\ref{thm:bounGreen's} now yields
\eqref{lastlemeq2}.

We next prove the potential estimate. Fix a compact set \(V\subset M\).
As above,
\[
\nu_V
:=
\inf_{x\in V}\operatorname{Vol}(B_1(x))
>0.
\]
For every \(x\in V\) and every \(0<t\leq1\), the Bishop--Gromov
comparison gives
\[
\operatorname{Vol}(B_t(x))
\geq
t^n\operatorname{Vol}(B_1(x))
\geq
\nu_V t^n.
\]
Combining this estimate with the uniform tail estimate proved above,
we obtain, for \(x\in V\) and \(0<d(x,y)<1\),
\begin{equation*}
G_M^s(x,y)
\leq
C\left(
\int_{d(x,y)}^1
\frac{t^{2s-1}}{\operatorname{Vol}(B_t(x))}\,dt
+
\int_1^\infty
\frac{t^{2s-1}}{\operatorname{Vol}(B_t(x))}\,dt
\right)
\leq
C(V)d(x,y)^{-(n-2s)}.
\end{equation*}
Together with
\eqref{lastlemeq2}, this gives a constant \(C(V)>0\) such that for every $x\in V$, $y\in M$
\[
G_M^s(x,y)
\leq
C(V)
\begin{cases}
d(x,y)^{-(n-2s)},
&0<d(x,y)<1,\\
1,
&d(x,y)\geq1.
\end{cases}
\]
Consequently, for every \(x\in V\),
\begin{equation*}
\int_M|\psi(y)|G_M^s(x,y)\,d\mu(y)
\leq
C(V)\|\psi\|_{L^\infty(M)}
\int_{B_1(x)}
\frac{d\mu(y)}{d(x,y)^{n-2s}}
+
C(V)\|\psi\|_{L^1(M)}.
\end{equation*}
By the coarea formula and the Bishop--Gromov upper bound,
\begin{align*}
\int_{B_1(x)}
\frac{d\mu(y)}{d(x,y)^{n-2s}}
&=
\operatorname{Vol}(B_1(x))
+
(n-2s)\int_0^1
\frac{\operatorname{Vol}(B_r(x))}
{r^{n+1-2s}}\,dr
\\
&\leq
\omega_n
\left(
1+(n-2s)\int_0^1r^{2s-1}\,dr
\right)
=
\omega_n\frac{n}{2s}.
\end{align*}
This proves the absolute convergence of the potential and
\eqref{lastlemeq3}.

Finally, fix a compact set \(V\subset M\) and let
\(\psi\in L^\infty(M)\) satisfy
\(\operatorname{supp}\psi\subset V\). If \(\psi\equiv0\), there is
nothing to prove. Otherwise, we apply estimate \eqref{lastlemeq3} on
\[
V_1:=\{x\in M:d(x,V)\leq1\}.
\]
On the other hand, if \(x\notin V_1\), then \(d(x,y)>1\) for every \(y\in V\). Hence, by
\eqref{lastlemeq2} applied with
the compact set \(V\) and \(\varepsilon=1\),
\begin{equation*}
|(-\Delta)^{-s}\psi(x)|
\leq
\int_V|\psi(y)|G_M^s(y,x)\,d\mu(y)
\leq
C(V_1)\|\psi\|_{L^1(M)}.
\end{equation*}
Combining the two regions $V_1, M\setminus V_1$ gives
\begin{equation*}
\|(-\Delta)^{-s}\psi\|_{L^\infty(M)}
\leq
C\bigl(
\|\psi\|_{L^1(M)}+\|\psi\|_{L^\infty(M)}
\bigr)\leq C\|\psi\|_{L^\infty(M)}
\end{equation*}
with $C>0$ depending only on $V$, that is \eqref{lastlemeq4}.
\end{proof}

\begin{lem}
Let \(M\) be a complete, noncompact, \(n\)-dimensional Riemannian
manifold without boundary, with \(\operatorname{Ric}\geq0\), satisfying
Assumption \ref{A2}. Then \(M\) is \(s\)-nonparabolic.
\end{lem}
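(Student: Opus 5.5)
By Theorem~\ref{thm:charnonpar}, it suffices to show that
\[
\int_{r}^{\infty}\frac{t^{2s-1}}{\mathrm{Vol}(B_t(x))}\,dt<+\infty
\]
for one point $x\in M$ and one radius $r>0$. Fix any $x\in M$ and take $r=R_0$, where $R_0\geq1$, $\gamma$ and $f$ are the objects provided by Assumption~\ref{A2}.

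The key step is to apply \eqref{fcondition} with $R_1=R_0$ and $R_2=t\geq R_0$. This gives
\[
\frac{t^{2s-1}f(t)}{\mathrm{Vol}(B_t(x))}\leq \gamma\,\frac{R_0^{2s-1}f(R_0)}{\mathrm{Vol}(B_{R_0}(x))}=:A_x,
\]
where $A_x$ is a finite positive constant. Indeed, $f$ is continuous and positive on $[1,\infty)$, so $f(R_0)\in(0,\infty)$, and $\mathrm{Vol}(B_{R_0}(x))>0$. Dividing by $f(t)>0$ yields the pointwise bound
\[
\frac{t^{2s-1}}{\mathrm{Vol}(B_t(x))}\leq \frac{A_x}{f(t)}\qquad\text{for all } t\geq R_0.
\]

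Integrating over $[R_0,\infty)$ and using \eqref{betacondition}, we obtain
\[
\int_{R_0}^\infty\frac{t^{2s-1}}{\mathrm{Vol}(B_t(x))}\,dt\leq A_x\int_{R_0}^\infty\frac{dt}{f(t)}\leq A_x\,\beta<+\infty,
\]
where the last step uses $R_0\geq1$ and $f>0$. Theorem~\ref{thm:charnonpar} then yields $s$-nonparabolicity.

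No step is a genuine obstacle. The only points requiring care are that \eqref{fcondition} is stated only for radii $\geq R_0$, which is why the integral starts at $r=R_0$, and that Theorem~\ref{thm:charnonpar} allows an arbitrary radius $r>0$. The monotonicity of $t^{2s-1}f(t)$ assumed in Assumption~\ref{A2} is not needed here.
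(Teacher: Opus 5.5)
Your proof is correct and is the paper's argument: apply \eqref{fcondition} with $R_1=R_0$, $R_2=t\geq R_0$, divide by $f(t)$, integrate over $[R_0,\infty)$ using \eqref{betacondition}, and conclude with Theorem~\ref{thm:charnonpar}. Your observation that the monotonicity of $t^{2s-1}f(t)$ is not needed is also correct.
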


\begin{proof}
Fix \(x\in M\). Applying \eqref{fcondition} with
\(R_1=R_0\) and \(R_2=t\geq R_0\), we obtain
\[
\frac{t^{2s-1}f(t)}
{\operatorname{Vol}(B_t(x))}
\leq
\gamma
\frac{R_0^{2s-1}f(R_0)}
{\operatorname{Vol}(B_{R_0}(x))}.
\]
Consequently,
\begin{equation*}
\int_{R_0}^\infty
\frac{t^{2s-1}}{\operatorname{Vol}(B_t(x))}\,dt
\leq
\gamma
\frac{R_0^{2s-1}f(R_0)}
{\operatorname{Vol}(B_{R_0}(x))}
\int_{R_0}^\infty\frac{dt}{f(t)}
<\infty.
\end{equation*}
The conclusion follows from
Theorem~\ref{thm:charnonpar}.
\end{proof}

\section{Examples of Manifolds Satisfying the Global Geometric Assumptions}\label{S6}

We provide several examples satisfying Assumptions \ref{A1}, \ref{A2}.

%

\begin{exm}
Let $M$ be a complete, noncompact Riemannian manifold with $\mathrm{Ric} \ge 0$. Assume that there exist $C_1,C_2 > 0$, $R_0> 1$ large such that
\begin{equation*}
C_1 R^k (\log R)^b \le \mathrm{Vol}(B_R(x)) \le C_2 R^k (\log R)^b \quad \text{for all } R \ge R_0,
\end{equation*}
for every $x\in M$ and that either $k \in (2s, n)$, $b \in \mathbb{R}$; $k = 2s<n$, $b > 1$; or $k = n>2s$, $b \le 0$. Then \ref{A2} holds with
\begin{equation*}
f(R) = R^{k-(2s-1)} (\log R)^b, \quad R \ge R_0.
\end{equation*}

Indeed
\begin{equation*}
\int_{R_0}^{\infty} \frac{dt}{f(t)} \leq C\int_{R_0}^{\infty} \frac{dt}{t^{k-2s+1} (\log t)^b},
\end{equation*}
which converges for the above parameters.

Further, for every $x \in M$ and $R_2 \geq R_1 \geq R_0$,
\begin{equation*}
\frac{R_2^{2s-1} f(R_2)}{\mathrm{Vol}(B_{R_2}(x))} = \frac{R_2^k (\log R_2)^b}{\mathrm{Vol}(B_{R_2}(x))} \le \frac{1}{C_1} \le \frac{C_2}{C_1} \frac{R_1^k (\log R_1)^b}{\mathrm{Vol}(B_{R_1}(x))} = \gamma \frac{R_1^{2s-1} f(R_1)}{\mathrm{Vol}(B_{R_1}(x))},
\end{equation*}
so \eqref{fcondition} holds with $\gamma = \frac{C_2}{C_1}$. Finally, Assumption \ref{A1} holds. Indeed, by the Bishop--Gromov comparison \eqref{bishopgromov}, for every $x\in M$ we have
$$\operatorname{Vol}(B_1(x))\geq\frac{\operatorname{Vol}(B_{R_0}(x))}{R_0^n}\geq C_1R_0^{k-n}(\log R_0)^b.$$
\end{exm}

\begin{exm}
Let $M$ be a complete, noncompact Riemannian manifold with $n = \dim M$,  $n> 2s$, $s\in(0,1]$, $\mathrm{Ric} \ge 0$ and maximal volume growth, i.e. with asymptotic volume ratio
\begin{equation*}
A \equiv \mathrm{AVR}(M) := \lim_{R \to \infty} \frac{\mathrm{Vol}(B_R(x))}{\omega_n R^n} > 0,
\end{equation*}
independent of $x \in M$. By the Bishop--Gromov comparison \eqref{bishopgromov},
\begin{equation*}
A \, \omega_n R^n \le \mathrm{Vol}(B_R(x)) \le \omega_n R^n \quad \text{for all } x\in M, \, R>0.
\end{equation*}
Thus, the noncollapsing property \ref{A1} holds, and \ref{A2} is satisfied with $f(R) = R^{n-(2s-1)}$, $R_0=1$, $\gamma=A^{-1}$.
\end{exm}

\begin{exm}
Let $M$ be a complete, noncompact, $s$-nonparabolic Riemannian manifold with $\partial M = \emptyset$ and $\mathrm{Ric} \ge 0$, and suppose $M$ is homogeneous, i.e., for every $x, y \in M$ there exists an isometry $\phi : M \to M$ with $\phi(x) = y$. Then $M$ satisfies the noncollapsing property \ref{A1}: for any $x, y \in M$ and $R > 0$, $\phi(B_R(x)) = B_R(y)$ for some isometry $\phi$, so $\mathrm{Vol}(B_R(x)) = \mathrm{Vol}(B_R(y))$. In particular, one can consider \( R = 1 \) and \ref{A1} holds.

Furthermore, arguing as in Proposition \ref{general}, we see that Assumption \ref{A2} is satisfied, with $f$ given by \eqref{deffo1} for any fixed point $o\in M$ and $R_0=1$.
\end{exm}

\begin{exm}
Let $M$ be a complete, noncompact Riemannian manifold with $n = \dim M$,  $n> 2s$, $s\in(0,1]$, with $\mathrm{Ric} \ge 0$ and satisfying the noncollapsing property \ref{A1}. Assume that the following Reverse Doubling Property holds: there exist $\theta>2s$, $R_0\geq 1$ such that for every $x\in M$
\begin{equation}\label{epsilonexm}
\mathrm{Vol}(B_{2R}(x)) \geq 2^{\theta} \mathrm{Vol}(B_R(x)) \quad \text{for all } R \geq R_0.
\end{equation}
Then  \ref{A2} is satisfied, with $f(R) = R^{\theta-(2s-1)}$. Indeed, $f$ trivially satisfies \eqref{betacondition}. Further, if $x \in M$, $R_2 \geq R_1 \geq R_0$ let $k \in \mathbb{N}$ be such that $2^k R_1 \le R_2 < 2^{k+1} R_1$. Then, by \eqref{epsilonexm},
\begin{equation*}
\mathrm{Vol}(B_{R_2}(x)) \ge \mathrm{Vol}(B_{2^k R_1}(x)) \ge 2^{\theta k} \mathrm{Vol}(B_{R_1}(x)) \ge \frac{1}{2^{\theta}} \left( \frac{R_2}{R_1} \right)^{\theta} \mathrm{Vol}(B_{R_1}(x)).
\end{equation*}
Thus
\begin{equation*}
\frac{R_2^{2s-1} f(R_2)}{\mathrm{Vol}(B_{R_2}(x))} = \frac{R_2^{\theta}}{\mathrm{Vol}(B_{R_2}(x))} \leq 2^{\theta} \frac{R_1^{\theta}}{\mathrm{Vol}(B_{R_1}(x))} = 2^{\theta} \frac{R_1^{2s-1} f(R_1)}{\mathrm{Vol}(B_{R_1}(x))},
\end{equation*}
that is \eqref{fcondition} with $\gamma=2^{\theta}$.
\end{exm}

\begin{exm}
Let $M$ be a  complete, noncompact Riemannian manifold with $\mathrm{Ric} \geq 0$ satisfying the noncollapsing property Assumption \ref{A1}. Suppose there exist constants \( \gamma_1, \gamma_2 > 0 \), a continuous \( f : [1, +\infty) \to (0, +\infty) \) such that $R^{2s-1}f(R)$ is nondecreasing and satisfying \eqref{betacondition} in Assumption \ref{A2}, and a function \( h : M \times [R_0, \infty) \to (0, \infty) \), with $R_0\geq1$, such that for every \( x \in M \) the function \( h(x, R) \) is nondecreasing in $R$ and
\begin{equation*}
\gamma_1 h(x, R) \leq \frac{\mathrm{Vol}(B_R(x))}{R^{2s-1} f(R)} \leq \gamma_2 h(x, R) \quad \text{for every } R \geq R_0
\end{equation*}
Then Property \eqref{fcondition} in \ref{A2} holds on $M$. Indeed for every $x \in M$ and every $R > r > R_0$ we have
\begin{equation*}
\frac{R^{2s-1} f(R)}{\mathrm{Vol}(B_R(x))} \leq \frac{1}{\gamma_1 h(x, R)} \le \frac{1}{\gamma_1 h(x, r)} \leq \frac{\gamma_2}{\gamma_1} \frac{r^{2s-1} f(r)}{\mathrm{Vol}(B_r(x))}.
\end{equation*}
\end{exm}

\begin{exm}\label{exm:last}
Let \(X\) be a complete, noncompact Riemannian manifold with
\(\operatorname{Ric}_X\geq0\) satisfying Assumptions \ref{A1} and \ref{A2}, and let
\(N\) be a compact Riemannian manifold with \(\operatorname{Ric}_N\geq0\).
Then the product manifold \(M:=X\times N\), endowed with the product metric,
is complete and noncompact, satisfies \(\operatorname{Ric}_M\geq0\), and
satisfies Assumptions \ref{A1} and \ref{A2} with the same function \(f\) as \(X\).
Indeed, set
\[
\rho_0:=\operatorname{diam}(N),
\qquad
R_*:=\max\{1,2\rho_0\}.
\]
Since
\[
d_M\bigl((x,p),(y,q)\bigr)
=
\bigl(d_X(x,y)^2+d_N(p,q)^2\bigr)^{1/2},
\]
for every \(R\geq R_*\) one has
\[
B^X_{R/2}(x)\times N
\subset B^M_R((x,p))
\subset B^X_R(x)\times N
\]
for every $(x,p)\in X\times N$.
The volume doubling property on \(X\) therefore yields constants \(c,C>0\)
such that
\[
c\,\operatorname{Vol}_X(B^X_R(x))
\leq
\operatorname{Vol}_M(B^M_R((x,p)))
\leq
C\,\operatorname{Vol}_X(B^X_R(x))
\]
for every \((x,p)\in M\) and every \(R\geq R_*\). Hence, if
\[
\alpha_X:=
\inf_{x\in X}\operatorname{Vol}_X(B^X_1(x))>0,
\]
then Bishop--Gromov comparison on \(M\) gives
\[
\inf_{(x,p)\in M}
\operatorname{Vol}_M(B^M_1((x,p)))
\geq
R_*^{-n}c\alpha_X>0,
\]
with $n=dim M$, so \(M\) satisfies \ref{A1}. Finally, let \(R_0\) and \(\gamma\) be the constants in \ref{A2} for \(X\), and
set
\[
\widehat R_0:=\max\{R_0,R_*\}.
\]
For every \(R_2\geq R_1\geq\widehat R_0\), the preceding volume comparison
gives
\[
\frac{R_2^{2s-1}f(R_2)}
{\operatorname{Vol}_M(B^M_{R_2}((x,p)))}
\leq
\frac{\gamma C}{c}
\frac{R_1^{2s-1}f(R_1)}
{\operatorname{Vol}_M(B^M_{R_1}((x,p)))}
\]
for every $(x,p)\in M$. The remaining requirements on \(f\) are inherited unchanged from \(X\);
therefore \ref{A2} holds on \(M\) with the same function \(f\).
\end{exm}

\section{Estimates on the Green Function and the Potential of Test Functions}
\label{S7}

Throughout this section, let \(M\) be a complete, noncompact,
\(s\)-nonparabolic \(n\)-dimensional Riemannian manifold, $n>2s$, with
\(\operatorname{Ric}\geq0\). We fix a continuous function
\(f:[1,+\infty)\to(0,+\infty)\), a constant \(\gamma>0\), a number
\[
\beta:=\int_1^{+\infty}\frac{dt}{f(t)}<+\infty,
\]
and radii \(R_0(x)\geq1\), \(x\in M\), for which Condition \ref{B} holds. Such
data exist by Proposition~\ref{general}. No regularity of the map
\(x\mapsto R_0(x)\) will be used. When Assumption \ref{A2} is imposed, the
notation \(f,\gamma,\beta,R_0\) refers to the corresponding data, with
\(R_0\) independent of the pole.

For \(x_0\in M\) and \(R\geq R_0(x_0)\), set
\begin{equation*}
\mathcal A_{x_0}(R)
:=
\frac{1}{\operatorname{Vol}(B_R(x_0))}
\left(
\frac{R^n}{n-2s}
+
\beta\gamma f(R)R^{n-1}
\right).
\end{equation*}

\begin{lem}\label{thm:lem1}
Let \(G_M^s\) be the minimal positive fractional Green function on \(M\). For every \(x_0\in M\), the following properties hold.
\begin{enumerate}
\item[(i)] For every \(x\in M\setminus\{x_0\}\),
\begin{equation}\label{eq:lem81-lower}
G_M^s(x,x_0)
\geq
\frac{C_1}{\omega_n(n-2s)}
\frac{1}{d(x,x_0)^{n-2s}}.
\end{equation}

\item[(ii)] For every \(x\in M\) such that \(d(x,x_0)\geq R_0(x_0)\),
\begin{equation*}
G_M^s(x,x_0)
\leq
C_2\gamma
\frac{d(x,x_0)^{2s-1}f(d(x,x_0))}
{\operatorname{Vol}(B_{d(x,x_0)}(x_0))}
\int_{d(x,x_0)}^{+\infty}\frac{dt}{f(t)}.
\end{equation*}

\item[(iii)] For every \(x\in M\) such that \(d(x,x_0)\geq1\),
\begin{equation}\label{eq:lem81-away}
G_M^s(x,x_0)
\leq C_2\Bigg(
\frac{R_0(x_0)^{2s}}
{2s\,\operatorname{Vol}(B_1(x_0))}+
\frac{\beta\gamma R_0(x_0)^{2s-1}f(R_0(x_0))}
{\operatorname{Vol}(B_{R_0(x_0)}(x_0))}
\Bigg).
\end{equation}

\item[(iv)] For every \(x\in M\setminus\{x_0\}\) and every
\(R\geq\max\{d(x,x_0),R_0(x_0)\}\),
\begin{equation}\label{eq:lem81-local-upper}
G_M^s(x,x_0)
\leq
C_2\mathcal A_{x_0}(R)
\frac{1}{d(x,x_0)^{n-2s}}.
\end{equation}

\item[(v)] For every \(0<\rho\leq R_0(x_0)\),
\begin{equation}\label{eq:lem81-local-integral}
\int_{B_\rho(x_0)}G_M^s(x,x_0)\,d\mu(x)
\leq
C_2\omega_n\frac{n}{2s}
\mathcal A_{x_0}(R_0(x_0))\rho^{2s}.
\end{equation}

\item[(vi)] For every \(R\geq R_0(x_0)\),
\begin{equation}\label{eq:lem81-ball-integral}
\int_{B_R(x_0)}G_M^s(x,x_0)\,d\mu(x)
\leq
C_2\max\{\gamma,1\}h(R),
\end{equation}
where
\[
h(R):=
R^{2s-1}f(R)\int_R^{+\infty}\frac{dt}{f(t)}
+\frac{R^{2s}}{2s}.
\]
\end{enumerate}
Here \(C_1\) and \(C_2\) are the constants in Theorem~\ref{thm:bounGreen's}.
\end{lem}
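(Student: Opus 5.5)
All six items follow from the two-sided bound of Theorem~\ref{thm:bounGreen's}, combined with Bishop--Gromov \eqref{bishopgromov}--\eqref{bishopgromovD} and Condition~\ref{B} at the pole $x_0$. Throughout, write $r=d(x,x_0)$ and $R_0=R_0(x_0)$.

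\emph{Item (i).} Use the lower bound of Theorem~\ref{thm:bounGreen's} together with $\operatorname{Vol}(B_t(x))\le\omega_n t^n$. This gives
\[
G_M^s(x,x_0)\ge C_1\omega_n^{-1}\int_r^\infty t^{2s-1-n}\,dt=\frac{C_1}{\omega_n(n-2s)}\,r^{-(n-2s)}.
\]
Since $G_M^s$ is symmetric, we may put the volume at either pole.

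\emph{Item (ii).} Apply the upper bound of Theorem~\ref{thm:bounGreen's} with the volume at $x_0$. Condition~\ref{B} with $R_1=r\ge R_0$ and $R_2=t\ge r$ gives
\[
\frac{t^{2s-1}}{\operatorname{Vol}(B_t(x_0))}\le \gamma\,\frac{r^{2s-1}f(r)}{\operatorname{Vol}(B_r(x_0))}\,\frac{1}{f(t)}.
\]
Integrating over $t\in[r,\infty)$ yields (ii).

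\emph{Item (iii).} Here $r\ge1$. Split $\int_r^\infty$ into the part over $[r,R_0]$ (possibly empty) and the part over $[\max(r,R_0),\infty)$.
\begin{itemize}
\item On $[1,R_0]$ use $\operatorname{Vol}(B_t(x_0))\ge\operatorname{Vol}(B_1(x_0))$; the integral is then at most $R_0^{2s}/(2s\operatorname{Vol}(B_1(x_0)))$.
\item On $[R_0,\infty)$ apply Condition~\ref{B} with $R_1=R_0$; the integral is bounded by $\gamma R_0^{2s-1}f(R_0)\beta/\operatorname{Vol}(B_{R_0}(x_0))$, using $\int_{R_0}^\infty dt/f\le\beta$.
\end{itemize}

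\emph{Item (iv).} For $r\le R$, split $\int_r^\infty=\int_r^R+\int_R^\infty$.
\begin{itemize}
\item On $[r,R]$, Bishop--Gromov gives $\operatorname{Vol}(B_t(x_0))\ge (t/R)^n\operatorname{Vol}(B_R(x_0))$. So this piece is at most
\[
\frac{R^n}{\operatorname{Vol}(B_R(x_0))}\int_r^\infty t^{2s-1-n}\,dt=\frac{R^n}{(n-2s)\operatorname{Vol}(B_R(x_0))}\,r^{-(n-2s)}.
\]
\item On $[R,\infty)$, Condition~\ref{B} with $R_1=R\ge R_0$ gives at most $\gamma\beta R^{2s-1}f(R)/\operatorname{Vol}(B_R(x_0))$. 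Since $r\le R$, we have $R^{2s-1}=R^{n-1}R^{-(n-2s)}\le R^{n-1}r^{-(n-2s)}$.
\end{itemize}
Summing the two pieces gives $\mathcal A_{x_0}(R)\,r^{-(n-2s)}$.

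\emph{Item (v).} Use (iv) with $R=R_0$, valid since $\rho\le R_0$. Then integrate $r^{-(n-2s)}$ over $B_\rho(x_0)$ exactly as in the proof of Lemma~\ref{lemlastlem}: coarea and $\operatorname{Vol}(B_r)\le\omega_n r^n$ give $\omega_n\frac{n}{2s}\rho^{2s}$.

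\emph{Item (vi).} This is the main obstacle. The bound $h(R)$ carries no factor $\mathcal A$, so (iv) alone is too crude. Instead, use Fubini/layer-cake on the upper bound of Theorem~\ref{thm:bounGreen's}:
\[
\int_{B_R(x_0)}\int_{d(x,x_0)}^\infty\frac{t^{2s-1}}{V(t)}\,dt\,d\mu(x)=\int_0^\infty\frac{t^{2s-1}}{V(t)}\,\operatorname{Vol}(B_{\min(t,R)}(x_0))\,dt,
\]
where $V(t)=\operatorname{Vol}(B_t(x_0))$.
\begin{itemize}
\item For $t\le R$ the integrand is $t^{2s-1}$, contributing $R^{2s}/(2s)$.
\item For $t\ge R$ it is $t^{2s-1}V(R)/V(t)$. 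Condition~\ref{B} with $R_1=R$ bounds this by $\gamma R^{2s-1}f(R)/f(t)$, which integrates to $\gamma R^{2s-1}f(R)\int_R^\infty dt/f$.
\end{itemize}
The total is at most $C_2\max\{\gamma,1\}\,h(R)$.
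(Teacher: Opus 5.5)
Your proposal is correct and follows essentially the same route as the paper's proof in the appendix. Items (i)--(v) use the same splittings of the two-sided bound in Theorem~\ref{thm:bounGreen's}, combined with Bishop--Gromov and Condition~\ref{B} at the pole $x_0$, and your layer-cake identity in (vi) is the same Tonelli--coarea--integration-by-parts computation that yields $\operatorname{Vol}(B_R(x_0))\int_R^\infty t^{2s-1}/\operatorname{Vol}(B_t(x_0))\,dt + R^{2s}/(2s)$ in the paper.
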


The proof of Lemma~\ref{thm:lem1} is given in Appendix~\ref{proofs}.

We next compare the potential generated by a nonnegative bounded function
concentrated near a pole with the Green function having the same pole.

\begin{prop}\label{thm:3.4}
Fix \(x_0\in M\) and \(\sigma\in(0,1]\). Let
\(0\leq\psi\in L^\infty(M)\) vanish almost everywhere on
\(M\setminus B_\sigma(x_0)\). Then there exist constants
\(\gamma_1(x_0),\gamma_2(x_0)>0\), independent of \(\sigma\) and \(\psi\),
such that, for every \(x\in M\setminus\{x_0\}\),
\begin{equation}\label{resultthm3.4}
\begin{aligned}
\gamma_1(x_0)
\min\{1,d(x,x_0)^{n-2s}\}
&\|\psi\|_{L^1(M)}G_M^s(x,x_0)
\\
&\leq
(-\Delta)^{-s}\psi(x)
\leq
\gamma_2(x_0)
\|\psi\|_{L^\infty(M)}
\operatorname{Vol}(B_\sigma(x_0))G_M^s(x,x_0).
\end{aligned}
\end{equation}
\end{prop}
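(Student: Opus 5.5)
The plan is to write $(-\Delta)^{-s}\psi(x)=\int_{B_\sigma(x_0)}\psi(y)G_M^s(x,y)\,d\mu(y)$ and compare $G_M^s(x,y)$ with $G_M^s(x,x_0)$ for $y\in B_\sigma(x_0)$. I will split according to whether $x$ is far from $x_0$ ($d(x,x_0)\ge 2$, say) or close to it ($d(x,x_0)<2$). All comparisons will go through the two-sided bound of Theorem~\ref{thm:bounGreen's},
\[
G_M^s(x,y)\asymp \int_{d(x,y)}^\infty \frac{t^{2s-1}}{\operatorname{Vol}(B_t(x))}\,dt ,
\]
together with doubling and the local estimates of Lemma~\ref{thm:lem1}.

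\emph{Far region.} For $d(x,x_0)\ge2$ and $y\in B_\sigma(x_0)$, with $\sigma\le1$, the triangle inequality gives
\[
\tfrac12 d(x,x_0)\le d(x,y)\le 2d(x,x_0).
\]
The integrand $t^{2s-1}/\operatorname{Vol}(B_t(x))$ is the same for both Green functions, since the volume is centred at $x$. So only the lower limit changes, and one needs $\int_{r/2}^{\infty}\le C\int_{r}^\infty$ and its converse.
\begin{itemize}
\item The converse bound $\int_{2r}^\infty\ge c\int_r^\infty$ follows by substituting $t\mapsto 2t$ and using doubling, $\operatorname{Vol}(B_{2t}(x))\le C_d\operatorname{Vol}(B_t(x))$.
\item For $\int_{r/2}^r$, monotonicity of the volume gives a bound by $C r^{2s}/\operatorname{Vol}(B_{r/2}(x))$. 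This is controlled by $C\int_r^{2r}$ via doubling.
\end{itemize}
Hence $G_M^s(x,y)\asymp G_M^s(x,x_0)$ with constants depending only on $n,s,C_d$. Integrating against $\psi$ gives both inequalities: the lower one with $\|\psi\|_{L^1}$, and the upper one with $\|\psi\|_{L^1}\le\|\psi\|_\infty\operatorname{Vol}(B_\sigma(x_0))$.

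\emph{Near region, lower bound.} For $d(x,x_0)<2$, Lemma~\ref{thm:lem1}(i) gives
\[
G_M^s(x,y)\ge c\,d(x,y)^{-(n-2s)}\ge c\,3^{-(n-2s)}.
\]
Lemma~\ref{thm:lem1}(iv), with $R=\max\{2,R_0(x_0)\}$, gives $G_M^s(x,x_0)\le C(x_0)d(x,x_0)^{-(n-2s)}$. Therefore
\[
\min\{1,d^{n-2s}\}\,G_M^s(x,x_0)\le C(x_0)\le C'(x_0)\,G_M^s(x,y),
\]
and integrating in $y$ yields the lower bound. This explains the factor $\min\{1,d^{n-2s}\}$.

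\emph{Near region, upper bound.} This is the main obstacle, since $G_M^s(x,y)$ may be much larger than $G_M^s(x,x_0)$ when $x$ is close to $y$. The plan is to use the upper bound $G_M^s(x,y)\le C(x_0)d(x,y)^{-(n-2s)}$, valid uniformly for $x,y$ in a compact set by Lemma~\ref{thm:lem1}(iv) applied with poles in $B_3(x_0)$. On the other side, $G_M^s(x,x_0)\ge c\,d(x,x_0)^{-(n-2s)}$. I then distinguish two cases.
\begin{itemize}
\item If $d(x,x_0)\ge2\sigma$, then $d(x,y)\ge d(x,x_0)/2$, so $G(x,y)\le C\,G(x,x_0)$ and the claim follows as in the far region.
\item If $d(x,x_0)<2\sigma$, bound
\[
\int_{B_\sigma(x_0)}\psi\,G(x,\cdot)\le\|\psi\|_\infty\int_{B_{3\sigma}(x)}d(x,y)^{-(n-2s)}\,d\mu(y).
\]
By the coarea computation in Lemma~\ref{lastlemeq3}'s proof, rescaled, this is at most $C\|\psi\|_\infty\sigma^{2s}\operatorname{Vol}(B_{3\sigma}(x))/\sigma^n$. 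One has $\operatorname{Vol}(B_{3\sigma}(x))\le C\operatorname{Vol}(B_\sigma(x_0))$ by doubling, and $\sigma^{2s-n}\le C d(x,x_0)^{2s-n}\le C'\,G(x,x_0)$.
\end{itemize}

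For the first case to be usable, the uniform-in-pole local upper bound on $B_3(x_0)$ must be justified. I would obtain it from Theorem~\ref{thm:bounGreen's} plus Bishop--Gromov together with the tail estimate in the proof of Lemma~\ref{lastlemeq2} on a compact set.
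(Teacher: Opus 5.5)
Your proposal is correct and follows essentially the same route as the paper. Away from the pole you compare $G_M^s(x,y)$ with $G_M^s(x,x_0)$ through the two-sided Green bounds and volume doubling, and near the pole you use Lemma~\ref{thm:lem1}(i) and (iv) for the lower bound and the $\sigma^{2s-n}\lesssim G_M^s(x,x_0)$ conversion for the upper bound. The only real difference is the case $d(x,x_0)<2\sigma$: you use a pointwise bound $G_M^s(x,y)\leq C(x_0)\,d(x,y)^{-(n-2s)}$, uniform for $x$ near $x_0$, which must come from the compactness/tail argument in the proof of Lemma~\ref{lemlastlem} (as your fallback says) rather than from Lemma~\ref{thm:lem1}(iv), whose constant depends on the pole. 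The paper instead integrates the upper Green bound over $B_{3\sigma}(x)$ and bounds the resulting tail $\int_{3\sigma}^{\infty}$ directly by $G_M^s(x,x_0)$ via the lower estimate, so it needs no uniform-in-pole local bound.
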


\begin{proof}
We first prove the upper bound. Assume that \(d(x,x_0)\leq2\sigma\). Since
\(\psi=0\) almost everywhere on \(M\setminus B_\sigma(x_0)\) and
\(B_\sigma(x_0)\subset B_{3\sigma}(x)\), Theorem~\ref{thm:bounGreen's},
Tonelli's theorem, the coarea formula, and integration by parts give
\begin{align}
(-\Delta)^{-s}\psi(x)
&\leq
C_2\|\psi\|_{L^\infty(M)}
\int_{B_{3\sigma}(x)}
\int_{d(x,y)}^{+\infty}
\frac{t^{2s-1}}{\operatorname{Vol}(B_t(x))}
\,dt\,d\mu(y)
\notag\\
&\leq
C_2\|\psi\|_{L^\infty(M)}
\left[
\operatorname{Vol}(B_{3\sigma}(x))
\int_{3\sigma}^{+\infty}
\frac{t^{2s-1}}{\operatorname{Vol}(B_t(x))}
\,dt
+
\frac{(3\sigma)^{2s}}{2s}
\right].
\label{prop1a18}
\end{align}
The inclusions
\[
B_{3\sigma}(x)
\subset B_{5\sigma}(x_0)
\subset B_{8\sigma}(x_0)
\]
and the volume doubling property implies
\begin{equation*}
\operatorname{Vol}(B_{3\sigma}(x))
\leq
C_d^3\operatorname{Vol}(B_\sigma(x_0)).
\end{equation*}
Since \(0<\sigma\leq1\), Bishop--Gromov comparison yields
\[
\operatorname{Vol}(B_\sigma(x_0))
\geq
\sigma^n\operatorname{Vol}(B_1(x_0)).
\]
Also, by \eqref{eq:lem81-lower} and \(d(x,x_0)\leq2\sigma\),
\begin{equation}\label{prop1a1nc}
\sigma^{2s}
\leq
\frac{2^{n-2s}(n-2s)\omega_n}
{C_1\operatorname{Vol}(B_1(x_0))}
\operatorname{Vol}(B_\sigma(x_0))G_M^s(x,x_0).
\end{equation}
Finally, because \(d(x,x_0)\leq2\sigma<3\sigma\), the lower estimate in
Theorem~\ref{thm:bounGreen's} gives
\[
\int_{3\sigma}^{+\infty}
\frac{t^{2s-1}}{\operatorname{Vol}(B_t(x))}\,dt
\leq
\frac{1}{C_1}G_M^s(x,x_0).
\]
Combining this estimate with \eqref{prop1a18}--\eqref{prop1a1nc}, we obtain
\begin{equation}\label{eq:prop82-upper-near}
(-\Delta)^{-s}\psi(x)
\leq
C_2\Bigg(
\frac{C_d^3}{C_1}
+
\frac{3^{2s}2^{n-1-2s}(n-2s)\omega_n}
{sC_1\operatorname{Vol}(B_1(x_0))}
\Bigg)
\|\psi\|_{L^\infty(M)}
\operatorname{Vol}(B_\sigma(x_0))G_M^s(x,x_0).
\end{equation}

Assume now that \(d(x,x_0)>2\sigma\). For almost every
\(y\in B_\sigma(x_0)\),
\[
d(x,y)\geq d(x,x_0)-\sigma.
\]
Therefore, after the change of variables \(r=t-\sigma\),
\begin{align*}
(-\Delta)^{-s}\psi(x)
&\leq
C_2\|\psi\|_{L^\infty(M)}
\operatorname{Vol}(B_\sigma(x_0))
\int_{d(x,x_0)-\sigma}^{+\infty}
\frac{r^{2s-1}}{\operatorname{Vol}(B_r(x))}\,dr
\\
&=
C_2\|\psi\|_{L^\infty(M)}
\operatorname{Vol}(B_\sigma(x_0))
\int_{d(x,x_0)}^{+\infty}
\frac{(t-\sigma)^{2s-1}}
{\operatorname{Vol}(B_{t-\sigma}(x))}\,dt.
\end{align*}
For \(t\geq d(x,x_0)>2\sigma\), one has \(t-\sigma\geq t/2\). Hence
\[
\operatorname{Vol}(B_{t-\sigma}(x))
\geq
C_d^{-1}\operatorname{Vol}(B_t(x))
\]
and
\[
(t-\sigma)^{2s-1}
\leq
2t^{2s-1}
\qquad\text{for every }s\in(0,1].
\]
Using the lower estimate in Theorem~\ref{thm:bounGreen's}, we conclude that
\begin{equation}\label{prop311}
(-\Delta)^{-s}\psi(x)
\leq
\frac{2C_2C_d}{C_1}
\|\psi\|_{L^\infty(M)}
\operatorname{Vol}(B_\sigma(x_0))G_M^s(x,x_0).
\end{equation}
The upper bound in \eqref{resultthm3.4} follows from
\eqref{eq:prop82-upper-near} and \eqref{prop311}.

We now prove the lower bound. Assume first that \(d(x,x_0)\geq1\). For
every \(y\in B_\sigma(x_0)\),
\[
d(x,y)\leq d(x,x_0)+\sigma.
\]
The lower estimate in Theorem~\ref{thm:bounGreen's} therefore gives
\begin{align}
(-\Delta)^{-s}\psi(x)
&\geq
C_1\int_{B_\sigma(x_0)}\psi(y)
\int_{d(x,x_0)+\sigma}^{+\infty}
\frac{t^{2s-1}}{\operatorname{Vol}(B_t(x))}
\,dt\,d\mu(y)
\notag\\
&=
C_1\|\psi\|_{L^1(M)}
\int_{d(x,x_0)}^{+\infty}
\frac{(\tau+\sigma)^{2s-1}}
{\operatorname{Vol}(B_{\tau+\sigma}(x))}\,d\tau.
\label{eq:prop82-lower-far}
\end{align}
Since \(\tau\geq d(x,x_0)\geq1\geq\sigma\), one has
\(\tau+\sigma\leq2\tau\). Thus,
\[
\operatorname{Vol}(B_{\tau+\sigma}(x))
\leq
C_d\operatorname{Vol}(B_\tau(x))
\]
and
\[
(\tau+\sigma)^{2s-1}
\geq
\frac12\tau^{2s-1}
\qquad\text{for every }s\in(0,1].
\]
The upper estimate in Theorem~\ref{thm:bounGreen's} and
\eqref{eq:prop82-lower-far} yield
\begin{equation}\label{eq:prop82-lower-far-final}
(-\Delta)^{-s}\psi(x)
\geq
\frac{C_1}{2C_2C_d}
\|\psi\|_{L^1(M)}G_M^s(x,x_0).
\end{equation}

Finally, assume that \(0<d(x,x_0)<1\). For every
\(y\in B_\sigma(x_0)\), one has \(d(x,y)\leq1+\sigma\leq2\). Hence
\eqref{eq:lem81-lower} gives
\begin{equation}\label{eq:prop82-lower-near-constant}
(-\Delta)^{-s}\psi(x)
\geq
\frac{C_1}{2^{n-2s}(n-2s)\omega_n}
\|\psi\|_{L^1(M)}.
\end{equation}
On the other hand, \eqref{eq:lem81-local-upper}, with
\(R=R_0(x_0)\), gives
\begin{equation}\label{eq:prop82-near-green-upper}
d(x,x_0)^{n-2s}G_M^s(x,x_0)
\leq
C_2\mathcal A_{x_0}(R_0(x_0)).
\end{equation}
Combining \eqref{eq:prop82-lower-far-final}, \eqref{eq:prop82-lower-near-constant} and
\eqref{eq:prop82-near-green-upper} proves the lower bound in
\eqref{resultthm3.4}.
\end{proof}

\begin{rmk}
The constants in Proposition~\ref{thm:3.4} may be chosen as
\begin{equation*}
\gamma_1(x_0)
:=
\min\left\{
\frac{C_1}{2C_2C_d},
\frac{C_1}
{2^{n-2s}(n-2s)\omega_nC_2
\mathcal A_{x_0}(R_0(x_0))}
\right\}
\end{equation*}
and
\begin{equation*}
\gamma_2(x_0)
:=
\max\left\{
\frac{2C_2C_d}{C_1},
\frac{C_2C_d^3}{C_1}
+
\frac{3^{2s}2^{n-1-2s}(n-2s)\omega_nC_2}
{sC_1\operatorname{Vol}(B_1(x_0))}
\right\}.
\end{equation*}
\end{rmk}

\begin{rmk}\label{rmkprop3.4}
Fix \(\sigma_0>1\). The upper estimate in \eqref{resultthm3.4} remains
valid for every \(\sigma\in(0,\sigma_0]\), with a constant independent of
\(\sigma\) and \(\psi\). Indeed, Bishop--Gromov comparison gives
\[
\operatorname{Vol}(B_\sigma(x_0))
\geq
\left(\frac{\sigma}{\sigma_0}\right)^n
\operatorname{Vol}(B_{\sigma_0}(x_0))
\geq
\frac{\sigma^n}{\sigma_0^n}
\operatorname{Vol}(B_1(x_0)),
\]
and the proof of Proposition~\ref{thm:3.4} applies with
\begin{equation*}
\gamma_{2,\sigma_0}(x_0)
:=
\max\left\{
\frac{2C_2C_d}{C_1},
\frac{C_2C_d^3}{C_1}
+
\frac{3^{2s}2^{n-1-2s}(n-2s)\omega_n\sigma_0^nC_2}
{sC_1\operatorname{Vol}(B_1(x_0))}
\right\}.
\end{equation*}
\end{rmk}

\begin{rmk}\label{rmk:gamcont}
Assume in addition that \ref{A1} and \ref{A2} hold, and set
\[
\alpha:=\inf_{x\in M}\operatorname{Vol}(B_1(x))>0.
\]
Then, for every \(x_0\in M\) and every \(R\geq R_0\),
\begin{equation*}
\mathcal A_{x_0}(R)
\leq
\frac{1}{\alpha}
\left(
\frac{R^n}{n-2s}
+
\beta\gamma f(R)R^{n-1}
\right).
\end{equation*}
Consequently, the constants in \eqref{eq:lem81-away},
\eqref{eq:lem81-local-upper}, and \eqref{eq:lem81-local-integral} can be
chosen uniformly with respect to the pole. The constants
\(\gamma_1(x_0)\), \(\gamma_2(x_0)\), and
\(\gamma_{2,\sigma_0}(x_0)\) in Proposition~\ref{thm:3.4} and
Remark~\ref{rmkprop3.4} can likewise be replaced by positive constants
independent of \(x_0\). 
\end{rmk}

The next proposition identifies the Green-weighted norm with an integral
weighted by the potential of a compactly supported bounded function.

\begin{prop}\label{thm:lem3.6}
Fix \(x_0\in M\) and define
\begin{equation}\label{eq:psi-x0}
\psi_{x_0}
:=
\frac{\mathbf 1_{B_{1/2}(x_0)}}
{\operatorname{Vol}(B_{1/2}(x_0))}.
\end{equation}
Then there exist constants \(K_1(x_0),K_2(x_0)>0\) such that, for every
measurable function \(\phi\) on \(M\),
\begin{equation}\label{lem363r}
K_1(x_0)
\int_M|\phi|(-\Delta)^{-s}\psi_{x_0}\,d\mu
\leq
\|\phi\|_{L^1_{G_M^s,x_0}(M)}
\leq
K_2(x_0)
\int_M|\phi|(-\Delta)^{-s}\psi_{x_0}\,d\mu
\end{equation}
In particular,
\[
\phi\in L^1_{G_M^s,x_0}(M)
\qquad\textrm{ if and only if }\qquad
\int_M|\phi|(-\Delta)^{-s}\psi_{x_0}\,d\mu<+\infty.
\]
If \ref{A1} and \ref{A2} hold, the constants \(K_1\) and \(K_2\) can be chosen
independently of \(x_0\in M\).
\end{prop}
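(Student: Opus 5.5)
The plan is to compare the weight $w:=(-\Delta)^{-s}\psi_{x_0}$ pointwise with the weight defining $\|\cdot\|_{L^1_{G_M^s,x_0}(M)}$. That weight is $1$ on $B_1(x_0)$ and $G_M^s(\cdot,x_0)$ on $M\setminus B_1(x_0)$. Once we have
\[
c\,\bigl(\mathbf 1_{B_1(x_0)}+\mathbf 1_{M\setminus B_1(x_0)}G_M^s(\cdot,x_0)\bigr)\leq w\leq C\,\bigl(\mathbf 1_{B_1(x_0)}+\mathbf 1_{M\setminus B_1(x_0)}G_M^s(\cdot,x_0)\bigr)
\]
almost everywhere, we multiply by $|\phi|$ and integrate, which gives \eqref{lem363r} with $K_1=c$ and $K_2=1/c$ (the roles are reversed appropriately).

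\textbf{Away from the pole.} We apply Proposition~\ref{thm:3.4} with $\sigma=1/2$ and $\psi=\psi_{x_0}$. Then $\|\psi_{x_0}\|_{L^1}=1$ and $\|\psi_{x_0}\|_{L^\infty}\operatorname{Vol}(B_{1/2}(x_0))=1$. For $d(x,x_0)\geq1$ this gives $\gamma_1(x_0)G_M^s(x,x_0)\leq w(x)\leq\gamma_2(x_0)G_M^s(x,x_0)$ directly, because $\min\{1,d^{n-2s}\}=1$ there.

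\textbf{On $B_1(x_0)$.} Here we need $w$ bounded above and below by constants.
\begin{itemize}
\item For the upper bound we use Lemma~\ref{lemlastlem}(3): $\psi_{x_0}$ is supported in the compact set $\overline{B_{1/2}(x_0)}$, so $w\leq C(x_0)\|\psi_{x_0}\|_\infty$.
\item For the lower bound we note that for $x\in B_1(x_0)$ and $y\in B_{1/2}(x_0)$ we have $d(x,y)\leq 3/2$. Then \eqref{eq:lem81-lower} gives $G_M^s(x,y)\geq C_1/(\omega_n(n-2s)(3/2)^{n-2s})$, and integrating against $\psi_{x_0}$, which has unit mass, yields a positive constant lower bound on $w$ that depends only on $n,s$.
\end{itemize}

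\textbf{Uniformity under \ref{A1} and \ref{A2}.} By Remark~\ref{rmk:gamcont}, $\gamma_1$ and $\gamma_2$ can be taken independent of $x_0$. The near lower bound is already universal. The remaining point is the near upper bound, since the constant in Lemma~\ref{lemlastlem}(3) is pole dependent, and this is the main obstacle. To handle it, we redo that bound uniformly:
\begin{itemize}
\item Use the upper bound in Proposition~\ref{thm:3.4} with the uniform $\gamma_2$: $w(x)\leq\gamma_2 G_M^s(x,x_0)$.
\item For $1/2\leq d(x,x_0)<1$ (more generally, whenever $d(x,x_0)$ is bounded below), \eqref{eq:lem81-local-upper} with $R=R_0$ and the uniform bound on $\mathcal A_{x_0}(R_0)$ from Remark~\ref{rmk:gamcont} bound $G_M^s(x,x_0)$ uniformly.
\item For $d(x,x_0)<1/2$, bound $w(x)$ directly. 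Split the integral into $B_1(x)$, where we use $G_M^s(x,y)\leq C\mathcal A\, d(x,y)^{-(n-2s)}$ from \eqref{eq:lem81-local-upper} (with pole $x$, which is uniform) together with the coarea computation of Lemma~\ref{lemlastlem}, and its complement, where we use the uniform bound \eqref{eq:lem81-away}. Then use $\|\psi_{x_0}\|_\infty=\operatorname{Vol}(B_{1/2}(x_0))^{-1}\leq 2^n/\alpha$, which follows from Bishop--Gromov and \ref{A1}.
\end{itemize}
This gives $w\leq C$ uniformly on $B_1(x_0)$ and completes the argument.
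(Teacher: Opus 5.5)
Your proof is correct and follows the paper's argument. You compare $(-\Delta)^{-s}\psi_{x_0}$ pointwise with $\mathbf 1_{B_1(x_0)}+G_M^s(\cdot,x_0)\mathbf 1_{M\setminus B_1(x_0)}$, using Proposition~\ref{thm:3.4} with $\sigma=1/2$, Lemma~\ref{lemlastlem}(3) and \eqref{eq:lem81-lower}, and you get uniformity from Remark~\ref{rmk:gamcont}. The only deviation is the uniform near-pole upper bound. The paper obtains it in one line from Lemma~\ref{thm:lem1}(vi) on $B_{\max\{3/2,R_0\}}(x)\supset B_{1/2}(x_0)$; your split via \eqref{eq:lem81-local-upper}, \eqref{eq:lem81-away} and the coarea bound works equally well. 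In fact your split works for every $x$, so the separate case $1/2\le d(x,x_0)<1$ is superfluous. One small correction: if $c$ and $C$ are your lower and upper comparison constants, then $K_1=1/C$ and $K_2=1/c$.
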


\begin{proof}
Set
\[
\Psi_{x_0}:=(-\Delta)^{-s}\psi_{x_0}.
\]
Then
\[
\|\psi_{x_0}\|_{L^1(M)}=1=
\|\psi_{x_0}\|_{L^\infty(M)}
\operatorname{Vol}(B_{1/2}(x_0)).
\]
Therefore, Proposition~\ref{thm:3.4}, applied with \(\sigma=1/2\), gives
for every \(x\in M\setminus B_1(x_0)\),
\begin{equation}\label{lem363f1}
\gamma_1(x_0)G_M^s(x,x_0)
\leq
\Psi_{x_0}(x)
\leq
\gamma_2(x_0)G_M^s(x,x_0).
\end{equation}
For every \(x\in B_1(x_0)\), Proposition~\ref{thm:3.4} and
\eqref{eq:lem81-lower} yield
\begin{equation}\label{lem36l1}
\Psi_{x_0}(x)
\geq
\gamma_1(x_0)d(x,x_0)^{n-2s}G_M^s(x,x_0)
\geq
\frac{\gamma_1(x_0)C_1}{\omega_n(n-2s)}.
\end{equation}
On the other hand, Lemma~\ref{lemlastlem}(3) gives
\[
\Psi_{x_0}\in L^\infty(M).
\]
Hence there exist constants \(0<c(x_0)\leq C(x_0)<+\infty\) such that,
for every \(x\in M\),
\begin{align*}
c(x_0)\Bigl(
\mathbf 1_{B_1(x_0)}(x)
+&G_M^s(x,x_0)\mathbf 1_{M\setminus B_1(x_0)}(x)
\Bigr)\\
&\leq
\Psi_{x_0}(x)
\leq
C(x_0)\Bigl(
\mathbf 1_{B_1(x_0)}(x)
+G_M^s(x,x_0)\mathbf 1_{M\setminus B_1(x_0)}(x)
\Bigr).
\end{align*}
Multiplying by \(|\phi|\) and integrating over \(M\) proves
\eqref{lem363r}.

Assume now that \ref{A1} and \ref{A2} hold. By Remark~\ref{rmk:gamcont}, the
constants in \eqref{lem363f1} and \eqref{lem36l1} can be chosen
independently of \(x_0\). Furthermore, Bishop--Gromov comparison gives
\[
\operatorname{Vol}(B_{1/2}(x_0))
\geq
2^{-n}\operatorname{Vol}(B_1(x_0))
\geq
2^{-n}\alpha.
\]
Set $\widehat R:=\max\left\{\frac32,R_0\right\}$. If \(x\in B_1(x_0)\), then
\(B_{1/2}(x_0)\subset B_{3/2}(x)\subset B_{\widehat R}(x)\). By \eqref{eq:lem81-ball-integral}
\begin{align*}
\Psi_{x_0}(x)
&\leq
\frac{1}{\operatorname{Vol}(B_{1/2}(x_0))}
\int_{B_{1/2}(x_0)}G_M^s(x,y)\,d\mu(y)\\
&\leq
\frac{2^n}{\alpha}
\int_{B_{\widehat R}(x)}G_M^s(y,x)\,d\mu(y)
\leq
\frac{2^nC_2}{\alpha}\max\{\gamma,1\}h(\widehat R).
\end{align*}
Thus, the constants \(c(x_0)\) and \(C(x_0)\), and consequently
\(K_1(x_0)\) and \(K_2(x_0)\), can be chosen independently of the pole.
\end{proof}

s\section{From Mild Solutions to Weak Dual Solutions via Approximation}
\label{sec:frommstowds}

We first study existence, uniqueness, and basic qualitative properties of mild solutions to
Problem~\eqref{maineq}. These solutions are obtained by nonlinear semigroup theory in
$L^1(M)$ and can be constructed as limits of implicit Euler approximations. The arguments in
Proposition~\ref{prop:4.1} are independent of Assumption \ref{A1} and of $s$-nonparabolicity.
The latter assumption enters only when the Green potential is used to identify mild solutions
with weak dual solutions.

Throughout this section, by a mild solution to \eqref{maineq} we mean the nonlinear semigroup
solution in $L^1(M)$. By the Crandall--Liggett Theorem, see, e.g., \cite{Vazquez2007}, this
solution is obtained as the limit in $C([0,T];L^1(M))$, for every $T>0$, of the implicit Euler
approximations introduced below.

\begin{prop}\label{prop:4.1}
Let $M$ be a complete, noncompact $n$-dimensional Riemannian manifold with
$\operatorname{Ric}\geq0$. For any nonnegative initial data
$u_0,v_0\in L^1(M)\cap L^\infty(M)$, there exist unique nonnegative mild solutions
$u,v\in C([0,+\infty);L^1(M))$ to Problem~\eqref{maineq}. In addition, the following
properties hold:
\begin{enumerate}
\item[(i)] \emph{Time monotonicity:}
\begin{equation}\label{eq:prop4.1mon}
 t\longmapsto t^{\frac1{m-1}}u(t,x)
 \quad\text{is nondecreasing on }(0,+\infty)
 \text{ for a.e. }x\in M.
\end{equation}

\item[(ii)] \emph{$L^p$-nonexpansivity:}
\begin{equation}\label{nonexpms}
 \|u(t)\|_{L^p(M)}\leq \|u_0\|_{L^p(M)},
 \qquad t\geq0,\quad 1\leq p\leq\infty.
\end{equation}

\item[(iii)] \emph{Order preservation:}
\begin{equation}\label{orderingpos}
 \int_M (u(t,x)-v(t,x))_+\,d\mu(x)
 \leq
 \int_M (u_0(x)-v_0(x))_+\,d\mu(x),
 \qquad t\geq0.
\end{equation}

\item[(iv)] \emph{$L^1$-contraction:}
\begin{equation*}
 \|u(t)-v(t)\|_{L^1(M)}
 \leq
 \|u_0-v_0\|_{L^1(M)},
 \qquad t\geq0.
\end{equation*}
\end{enumerate}
\end{prop}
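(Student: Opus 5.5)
The plan is to place Problem~\eqref{maineq} in the framework of nonlinear semigroups in $L^1(M)$, following the classical Bénilan--Crandall theory as presented in \cite{Vazquez2007}. First I would show that the operator $A(u):=(-\Delta)^s(u^m)$, suitably realized in $L^1(M)$, is $m$-accretive. The key ingredient is that $(-\Delta)^s$ generates a sub-Markovian semigroup. For $s<1$ this semigroup is obtained by subordination of the heat semigroup, which is sub-Markovian on any complete manifold and conservative since $\operatorname{Ric}\geq0$. Consequently $(-\Delta)^s$ is an $m$-$T$-accretive Dirichlet-form operator in $L^1(M)$. For $m>1$ the map $\phi(r)=r^m$ is continuous, nondecreasing and vanishes at $0$, so the Brezis--Strauss theorem yields: for every $f\in L^1(M)$ and $\lambda>0$ the resolvent equation $u+\lambda(-\Delta)^s\phi(u)=f$ has a unique solution $u\in L^1(M)$. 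The resolvent $J_\lambda$ is moreover $T$-contractive in $L^1(M)$,
\[
\int_M (J_\lambda f-J_\lambda g)_+\,d\mu\leq\int_M(f-g)_+\,d\mu,
\]
and it is nonexpansive in every $L^p$. I would obtain the $L^p$ bound by testing the resolvent equation with $|u|^{p-2}u$, or more simply from $T$-contractivity together with comparison with constants, using $(-\Delta)^s c=0$, which relies on conservativity.

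With these resolvent properties in hand, the Crandall--Liggett theorem provides the mild solution $u(t)=\lim_{k\to\infty}J_{t/k}^k u_0$ in $C([0,T];L^1(M))$. Uniqueness holds within the class of mild solutions. Properties (ii), (iii) and (iv) are inherited from the corresponding resolvent estimates by iteration and passage to the limit, using lower semicontinuity of the norms for $p=\infty$. Nonnegativity follows from (iii) with $v_0=0$.

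For (i) I would use the homogeneity argument. Since $A$ is homogeneous of degree $m$, the rescaled function $u_\lambda(t,x):=\lambda^{1/(m-1)}u(\lambda t,x)$ is the mild solution with datum $\lambda^{1/(m-1)}u_0$. For $\lambda\geq1$ and $u_0\geq0$ we have $\lambda^{1/(m-1)}u_0\geq u_0$, so order preservation gives $u_\lambda(t)\geq u(t)$ a.e. Setting $\lambda=t'/t\geq1$ then yields
\[
t'^{1/(m-1)}u(t')\geq t^{1/(m-1)}u(t).
\]
To upgrade this to "for a.e. $x$, for all $t$", I would first take $t,t'$ rational and then extend by $L^1$-continuity in time.

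The main obstacle is the first step: making the $m$-accretivity of $(-\Delta)^s\phi$ rigorous on a noncompact manifold, where $(-\Delta)^s$ has no compact resolvent and no spectral gap. I would handle it by solving the resolvent equation on an exhaustion by compact domains, or by replacing $\phi$ with $\phi+\varepsilon r$. Then I would pass to the limit using the uniform $L^1$ and $L^\infty$ bounds and monotonicity, as in \cite{BonforteVazquez2015}.
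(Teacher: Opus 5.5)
Your proposal is correct and follows essentially the same route as the paper: nonlinear semigroup theory in $L^1(M)$ via Crandall--Liggett, with accretivity of $(-\Delta)^s(u^m)$ coming from the sub-Markovian subordinated semigroup, and (ii)--(iv) inherited from the resolvents. The paper obtains the B\'enilan--Crandall monotonicity by citing the abstract Crandall--Pierre theory, whereas you derive it explicitly from homogeneity plus order preservation. Two small fixes are needed. First, nonnegativity comes from (iii) with the zero solution in the \emph{first} slot, i.e.\ $\int_M(0-v(t))_+\,d\mu\le\int_M(0-v_0)_+\,d\mu=0$; your choice $v_0=0$ does not give it. Second, the $L^\infty$ bound should come from your testing argument rather than from ``comparison with constants'', since constants are not in $L^1(M)$ when $\operatorname{Vol}(M)=\infty$.
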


\begin{proof}
Set $A:=(-\Delta)^s$ and extend the porous-medium nonlinearity to
$\Phi(r):=|r|^{m-1}r$ on $\mathbb R$; on the nonnegative cone, $\Phi(r)=r^m$.
For $s=1$ we set $T^1_t=T_t$, the heat semigroup on $M$. The heat semigroup on $M$ is a strongly continuous
sub-Markovian contraction semigroup on $L^1(M)$. For $s\in(0,1)$, subordination through the Bernstein
function $\lambda\mapsto\lambda^s$ yields a strongly continuous sub-Markovian contraction
semigroup $(T_t^s)_{t\geq0}$ on $L^1(M)$ whose generator is $-A$; see
\cite[Chapters~3--4]{Jacob2001}. In particular, for every $s\in(0,1]$ $A$ is $m$-accretive and satisfies the
complete-accretivity property required in the theory of Crandall and Pierre. We shall use in
particular the standard consequence
\begin{equation*}
 \int_M \beta(w)Aw\,d\mu\geq0,
\end{equation*}
for every nondecreasing Lipschitz function $\beta:\mathbb R\to\mathbb R$ with
$\beta(0)=0$ and every $w\in D(A)$ for which the integral is well defined; the usual
monotone approximations allow one to use the corresponding truncated sign and power
functions. The abstract theory of \cite{crandallpierre} therefore gives existence and
uniqueness of the mild solution and the B\'enilan--Crandall time-monotonicity
\eqref{eq:prop4.1mon}; see also \cite[Theorem~2.1]{BonforteVazquez2016}.

The proofs of \emph{(ii)--(iv)} require only the semigroup and accretivity properties of $A$ established above. In particular, the proof given in \cite[Proposition~5.1]{BerchioBonforteGrilloMuratori2024} applies in the present setting.

Since the time-discretization used in the construction of mild solutions will also be needed below, we recall it here.
Fix $T>0$ and $\ell\in\mathbb N$, and set
\[
 t_k:=\frac{kT}{\ell},\qquad k=0,\ldots,\ell,
 \qquad h:=\frac{T}{\ell}.
\]
Write $u_k:=u_{k,\ell}$ for the discrete states and define the piecewise constant approximation by
\begin{equation*}
 u_\ell(t):=u_k
 \quad\text{for }t_k\leq t<t_{k+1},
 \qquad
 u_\ell(T):=u_{\ell,\ell},
\end{equation*}
where $u_0=u_{0,\ell}$ is the initial datum and the states $u_{k+1}$ are determined recursively by
\begin{equation}\label{fracellpb}
 hA(u_{k+1}^m)+u_{k+1}=u_k.
\end{equation}
By the Crandall--Liggett construction,
\[
 \sup_{t\in[0,T]}\|u_\ell(t)-u(t)\|_{L^1(M)}\longrightarrow0
 \qquad\text{as }\ell\to\infty.
\]
We shall use this approximation below in the passage from mild to weak dual solutions.
\end{proof}

\begin{rmk}
In particular, the order-preserving semigroup leaves the positive cone invariant: if
$u_0\geq0$, then $u(t)\geq0$ for every $t\geq0$.
\end{rmk}

\begin{rmk}
Proposition~\ref{prop:4.1} is purely semigroup-theoretic and does not use
$s$-nonparabolicity. From this point on we assume that $M$ is $s$-nonparabolic, so that the
Green potential $(-\Delta)^{-s}$ is available.
\end{rmk}

For $s\in(0,1)$, let $(T_t^s)_{t\geq0}$ denote the semigroup subordinated to the heat semigroup
$(T_t)_{t\geq0}$ through the Bernstein function $\lambda\mapsto\lambda^s$. When $s=1$, let $(T_t^1)_{t\geq0}=(T_t)_{t\geq0}$ denote the heat semigroup on $M$. In either case, the generator of $(T_t^s)_{t\geq0}$ is $-(-\Delta)^s$.

\begin{lem}\label{lem_leftinv}
Let $M$ be a complete, noncompact, $s$-nonparabolic $n$-dimensional Riemannian manifold, $n>2s$,
with $\operatorname{Ric}\geq0$.
\begin{enumerate}
\item[(i)] The operator $(-\Delta)^{-s}$ defined in \eqref{defform} is continuous from
$L^1(M)\cap L^\infty(M)$ into $L^\infty_{\mathrm{loc}}(M)$. Also,
for every compact set $V\Subset M$, it is continuous from
\[
 \{\psi\in L^\infty(M):\operatorname{supp}\psi\subset V\},
\]
into $L^\infty(M)$.

\item[(ii)] For every $g\in L^1(M)\cap L^\infty(M)$,
\begin{equation}\label{eq:integral_convergence}
 \lim_{r\to+\infty}\int_0^r T_t^s g(x)\,dt
 =\int_0^{+\infty}T_t^s g(x)\,dt
 =(-\Delta)^{-s}g(x)
\end{equation}
for every $x\in M$. The integral is absolutely convergent. In addition, the convergence in
\eqref{eq:integral_convergence} holds in $L^p_{\mathrm{loc}}(M)$ for every
$1\leq p<\infty$.

\item[(iii)] If $g\in D(( -\Delta)^s)$ and
\[
 g,\;(-\Delta)^s g\in L^1(M)\cap L^\infty(M),
\]
then
\begin{equation}\label{eq:left_inverse}
 (-\Delta)^{-s}\bigl(( -\Delta)^s g\bigr)=g
 \qquad\text{a.e. in }M.
\end{equation}
\end{enumerate}
\end{lem}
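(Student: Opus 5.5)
Part (i) is already contained in Lemma~\ref{lemlastlem}: estimate \eqref{lastlemeq3} gives, for every compact $V$, $\|(-\Delta)^{-s}\psi\|_{L^\infty(V)}\le C(V)(\|\psi\|_{L^1}+\|\psi\|_{L^\infty})$. Since $(-\Delta)^{-s}$ is linear, this is continuity from $L^1\cap L^\infty$ into $L^\infty_{\mathrm{loc}}$. Likewise, \eqref{lastlemeq4} gives continuity on bounded functions supported in $V$, with values in $L^\infty(M)$.

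For (ii) I will start from the subordination identity. For $s\in(0,1)$, $T_t^s g=\int_0^\infty T_\tau g\,\eta_t^s(d\tau)$, where $\eta^s_t$ are the $s$-stable subordinator laws. The standard identity $\int_0^\infty \eta_t^s(d\tau)\,dt=\tau^{s-1}d\tau/\Gamma(s)$ is the potential measure of the Bernstein function $\lambda^s$. First apply it to $|g|$. Tonelli's theorem then gives
\[
\int_0^\infty T_t^s|g|(x)\,dt=\frac1{\Gamma(s)}\int_0^\infty \tau^{s-1}T_\tau|g|(x)\,d\tau=\int_M G^s_M(x,y)|g(y)|\,d\mu(y),
\]
using \eqref{fracGreen'sheat} and the heat kernel representation $T_\tau |g|(x)=\int_M p_M(\tau,x,y)|g(y)|\,d\mu(y)$. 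For $s=1$ this step is immediate. By Lemma~\ref{lemlastlem}(2) the right-hand side is finite, and locally bounded in $x$. This yields absolute convergence, and then Fubini gives the same identity for $g$ itself, with pointwise convergence of $\int_0^r$ as $r\to\infty$. For $L^p_{\rm loc}$ convergence, note that
\[
\Big|\int_0^rT^s_tg\,dt-(-\Delta)^{-s}g\Big|\le\int_r^\infty T_t^s|g|\,dt\le(-\Delta)^{-s}|g|,
\]
and the last function lies in $L^\infty_{\rm loc}$. Dominated convergence on compact sets then concludes.

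For (iii), let $w:=(-\Delta)^s g\in L^1\cap L^\infty$. Since $g\in D((-\Delta)^s)$, the semigroup identity in $L^1$ gives $\int_0^r T_t^s w\,dt=g-T_r^s g$. By (ii) the left side converges to $(-\Delta)^{-s}w$ in $L^1_{\rm loc}$. It therefore remains to show that $T_r^s g\to0$ in $L^1_{\rm loc}$, or a.e. along a subsequence, as $r\to\infty$.

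This last step is the main obstacle. I would bound $|T_r^s g(x)|\le\int_0^\infty \|p_M(\tau,x,\cdot)\|_{L^\infty}\|g\|_{L^1}\,\eta_r^s(d\tau)$ and use the Li--Yau bound $p_M(\tau,x,y)\le C/\operatorname{Vol}(B_{\sqrt\tau}(x))$. Nonparabolicity via Theorem~\ref{thm:charnonpar} gives $\operatorname{Vol}(B_R(x))\to\infty$, since otherwise the integral there would diverge given $2s>0$. Hence $\sup_y p_M(\tau,x,y)\to0$ locally uniformly in $x$ as $\tau\to\infty$. Meanwhile $\eta_r^s$ escapes to infinity as $r\to\infty$, that is $\eta_r^s([0,L])\to0$ for each fixed $L$. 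Splitting the $\tau$-integral at $L$, using $\|T_\tau g\|_\infty\le\|g\|_\infty$ on $[0,L]$ and the heat kernel decay on $[L,\infty)$, gives $T^s_rg\to0$ locally uniformly. Identifying the limits yields \eqref{eq:left_inverse} a.e.
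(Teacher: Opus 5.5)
Your proposal is correct. Parts (i) and (ii) match the paper's proof essentially step for step: Lemma~\ref{lemlastlem}, the subordination potential identity \(\int_0^\infty\eta^s_t\,dt=\tau^{s-1}/\Gamma(s)\), Tonelli, and domination by \((-\Delta)^{-s}|g|\). Part (iii) takes a genuinely different route.

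\textbf{The paper's argument for (iii).} The paper works in \(L^2\). Since \(g,(-\Delta)^sg\in L^1\cap L^\infty\subset L^2\), the identity \(\int_0^rT^s_t(-\Delta)^sg\,dt=g-T^s_rg\) holds in \(L^2\). The spectral theorem then gives \(T^s_rg\to\) the orthogonal projection of \(g\) onto \(\ker(-\Delta)^s=\ker(-\Delta)\). An \(L^2\) element of that kernel has zero Dirichlet energy, so it is constant, and it vanishes because \(M\) has infinite volume. Hence \(\int_0^rT^s_t(-\Delta)^sg\,dt\to g\) in \(L^2\), and an a.e.-convergent subsequence is compared with the pointwise limit from (ii).

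\textbf{Your argument for (iii).} You instead prove \(T^s_rg\to0\) directly and quantitatively. The ingredients are:
\begin{enumerate}
\item the Li--Yau bound \(\sup_y p_M(\tau,x,y)\le C/\operatorname{Vol}(B_{\sqrt\tau}(x))\);
\item the divergence of volume, which you correctly extract from \(s\)-nonparabolicity via Theorem~\ref{thm:charnonpar};
\item the escape of the stable subordinator, \(\eta^s_r([0,L])\to0\), which follows from self-similarity.
\end{enumerate}
All of these steps are sound.

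\textbf{What each approach buys.} The paper's argument is softer. It uses only self-adjointness, connectedness and infinite volume, with no heat kernel bounds and no information about the subordinator law. The price is that it yields only \(L^2\) convergence and requires passing to the \(L^2\) realization of the generator.

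Your argument stays in the \(L^1\) framework, which is where \(D((-\Delta)^s)\) is actually used later, in the implicit Euler scheme of Proposition~\ref{prop:existence}. It also gives locally uniform convergence \(T^s_rg\to0\), with a rate governed by volume growth. The cost is importing the Li--Yau upper bound and the scaling of the stable law. Under \(\operatorname{Ric}\ge0\) these are of the same nature as the input behind Theorem~\ref{thm:bounGreen's}, so nothing foreign to the paper's setting is needed.
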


\begin{proof}
Part~(i) is precisely Lemma~\ref{lemlastlem}(2)--(3).

We prove part~(ii). For $s=1$, since $(T_t^1)_{t\geq0}=(T_t)_{t\geq0}$, \eqref{eq:integral_convergence} follows directly from \eqref{fracGreen'sheat}, Tonelli's theorem and the definition of $(-\Delta)^{-1}$ in \eqref{defform}.

From now on we suppose $0<s<1$. By subordination, see \cite[Section~4.3]{Jacob2001}, there exists, for
every $t>0$, a nonnegative probability density $\eta_t^s$ on $(0,+\infty)$ such that
\begin{equation*}
 T_t^s g(x)=\int_0^{+\infty}T_\tau g(x)\eta_t^s(\tau)\,d\tau.
\end{equation*}
Equivalently,
\[
 T_t^s g(x)
 =\int_M p_M^s(t,x,y)g(y)\,d\mu(y),
 \qquad
 p_M^s(t,x,y):=
 \int_0^{+\infty}p_M(\tau,x,y)\eta_t^s(\tau)\,d\tau.
\]
The stable subordinator satisfies
\begin{equation}\label{etaid}
 \int_0^{+\infty}\eta_t^s(\tau)\,dt
 =\frac{\tau^{s-1}}{\Gamma(s)},
 \qquad \tau>0.
\end{equation}
Indeed, the Laplace-transform identity
\[
 \int_0^{+\infty}e^{-\lambda\tau}\eta_t^s(\tau)\,d\tau
 =e^{-t\lambda^s}
\]
and Tonelli's theorem give
\[
 \int_0^{+\infty}e^{-\lambda\tau}
 \left(\int_0^{+\infty}\eta_t^s(\tau)\,dt\right)d\tau
 =\lambda^{-s},
\]
which is the Laplace transform of $\tau^{s-1}/\Gamma(s)$.

For $g\in L^1(M)\cap L^\infty(M)$, Lemma~\ref{lemlastlem}(2), positivity of the kernels,
and Tonelli's theorem yield, for every $x\in M$,
\begin{align*}
 \int_0^{+\infty}|T_t^s g(x)|\,dt
 &\leq
 \int_M |g(y)|
 \left(\int_0^{+\infty}p_M^s(t,x,y)\,dt\right)d\mu(y)
 \\
 &=
 \frac1{\Gamma(s)}
 \int_M |g(y)|
 \left(\int_0^{+\infty}p_M(\tau,x,y)\tau^{s-1}\,d\tau\right)d\mu(y)
 \\
 &=\int_M |g(y)|G_M^s(x,y)\,d\mu(y)<+\infty.
\end{align*}
Using \eqref{etaid} once more therefore gives
\[
 \int_0^{+\infty}T_t^s g(x)\,dt
 =\int_M G_M^s(x,y)g(y)\,d\mu(y)
 =(-\Delta)^{-s}g(x),
\]
which proves the pointwise assertion in \eqref{eq:integral_convergence}. If $V\Subset M$ is
compact, then
\[
 \left|\int_r^{+\infty}T_t^s g(x)\,dt\right|
 \leq(-\Delta)^{-s}|g|(x),
 \qquad x\in V,
\]
and the right-hand side belongs to $L^\infty(V)$ by part~(i). Dominated convergence on $V$
proves convergence in $L^p(V)$ for every $1\leq p<\infty$.

We finally prove part~(iii). Set again $A:=(-\Delta)^s$. Since
$g,Ag\in L^1(M)\cap L^\infty(M)$, both functions belong to $L^2(M)$. For every $r>0$,
the semigroup identity gives
\begin{equation*}
 \int_0^r T_t^s Ag\,dt=g-T_r^s g
 \qquad\text{in }L^2(M).
\end{equation*}
By the spectral theorem, $T_r^s g$ converges in $L^2(M)$, as $r\to+\infty$, to the
orthogonal projection of $g$ onto $\ker A$. Since
$\ker A=\ker(-\Delta)$, every element of $\ker A\subset L^2(M)$ has vanishing Dirichlet
energy and hence is constant. Furthermore, a complete noncompact manifold with
$\operatorname{Ric}\geq0$ has infinite volume, so this constant must be zero. Consequently,
\begin{equation}\label{leminvp22}
 \int_0^r T_t^s Ag\,dt\longrightarrow g
 \qquad\text{in }L^2(M)\text{ as }r\to+\infty.
\end{equation}
On the other hand, part~(ii), applied to $Ag$, gives pointwise convergence of the same
integrals to $(-\Delta)^{-s}Ag$. Taking an a.e.-convergent subsequence in
\eqref{leminvp22} proves \eqref{eq:left_inverse}.
\end{proof}

We now show that the mild solutions constructed in Proposition~\ref{prop:4.1} are weak dual
solutions in the sense of Definition~\ref{defn:wds}.

\begin{prop}\label{prop:existence}
Let $M$ be a complete, noncompact, $s$-nonparabolic $n$-dimensional Riemannian manifold
with $\operatorname{Ric}\geq0$. For every nonnegative initial datum
$u_0\in L^1(M)\cap L^\infty(M)$, the mild solution $u$ to \eqref{maineq} constructed in
Proposition~\ref{prop:4.1} is a weak dual solution in the sense of
Definition~\ref{defn:wds}.
\end{prop}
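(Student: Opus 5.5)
The plan is to verify the four conditions of Definition~\ref{defn:wds} at the level of the implicit Euler scheme \eqref{fracellpb}, and then pass to the limit $\ell\to\infty$ using the Crandall--Liggett convergence in $C([0,T];L^1(M))$. Conditions (i), (ii) and (iv) are essentially free. By Proposition~\ref{prop:4.1}, $u\in C([0,T];L^1(M))$, and $0\le u\le\|u_0\|_{L^\infty}$. The inclusion $L^1(M)\hookrightarrow L^1_{G^s_M,x_0}(M)$ holds with a constant depending only on $x_0$: by Lemma~\ref{lemlastlem}(1) with $V=\{x_0\}$ and $\varepsilon=1$, $G^s_M(\cdot,x_0)$ is bounded off $B_1(x_0)$. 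This gives (i). Boundedness together with $L^1$ gives $u^m\in L^1((0,T);L^1(M))$, hence (ii). The initial condition (iv) is part of the construction.

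The core step is the discrete dual identity. Each state satisfies $u_k\in L^1\cap L^\infty$, by $L^p$-nonexpansivity applied to the resolvent. From \eqref{fracellpb}, $A(u_{k+1}^m)=(u_k-u_{k+1})/h\in L^1\cap L^\infty$, and $u_{k+1}^m\in L^1\cap L^\infty$. Applying Lemma~\ref{lem_leftinv}(iii) with $g=u_{k+1}^m$ then gives
\[
\frac{(-\Delta)^{-s}u_{k+1}-(-\Delta)^{-s}u_k}{h}=-u_{k+1}^m\quad\text{a.e. in }M,
\]
where both potentials are well defined and locally bounded by Lemma~\ref{lem_leftinv}(i). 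One technical point must be checked here: that $u_{k+1}^m\in D((-\Delta)^s)$ in the $L^2$ sense, with the resolvent equation holding in $L^1$ and in $L^2$ simultaneously. I expect this to follow from the consistency of the resolvents on $L^1\cap L^2$, because the semigroups are compatible across $L^p$ spaces.

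Next, fix $\psi\in C^1_c((0,T);L^\infty_c(M))$ with spatial support in a compact set $V$. I multiply the discrete identity by $\psi(t)$, integrate over $M\times(t_k,t_{k+1})$, sum in $k$, and perform a discrete summation by parts. The boundary terms vanish because $\psi$ has compact support in time. This yields
\[
\int_0^T\!\!\int_M \frac{\psi(t+h)-\psi(t)}{h}\,(-\Delta)^{-s}u_\ell(t)\,d\mu\,dt=\int_0^T\!\!\int_M u_\ell(t+h)^m\psi(t)\,d\mu\,dt+o(1),
\]
up to the usual index shifts.

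The main obstacle is passing to the limit $\ell\to\infty$, and here I plan to use symmetry of $G^s_M$ to move the potential onto the test function: $\int_M\phi\,(-\Delta)^{-s}u_\ell\,d\mu=\int_M u_\ell\,(-\Delta)^{-s}\phi\,d\mu$ for $\phi\in L^\infty$ supported in $V$, justified by Tonelli. By Lemma~\ref{lemlastlem}(3), $(-\Delta)^{-s}\partial_t\psi$ lies in $L^\infty(M)$ uniformly in $t$. Combined with $u_\ell\to u$ in $C([0,T];L^1(M))$ and the uniform convergence of the difference quotients of $\psi$, this makes the left side converge to $\int\!\!\int\partial_t\psi\,(-\Delta)^{-s}u$. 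On the right side, the uniform $L^\infty$ bound and the elementary inequality $|a^m-b^m|\le m\|u_0\|_\infty^{m-1}|a-b|$ give $u_\ell^m\to u^m$ in $L^1(M\times(0,T))$, and the time shift by $h$ is harmless by $L^1$-continuity of $u$. This yields \eqref{wds-identity}.
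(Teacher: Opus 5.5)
Your proposal is correct and follows essentially the same route as the paper. Both arguments obtain the discrete dual identity from Lemma~\ref{lem_leftinv}(iii) with $g=u_{k+1}^m$, sum by parts against $\psi$, and pass to the limit using Crandall--Liggett convergence, the $L^1$/$L^\infty$ bounds, and the duality estimate $\bigl|\int_M \varphi\,(-\Delta)^{-s}w\,d\mu\bigr|\le C_K\|\varphi\|_{L^\infty(M)}\|w\|_{L^1(M)}$ (from symmetry, Tonelli and Lemma~\ref{lemlastlem}(3)). The domain point you flag, that $u_{k+1}^m$ must lie in the $L^2$-domain for the spectral argument in Lemma~\ref{lem_leftinv}(iii), is indeed settled by consistency of the resolvents on $L^1\cap L^2$; the paper passes over this with ``by construction''.
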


\begin{proof}
Fix $T>0$ and consider the implicit Euler scheme \eqref{fracellpb} in
Proposition~\ref{prop:4.1}. By construction, $u_{k+1}^m\in D(A)$, where
$A:=(-\Delta)^s$, and
\[
A(u_{k+1}^m)
=
\frac{u_k-u_{k+1}}{h}
\in L^1(M)\cap L^\infty(M).
\]
Since also $u_{k+1}^m\in L^1(M)\cap L^\infty(M)$,
Lemma~\ref{lem_leftinv}(iii) yields
\begin{equation}\label{eq:dual-discrete}
(-\Delta)^{-s}u_{k+1}-(-\Delta)^{-s}u_k
=
-hu_{k+1}^m
\qquad\text{a.e. in }M.
\end{equation}

Let $\psi\in C_c^1((0,T);L_c^\infty(M))$ and choose $K\Subset M$
containing the spatial supports of $\psi(t)$ and $\partial_t\psi(t)$ for
all $t\in[0,T]$. Multiplying \eqref{eq:dual-discrete} by $\psi_k:=\psi(t_k)$ and summing over $k=0,\ldots,\ell-1$, the remainder of the discrete summation-by-parts argument and the passage to the limit $\ell\to\infty$ follow along the same lines as in \cite[Proof of Proposition~5.3]{BerchioBonforteGrilloMuratori2024}. We only record the key ingredients here.

The passage to the limit relies on the
Crandall--Liggett convergence
\[
\sup_{t\in[0,T]}
\|u_\ell(t)-u(t)\|_{L^1(M)}
\longrightarrow0,
\]
the $L^\infty$- and $L^1$-nonexpansivity \eqref{nonexpms} of
Proposition~\ref{prop:4.1}, and the following estimate: If
$\varphi\in L^\infty(M)$ satisfies $\operatorname{supp}\varphi\subset K$,
then, by symmetry of the Green kernel, Tonelli's theorem, and
Lemma~\ref{lemlastlem}(3), for every
$w\in L^1(M)\cap L^\infty(M)$,
\begin{equation*}
\left|
\int_M \varphi\,(-\Delta)^{-s}w\,d\mu
\right|
=
\left|
\int_M w\,(-\Delta)^{-s}\varphi\,d\mu
\right|
\le
C_K\|\varphi\|_{L^\infty(M)}\|w\|_{L^1(M)}.
\end{equation*}
These ingredients allow one to pass to the limit $\ell\rightarrow\infty$ in the discrete identity
as in \cite[Proof of Proposition~5.3]{BerchioBonforteGrilloMuratori2024}. We then obtain
\[
\int_0^T\int_M
\partial_t\psi\,(-\Delta)^{-s}u\,d\mu\,dt
=
\int_0^T\int_M u^m\psi\,d\mu\,dt.
\]

Finally, $u\in C([0,T];L^1(M))$ by Proposition~\ref{prop:4.1}, and the
continuous embedding
$L^1(M)\hookrightarrow L^1_{G_M^s,x_0}(M)$ gives the required continuity
in the weighted space. The $L^\infty$ bound also gives
$u^m\in C([0,T];L^1(M))$, while $u(0)=u_0$ in $L^1(M)$.
Hence $u$ satisfies all the requirements of
Definition~\ref{defn:wds}.
\end{proof}

\begin{cor}
Let $M$ be a complete, noncompact, $s$-nonparabolic $n$-dimensional
Riemannian manifold with $\operatorname{Ric}\geq0$. Let
$u_0\in L^1(M)$ be nonnegative, and let $u$ be a WDS to
\eqref{maineq} obtained by monotone approximation from WDSs associated,
through Proposition~\ref{prop:existence}, with nonnegative data in
$L^1(M)\cap L^\infty(M)$. Then
\[
\int_M u(t,x)\,d\mu(x)
=
\int_M u_0(x)\,d\mu(x),
\qquad t\geq0.
\]
\end{cor}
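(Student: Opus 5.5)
The plan is to prove mass conservation first for data in $L^1(M)\cap L^\infty(M)$, and then pass to general $L^1$ data by monotone convergence. For bounded integrable data the solution is the mild solution of Proposition~\ref{prop:4.1}, constructed through the implicit Euler scheme \eqref{fracellpb}. Since the Crandall--Liggett approximations converge in $C([0,T];L^1(M))$, it suffices to show that every discrete step preserves mass, i.e.
\[
\int_M A(u_{k+1}^m)\,d\mu=0 .
\]
Here $A=(-\Delta)^s$ generates the sub-Markovian semigroup $T^s_t$. On $\operatorname{Ric}\geq0$ the heat semigroup is stochastically complete, and hence so is the subordinated semigroup, so $T^s_t$ is conservative on $L^1$:
\[
\int_M T_t^s w\,d\mu=\int_M w\,d\mu,\qquad w\in L^1(M).
\]
For $w=u_{k+1}^m\in D(A)$ (the $L^1$-domain) we have
\[
Aw=\lim_{t\to0}\frac{w-T^s_tw}{t}\quad\text{in }L^1 .
\]
Integrating this limit gives $\int_M Aw\,d\mu=0$. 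Therefore
\[
\int_M u_{k+1}\,d\mu=\int_M u_k\,d\mu ,
\]
so $\int_M u_\ell(t)\,d\mu=\int_M u_0\,d\mu$ for every $t$ and $\ell$. Letting $\ell\to\infty$ in $L^1$ yields conservation for the mild solution, and hence for the WDS of Proposition~\ref{prop:existence}.

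One point needs checking: that the $L^1$-generator of the subordinated semigroup is indeed the operator used in \eqref{fracellpb}. Proposition~\ref{prop:4.1} states this explicitly: the generator of $T^s_t$ on $L^1$ is $-A$. If one prefers to avoid the generator, an alternative is to integrate \eqref{eq:dual-discrete} against cutoffs; I would not take that route.

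For general $0\le u_0\in L^1(M)$, take an increasing sequence $0\le u_{0,j}\in L^1\cap L^\infty$ with $u_{0,j}\uparrow u_0$, for instance $u_{0,j}=\min(u_0,j)\mathbf 1_{B_j(o)}$. The corresponding solutions $u_j$ increase in $j$ by order preservation \eqref{orderingpos}, and $u=\lim_j u_j$ by construction. By monotone convergence,
\[
\int_M u(t)\,d\mu=\lim_j\int_M u_j(t)\,d\mu=\lim_j\int_M u_{0,j}\,d\mu=\int_M u_0\,d\mu .
\]

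The main obstacle is to make sure the limit $u(t)$ really coincides pointwise a.e.\ with $\sup_j u_j(t)$ for \emph{every} $t$, and not only for a.e.\ $t$. By the $L^1$-contraction, $u_j(t)$ is Cauchy in $L^1$ uniformly in $t$: for $j\ge i$,
\[
\|u_j(t)-u_i(t)\|_{L^1}\le\|u_{0,j}-u_{0,i}\|_{L^1}\to0 .
\]
So the monotone limit is also an $L^1$ limit, uniform in $t$. This both identifies $u(t)$ at every $t$ and gives the mass identity directly, independently of the choice of approximating sequence.
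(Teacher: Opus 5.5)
Your proposal is correct and follows essentially the paper's proof. Conservativity of the subordinated semigroup gives $\int_M Aw\,d\mu=0$ for $w\in D(A)$, so each implicit Euler step preserves mass; this passes to the Crandall--Liggett limit, and general nonnegative $L^1$ data are handled by monotone approximation and the monotone convergence theorem (your extra uniform-in-$t$ $L^1$-Cauchy argument via the contraction is a harmless refinement, since monotone convergence at each fixed $t$ already suffices).
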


\begin{proof}
We first consider $u_0\in L^1(M)\cap L^\infty(M)$ and the mild solution
constructed in Proposition~\ref{prop:4.1}.
Let \((T_t^s)_{t\geq0}\) be the semigroup generated by
\(-A=-(-\Delta)^s\). If \(0<s<1\), then \((T_t^s)_{t\geq0}\) is the
semigroup subordinated to the heat semigroup, whereas for \(s=1\) one
has \(T_t^1=T_t\). Since \(\operatorname{Ric}\geq0\), \(M\) is
stochastically complete, and hence the heat semigroup is conservative.
It follows that \((T_t^s)_{t\geq0}\) is conservative for every
\(s\in(0,1]\): by subordination if \(0<s<1\), and directly from
\(T_t^1=T_t\) if \(s=1\). Therefore, for every
$w\in D(A)$,
\[
\int_M Aw\,d\mu
=
\lim_{\tau\downarrow0}
\frac{1}{\tau}
\left(
\int_M w\,d\mu-\int_M T_\tau^s w\,d\mu
\right)
=0.
\]
Consider now the implicit Euler scheme \eqref{fracellpb}. Since
$u_{k+1}^m\in D(A)$, integration over $M$ gives
\[
\int_M u_{k+1}\,d\mu
=
\int_M u_k\,d\mu
-
h\int_M A(u_{k+1}^m)\,d\mu
=
\int_M u_k\,d\mu.
\]
Thus every discrete approximation preserves the mass. Passing to the
Crandall--Liggett limit in $C([0,T];L^1(M))$ yields
\[
\int_M u(t)\,d\mu=\int_M u_0\,d\mu,
\qquad t\geq0.
\]
For a general nonnegative $u_0\in L^1(M)$, the claim follows by monotone approximation and the monotone convergence theorem, passing to the limit in the preceding identity.
\end{proof}

\section{Properties of Weak Dual Solutions}\label{sec: propswds}

In this section we study properties of weak dual solutions (WDS) to \eqref{maineq} obtained by monotone approximation. Constants may vary from line to line.

\begin{prop}\label{thm:estonwds}
Let $M$ be a complete, noncompact, $s$-nonparabolic $n$-dimensional Riemannian manifold, $n>2s$, with $\operatorname{Ric}(M)\geq 0$. For any WDS $u$ of \eqref{maineq} with nonnegative initial datum $u_0\in L^1(M)\cap L^\infty(M)$, we have
\begin{equation}\label{5.1N}
\int_M u(t,x)G_M^s(x,x_0)\,d\mu(x)
\leq
\int_M u_0(x)G_M^s(x,x_0)\,d\mu(x),
\qquad x_0\in M,\ t\geq 0,
\end{equation}
and, for every $0<t_0\leq t_1<t$ and almost every $x_0\in M$,
\begin{equation}\label{wdsmonf}
\begin{split}
\left(\frac{t_0}{t_1}\right)^{\frac{m}{m-1}}
(t_1-t_0)u^m(t_0,x_0)
&\leq
\int_M\bigl[u(t_0,x)-u(t_1,x)\bigr]G_M^s(x_0,x)\,d\mu(x)\\
&\leq
(m-1)\frac{t^{\frac{m}{m-1}}}{t_0^{\frac{1}{m-1}}}u^m(t,x_0).
\end{split}
\end{equation}
\end{prop}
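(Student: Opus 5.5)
The plan is to work first at the level of the implicit Euler scheme \eqref{fracellpb}, or directly with the weak dual identity, and then pass to the limit. The key relation is the discrete dual identity \eqref{eq:dual-discrete}:
\[
(-\Delta)^{-s}u_{k+1}-(-\Delta)^{-s}u_k=-hu_{k+1}^m\le0 .
\]
Its continuous counterpart, which follows by testing \eqref{wds-identity} with $\psi(t,x)=\eta(t)\varphi(x)$, reads
\[
\int_M \varphi\,(-\Delta)^{-s}u(t_1)\,d\mu-\int_M\varphi\,(-\Delta)^{-s}u(t_0)\,d\mu=-\int_{t_0}^{t_1}\!\!\int_M u^m\varphi\,d\mu\,d\tau ,
\]
for $0\le\varphi\in L^\infty_c(M)$ and $\eta$ approximating $\mathbf 1_{[t_0,t_1]}$. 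To justify this, I would use Lemma~\ref{rmk:contloc}, which gives $(-\Delta)^{-s}u\in C([0,\infty);L^1_{\rm loc})$, together with $u^m\in L^1((0,T);L^1_{\rm loc})$. Since $\varphi$ is arbitrary, this yields, for a.e.\ $x_0$,
\begin{equation*}
\int_M\bigl[u(t_0,x)-u(t_1,x)\bigr]G_M^s(x_0,x)\,d\mu(x)=\int_{t_0}^{t_1}u^m(\tau,x_0)\,d\tau .
\end{equation*}
Here Fubini is legitimate because $u(t)\in L^1\cap L^\infty$, so $(-\Delta)^{-s}u(t)$ is finite everywhere by Lemma~\ref{lemlastlem}(2), and the kernel is symmetric.

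\eqref{5.1N} follows from this identity with $t_0=0$, since the right side is nonnegative. This gives the bound for a.e.\ $x_0$. To obtain it for every $x_0$, I would argue by lower semicontinuity: average the a.e.\ inequality against $\psi_{x_0,\sigma}=\mathbf 1_{B_\sigma(x_0)}/\operatorname{Vol}(B_\sigma(x_0))$ and let $\sigma\to0$. On the left side I would use Fatou together with continuity of $G_M^s(\cdot,x)$ off the diagonal. On the right side, $u_0\in L^\infty$ lets me dominate the local singularity, using the estimate $\int_{B_1(x)}d(x,y)^{-(n-2s)}d\mu(y)\le \omega_n n/(2s)$ from the proof of Lemma~\ref{lemlastlem}.

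For \eqref{wdsmonf}, I would insert the Bénilan--Crandall monotonicity \eqref{eq:prop4.1mon} into the identity above. Monotonicity gives $u(\tau,x_0)\ge (t_0/\tau)^{1/(m-1)}u(t_0,x_0)$, so for $\tau\in[t_0,t_1]$ we have $u^m(\tau)\ge (t_0/t_1)^{m/(m-1)}u^m(t_0)$. Integrating over $[t_0,t_1]$ gives the lower bound with the factor $(t_1-t_0)$. For the upper bound, monotonicity gives $u^m(\tau,x_0)\le (t/\tau)^{m/(m-1)}u^m(t,x_0)$ for $\tau\le t$. Integrating,
\[
\int_{t_0}^{t_1}\Bigl(\frac t\tau\Bigr)^{\frac m{m-1}}d\tau\le t^{\frac m{m-1}}\int_{t_0}^\infty\tau^{-\frac m{m-1}}d\tau=(m-1)\,t^{\frac m{m-1}}t_0^{-\frac1{m-1}},
\]
which is exactly the stated constant.

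I expect the main obstacle to be the a.e.\ pointwise identification. This requires choosing a single null set valid simultaneously for the (countably many rational, then by continuity all) times involved. It also requires justifying that the monotonicity \eqref{eq:prop4.1mon}, proven for mild solutions, holds for the WDS under consideration. For data in $L^1\cap L^\infty$ the WDS is the mild solution by Proposition~\ref{prop:existence}, so this holds. Time values should be read via the $C([0,T];L^1)$ representative, so that $u(t_0,\cdot)$ and $u(t,\cdot)$ are well defined a.e.
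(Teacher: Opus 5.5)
Your proposal is correct and follows essentially the paper's route. The a.e.\ pointwise identity $\int_M[u(t_0,x)-u(t_1,x)]G_M^s(x_0,x)\,d\mu(x)=\int_{t_0}^{t_1}u^m(\tau,x_0)\,d\tau$, combined with the B\'enilan--Crandall monotonicity, is exactly the argument from the cited reference that the paper invokes for \eqref{wdsmonf}, and your constants are right. For \eqref{5.1N} the differences are only technical: you reach $t_0=0$ directly via Lemma~\ref{rmk:contloc} and use Fatou on the left-hand side, whereas the paper passes to the limit on both sides of the averaged inequality and then lets $t_0\downarrow0$ via Vitali. The paper's limit uses the domination $(-\Delta)^{-s}\eta_k\le\gamma_2(x_0)G_M^s(\cdot,x_0)$ from Proposition~\ref{thm:3.4} together with an $L^p$--H\"older argument near the pole, and that domination also gives a one-line justification of your right-hand-side step.
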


\begin{proof}
The argument follows along the same lines as \cite[Proposition~5.4]{BerchioBonforteGrilloMuratori2024}
and the references therein; see also \cite[Proposition~5.1]{FPMD}. We nevertheless include the proof to highlight the adjustments needed in our setting, in particular the pole dependence of the estimates.

Fix $T>0$, $0<t_0\le t_1\le T$, and
$0\le \eta\in L^\infty_c(M)$. Approximating
$\mathbf 1_{[t_0,t_1]}$ by smooth functions in the weak dual
formulation in Definition \ref{defn:wds}, and using Lemma~\ref{rmk:contloc}, we obtain
\begin{equation}\label{wdsagtestmon1}
\int_M \eta(x)\Bigl[(-\Delta)^{-s}u(t_0,x)-(-\Delta)^{-s}u(t_1,x)\Bigr] \,d\mu(x)
=
\int_{t_0}^{t_1}\int_M u^m(\tau,x)\eta(x)\,d\mu(x)\,d\tau
\geq 0.
\end{equation}
In particular, by Fubini--Tonelli,
\begin{equation}\label{5.3N}
\int_M (-\Delta)^{-s}\eta(x)u(t_1,x)\,d\mu(x)
\leq
\int_M (-\Delta)^{-s}\eta(x)u(t_0,x)\,d\mu(x).
\end{equation}

Fix $x_0\in M$ and, for $k\in\mathbb{N}$, set
\begin{equation*}
\eta_k(x):=\frac{\mathbf 1_{B_{1/k}(x_0)}(x)}{\operatorname{Vol}(B_{1/k}(x_0))}.
\end{equation*}
We claim that, for every $\tau\geq 0$,
\begin{equation}\label{5.6N}
\lim_{k\to\infty}
\int_M u(\tau,x)(-\Delta)^{-s}\eta_k(x)\,d\mu(x)
=
\int_M u(\tau,x)G_M^s(x,x_0)\,d\mu(x).
\end{equation}

Before proving this claim, let us show that it implies \eqref{5.1N}. Taking $t_1=t$ in \eqref{5.3N} and then letting $k\to\infty$ in view of \eqref{5.6N}, we obtain, for every $0<t_0\leq t$,
\[
\int_M u(t,x)G_M^s(x,x_0)\,d\mu(x)
\leq
\int_M u(t_0,x)G_M^s(x,x_0)\,d\mu(x).
\]
We next let $t_0\downarrow0$. On $M\setminus B_{R_0(x_0)}(x_0)$, Lemma~\ref{lemlastlem}(1) gives a constant $C(x_0)>0$ such that
\begin{equation*}
\int_{M\setminus B_{R_0(x_0)}(x_0)}
|u(t_0,x)-u_0(x)|G_M^s(x,x_0)\,d\mu(x)
\leq
C(x_0)\|u(t_0)-u_0\|_{L^1(M)}\longrightarrow0.
\end{equation*}
On $B_{R_0(x_0)}(x_0)$, Proposition~\ref{prop:4.1} gives
\[
|u(t_0,x)-u_0(x)|G_M^s(x,x_0)
\leq
2\|u_0\|_{L^\infty(M)}G_M^s(x,x_0),
\]
and the right-hand side belongs to $L^1(B_{R_0(x_0)}(x_0))$ by Lemma~\ref{thm:lem1}(v). Since $u(t_0)\to u_0$ in $L^1(M)$, hence in measure, Vitali's theorem yields
\[
\int_{B_{R_0(x_0)}(x_0)}
|u(t_0,x)-u_0(x)|G_M^s(x,x_0)\,d\mu(x)
\longrightarrow0.
\]
This proves \eqref{5.1N}.

We now establish \eqref{5.6N}. For every $x\neq x_0$, the off-diagonal continuity of the Green kernel gives
\begin{equation}\label{5.10}
(-\Delta)^{-s}\eta_k(x)
=
\frac{1}{\operatorname{Vol}(B_{1/k}(x_0))}
\int_{B_{1/k}(x_0)}G_M^s(x,y)\,d\mu(y)
\longrightarrow G_M^s(x,x_0).
\end{equation}
Proposition~\ref{thm:3.4} yields
\begin{equation}\label{5.09}
0\leq(-\Delta)^{-s}\eta_k(x)
\leq\gamma_2(x_0)G_M^s(x,x_0),
\qquad k\in\mathbb N,\quad x\neq x_0.
\end{equation}
Consequently,
\begin{equation}\label{5.11}
0\leq u(\tau,x)(-\Delta)^{-s}\eta_k(x)
\leq\gamma_2(x_0)u(\tau,x)G_M^s(x,x_0)
\end{equation}
for every $k$ and almost every $x\in M$. Let $R\geq R_0(x_0)$. Since $u(\tau)\in L^1_{G_M^s,x_0}(M)$, \eqref{5.10}--\eqref{5.11} and dominated convergence give
\begin{equation}\label{5.1ls1}
\int_{M\setminus B_R(x_0)}
 u(\tau,x)\Bigl|(-\Delta)^{-s}\eta_k(x)-G_M^s(x,x_0)\Bigr|\,d\mu(x)
\longrightarrow0.
\end{equation}
For the integral over $B_R(x_0)$, Lemma~\ref{thm:lem1}(iv) implies
\[
G_M^s(\cdot,x_0)\in L^p(B_R(x_0))
\qquad\text{for every }1\leq p<\frac{n}{n-2s}.
\]
Together with \eqref{5.10} and \eqref{5.09}, dominated convergence gives
\[
(-\Delta)^{-s}\eta_k\longrightarrow G_M^s(\cdot,x_0)
\quad\text{in }L^p(B_R(x_0))
\qquad\text{for every }1\leq p<\frac{n}{n-2s}.
\]
Choose such a $p$ and let $p'$ be its Hölder conjugate. By Proposition~\ref{prop:4.1}, $u(\tau)\in L^{p'}(M)$, therefore
\begin{equation}\label{5.1ls2}
\begin{aligned}
&\int_{B_R(x_0)}
 u(\tau,x)\Bigl|(-\Delta)^{-s}\eta_k(x)-G_M^s(x,x_0)\Bigr|\,d\mu(x)
\\
&\qquad\leq
\|u(\tau)\|_{L^{p'}(M)}
\|(-\Delta)^{-s}\eta_k-G_M^s(\cdot,x_0)\|_{L^p(B_R(x_0))}
\longrightarrow0.
\end{aligned}
\end{equation}
Combining \eqref{5.1ls1} and \eqref{5.1ls2} proves \eqref{5.6N}.

It remains to prove \eqref{wdsmonf}. This follows as in
\cite[Proposition~3.3, Steps~3 and~4]{BerchioBonforteGangulyGrillo2020} and references therein,
using the time-monotonicity \eqref{eq:prop4.1mon}. The argument carries over
to the present setting line by line, with the limiting passages justified as
in the first part of the proof.
\end{proof}

\begin{prop}\label{lastpropwdsprops}
Let $M$ be a complete, noncompact, $s$-nonparabolic $n$-dimensional Riemannian manifold, $n>2s$, with $\operatorname{Ric}(M)\geq0$. Let $u$ be a WDS to \eqref{maineq} with nonnegative initial datum $u_0\in L^1(M)\cap L^\infty(M)$. Then, for every $x_0\in M$, there exists a constant $C(x_0)>0$ such that, for all $t\geq0$,
\begin{equation}\label{nonexpwighted}
\|u(t)\|_{L^1_{G_M^s,x_0}(M)}
\leq C(x_0)\|u_0\|_{L^1_{G_M^s,x_0}(M)}.
\end{equation}
More generally, if $u,v$ are two ordered WDSs to \eqref{maineq} corresponding to nonnegative initial data $u_0,v_0\in L^1(M)\cap L^\infty(M)$, then
\begin{equation}\label{ordering weighted2}
\|u(t)-v(t)\|_{L^1_{G_M^s,x_0}(M)}
\leq C(x_0)\|u_0-v_0\|_{L^1_{G_M^s,x_0}(M)},
\qquad t\geq0.
\end{equation}
In addition, for every $0<R\leq1$,
\begin{equation}\label{4.10nn}
R^{n-2s}\int_{M\setminus B_R(x_0)}u(t,x)G_M^s(x,x_0)\,d\mu(x)
\leq C(x_0)\|u(t)\|_{L^1_{G_M^s,x_0}(M)}.
\end{equation}
If, in addition, Assumptions \ref{A1} and \ref{A2} hold, the constants in \eqref{nonexpwighted}--\eqref{4.10nn} can be chosen independently of $x_0\in M$.
\end{prop}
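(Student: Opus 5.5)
The plan is to reduce everything to the monotonicity of Green potentials, which we already have. The key tool is Proposition~\ref{thm:lem3.6}, which identifies $\|\phi\|_{L^1_{G_M^s,x_0}(M)}$, up to constants $K_1(x_0),K_2(x_0)$, with $\int_M|\phi|\,\Psi_{x_0}\,d\mu$. Here $\Psi_{x_0}:=(-\Delta)^{-s}\psi_{x_0}$, and $\psi_{x_0}$ is the normalized indicator of $B_{1/2}(x_0)$. Since $\psi_{x_0}\in L^\infty_c(M)$ is nonnegative, we may take $\eta=\psi_{x_0}$ in inequality \eqref{5.3N} from the proof of Proposition~\ref{thm:estonwds}. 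This shows that $t\mapsto\int_M u(t)\Psi_{x_0}\,d\mu$ is nonincreasing on $(0,\infty)$.

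To reach $t_0=0$, we pass to the limit $t_0\downarrow0$. We use $u(t_0)\to u_0$ in $L^1(M)$ together with $\Psi_{x_0}\in L^\infty(M)$ from Lemma~\ref{lemlastlem}(3). Chaining the two inequalities of \eqref{lem363r} then gives \eqref{nonexpwighted} with $C(x_0)=K_2(x_0)/K_1(x_0)$.

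For \eqref{ordering weighted2}, assume without loss of generality that $u\ge v$. By order preservation \eqref{orderingpos} we get $u_0\ge v_0$, and \eqref{wdsagtestmon1} holds for both solutions. Subtracting the two identities with $\eta=\psi_{x_0}$ gives
\[
\int_M\eta\,(-\Delta)^{-s}\bigl[(u-v)(t_0)-(u-v)(t_1)\bigr]\,d\mu=\int_{t_0}^{t_1}\!\!\int_M(u^m-v^m)\,\eta\,d\mu\,d\tau\ge0,
\]
because $u\ge v$ implies $u^m\ge v^m$. By Tonelli and symmetry of the kernel, $t\mapsto\int_M(u-v)(t)\,\Psi_{x_0}\,d\mu$ is nonincreasing. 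Since $u-v\ge0$, this quantity is comparable to $\|u(t)-v(t)\|_{L^1_{G_M^s,x_0}}$, and the same sandwich argument as before concludes.

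For \eqref{4.10nn}, split $M\setminus B_R(x_0)$ into $B_1(x_0)\setminus B_R(x_0)$ and $M\setminus B_1(x_0)$. On the outer region the integrand is exactly the weighted part of the norm, and $R^{n-2s}\le1$. On the annulus, Lemma~\ref{thm:lem1}(iv) with $R=R_0(x_0)$ gives
\[
G_M^s(x,x_0)\le C_2\,\mathcal A_{x_0}(R_0(x_0))\,d(x,x_0)^{-(n-2s)}\le C_2\,\mathcal A_{x_0}(R_0(x_0))\,R^{-(n-2s)}.
\]
Multiplying by $R^{n-2s}$, the annulus contribution is bounded by $C_2\,\mathcal A_{x_0}(R_0(x_0))\int_{B_1(x_0)}u(t)\,d\mu$, which is controlled by the norm.

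Under Assumptions \ref{A1} and \ref{A2}, the constants $K_1,K_2$ are uniform by Proposition~\ref{thm:lem3.6}. The constant $\mathcal A_{x_0}(R_0)$ is uniform by Remark~\ref{rmk:gamcont}, since $R_0$ no longer depends on the pole.

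The main obstacle I expect is the rigorous passage $t_0\downarrow0$ in the weighted setting, together with the justification of \eqref{5.3N}. Specifically, we must check that $\int_M u(t)\Psi_{x_0}\,d\mu$ is continuous at $t=0$ and that the Fubini exchange $\int\eta\,(-\Delta)^{-s}u=\int u\,(-\Delta)^{-s}\eta$ is legitimate. Both follow from $u(t)\in L^1(M)$ with $u(t)\to u_0$ in $L^1(M)$ and from $\Psi_{x_0}$ being bounded. This boundedness is exactly why one works with $\Psi_{x_0}$ rather than with $G_M^s(\cdot,x_0)$ directly.
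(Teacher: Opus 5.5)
Your proof is correct. For \eqref{nonexpwighted} and \eqref{ordering weighted2} it follows the paper's route exactly. You take $\eta=\psi_{x_0}$ in \eqref{5.3N}, applied to the difference of the two ordered solutions in the second case, and let $t_0\downarrow0$. You then sandwich with the norm equivalence \eqref{lem363r}, which gives $C(x_0)=K_2(x_0)/K_1(x_0)$.

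For \eqref{4.10nn} you take a slightly different route.
\begin{itemize}
\item The paper uses the lower bound of Proposition~\ref{thm:3.4} for $\psi_{x_0}$ with $\sigma=1/2$. For $x\notin B_R(x_0)$ this gives $\Psi_{x_0}(x)\geq\gamma_1(x_0)\min\{1,d(x,x_0)^{n-2s}\}G_M^s(x,x_0)\geq\gamma_1(x_0)R^{n-2s}G_M^s(x,x_0)$. It integrates this against $u(t)$ and then applies Proposition~\ref{thm:lem3.6}.
\item You instead split at $B_1(x_0)$ and bound $G_M^s$ pointwise on the annulus $B_1(x_0)\setminus B_R(x_0)$ via Lemma~\ref{thm:lem1}(iv) with radius $R_0(x_0)$.
\end{itemize}
Your version is more direct and yields the explicit constant $\max\{1,C_2\mathcal A_{x_0}(R_0(x_0))\}$. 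It also makes plain that \eqref{4.10nn} is just an inequality for nonnegative functions in $L^1_{G_M^s,x_0}(M)$, with no use of the equation. The paper's version keeps all three estimates expressed through the single comparison function $\Psi_{x_0}$. The underlying input is the same, since $\gamma_1(x_0)$ is itself built from $\mathcal A_{x_0}(R_0(x_0))$ through Lemma~\ref{thm:lem1}(iv).

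Two minor points.
\begin{itemize}
\item \eqref{orderingpos} propagates order forward in time, so it does not yield $u_0\geq v_0$ from $u\geq v$. You do not need this: the last step only uses $\int_M(u_0-v_0)\Psi_{x_0}\,d\mu\leq\int_M|u_0-v_0|\Psi_{x_0}\,d\mu$. In any case $u_0\geq v_0$ is simply $u(0)\geq v(0)$.
\item Continuity at $t_0=0$ of $t\mapsto\int_M u(t)\Psi_{x_0}\,d\mu$ also follows directly from item (i) of Definition~\ref{defn:wds} combined with \eqref{lem363r}. This way you need not appeal to the $L^1(M)$-continuity of the underlying mild solution.
\end{itemize}
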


\begin{proof}
Estimates \eqref{nonexpwighted} and \eqref{ordering weighted2} follow as in
\cite[Proposition~5.5]{BerchioBonforteGrilloMuratori2024}. The proof carries
over line by line, the only point to keep track of being the dependence of the
constant on the pole $x_0$. Indeed, fixing $x_0\in M$ and taking
$\eta=\psi_{x_0}$ in \eqref{5.3N}, with $\psi_{x_0}$ defined in
\eqref{eq:psi-x0}, Proposition~\ref{thm:lem3.6} yields
\eqref{nonexpwighted}; the same argument applied to the difference of two
ordered solutions gives \eqref{ordering weighted2}.

Finally, \eqref{4.10nn} follows as in \cite[Lemma~4.5]{FPMD}; see also \cite[Proposition~5.5] {BerchioBonforteGrilloMuratori2024}. Indeed, Proposition~\ref{thm:3.4}, applied to $\psi_{x_0}$ with $\sigma=1/2$, and Proposition~\ref{thm:lem3.6} give the same argument in the present fractional setting, with constants possibly depending on the pole $x_0$.

If \ref{A1} and \ref{A2} hold, Proposition~\ref{thm:lem3.6} and Remark~\ref{rmk:gamcont} show that the constants in the norm equivalence and the constant $\gamma_1$ in Proposition~\ref{thm:3.4} can be chosen independently of the pole. Hence, the constants in \eqref{nonexpwighted}--\eqref{4.10nn} can be chosen uniform with respect to $x_0$.

\end{proof}

\begin{rmk}\label{rem2222}
The ordering assumption in \eqref{ordering weighted2} can be dropped for
WDSs obtained by monotone approximation. More precisely, let $u$ and $v$
be WDSs corresponding to nonnegative initial data
$u_0,v_0\in L^1_{G_M^s,o}(M)$ and obtained as monotone limits of WDSs
associated with data in $L^1(M)\cap L^\infty(M)$. Then, for every
$x_0\in M$,
\[
\|u(t)-v(t)\|_{L^1_{G_M^s,x_0}(M)}
\leq
C(x_0)
\|u_0-v_0\|_{L^1_{G_M^s,x_0}(M)},
\qquad t\geq0.
\]
In particular, this yields continuous dependence on the initial datum and
uniqueness within this approximation class. If Assumptions \ref{A1} and \ref{A2}
hold, the constant $C(x_0)$ can be chosen independently of $x_0$.

Indeed, let
\[
0\leq u_{0,k}\uparrow u_0,
\qquad
0\leq v_{0,k}\uparrow v_0
\quad\text{a.e. in }M,
\]
with $u_{0,k},v_{0,k}\in L^1(M)\cap L^\infty(M)$, and let $u_k,v_k$ be
the corresponding WDSs. Set
\[
a_{0,k}:=u_{0,k}\wedge v_{0,k},
\qquad
b_{0,k}:=u_{0,k}\vee v_{0,k},
\]
and let $a_k,b_k$ be the corresponding WDSs. By the order-preserving
property of the mild solutions,
\[
a_k\leq u_k,v_k\leq b_k.
\]
Hence, using \eqref{ordering weighted2} for the ordered pair $(b_k,a_k)$,
\[
\begin{aligned}
\|u_k(t)-v_k(t)\|_{L^1_{G_M^s,x_0}(M)}
&\leq
\|b_k(t)-a_k(t)\|_{L^1_{G_M^s,x_0}(M)}
\\
&\leq
C(x_0)
\|b_{0,k}-a_{0,k}\|_{L^1_{G_M^s,x_0}(M)}=
C(x_0)
\|u_{0,k}-v_{0,k}\|_{L^1_{G_M^s,x_0}(M)}.
\end{aligned}
\]
By Proposition~\ref{prop:compact-poles-equivalence} and dominated
convergence, for every $x_0\in M$,
\[
\|u_{0,k}-u_0\|_{L^1_{G_M^s,x_0}(M)}
+
\|v_{0,k}-v_0\|_{L^1_{G_M^s,x_0}(M)}
\longrightarrow0.
\]
The same argument used in the proof of Theorem~\ref{thm:existence}, together
with the order-preserving property of the mild solutions, then gives
\[
\|u_k(t)-u(t)\|_{L^1_{G_M^s,x_0}(M)}
+
\|v_k(t)-v(t)\|_{L^1_{G_M^s,x_0}(M)}
\longrightarrow0.
\]
Passing to the limit in the above inequality yields the claim. The uniformity of
$C(x_0)$ under Assumptions \ref{A1} and \ref{A2} follows directly from
Proposition~\ref{lastpropwdsprops}.
\end{rmk}

\section{Proof of the Main Results}\label{sec:proofsofmainres}

\begin{proof}[Proof of Theorem \ref{thm:existence}]
Let
\[
u_{0,k}:=\min\{u_0,k\}\,\mathbf 1_{B_k(o)},
\qquad k\in\mathbb N.
\]
Then
\[
0\leq u_{0,k}\leq u_{0,k+1}\leq u_0,
\qquad
u_{0,k}\uparrow u_0
\quad\text{a.e. in }M,
\]
and \(u_{0,k}\in L^1(M)\cap L^\infty(M)\). By
Proposition~\ref{prop:compact-poles-equivalence}, for every \(x_0\in M\)
one has $u_0\in L^1_{G_M^s,x_0}(M)$
and, by dominated convergence,
\[
\|u_0-u_{0,k}\|_{L^1_{G_M^s,x_0}(M)}
\longrightarrow0.
\]
Let \(u_k\) be the weak dual solution associated with \(u_{0,k}\)
by Proposition~\ref{prop:existence}. By the order-preserving property
in Proposition~\ref{prop:4.1}, $0\leq u_k\leq u_{k+1}$.
We define
\[
u(t,x):=\lim_{k\to\infty}u_k(t,x).
\]
Fix \(x_0\in M\). For \(j\geq k\), the weighted stability estimate
\eqref{ordering weighted2} gives
\begin{equation*}
\|u_j(t)-u_k(t)\|_{L^1_{G_M^s,x_0}(M)}\leq C(x_0)
\|u_{0,j}-u_{0,k}\|_{L^1_{G_M^s,x_0}(M)}
\leq
C(x_0)
\|u_0-u_{0,k}\|_{L^1_{G_M^s,x_0}(M)}
\end{equation*}
for every \(t\geq0\). Letting \(j\to\infty\) and using monotone
convergence, we obtain, for every \(T>0\),
\[
\sup_{t\in[0,T]}
\|u(t)-u_k(t)\|_{L^1_{G_M^s,x_0}(M)}
\leq
C(x_0)
\|u_0-u_{0,k}\|_{L^1_{G_M^s,x_0}(M)}
\longrightarrow0.
\]
Since $u_k\in C\bigl([0,T];L^1_{G_M^s,x_0}(M)\bigr)$, it follows that
\[
u\in C\bigl([0,T];L^1_{G_M^s,x_0}(M)\bigr)
\]
for every \(x_0\in M\). Also, $u(0)=u_0$ in $L^1_{G_M^s,x_0}(M)$,
and hence almost everywhere in \(M\).

We next prove the local space-time integrability of \(u^m\). Let
\(K\Subset M\). Choose \(x_1,\ldots,x_N\in M\) such that
\[
K\subset\bigcup_{i=1}^N B_{1/2}(x_i),
\]
and set, as in \eqref{eq:psi-x0},
\[
\psi_i:=
\frac{\mathbf 1_{B_{1/2}(x_i)}}
{\operatorname{Vol}(B_{1/2}(x_i))}.
\]
For \(0<\delta<T\), applying \eqref{wdsagtestmon1} to \(u_k\) with
\(\eta=\psi_i\), and using Fubini--Tonelli, gives
\[
\begin{aligned}
\int_\delta^T\int_M
u_k^m(t,x)\psi_i(x)\,d\mu(x)\,dt
&=
\int_M
\bigl[u_k(\delta,x)-u_k(T,x)\bigr]
(-\Delta)^{-s}\psi_i(x)\,d\mu(x)
\\
&\leq
\int_M
u_k(\delta,x)(-\Delta)^{-s}\psi_i(x)\,d\mu(x).
\end{aligned}
\]
By Proposition~\ref{thm:lem3.6} and \eqref{nonexpwighted},
\[
\begin{aligned}
\int_M
u_k(\delta)(-\Delta)^{-s}\psi_i\,d\mu
&\leq
C(x_i)
\|u_k(\delta)\|_{L^1_{G_M^s,x_i}(M)}
\\
&\leq
C(x_i)
\|u_{0,k}\|_{L^1_{G_M^s,x_i}(M)}
\leq
C(x_i)
\|u_0\|_{L^1_{G_M^s,x_i}(M)}.
\end{aligned}
\]
Multiplying by \(\operatorname{Vol}(B_{1/2}(x_i))\), summing over
\(i=1,\ldots,N\), and then letting \(\delta\downarrow0\), we obtain
\[
\sup_{k\in\mathbb N}
\int_0^T\int_K u_k^m\,d\mu\,dt<+\infty.
\]
Since \(u_k\uparrow u\), the monotone convergence theorem yields
\[
u^m\in L^1\bigl((0,T);L^1(K)\bigr).
\]
Furthermore,
\[
u_k^m\longrightarrow u^m
\qquad\text{in }L^1((0,T)\times K).
\]

It remains to pass to the limit in the weak dual identity. We first
state the local estimate for the Green potential that will be used
below. Let
\[
B_i:=B_{1/2}(x_i).
\]
For every measurable \(w\) for which the right-hand side below is
finite, Tonelli's
theorem and Proposition~\ref{thm:lem3.6} give
\[
\begin{aligned}
\|(-\Delta)^{-s}w\|_{L^1(B_i)}
&\leq
\int_{B_i}(-\Delta)^{-s}|w|(x)\,d\mu(x)
=
\int_M
|w(y)|(-\Delta)^{-s}\mathbf 1_{B_i}(y)\,d\mu(y)
\\
&=
\operatorname{Vol}(B_i)
\int_M
|w(y)|(-\Delta)^{-s}\psi_i(y)\,d\mu(y)
\leq
C(x_i)
\|w\|_{L^1_{G_M^s,x_i}(M)}.
\end{aligned}
\]
Using the finite covering of \(K\) and the uniform weighted
convergence proved above, we conclude that
\[
\sup_{t\in[0,T]}
\|(-\Delta)^{-s}(u_k(t)-u(t))\|_{L^1(K)}
\longrightarrow0.
\]

Now let
\[
\psi\in C_c^1((0,T);L_c^\infty(M)),
\]
and let \(K\Subset M\) contain the spatial supports of
\(\psi(t)\) and \(\partial_t\psi(t)\) for all \(t\in[0,T]\).
Since each \(u_k\) is a weak dual solution,
\[
\int_0^T\int_M
\partial_t\psi\,(-\Delta)^{-s}u_k\,d\mu\,dt
=
\int_0^T\int_M
u_k^m\psi\,d\mu\,dt.
\]
The left-hand side converges to the corresponding expression with
\(u\) by the preceding local potential estimate, while the
right-hand side converges by
\[
u_k^m\longrightarrow u^m
\qquad\text{in }L^1((0,T)\times K).
\]
Therefore
\[
\int_0^T\int_M
\partial_t\psi\,(-\Delta)^{-s}u\,d\mu\,dt
=
\int_0^T\int_M
u^m\psi\,d\mu\,dt.
\]
Thus \(u\) satisfies all the requirements of
Definition~\ref{defn:wds} and is a weak dual solution with initial
datum \(u_0\).
\end{proof}

\begin{proof}[Proof of Theorem~\ref{thm:smoothing}]
If $u_0\equiv0$, then $u\equiv0$, so we assume throughout that
$u_0\not\equiv0$.

By an approximation argument using monotone sequences in $L^1(M)\cap L^\infty(M)$, the validity of the estimates below for initial data $u_0 \in L^1(M) \cap L^\infty(M)$, $u_0 \geq 0$, extends to general $u_0 \in L^1(M)$, $u_0 \geq 0$. Indeed, this can be proven by a simple adaptation of \cite[Proof of Theorem 2.4]{BerchioBonforteGrilloMuratori2024}.
In particular, if $0\leq u_{0,k}\uparrow u_0$ almost everywhere, with
$u_{0,k}\in L^1(M)\cap L^\infty(M)$ and
$\|u_{0,k}\|_{L^1(M)}\leq\|u_0\|_{L^1(M)}$, then all the estimates
proved below are independent of
$\|u_{0,k}\|_{L^\infty(M)}$ and pass to the corresponding monotone
limit. We may therefore assume without loss of generality that $u_0 \in L^1(M) \cap L^\infty(M)$ with $u_0 \geq 0$.

In Proposition \ref{thm:estonwds}, taking $t_1 = 2t_0$, Equation \eqref{wdsmonf} yields, for a.e. $x_0\in M, t>0$,
\begin{equation}\label{thm1.6split'}
u^m(t,x_0) \leq C \left( \frac{1}{t} \int_{M} u(t,x)G_M^s(x, x_0) \, d\mu(x) \right).
\end{equation}

\medskip
\noindent\emph{Short-time and basic long-time estimates.}
Fix a nonempty compact set $K\Subset M$ and set
\[
K_1:=\{x\in M:d(x,K)\leq1\}.
\]
We claim that there exists a constant
$C_K>0$ such that, for every $x_0\in K_1$ and every $0<r\leq1$,
\begin{equation}
\label{eq:proof33-local-green}
\int_{B_r(x_0)}G_M^s(x,x_0)\,d\mu(x)
\leq
C_Kr^{2s},
\qquad
\sup_{x\in M\setminus B_r(x_0)}
G_M^s(x,x_0)
\leq
C_Kr^{-(n-2s)}.
\end{equation}
Indeed, since
\[
\inf_{x_0\in K_1}\operatorname{Vol}(B_1(x_0))>0,
\]
Bishop--Gromov comparison, the upper Green-function estimate in
Theorem \ref{thm:bounGreen's}, and the uniform tail estimate established in the proof of
Lemma \ref{lemlastlem} give, uniformly for $x_0\in K_1$,
\[
G_M^s(x,x_0)
\leq
C_Kd(x,x_0)^{-(n-2s)}
\qquad
\text{if }0<d(x,x_0)<1.
\]
For $d(x,x_0)\geq1$, Lemma \ref{lemlastlem}(1) gives a uniform bound for
$x_0\in K_1$. The second inequality in
\eqref{eq:proof33-local-green} follows immediately. For the first one,
the coarea formula and Bishop--Gromov comparison yield
\[
\int_{B_r(x_0)}
\frac{d\mu(x)}{d(x,x_0)^{n-2s}}
=
r^{-(n-2s)}\operatorname{Vol}(B_r(x_0))
+(n-2s)
\int_0^r
\frac{\operatorname{Vol}(B_\rho(x_0))}
{\rho^{n-2s+1}}
\,d\rho
\leq
Cr^{2s}.
\]
This proves the first inequality in~\eqref{eq:proof33-local-green}.

For $0\leq\rho\leq1$, write
\[
K_\rho:=\{x\in M:d(x,K)\leq\rho\}.
\]
Let $0\leq\rho<\sigma\leq1$. For every $x_0\in K_\rho$, one has $B_{\sigma-\rho}(x_0)\subset K_\sigma$.
Splitting the integral in \eqref{thm1.6split'} over
$B_{\sigma-\rho}(x_0)$ and its complement, and using
\eqref{eq:proof33-local-green} and \eqref{nonexpms}, we obtain
\begin{align*}
\|u(t)\|_{L^\infty(K_\rho)}^m
&\leq
\frac{C_K}{t}
\left[
(\sigma-\rho)^{2s}
\|u(t)\|_{L^\infty(K_\sigma)}
+
(\sigma-\rho)^{-(n-2s)}
\|u_0\|_{L^1(M)}
\right].
\end{align*}
Taking the $m$-th root and applying Young's inequality, for every
$\eta\in(0,1)$ we get
\begin{equation}\label{eq:proof33-local-young}
\|u(t)\|_{L^\infty(K_\rho)}
\leq
\eta
\|u(t)\|_{L^\infty(K_\sigma)}
+\frac{C_{K,\eta}}{t^{\frac{1}{m-1}}}
(\sigma-\rho)^{\frac{2s}{m-1}}
+
\frac{C_K}{t^{\frac1m}}
\|u_0\|_{L^1(M)}^{\frac1m}
(\sigma-\rho)^{-\frac{n-2s}{m}}.
\end{equation}

Fix $0<R\leq1$ and define for every $j\in\mathbb{N}$
\[
\rho_j:=R(1-2^{-j}).
\]
Then
\[
\rho_{j+1}-\rho_j=2^{-(j+1)}R.
\]
Applying \eqref{eq:proof33-local-young} successively with
$\rho=\rho_j$ and $\sigma=\rho_{j+1}$ gives, for every $k\geq1$,
\[
\begin{aligned}
\|u(t)\|_{L^\infty(K)}
&\leq
\eta^k
\|u(t)\|_{L^\infty(K_{\rho_k})}
+
C_{K,\eta}
t^{-\frac{1}{m-1}}
R^{\frac{2s}{m-1}}
\sum_{j=0}^{k-1}
\eta^j
2^{-\frac{2s(j+1)}{m-1}}
\\
&\quad
+
C_K
t^{-\frac1m}
\|u_0\|_{L^1(M)}^{\frac1m}
R^{-\frac{n-2s}{m}}
\sum_{j=0}^{k-1}
\eta^j
2^{\frac{(n-2s)(j+1)}{m}}.
\end{aligned}
\]
Choose
\[
0<\eta<
2^{-\frac{n-2s}{m}}.
\]
Both geometric sums are then bounded independently of $k$. Proposition \ref{prop:4.1} yields
\[
\|u(t)\|_{L^\infty(K_{\rho_k})}
\leq
\|u_{0}\|_{L^\infty(M)},
\]
so that the first term tends to zero as $k\to+\infty$. Hence, for every
$0<R\leq1$,
\begin{equation}\label{eq:proof33-small-radius-master}
\|u(t)\|_{L^\infty(K)}
\leq
C_K
\left[
t^{-\frac{1}{m-1}}
R^{\frac{2s}{m-1}}
+
t^{-\frac1m}
\|u_0\|_{L^1(M)}^{\frac1m}
R^{-\frac{n-2s}{m}}
\right].
\end{equation}

Now if
\[
0<t\leq
\|u_0\|_{L^1(M)}^{-(m-1)},
\]
we choose
\[
R=
\left(
t\|u_0\|_{L^1(M)}^{m-1}
\right)^{\frac{1}{n(m-1)+2s}}
\leq1.
\]
Then
\[
t^{-\frac{1}{m-1}}
R^{\frac{2s}{m-1}}
=
t^{-\frac{n}{n(m-1)+2s}}
\|u_0\|_{L^1(M)}^{\frac{2s}{n(m-1)+2s}},
\]
and
\[
t^{-\frac1m}
\|u_0\|_{L^1(M)}^{\frac1m}
R^{-\frac{n-2s}{m}}
=
t^{-\frac{n}{n(m-1)+2s}}
\|u_0\|_{L^1(M)}^{\frac{2s}{n(m-1)+2s}}.
\]
This proves the short-time estimate. If instead
\[
t\geq
\|u_0\|_{L^1(M)}^{-(m-1)},
\]
we choose $R=1$ in \eqref{eq:proof33-small-radius-master}. Since
\[
t^{-\frac{1}{m-1}}
=
t^{-\frac1m}
\|u_0\|_{L^1(M)}^{\frac1m}
\left(
t\|u_0\|_{L^1(M)}^{m-1}
\right)^{-\frac{1}{m(m-1)}}
\leq
t^{-\frac1m}
\|u_0\|_{L^1(M)}^{\frac1m},
\]
we obtain
\[
\|u(t)\|_{L^\infty(K)}
\leq
C_K
t^{-\frac1m}
\|u_0\|_{L^1(M)}^{\frac1m}.
\]
This proves the basic long-time estimate.

\medskip
\noindent\emph{Refined local long-time estimate.}
Fix $o\in M$ and a nonempty compact set $K\Subset M$. Choose
\[
R_{K,o}:=
\max\left\{
1,
2\max_{x\in K}d(x,o)
\right\}.
\]
For $R\geq R_{K,o}$ and $j\in\mathbb{N}$, define
\[
K_j:=
\left\{
x\in M:
d(x,K)\leq
\frac{3^j-1}{2}R
\right\}.
\]
Then $B_{3^jR}(x_0)\subset K_{j+1}$  for every $x_0\in K_j$, and
\[
d(x_0,o)
\leq
\max_{x\in K}d(x,o)
+
\frac{3^j-1}{2}R
\leq
\frac{3^jR}{2}
\]
for every $x_0\in K_j$. Consequently, for every $j\in\mathbb{N}$, $x_0\in K_j$, $r\geq3^jR$,
\[
B_{r/2}(o)\subset B_r(x_0),
\]
while
\[
B_{3^jR}(x_0)
\subset
B_{2\cdot3^jR}(o).
\]
Then by volume doubling,
\begin{equation}
\label{eq:proof33-volume-comparison-moving-pole}
\operatorname{Vol}(B_r(x_0))
\geq
C_d^{-1}\operatorname{Vol}(B_r(o))
\end{equation}
and
\[
\operatorname{Vol}(B_{3^jR}(x_0))
\leq
C_d\operatorname{Vol}(B_{3^jR}(o)).
\]

Using the upper Green-function
estimate in Theorem \ref{thm:bounGreen's}, Tonelli's theorem, the coarea formula, and
integration by parts, we obtain
\[
\begin{aligned}
\int_{B_{3^jR}(x_0)}
G_M^s(x,x_0)\,d\mu(x)
&\leq
C\operatorname{Vol}(B_{3^jR}(x_0))
\int_{3^jR}^{+\infty}
\frac{r^{2s-1}}
{\operatorname{Vol}(B_r(x_0))}
\,dr
+
C\frac{(3^jR)^{2s}}{2s}
\\
&\leq
C\operatorname{Vol}(B_{3^jR}(o))
\int_{3^jR}^{+\infty}
\frac{r^{2s-1}}
{\operatorname{Vol}(B_r(o))}
\,dr
+
C\frac{(3^jR)^{2s}}{2s}
\leq
Ch_o(3^jR).
\end{aligned}
\]
Also, if $x\notin B_{3^jR}(x_0)$, then by Theorem \ref{thm:bounGreen's} and
\eqref{eq:proof33-volume-comparison-moving-pole},
\[
G_M^s(x,x_0)
\leq
C
\int_{d(x,x_0)}^{+\infty}
\frac{r^{2s-1}}
{\operatorname{Vol}(B_r(x_0))}
\,dr
\leq
C
\int_{3^jR}^{+\infty}
\frac{r^{2s-1}}
{\operatorname{Vol}(B_r(o))}
\,dr
\leq
C
\frac{h_o(3^jR)}
{\operatorname{Vol}(B_{3^jR}(o))}.
\]

We shall also use the following growth bounds. By Bishop--Gromov
comparison, for every $\lambda\geq1$ and every $R\geq1$,
\begin{equation*}\begin{aligned}
h_o(\lambda R)
&\leq
\lambda^n h_o(R),
\\
\frac{h_o(\lambda R)}
{\operatorname{Vol}(B_{\lambda R}(o))}
&\leq
\lambda^n
\frac{h_o(R)}
{\operatorname{Vol}(B_R(o))}.
\end{aligned}
\end{equation*}
Indeed,
\[
\begin{aligned}
h_o(\lambda R)
&=
\operatorname{Vol}(B_{\lambda R}(o))
\int_{\lambda R}^{+\infty}
\frac{r^{2s-1}}
{\operatorname{Vol}(B_r(o))}
\,dr
+
\frac{(\lambda R)^{2s}}{2s}
\\
&\leq
\lambda^n
\operatorname{Vol}(B_R(o))
\int_R^{+\infty}
\frac{r^{2s-1}}
{\operatorname{Vol}(B_r(o))}
\,dr
+
\lambda^n
\frac{R^{2s}}{2s}
=
\lambda^nh_o(R),
\end{aligned}
\]
where we used $2s\leq n$. The second estimate follows immediately from the first.

Applying \eqref{thm1.6split'} with $x_0\in K_j$ and splitting
at radius $3^jR$, the preceding estimates, the inclusion
$B_{3^jR}(x_0)\subset K_{j+1}$, and the $L^1$-nonexpansivity of $u$
give
\begin{equation*}
\begin{aligned}
\|u(t)\|_{L^\infty(K_j)}^m
&\leq
\frac{C}{t}
\left[
h_o(3^jR)
\|u(t)\|_{L^\infty(K_{j+1})}
+
\frac{h_o(3^jR)}
{\operatorname{Vol}(B_{3^jR}(o))}
\|u_0\|_{L^1(M)}
\right]
\notag\\
&\leq
\frac{C3^{jn}}{t}
\left[
h_o(R)
\|u(t)\|_{L^\infty(K_{j+1})}
+
\frac{h_o(R)}
{\operatorname{Vol}(B_R(o))}
\|u_0\|_{L^1(M)}
\right].
\end{aligned}
\end{equation*}
Taking the $m$-th root and applying Young's inequality to the term
containing
$\|u(t)\|_{L^\infty(K_{j+1})}^{1/m}$, for every
$\eta\in(0,1)$ we obtain
\begin{align}
\|u(t)\|_{L^\infty(K_j)}
&\leq
\eta
\|u(t)\|_{L^\infty(K_{j+1})}
+
C_\eta
3^{\frac{jn}{m-1}}
t^{-\frac{1}{m-1}}
h_o(R)^{\frac{1}{m-1}}
\notag\\
&\quad
+
C
3^{\frac{jn}{m}}
t^{-\frac1m}
\left(
\frac{h_o(R)}
{\operatorname{Vol}(B_R(o))}
\|u_0\|_{L^1(M)}
\right)^{\frac1m}.
\label{eq:proof33-large-young}
\end{align}
Choose
\[
0<\eta<
\min\left\{
3^{-\frac{n}{m-1}},
3^{-\frac{n}{m}}
\right\}.
\]
Iterating \eqref{eq:proof33-large-young} from $j=0$ to $j=k-1$ yields
\[
\begin{aligned}
\|u(t)\|_{L^\infty(K)}
&\leq
\eta^k
\|u(t)\|_{L^\infty(K_k)}
+
C_\eta
t^{-\frac{1}{m-1}}
h_o(R)^{\frac{1}{m-1}}
\sum_{j=0}^{k-1}
\eta^j
3^{\frac{jn}{m-1}}
\\
&\quad
+
C
t^{-\frac1m}
\left(
\frac{h_o(R)}
{\operatorname{Vol}(B_R(o))}
\|u_0\|_{L^1(M)}
\right)^{\frac1m}
\sum_{j=0}^{k-1}
\eta^j
3^{\frac{jn}{m}}.
\end{aligned}
\]
The two geometric sums are bounded independently of $k$. Proposition \ref{prop:4.1} gives
\[
\|u(t)\|_{L^\infty(K_k)}
\leq
\|u_{0}\|_{L^\infty(M)},
\]
so the first term tends to zero as $k\to+\infty$. We thus obtain, for every
$R\geq R_{K,o}$,
\begin{align}
\|u(t)\|_{L^\infty(K)}
&\leq
Ct^{-\frac{1}{m-1}}
h_o(R)^{\frac{1}{m-1}}
+
C
t^{-\frac1m}
\left(
\frac{h_o(R)}
{\operatorname{Vol}(B_R(o))}
\|u_0\|_{L^1(M)}
\right)^{\frac1m}\notag\\
&\leq
C
t^{-\frac{1}{m-1}}
h_o(R)^{\frac{1}{m-1}}
\left(
1+
\frac{
t^{\frac{1}{m-1}}
\|u_0\|_{L^1(M)}
}{
\operatorname{Vol}(B_R(o))
h_o(R)^{\frac{1}{m-1}}
}
\right)^{\frac1m}
\notag\\
&=
C
t^{-\frac{1}{m-1}}
h_o(R)^{\frac{1}{m-1}}
\left(
1+
\frac{
t^{\frac{1}{m-1}}
\|u_0\|_{L^1(M)}
}{
\theta_o(R)
}
\right)^{\frac1m}
\label{eq:proof33-local-master-R}
\end{align}
for every $t>0$. Taking the infimum
over $R\geq R_{K,o}$ gives the local master estimate stated in
Remark \ref{rem111}.

It is easy to see that $h_0$ is increasing, and therefore so is
$\theta_o$. Also, $\theta_o$ tends to $\infty$ as $R$ tends to $\infty$.
Hence, whenever
\[
t\geq
\theta_o(R_{K,o})^{m-1}
\|u_0\|_{L^1(M)}^{-(m-1)},
\]
we may take
\[
R=
\theta_o^{-1}\!\left(
t^{\frac{1}{m-1}}
\|u_0\|_{L^1(M)}
\right)
\geq
R_{K,o}
\]
in \eqref{eq:proof33-local-master-R}. This gives
\[
\|u(t)\|_{L^\infty(K)}
\leq
C
t^{-\frac{1}{m-1}}
\left[
h_o\!\left(
\theta_o^{-1}\!\left(
t^{\frac{1}{m-1}}
\|u_0\|_{L^1(M)}
\right)
\right)
\right]^{\frac{1}{m-1}},
\]
which is the refined local long-time estimate.
\end{proof}

\begin{proof}[Proof of Corollary~\ref{cor:global-smoothing-L1}]
By the monotone-approximation argument at the beginning of the proof
of Theorem~\ref{thm:smoothing}, it is enough to consider
$u_0\in L^1(M)\cap L^\infty(M)$, $u_0\geq0$.

We first derive the global short-time and basic long-time estimates.
Under Assumptions \ref{A1} and \ref{A2}, Remark \ref{rmk:gamcont} implies
that the small-scale Green-function estimates used in the proof of
Theorem~\ref{thm:smoothing} are uniform with respect to the pole.
Therefore, for every $x_0\in M$ and every $0<R\leq1$,
\[
\int_{B_R(x_0)}
G_M^s(x,x_0)\,d\mu(x)
\leq
CR^{2s},
\qquad
\sup_{x\in M\setminus B_R(x_0)}
G_M^s(x,x_0)
\leq
CR^{-(n-2s)},
\]
with a constant independent of $x_0$.

Starting from the potential estimate established at the beginning of
the proof of Theorem~\ref{thm:smoothing}, splitting at radius $R$,
using the $L^1$-nonexpansivity, and taking the essential supremum over
$x_0\in M$, we obtain
\[
\|u(t)\|_{L^\infty(M)}^m
\leq
\frac{C}{t}
R^{2s}
\|u(t)\|_{L^\infty(M)}
+
\frac{C}{t}
R^{-(n-2s)}
\|u_0\|_{L^1(M)}.
\]
Taking the $m$-th root and applying Young's inequality,
\[
\|u(t)\|_{L^\infty(M)}
\leq
C
\left[
t^{-\frac{1}{m-1}}
R^{\frac{2s}{m-1}}
+
t^{-\frac1m}
R^{-\frac{n-2s}{m}}
\|u_0\|_{L^1(M)}^{\frac1m}
\right].
\]
This is the global counterpart of the small-radius estimate obtained
in the proof of Theorem~\ref{thm:smoothing}. Choosing $R=\left(t\|u_0\|_{L^1(M)}^{m-1}
\right)^{\frac{1}{n(m-1)+2s}}$ when $0<t\leq \|u_0\|_{L^1(M)}^{-(m-1)}$ gives~\eqref{eq:shorttime}, while taking $R=1$ when
$t\geq\|u_0\|_{L^1(M)}^{-(m-1)}$
gives~\eqref{eq:longtime}, exactly as in the proof of
Theorem~\ref{thm:smoothing}.

We next prove the refined global estimate. Let $R\geq R_0$.
Lemma~\ref{thm:lem1}(vi) gives, uniformly in $x_0\in M$,
\[
\int_{B_R(x_0)}
G_M^s(x,x_0)\,d\mu(x)
\leq
Ch(R).
\]
If $x\in M\setminus B_R(x_0)$ and
$\rho=d(x,x_0)$, then Lemma~\ref{thm:lem1}(ii),
Assumption \ref{A2}, and the monotonicity of
$r\mapsto\int_r^{+\infty}f(\tau)^{-1}\,d\tau$ yield
\[
G_M^s(x,x_0)\leq
C
\frac{\rho^{2s-1}f(\rho)}
{\operatorname{Vol}(B_\rho(x_0))}
\int_\rho^{+\infty}\frac{d\tau}{f(\tau)}
\leq
C
\frac{R^{2s-1}f(R)}
{\operatorname{Vol}(B_R(x_0))}
\int_R^{+\infty}\frac{d\tau}{f(\tau)}
\leq
C\frac{h(R)}{F(R)}.
\]
Repeating the splitting argument above therefore gives
\[
\|u(t)\|_{L^\infty(M)}^m
\leq
\frac{C}{t}
h(R)\|u(t)\|_{L^\infty(M)}
+
\frac{C}{t}
\frac{h(R)}{F(R)}
\|u_0\|_{L^1(M)}.
\]
After Young's inequality and absorption,
\[
\|u(t)\|_{L^\infty(M)}
\leq
C
t^{-\frac{1}{m-1}}
h(R)^{\frac{1}{m-1}}
\left(
1+
\frac{
t^{\frac{1}{m-1}}
\|u_0\|_{L^1(M)}
}{
F(R)h(R)^{\frac{1}{m-1}}
}
\right)^{\frac1m}.
\]
Recalling~\eqref{thetadefn}, this becomes
\[
\|u(t)\|_{L^\infty(M)}
\leq
C
t^{-\frac{1}{m-1}}
h(R)^{\frac{1}{m-1}}
\left(
1+
\frac{
t^{\frac{1}{m-1}}
\|u_0\|_{L^1(M)}
}{
\theta(R)
}
\right)^{\frac1m}
\]
for every $R\geq R_0$. Taking the infimum over $R\geq R_0$
proves the global master estimate stated in Remark~\ref{rem3333}.

It is easy to see that $h,F,\theta$ are increasing and that
$\theta(R)\to+\infty$ as $R\to+\infty$.
Hence, if
\[
t\geq
\theta(R_0)^{m-1}
\|u_0\|_{L^1(M)}^{-(m-1)},
\]
we may choose
\[
R=
\theta^{-1}\!\left(
t^{\frac{1}{m-1}}
\|u_0\|_{L^1(M)}
\right)
\]
in the preceding estimate. Then we obtain
\[
\|u(t)\|_{L^\infty(M)}
\leq
C
t^{-\frac{1}{m-1}}
\left[
h\!\left(
\theta^{-1}\!\left(
t^{\frac{1}{m-1}}
\|u_0\|_{L^1(M)}
\right)
\right)
\right]^{\frac{1}{m-1}},
\]
namely~\eqref{eq:improved}. The last assertion of Remark~\ref{rem3333} follows in the same way: if
$\widehat{\theta}$ is increasing, divergent, and satisfies
\[
\widehat{\theta}(R)
\leq
C F(R)h(R)^{\frac{1}{m-1}},
\]
one takes
\[
R=
\widehat{\theta}^{-1}\!\left(
t^{\frac{1}{m-1}}
\|u_0\|_{L^1(M)}
\right)
\]
in the global master estimate.
\end{proof}

\begin{proof}[Proof of Corollary \ref{cor1}]
We begin with (i). Under the stated assumptions, we obtain the following scaling relations:
\begin{equation*}
    \int_R^\infty \frac{\mathrm{d}t}{f(t)} \asymp \frac{1}{R^{k-2s}(\log R)^\delta}, \quad
    h(R) \asymp R^{2s}, \quad
    F(R) \geq CR^\lambda.
\end{equation*}
Define $\widehat{\theta}(R) := R^{\lambda + \frac{2s}{m-1}}$. One then verifies that:
\begin{equation*}
    \widehat{\theta}(R) \leq C F(R) \big(h(R)\big)^{\frac{1}{m-1}} \quad \text{and} \quad \widehat{\theta}^{-1}(r) = r^{\frac{m-1}{(m-1)\lambda + 2s}}.
\end{equation*}
For any nonnegative initial datum $u_0 \in L^1(M)$, the claimed decay estimate follows directly from Corollary~\ref{cor:global-smoothing-L1} and Remark~\ref{rem3333}.

We now address (ii). Under the assumptions, we establish the following scaling relations:
\begin{equation*}
    \int_R^\infty \frac{\mathrm{d}t}{f(t)} \asymp \frac{1}{(\log R)^{\delta-1}}, \quad
    h(R) \asymp R^{2s} \log R, \quad
    F(R) \geq CR^\lambda (\log R)^\sigma.
\end{equation*}
Define $\widehat{\theta}(R) := R^{\lambda + \frac{2s}{m-1}} (\log R)^{\sigma + \frac{1}{m-1}}$, which satisfies:
\begin{equation*}
    \widehat{\theta}(R) \leq C F(R) \big(h(R)\big)^{\frac{1}{m-1}}.
\end{equation*}
For $r$ large enough, its inverse function $G(r) := \widehat{\theta}^{-1}(r)$ takes the form
\begin{equation*}
    G(r) = \begin{cases}
    \exp\left(\frac{b}{a} W_0\left(\frac{a}{b} r^{1/b}\right)\right) & \text{if }b> 0,\\\  r^{\frac{m-1}{\lambda(m-1) + 2s}} & \text{if } b = 0,\\
     \exp\left(\frac{b}{a} W_{-1}\left(\frac{a}{b} r^{1/b}\right)\right) & \text{if } b<0,
    \end{cases}
\end{equation*}
where
\begin{equation*}
    a = \lambda + \frac{2s}{m-1}, \quad b = \sigma + \frac{1}{m-1},
\end{equation*}
$W_0$ denotes the inverse of $x \mapsto xe^x$ on $[-1,\infty)$ and $W_{-1}$ denotes the inverse of $x \mapsto xe^x$ on $(-\infty,-1]$.

For any nonnegative initial datum $u_0 \in L^1(M)$, the stated decay estimate follows immediately from Corollary~\ref{cor:global-smoothing-L1} and Remark~\ref{rem3333}.
\end{proof}

\begin{proof}[Proof of Theorem~\ref{thm:smoothing2}]
If \(u_0\equiv0\), then \(u\equiv0\), so we assume that \(u_0\not\equiv0\).

We first assume that $u_0\in L^1(M)\cap L^\infty(M)$, $u_0\geq0$.
Exactly as at the beginning of the proof of Theorem~\ref{thm:smoothing},
Proposition~\ref{thm:estonwds} and \eqref{wdsmonf} give, for almost every
\(x_0\in M\) and every \(t>0\),
\[
u^m(t,x_0)
\leq
\frac{C}{t}
\int_M u(t,x)G_M^s(x,x_0)\,d\mu(x).
\]

Fix a nonempty compact set \(K\Subset M\), and set, as in the proof of
Theorem~\ref{thm:smoothing},
\[
K_1:=\{x\in M:d(x,K)\leq1\}.
\]
By Proposition~\ref{prop:compact-poles-equivalence} and
\eqref{nonexpwighted}, applied at the fixed pole \(o\), there exists a
constant \(C_{K,o}>0\) such that, for every \(x_0\in K_1\) and every
\(t\geq0\),
\[
\| u(t)\|_{L^1_{G_M^s,x_0}(M)}
\leq
\| u(t)\|_{L^1_{G_M^s,K_1}(M)}
\leq
C_{K,o}\| u(t)\|_{L^1_{G_M^s,o}(M)}
\leq
C_{K,o}\| u_0\|_{L^1_{G_M^s,o}(M)}.
\]

We also use the local Green-function bounds
\eqref{eq:proof33-local-green}, already established in the proof of
Theorem~\ref{thm:smoothing}. Thus, uniformly for \(x_0\in K_1\) and
\(0<r\leq1\),
\[
\int_{B_r(x_0)}G_M^s(x,x_0)\,d\mu(x)
\leq
C_Kr^{2s},
\qquad
\sup_{x\in M\setminus B_r(x_0)}G_M^s(x,x_0)
\leq
C_Kr^{-(n-2s)}.
\]
Since \(n-2s>0\), for every \(x_0\in K_1\) and \(0<r\leq1\) we therefore
have
\begin{align*}
\int_{M\setminus B_r(x_0)}
u(t,x)&G_M^s(x,x_0)\,d\mu(x)
\\
&=
\int_{B_1(x_0)\setminus B_r(x_0)}
u(t,x)G_M^s(x,x_0)\,d\mu(x)
+
\int_{M\setminus B_1(x_0)}
u(t,x)G_M^s(x,x_0)\,d\mu(x)
\\
&\leq
C_Kr^{-(n-2s)}
\int_{B_1(x_0)}u(t,x)\,d\mu(x)
+
\int_{M\setminus B_1(x_0)}
u(t,x)G_M^s(x,x_0)\,d\mu(x)
\\
&\leq
C_Kr^{-(n-2s)}
\| u(t)\|_{L^1_{G_M^s,x_0}(M)}
\leq
C_{K,o}r^{-(n-2s)}
\| u_0\|_{L^1_{G_M^s,o}(M)}.
\end{align*}

For \(0\leq\rho\leq1\), let
\[
K_\rho:=\{x\in M:d(x,K)\leq\rho\}.
\]
If \(0\leq\rho<\sigma\leq1\), then for every $x_0\in K_\rho$ one has $B_{\sigma-\rho}(x_0)\subset K_\sigma$.
Splitting the potential at radius \(\sigma-\rho\) and using the preceding
estimates, we obtain
\[
\| u(t)\|_{L^\infty(K_\rho)}^m
\leq
\frac{C_{K,o}}{t}
\left[
(\sigma-\rho)^{2s}
\| u(t)\|_{L^\infty(K_\sigma)}
+
(\sigma-\rho)^{-(n-2s)}
\| u_0\|_{L^1_{G_M^s,o}(M)}
\right].
\]
This is precisely the analogue of the recursive estimate obtained in the proof of Theorem \ref{thm:smoothing} immediately before \eqref{eq:proof33-local-young}, with
\(\| u_0\|_{L^1(M)}\) replaced by
\(\| u_0\|_{L^1_{G_M^s,o}(M)}\).  Hence the Young-inequality argument and the dyadic iteration used in the proof of
Theorem~\ref{thm:smoothing},  leading to
\eqref{eq:proof33-small-radius-master}, apply without change and yield,
for every \(0<R\leq1\),
\begin{equation}\label{eq11223344}
\| u(t)\|_{L^\infty(K)}
\leq
C_{K,o}
\left[
t^{-\frac{1}{m-1}}
R^{\frac{2s}{m-1}}
+
t^{-\frac{1}{m}}
\| u_0\|_{L^1_{G_M^s,o}(M)}^{\frac{1}{m}}
R^{-\frac{n-2s}{m}}
\right]
\end{equation}
for every \(t>0\).

We now remove the assumption \(u_0\in L^1(M)\cap L^\infty(M)\). For a
general nonnegative \(u_0\in L^1_{G_M^s,o}(M)\), let
\[
u_{0,k}:=\min\{u_0,k\}\mathbf{1}_{B_k(o)},
\qquad
k\in\mathbb{N},
\]
and let \(u_k\) be the corresponding weak dual solutions. As in the proof
of Theorem~\ref{thm:existence},
\[
0\leq u_{0,k}\uparrow u_0
\quad\text{a.e. in }M,
\qquad
0\leq u_k\uparrow u
\quad\text{a.e. in }(0,+\infty)\times M,
\]
and
\[
\| u_{0,k}\|_{L^1_{G_M^s,o}(M)}
\leq
\| u_0\|_{L^1_{G_M^s,o}(M)}.
\]
Applying the preceding estimate to \(u_k\), and passing to the monotone limit we obtain, for every \(0<R\leq1\), the validity of \eqref{eq11223344}
for every \(t>0\), also for a generic nonnegative initial datum \(u_0\in L^1_{G_M^s,o}(M)\).
%
Choosing
\[
R=
\left(
t\|u_0\|_{L^1_{G_M^s,o}(M)}^{m-1}
\right)^{\frac{1}{n(m-1)+2s}}
\]
when
\[
0<t\leq
\|u_0\|_{L^1_{G_M^s,o}(M)}^{-(m-1)},
\]
and taking $R=1$ otherwise, the same computation as in the proof of
Theorem~\ref{thm:smoothing} yields the asserted short- and long-time
estimates.
\end{proof}

\begin{proof}[Proof of Corollary~\ref{cor:global-smoothing-weighted}]
By the same monotone-approximation argument used in the proof of
Theorem~\ref{thm:smoothing2}, it is enough to consider $u_0\in L^1(M)\cap L^\infty(M)$, $u_0\geq0$.

Under Assumptions \ref{A1} and \ref{A2}, Remark~\ref{rmk:gamcont} implies that
the small-scale Green-function estimates used in the proof of
Theorem~\ref{thm:smoothing2} are uniform with respect to the pole, while
Proposition~\ref{lastpropwdsprops} gives the corresponding weighted
estimates with constants independent of \(x_0\in M\). Hence, arguing as
in the proof of Theorem~\ref{thm:smoothing2}, for every \(0<R\leq1\) we
obtain
\[
\| u(t)\|_{L^\infty(M)}^m
\leq
\frac{C}{t}
\left[
R^{2s}\| u(t)\|_{L^\infty(M)}
+
R^{-(n-2s)}
\| u_0\|_{L^1_{G_M^s}(M)}
\right]
\]
for every \(t>0\). Young's inequality therefore yields
\[
\| u(t)\|_{L^\infty(M)}
\leq
C
\left[
t^{-\frac{1}{m-1}}R^{\frac{2s}{m-1}}
+
t^{-\frac{1}{m}}
\| u_0\|_{L^1_{G_M^s}(M)}^{\frac{1}{m}}
R^{-\frac{n-2s}{m}}
\right].
\]

The same monotone-approximation argument extends this estimate to every
nonnegative \(u_0\in L^1_{G_M^s}(M)\). Finally, choosing
\[
R=
\left(
t\| u_0\|_{L^1_{G_M^s}(M)}^{m-1}
\right)^{\frac{1}{n(m-1)+2s}}
\]
when $0<t\leq\| u_0\|_{L^1_{G_M^s}(M)}^{-(m-1)}$
and \(R=1\) otherwise, exactly as in the proof of
Theorem~\ref{thm:smoothing2}, gives the two claimed estimates.
\end{proof}

\section{Extension to General Nonlinearities}\label{S11}

The preceding arguments extend to equations of the form
\[
\partial_t u+(-\Delta)^s\Phi(u)=0.
\]
Assume that \(\Phi:\mathbb{R}\to\mathbb{R}\) is continuous and nondecreasing, with
\(\Phi(0)=0\) and \(\Phi\in C^1(\mathbb{R}\setminus\{0\})\), and that the quotient
\(\Phi/\Phi'\), defined wherever \(\Phi'\neq0\) and set equal to zero wherever
\(\Phi=\Phi'=0\), extends to a Lipschitz function on \(\mathbb{R}\). Suppose also
that there exist \(\mu_0,\mu_1>0\), with \(\mu_0<1\), such that
\[
1-\mu_1
\le
\left(\frac{\Phi}{\Phi'}\right)'
\le
1-\mu_0
\qquad\text{a.e. in }\mathbb{R}.
\]
These are the structural assumptions underlying the Crandall--Pierre regularizing
estimates, see \cite{crandallpierre}. The notion of weak dual solution is obtained from Definition~\ref{defn:wds} by
requiring
\[
\Phi(u)\in L^1\bigl((0,T);L^1_{\mathrm{loc}}(M)\bigr)
\]
in item~(ii) and by replacing \(u^m\) with \(\Phi(u)\) in
\eqref{wds-identity}. The nonlinear-semigroup construction, the \(L^p\)-nonexpansivity,
and the order-preserving arguments of Section~\ref{sec:frommstowds} carry over to this setting; the B\'enilan--Crandall time-monotonicity
\eqref{eq:prop4.1mon} is replaced by
\[
 t\longmapsto t^{1/\mu_0}\Phi(u(t,x))
 \quad\text{is nondecreasing on }(0,+\infty)
 \text{ for a.e. }x\in M.
\]
The potential and
weighted-stability arguments of Section~\ref{sec: propswds} can be adapted accordingly. Consequently, for every nonnegative \(u_0\in L^1_{G_M^s,o}(M)\) there exists a weak dual solution,
obtained by monotone approximation.

Furthermore, the Crandall--Pierre estimate and the argument of Proposition~\ref{thm:estonwds}
give
\begin{equation}\label{eq:general-fundamental}
\Phi(u(t,x_0))
\le
\frac{C}{t}
\int_M u(t,x)G_M^s(x,x_0)\,d\mu(x)
\end{equation}
for every \(t>0\) and almost every \(x_0\in M\).

Assume in addition that \(\Phi(r)>0\) for every \(r>0\), and set
\[
H_\Phi(0):=0,
\qquad
H_\Phi(r):=\frac{\Phi(r)}{r^{1-2s/n}},
\qquad r>0.
\]
The structural assumptions imply that \(\Phi\) is convex and superlinear on
\([0,+\infty)\); in particular, \(H_\Phi\) is nondecreasing and
\(H_\Phi(r)\to+\infty\) as \(r\to+\infty\). We denote by \(\Phi^{-1}\) and
\(H_\Phi^{-1}\) the corresponding generalized inverses.

The localization and iteration arguments in the proofs of Theorems~\ref{thm:smoothing}
and~\ref{thm:smoothing2}, with \eqref{eq:general-fundamental} in place of the
power estimate, then give the following bounds. If
\(0\not\equiv u_0\in L^1(M)\), then for every nonempty compact set \(K\Subset M\)
there exist \(C_K,c_K>0\) such that
\begin{equation}\label{eq:general-L1-short}
\|u(t)\|_{L^\infty(K)}
\le
H_\Phi^{-1}\left(
\frac{C_K\|u_0\|_{L^1(M)}^{2s/n}}{t}
\right)
\end{equation}
for every $0<t\le c_K\frac{\|u_0\|_{L^1(M)}}{\Phi(\|u_0\|_{L^1(M)})}$,
whereas
\begin{equation}\label{eq:general-L1-long}
\|u(t)\|_{L^\infty(K)}
\le
\Phi^{-1}\left(
\frac{C_K\|u_0\|_{L^1(M)}}{t}
\right)
\end{equation}
for every $t\ge c_K\frac{\|u_0\|_{L^1(M)}}{\Phi(\|u_0\|_{L^1(M)})}$.

Likewise, if \(0\not\equiv u_0\in L^1_{G_M^s,o}(M)\), then for every nonempty
compact set \(K\Subset M\) there exist \(C_{K,o},c_{K,o}>0\) such that
\begin{equation}\label{eq:general-G-short}
\|u(t)\|_{L^\infty(K)}
\le
H_\Phi^{-1}\left(
\frac{C_{K,o}\|u_0\|_{L^1_{G_M^s,o}(M)}^{2s/n}}{t}
\right)
\end{equation}
for every $0<t\le c_{K,o}
\frac{\|u_0\|_{L^1_{G_M^s,o}(M)}}
{\Phi(\|u_0\|_{L^1_{G_M^s,o}(M)})}$,
and
\begin{equation}\label{eq:general-G-long}
\|u(t)\|_{L^\infty(K)}
\le
\Phi^{-1}\left(
\frac{C_{K,o}\|u_0\|_{L^1_{G_M^s,o}(M)}}{t}
\right)
\end{equation}
for every $t\ge
c_{K,o} \frac{\|u_0\|_{L^1_{G_M^s,o}(M)}}
{\Phi(\|u_0\|_{L^1_{G_M^s,o}(M)})}$.

Under Assumptions \ref{A1} and \ref{A2}, the preceding arguments are uniform with respect
to the pole. Hence \eqref{eq:general-L1-short}--\eqref{eq:general-L1-long} hold
globally with \(L^\infty(K)\) replaced by \(L^\infty(M)\), while in
\eqref{eq:general-G-short}--\eqref{eq:general-G-long} one assumes
\(u_0\in L^1_{G_M^s}(M)\), replaces the fixed-pole norm by
\(\|u_0\|_{L^1_{G_M^s}(M)}\), and again replaces \(L^\infty(K)\) by
\(L^\infty(M)\).

For \(\Phi(r)=|r|^{m-1}r\), \(m>1\), one may take
\[
\mu_0=\mu_1=\frac{m-1}{m},
\qquad
\Phi^{-1}(r)=r^{1/m},
\qquad
H_\Phi^{-1}(r)=r^{\frac{n}{n(m-1)+2s}}.
\]
Thus \eqref{eq:general-L1-short}--\eqref{eq:general-G-long} recover exactly the
short-time and basic long-time estimates of Theorems~\ref{thm:smoothing}
and~\ref{thm:smoothing2} and Corollaries~\ref{cor:global-smoothing-L1}
and~\ref{cor:global-smoothing-weighted}.

\appendix

\section{Proofs of Some Technical Results}\label{proofs}

\noindent\bf Proof of Lemma \ref{rmk:contloc}\rm.
Fix \(T>0\) and let \(V\Subset M\) be compact. Choose \(x_0\in M\) and
\(\sigma>0\) such that \(V\subset B_\sigma(x_0)\), and set
\[
\Psi_V(y):=(-\Delta)^{-s}\chi_V(y)
=\int_V G_M^s(y,z)\,d\mu(z).
\]
By Lemma~\ref{lemlastlem}, since \(\chi_V\in L^\infty_c(M)\), one has
\[
\Psi_V\in L^\infty(M).
\]
The upper estimate in Proposition~\ref{thm:3.4}, together with
Remark~\ref{rmkprop3.4} when \(\sigma>1\), yields
\[
\Psi_V(y)\leq C(V,x_0)G_M^s(y,x_0)
\qquad\text{for every }y\in M\setminus\{x_0\}.
\]

We claim that, for every \(w\in L^1_{G_M^s,x_0}(M)\),
\[
\|(-\Delta)^{-s}w\|_{L^1(V)}
\leq C(V,x_0)\|w\|_{L^1_{G_M^s,x_0}(M)}.
\]
Indeed, by Tonelli's theorem,
\begin{align*}
\int_V\int_M &G_M^s(z,y)|w(y)|\,d\mu(y)\,d\mu(z)\\&=\int_M |w(y)|\Psi_V(y)\,d\mu(y)\\
&\leq \|\Psi_V\|_{L^\infty(M)} \int_{B_1(x_0)}|w(y)|\,d\mu(y)+C(V,x_0)
\int_{M\setminus B_1(x_0)} |w(y)|G_M^s(y,x_0)\,d\mu(y)\\
&\leq
C(V,x_0)\|w\|_{L^1_{G_M^s,x_0}(M)}<\infty.
\end{align*}
It follows that \((-\Delta)^{-s}w\) is well-defined by absolute convergence for almost every
point of \(V\), belongs to \(L^1(V)\), and satisfies the claimed estimate.

Since \(u\) is a weak dual solution, Definition~\ref{defn:wds} gives
\[
u\in C\bigl([0,T];L^1_{G_M^s,x_0}(M)\bigr).
\]
Applying the preceding estimate to \(w=u(t)-u(\tau)\), with $t,\tau\in[0,T]$, we obtain
\begin{equation*}
\|(-\Delta)^{-s}u(t)-(-\Delta)^{-s}u(\tau)\|_{L^1(V)}\leq C(V,x_0) \|u(t)-u(\tau)\|_{L^1_{G_M^s,x_0}(M)} \longrightarrow 0
\end{equation*}
as \(t\to\tau\). Since \(V\Subset M\) and \(T>0\) are arbitrary, the
conclusion follows.
\qed

\medskip

\noindent \bf Proof of Proposition \ref{general}\rm. Take first $x=o$. The validity of a stronger analogue of \eqref{fcondition1}, with $C_d$ replaced by 1, for any $R_1, R_2$ s.t. $1\leq R_1 \leq R_2$, is then trivial. If $x\not=o$, letting $R_0(x) = 2d(x,o) + 1$ as in the statement, we note that for all $R \geq d(x,o)$ one has:
\begin{equation*}
    B_{R-d(x,o)}(o)\subset B_R(x) \quad \text{and} \quad B_{R-d(x,o)}(x)\subset B_R(o)
\end{equation*}
Now, let $R_2 \geq R_1 \geq  2d(x,o) + 1$. Then $R_i - d(x,o) \geq R_i/2$ for $i = 1, 2$. Using the Doubling Property \eqref{rmk:doubling}, we obtain, using \eqref{fcondition1} for $x=o$:
\begin{equation*}
\begin{split}
    \frac{R_2^{2s-1}f(R_2)}{\mathrm{Vol}(B_{R_2}(x))} & \leq  \frac{R_2^{2s-1}f(R_2)}{\mathrm{Vol}(B_{R_2-d(x,o)}(o))} \leq \frac{R_2^{2s-1}f(R_2)}{\mathrm{Vol}(B_{R_2/2}(o))}\\
    & \leq C_d \frac{R_2^{2s-1}f(R_2)}{\mathrm{Vol}(B_{R_2}(o))}
    \leq  C_d \frac{R_1^{2s-1}f(R_1)}{\mathrm{Vol}(B_{R_1}(o))}\\
    & \leq C_d \frac{R_1^{2s-1}f(R_1)}{\mathrm{Vol}(B_{R_1-d(x,o)})(x)}
     \leq  C_d \frac{R_1^{2s-1}f(R_1)}{\mathrm{Vol}(B_{R_1/2}(x))}\\
     &  \leq  C_d^2 \frac{R_1^{2s-1}f(R_1)}{\mathrm{Vol}(B_{R_1}(x))},\\
\end{split}
\end{equation*}
which is the desired inequality at $x\in M$, for every $R_2\geq R_1\geq R_0(x)$. 
In the fourth inequality, we have used the mentioned analogue of \eqref{fcondition1}, with $C_d$ replaced by 1, for any $R_1, R_2$ s.t. $1\leq R_1 \leq R_2$, when $x=o$.

Finally, \eqref{betacondition1} immediately holds by Theorem \ref{thm:charnonpar} and the definition of $f$. \qed

\medskip

\noindent \bf Proof of Proposition \ref{prop:properembedding}\rm.
Fix $x_0\in M$ and set
\[
\psi_{x_0}
:=
\frac{\mathbf 1_{B_{1/2}(x_0)}}
{\operatorname{Vol}(B_{1/2}(x_0))}.
\]
By Tonelli's theorem
\begin{align*}
\int_{B_{1/2}(x_0)}
(-\Delta)^{-s}|v|(x)\,d\mu(x)
&=
\int_M |v(y)|
\left(
\int_{B_{1/2}(x_0)}
G_M^s(x,y)\,d\mu(x)
\right)d\mu(y)
\\
&=
\operatorname{Vol}(B_{1/2}(x_0))
\int_M
|v(y)|(-\Delta)^{-s}\psi_{x_0}(y)\,d\mu(y).
\end{align*}
Hence Proposition~\ref{thm:lem3.6} gives
\[
v\in L^1_{G_M^s,x_0}(M)
\quad\Longleftrightarrow\quad
(-\Delta)^{-s}|v|
\in L^1(B_{1/2}(x_0)).
\]
If $v\in L^1_{G_M^s,o}(M)$,
Proposition~\ref{prop:compact-poles-equivalence} implies that
$v\in L^1_{G_M^s,x_0}(M)$ for every $x_0\in M$. A finite covering of
any compact subset of $M$ by balls of radius $1/2$ therefore yields
\[
(-\Delta)^{-s}|v|\in L^1_{\mathrm{loc}}(M).
\]
Conversely, if $(-\Delta)^{-s}|v|\in L^1_{\mathrm{loc}}(M)$,
then in particular
\[
(-\Delta)^{-s}|v|\in L^1(B_{1/2}(o)),
\]
and the preceding equivalence with $x_0=o$ gives
$v\in L^1_{G_M^s,o}(M)$.

We next prove the strict inclusions. By the upper Green-function estimate,
for every $x\in M\setminus B_1(o)$,
\[
G_M^s(x,o)
\leq
C_2
\int_{d(x,o)}^\infty
\frac{t^{2s-1}}{\operatorname{Vol}(B_t(o))}\,dt
\leq
C_2
\int_1^\infty
\frac{t^{2s-1}}{\operatorname{Vol}(B_t(o))}\,dt
=:C_o<\infty,
\]
where the finiteness follows from $s$-nonparabolicity. Hence, for every
$v\in L^1(M)$,
\[
\|v\|_{L^1_{G_M^s,o}(M)}
\leq
(1+C_o)\|v\|_{L^1(M)},
\]
so that $L^1(M)\hookrightarrow L^1_{G_M^s,o}(M)$. In addition, $L^1_{G_M^s}(M)\hookrightarrow L^1_{G_M^s,o}(M)$ is immediate from the definition of $L^1_{G_M^s}(M)$.
Since
\[
\int_R^\infty
\frac{t^{2s-1}}{\operatorname{Vol}(B_t(o))}\,dt
\longrightarrow0
\qquad\text{as }R\to\infty,
\]
we can choose inductively a sequence
$\{o_j\}_{j\geq1}\subset M$ such that
\[
d(o_1,o)>2,
\qquad
d(o_j,o)>d(o_{j-1},o)+3
\quad\text{for every }j\geq2,
\]
and
\[
C_2
\int_{d(o_j,o)-1}^\infty
\frac{t^{2s-1}}{\operatorname{Vol}(B_t(o))}\,dt
\leq
\frac{2^{-j}}{j}
\qquad\text{for every }j\geq1.
\]
Then the balls $B_1(o_j)$ are pairwise disjoint and contained in
$M\setminus B_1(o)$. Furthermore, if $x\in B_1(o_j)$, then
$d(x,o)\geq d(o_j,o)-1$, and therefore
\[
G_M^s(x,o)
\leq
C_2
\int_{d(x,o)}^\infty
\frac{t^{2s-1}}{\operatorname{Vol}(B_t(o))}\,dt
\leq
\frac{2^{-j}}{j}.
\]
Define
\[
v(x)
:=
\sum_{j=1}^\infty
\frac{j}{\operatorname{Vol}(B_1(o_j))}
\mathbf 1_{B_1(o_j)}(x).
\]
Since the balls $B_1(o_j)$ are pairwise disjoint and contained in
$M\setminus B_1(o)$,
\begin{equation*}
\|v\|_{L^1_{G_M^s,o}(M)}
=
\sum_{j=1}^\infty
\frac{j}{\operatorname{Vol}(B_1(o_j))}
\int_{B_1(o_j)}
G_M^s(x,o)\,d\mu(x)\leq
\sum_{j=1}^\infty 2^{-j}
<\infty.
\end{equation*}
Thus $v\in L^1_{G_M^s,o}(M)$. On the other hand,
\[
\|v\|_{L^1(M)}
=
\sum_{j=1}^\infty j
=
\infty,
\]
while, for every $j\geq1$,
\[
\|v\|_{L^1_{G_M^s,o_j}(M)}
\geq
\int_{B_1(o_j)}v\,d\mu
=
j.
\]
Therefore
\[
\sup_{x_0\in M}
\|v\|_{L^1_{G_M^s,x_0}(M)}
=
\infty,
\]
so that $v\notin L^1_{G_M^s}(M)$.
Consequently,
\[
v\in
L^1_{G_M^s,o}(M)
\setminus
\bigl(L^1(M)\cup L^1_{G_M^s}(M)\bigr),
\]
and both inclusions are strict.
\qed

\begin{rmk}\label{rmk:appendix}
Under Assumptions \ref{A1} and \ref{A2}, one has
\[
L^1(M)\subsetneq L^1_{G_M^s}(M).
\]
Indeed, the continuous inclusion follows from
Lemma~\ref{thm:lem1}(iii). For the strictness, we adapt the
construction in the proof of Proposition~\ref{prop:properembedding} to obtain a function
$v\in L^1_{G_M^s}(M)\setminus L^1(M)$.
Since the radius \(R_0\) in Assumption \ref{A2} is independent of the
pole, Lemma~\ref{thm:lem1}(ii), together with Assumption \ref{A1}, yields a
constant \(C>0\), independent of \(x_0,z\in M\), such that
\[
\frac{1}{\operatorname{Vol}(B_1(z))}
\int_{B_1(z)}G_M^s(x,x_0)\,d\mu(x)
\leq
C\int_{d(z,x_0)-1}^{\infty}\frac{dt}{f(t)}
\]
whenever \(d(z,x_0)\geq R_0+1\). Choose a monotone sequence \(R_j\to\infty\), with $R_1\geq R_0+1$,
and points \(o_j\in M\) such that the balls \(B_{R_j}(o_j)\) are
pairwise disjoint and
\[
C\int_{R_j-1}^{\infty}\frac{dt}{f(t)}
\leq 2^{-j}.
\]
Define
\[
v(x):=
\sum_{j=1}^{\infty}
\frac{\mathbf{1}_{B_1(o_j)}(x)}
{\operatorname{Vol}(B_1(o_j))}.
\]
Then \(v\notin L^1(M)\). For every \(x_0\in M\), at most one of the
balls \(B_{R_j}(o_j)\) contains \(x_0\), while the contribution of
every other \(B_1(o_j)\) to the Green-weighted integral is bounded by
\(2^{-j}\). Furthermore, since \(R_j\geq 2\), at most one ball
\(B_1(o_j)\) intersects \(B_1(x_0)\), so the local contribution is
bounded by \(1\). For the possible index \(j_0\) such that
\(x_0\in B_{R_{j_0}}(o_{j_0})\), Lemma~\ref{thm:lem1}(iii) and the
fact that \(R_0\) is independent of the pole yield
\[
\frac{1}{\operatorname{Vol}(B_1(o_{j_0}))}
\int_{B_1(o_{j_0})\setminus B_1(x_0)}
G_M^s(x,x_0)\,d\mu(x)\leq C,
\]
where \(C\) is independent of \(x_0\) and \(j_0\). Therefore,
\[
\sup_{x_0\in M}
\|v\|_{L^1_{G_M^s,x_0}(M)}<\infty.
\]
Thus \(v\in L^1_{G_M^s}(M)\setminus L^1(M)\), and the inclusion is
strict.
\end{rmk}

\noindent \bf Proof of Lemma \ref{thm:lem1}\rm.
We use the data \(f,\gamma,\beta\), and the function \(R_0(\cdot)\) fixed at the
beginning of Section~\ref{S7}.

The lower estimate in Theorem~\ref{thm:bounGreen's} and the
Bishop--Gromov upper bound give, for every \(x\neq x_0\),
\begin{equation*}
G_M^s(x,x_0)\geq
C_1\int_{d(x,x_0)}^{+\infty}
\frac{t^{2s-1}}{\operatorname{Vol}(B_t(x))}\,dt
\geq
\frac{C_1}{\omega_n}
\int_{d(x,x_0)}^{+\infty}t^{-n-1+2s}\,dt
=
\frac{C_1}{\omega_n(n-2s)}
\frac{1}{d(x,x_0)^{n-2s}},
\end{equation*}
which proves part~(i).

Let \(d(x,x_0)\geq R_0(x_0)\). By the
upper estimate in Theorem~\ref{thm:bounGreen's} and Condition \ref{B}, see Proposition \ref{general},
\begin{align*}
G_M^s(x,x_0)
&\leq
C_2\int_{d(x,x_0)}^{+\infty}
\frac{t^{2s-1}}{\operatorname{Vol}(B_t(x_0))}\,dt
=
C_2\int_{d(x,x_0)}^{+\infty}
\frac{t^{2s-1}f(t)}{\operatorname{Vol}(B_t(x_0))}
\frac{dt}{f(t)}
\\
&\leq
C_2\gamma
\frac{d(x,x_0)^{2s-1}f(d(x,x_0))}
{\operatorname{Vol}(B_{d(x,x_0)}(x_0))}
\int_{d(x,x_0)}^{+\infty}\frac{dt}{f(t)}.
\end{align*}
This proves part~(ii).

If \(d(x,x_0)\geq1\), the upper estimate in
Theorem~\ref{thm:bounGreen's} yields
\begin{align*}
G_M^s(x,x_0)
&\leq
C_2\int_1^{R_0(x_0)}
\frac{t^{2s-1}}{\operatorname{Vol}(B_t(x_0))}\,dt
+
C_2\int_{R_0(x_0)}^{+\infty}
\frac{t^{2s-1}}{\operatorname{Vol}(B_t(x_0))}\,dt
\\
&\leq
C_2\frac{R_0(x_0)^{2s}}
{2s\,\operatorname{Vol}(B_1(x_0))}
+C_2\gamma
\frac{R_0(x_0)^{2s-1}f(R_0(x_0))}
{\operatorname{Vol}(B_{R_0(x_0)}(x_0))}
\int_{R_0(x_0)}^{+\infty}\frac{dt}{f(t)}
\\
&\leq
C_2\left(
\frac{R_0(x_0)^{2s}}
{2s\,\operatorname{Vol}(B_1(x_0))}
+
\frac{\beta\gamma R_0(x_0)^{2s-1}f(R_0(x_0))}
{\operatorname{Vol}(B_{R_0(x_0)}(x_0))}
\right),
\end{align*}
which proves part~(iii).

Let \(x\neq x_0\) and
\(R\geq\max\{d(x,x_0),R_0(x_0)\}\). By the upper estimate
in Theorem~\ref{thm:bounGreen's},
\begin{align*}
G_M^s(x,x_0)
&\leq
C_2\int_{d(x,x_0)}^R
\frac{t^{2s-1}}{\operatorname{Vol}(B_t(x_0))}\,dt
+
C_2\int_R^{+\infty}
\frac{t^{2s-1}}{\operatorname{Vol}(B_t(x_0))}\,dt.
\end{align*}
For \(0<t\leq R\), Bishop--Gromov comparison gives
\[
\operatorname{Vol}(B_t(x_0))
\geq
\left(\frac{t}{R}\right)^n\operatorname{Vol}(B_R(x_0)).
\]
Consequently,
\begin{align*}
\int_{d(x,x_0)}^R
\frac{t^{2s-1}}{\operatorname{Vol}(B_t(x_0))}\,dt
&\leq
\frac{R^n}{\operatorname{Vol}(B_R(x_0))}
\int_{d(x,x_0)}^Rt^{-n-1+2s}\,dt
\\
&\leq
\frac{R^n}{(n-2s)\operatorname{Vol}(B_R(x_0))}
\frac{1}{d(x,x_0)^{n-2s}}.
\end{align*}
Condition \ref{B}  gives
\begin{align*}
\int_R^{+\infty}
\frac{t^{2s-1}}{\operatorname{Vol}(B_t(x_0))}\,dt
&\leq
\gamma
\frac{R^{2s-1}f(R)}{\operatorname{Vol}(B_R(x_0))}
\int_R^{+\infty}\frac{dt}{f(t)}
\leq
\frac{\beta\gamma f(R)R^{n-1}}
{\operatorname{Vol}(B_R(x_0))}
\frac{1}{d(x,x_0)^{n-2s}},
\end{align*}
where the last inequality follows from \(d(x,x_0)\leq R\). Combining
these estimates proves part~(iv).

Part~(v) follows from part~(iv), with \(R=R_0(x_0)\). Indeed, if
\(0<\rho\leq R_0(x_0)\), then
\begin{align*}
\int_{B_\rho(x_0)}G_M^s(x,x_0)\,d\mu(x)
&\leq
C_2\mathcal A_{x_0}(R_0(x_0))
\int_{B_\rho(x_0)}
\frac{d\mu(x)}{d(x,x_0)^{n-2s}}.
\end{align*}
By the coarea formula, integration by parts, and the Bishop--Gromov upper
bound,
\begin{align*}
\int_{B_\rho(x_0)}
\frac{d\mu(x)}{d(x,x_0)^{n-2s}}
&=
\frac{\operatorname{Vol}(B_\rho(x_0))}{\rho^{n-2s}}
+
(n-2s)\int_0^\rho
\frac{\operatorname{Vol}(B_r(x_0))}{r^{n+1-2s}}\,dr
\\
&\leq
\omega_n\rho^{2s}
+
(n-2s)\omega_n\int_0^\rho r^{2s-1}\,dr=
\omega_n\frac{n}{2s}\rho^{2s}.
\end{align*}
This proves part~(v).

Finally, let \(R\geq R_0(x_0)\). By symmetry, the upper estimate in
Theorem~\ref{thm:bounGreen's}, Tonelli's theorem, the coarea formula, and
integration by parts,
\begin{align*}
\int_{B_R(x_0)}G_M^s(x,x_0)\,d\mu(x)
&\leq
C_2\int_{B_R(x_0)}
\int_{d(x,x_0)}^{+\infty}
\frac{t^{2s-1}}{\operatorname{Vol}(B_t(x_0))}
\,dt\,d\mu(x)
\\
&=
C_2\operatorname{Vol}(B_R(x_0))
\int_R^{+\infty}
\frac{t^{2s-1}}{\operatorname{Vol}(B_t(x_0))}\,dt
+
C_2\int_0^Rr^{2s-1}\,dr
\\
&\leq
C_2\gamma R^{2s-1}f(R)
\int_R^{+\infty}\frac{dt}{f(t)}
+
C_2\frac{R^{2s}}{2s}
\\
&\leq
C_2\max\{\gamma,1\}h(R).
\end{align*}
This proves part~(vi) and completes the proof.
\qed

\par\bigskip\noindent
\textbf{Acknowledgments.}
The authors are members of the Gruppo Nazionale per l'Analisi Matematica, la Probabilit\`a e le loro Applicazioni (GNAMPA, Italy) of the Istituto Nazionale di Alta Matematica (INdAM, Italy). The first author is partially supported by the PRIN project 2022 ''Partial differential equations and related geometric-functional inequalities", ref. 20229M52AS. The second author is partially supported by the PRIN projects 2022 Geometric-analytic methods for PDEs
and applications, ref. 2022SLTHCE. Both PRIN projects above are
financially supported by the EU, in the framework of the "Next Generation EU initiative". The authors acknowledge the use of AI tools during the exploratory stage of this project. All mathematical arguments and proofs in the final manuscript were checked and written by the authors.


\end{document}